\documentclass[11pt,a4paper]{amsart}
\usepackage{exscale,amssymb,amsmath,bbm,color,amsmath,amsthm,amsfonts,stmaryrd}
\usepackage{graphicx,fullpage,mathtools}
\setlength{\parindent}{0pt}
\setlength{\parskip}{11pt}

\usepackage{bbm,exscale,stmaryrd,wasysym}
\usepackage[foot]{amsaddr}
\usepackage{hyperref}
\usepackage{enumitem}

\newcommand{\E}[1]{\ensuremath{\mathbb{E} \left[#1 \right]}}
\newcommand{\Prob}[1]{\ensuremath{\mathbb{P} \left(#1 \right)}}

\newcommand{\var}[1]{\ensuremath{\mathrm{var} \left(#1 \right)}}

\newcommand{\I}[1]{\ensuremath{\mathbbm{1}_{ \{ #1 \} }}}
\newcommand{\R}{\ensuremath{\mathbb{R}}}
\newcommand{\Z}{\ensuremath{\mathbb{Z}}}
\newcommand{\N}{\ensuremath{\mathbb{N}}}
\newcommand{\Q}{\ensuremath{\mathbb{Q}}}

\newcommand{\fl}[1]{\ensuremath{\lfloor #1 \rfloor}}

\renewcommand{\subset}{\subseteq}

\newcommand{\convdist}{\ensuremath{\stackrel{d}{\longrightarrow}}}
\newcommand{\convprob}{\ensuremath{\stackrel{p}{\rightarrow}}}
\newcommand{\equidist}{\ensuremath{\stackrel{d}{=}}}

\usepackage[sc]{mathpazo}
\linespread{1.05}         
\usepackage[T1]{fontenc}

\newenvironment{itemize*}%
  {\vspace{-0.3cm}%
  \begin{itemize}%
    \setlength{\itemsep}{0pt}%
    \setlength{\parskip}{0pt}}%
  {\end{itemize}}

\newenvironment{enumerate*}%
  {\vspace{-0.3cm}%
  \begin{enumerate}%
    \setlength{\itemsep}{0pt}%
    \setlength{\parskip}{0pt}}%
  {\end{enumerate}}

\newtheorem{thm}{Theorem}[section]
\newtheorem{lem}[thm]{Lemma}

\newtheorem{prop}[thm]{Proposition}
\newtheorem{cor}[thm]{Corollary}

\newtheorem{rem}[thm]{Remark}

\title{The stable graph: the metric space scaling limit of a critical random graph with i.i.d.\ power-law degrees}
\author{Guillaume Conchon-\,\hspace{-0.3mm}-Kerjan} 
\address{Laboratoire de probabilit\'es, statistique et mod\'elisation, Universit\'e Paris-Diderot, B\^atiment Sophie Germain, Case courrier 7012, 75205 Paris Cedex 13, France}
\email{gconchon@lpsm.paris}

\author{Christina Goldschmidt}
\address{Department of Statistics and Lady Margaret Hall, University of Oxford, 24-29 St Giles', Oxford OX1 3LB, UK}
\email{goldschm@stats.ox.ac.uk}

\date{29th July 2021}

\begin{document}

\maketitle

\begin{abstract}
We prove a metric space scaling limit for a critical random graph with independent and identically distributed degrees having power-law tail behaviour with exponent $\alpha+1$, where $\alpha \in (1,2)$.  The limiting components are constructed from random $\R$-trees encoded by the excursions above its running infimum of a process whose law is locally absolutely continuous with respect to that of a spectrally positive $\alpha$-stable L\'evy process. These spanning $\R$-trees are measure-changed $\alpha$-stable trees. In each such $\R$-tree, we make a random number of vertex-identifications, whose locations are determined by an auxiliary Poisson process.  This generalises results which were already known in the case where the degree distribution has a finite third moment (a model which lies in the same universality class as the Erd\H{o}s--R\'enyi random graph) and where the role of the $\alpha$-stable L\'evy process is played by a Brownian motion.
\end{abstract}

\section{Introduction}

\subsection{Overview}
In recent years, a wide variety of random graph models have been introduced and studied.  Many of these models undergo a phase transition of the following type: below some threshold, the connected components are microscopic in size (in the sense that they each contain a negligible proportion of the vertices) and possess few cycles, whereas above the threshold, there is a component which occupies a positive fraction of the vertices and contains many cycles, and all other components are again microscopic.  We are particularly interested in the behaviour exactly at the point of the phase transition, and in a precise description of the sizes and geometric properties of the components, which is typically much more delicate than in the sub- and supercritical cases.  We will first give a brief overview of the setting in which we are interested, and of our main results, deferring a more detailed account with proper definitions, as well as a summary of the pre-existing literature, to the next section.

We consider a uniform random graph on $n$ vertices with a given degree sequence, where the degrees themselves are independent and identically distributed random variables, $D_1, \ldots, D_n$.  (If $\sum_{i=1}^n D_i$ is odd, we replace $D_n$ by $D_n + 1$.)  For simplicity, we impose the condition that $\Prob{D_1 \ge 1} = 1$, so that there are no isolated vertices. We also assume that $\Prob{D_1 = 2} < 1$ (otherwise we have a random 2-regular graph, which contains many cycles of macroscropic size~\cite{ArratiaBarbourTavare}) and that $\var{D_1} < \infty$ (otherwise the graph behaves very differently; see \cite{DharavdHvLNewunivnovariance}).  The phase transition then occurs when the parameter $\theta := \E{D_1(D_1-1)}/\E{D_1}$ passes through 1: if $\theta < 1$ then the proportion of vertices in the largest component tends to 0 in probability as $n \to \infty$, whereas if $\theta > 1$, this proportion instead converges to a strictly positive constant, again in probability as $n \to \infty$.  

At the critical point $\theta = 1$, there is a sequence of components whose sizes are comparable (rather than a single giant component, as in the supercritical case) and which, even after rescaling, retain some randomness in the limit.  The sizes and geometric properties of these components depend on the tail of the distribution of $D_1$.  In particular, 
\begin{itemize}
\item if $\E{D_1^3} < \infty$ then the largest components have sizes on the order of $n^{2/3}$ and diameters on the order of $n^{1/3}$;
\item if $\Prob{D_1 = k} \sim c k^{-(\alpha+2)}$ for some constant $c> 0$ and $\alpha \in (1,2)$ then the largest components have sizes on the order of $n^{\alpha/(\alpha+1)}$ and diameters on the order of $n^{(\alpha-1)/(\alpha+1)}$.
\end{itemize}
(It will be convenient to refer to the first of these as the ``$\alpha = 2$ case''.)

These scaling properties are either proved or conjectured to be \emph{universal}, that is to hold for whole families of random graph models with similar asymptotic degree distributions.  We will discuss the issue of universality in some detail below.

\subsection{Our results}
Let us now state our scaling limit theorem.  Let $G_1^n, G_2^n, \ldots$ be the (vertex-sets of the) components of the critical random graph, listed in decreasing order of size, with ties broken arbitrarily. We think of these as measured metric spaces, by endowing $G_i^n$ with the graph distance, $d_i^n$, and the counting measure on its vertices, $\mu_i^n$.  Formally,  each is an element of the Polish space of isometry-equivalence classes of measured metric spaces endowed with the Gromov--Hausdorff--Prokhorov distance, which we will define properly below.

\begin{thm} \label{thm:main} Fix $\alpha \in (1,2]$.  Then under the conditions above, there exists a sequence of random compact measured metric spaces $(\mathcal{G}_1, d_1, \mu_1), (\mathcal{G}_2, d_2, \mu_2), \ldots$ (whose law depends on $\alpha$) such that, as $n \to \infty$,
\[
\left( \left(G_i^n, n^{-(\alpha-1)/(\alpha+1)} d_i^n, n^{-\alpha/(\alpha+1)} \mu_i^n\right), i \ge 1 \right) \convdist \left( \left(\mathcal{G}_i, d_i, \mu_i \right), i \ge 1 \right)
\]
in the sense of the product Gromov--Hausdorff--Prokhorov topology.
\end{thm}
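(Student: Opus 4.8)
The plan is to exploit the explicit graph-exploration structure of the configuration-model-type construction and reduce the scaling limit to a convergence statement for a depth-first (or breadth-first) walk encoding the components, together with surplus-edge data. The first step is to realise the random graph via a size-biased, possibly ordered, exploration: reveal the degrees $D_1,\dots,D_n$, pair up half-edges to form the graph, and simultaneously explore the components one at a time. This produces a walk $S^n = (S^n_k)_{k \ge 0}$ — at each step we add the ``forward degree minus one'' contribution of a newly explored vertex, chosen in a size-biased manner from the unexplored vertices — whose excursions above its running minimum enumerate the component sizes, and whose local minima carry the information needed to construct the spanning tree of each component by depth-first search. The surplus edges within a component correspond to ``back-edges'' that in the limit are governed by an auxiliary Poisson point process driven by the area under the excursion. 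This is precisely the architecture used in the $\alpha=2$ (Erd\H{o}s--R\'enyi universality class) case by Addario-Berry, Broutin and Goldschmidt, and the content of the theorem is to push it through when the step distribution has a heavy ($\alpha+1$ exponent) tail.

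The heart of the argument is therefore a functional limit theorem: after the parabolic space-time rescaling dictated by the exponents $n^{\alpha/(\alpha+1)}$ (horizontal) and $n^{1/(\alpha+1)}$ (vertical), the exploration walk $S^n$ should converge in distribution (in the Skorokhod topology on $D([0,\infty),\mathbb{R})$) to a process whose law is locally absolutely continuous with respect to that of a spectrally positive $\alpha$-stable L\'evy process with a negative parabolic drift. Here I would proceed in two stages. First, handle the ``one big pool'' version where vertices are explored in a uniform random order and the size-biasing is replaced by ordinary sampling without replacement; a tilted/conditioned random walk with heavy-tailed increments converges, by the classical stable-domain-of-attraction theory plus Donsker-type arguments (e.g.\ following Aldous's multiplicative-coalescent approach, and the stable-graph work of Aldous--Limic and of Broutin--Duquesne--Wang), to the stable L\'evy process, and the $-$parabolic drift comes from the depletion of the pool, exactly mirroring the Brownian case. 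Second, transfer from this idealised exploration to the true one using the absolute-continuity relation between the size-biased ordering and the uniform ordering; the Radon--Nikodym density converges, and this is what produces the \emph{measure-changed} (tilted) $\alpha$-stable tree described in the abstract rather than the plain one.

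With the walk convergence in hand, the remaining steps are the now-standard ``metric space from an excursion'' machinery. One encodes each component's depth-first tree by the corresponding excursion of $S^n$ (or rather its height/contour process), invokes the continuity of the map from an excursion to the $\mathbb{R}$-tree it codes together with its mass measure — here the relevant continuity is with respect to Gromov--Hausdorff--Prokhorov distance and holds for excursions converging in a strong enough (sup-norm after time-change, or $C[0,1]$) sense, which the stable setting supplies — and then glues in the surplus edges at the limiting Poisson locations. One must check that the joint law of (ordered excursion lengths, excursions, surplus counts and positions) converges, that the vertex-identifications are a.s.\ at distinct, non-branch points so the gluing map is continuous, and that the components can be truncated/tightness-controlled so that the product topology convergence over all $i \ge 1$ follows from convergence of each finite collection plus a negligible-tail estimate for the small components. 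Compactness of the limit $\mathbb{R}$-trees follows from finiteness of the total mass and standard moment bounds on the height process.

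**The main obstacle** I anticipate is the functional convergence of the exploration walk together with control of the Radon--Nikodym derivative: unlike the finite-third-moment case, where a second-moment computation and Donsker's theorem suffice, here one must carefully set up the tilting so that the limiting driving process is genuinely absolutely continuous with respect to the $\alpha$-stable L\'evy process (not merely in some weak sense), identify the density explicitly, and prove it converges; entangled with this is the need for uniform (over $n$ and over the depletion of the pool) tail estimates on the heavy-tailed increments to get tightness in Skorokhod space and to rule out pathologies in the limit such as the walk jumping across infinitely many excursions. A secondary technical difficulty is the Gromov--Hausdorff--Prokhorov continuity of the excursion-to-tree-to-graph map in the presence of jumps: the height/contour process is no longer continuous, so one works with the appropriate càdlàg height process of the stable tree (\`a la Duquesne--Le Gall), and must verify that the surplus identifications and the re-rooting/ordering conventions are handled so that no extra distortion appears in the limit.
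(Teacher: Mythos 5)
Your plan follows essentially the same route as the paper: a size-biased depth-first exploration whose walk converges, via an explicit discrete change of measure against an i.i.d.\ size-biased sequence whose Radon--Nikodym density converges, to the tilted spectrally positive $\alpha$-stable process; joint convergence with the Duquesne--Le Gall height process transferred by absolute continuity; back-edges converging to a Poisson process of intensity $1/\mu$ in the area under the reflected limit; extraction of the ordered marked excursions (the paper does this by Aldous's size-biased point process method rather than anything involving the multiplicative coalescent); and finally Gromov--Hausdorff--Prokhorov convergence by coding the spanning trees and gluing at the limiting Poisson locations. The only substantive points you leave untouched are the passage from the configuration multigraph to the uniform simple graph (handled in the paper by showing loops and multiple edges appear only well after the $n^{\alpha/(\alpha+1)}$ timescale), and note that the limiting height process is in fact continuous (Duquesne--Le Gall), so the c\`adl\`ag subtlety you worry about concerns only the L\'evy-type walk, exactly as the paper treats it.
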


The same result also holds for a multigraph with the same degree sequence generated according to the configuration model (see Section~\ref{sec:configmodel} for more details).

In the terminology of \cite{ABBrGoMi}, $(\mathcal{G}_i, d_i)$ is a random $\R$-graph for each $i \ge 1$.  For reasons which will shortly become clear, we refer to the whole limiting object as the \emph{$\alpha$-stable graph} if $\alpha \in (1,2)$ or the \emph{Brownian graph} (instances of which have already occurred several times in the literature) if $\alpha = 2$.

This theorem, in particular, implies the scaling properties mentioned above.  The $\alpha = 2$ case may be deduced from a more general theorem due to Bhamidi and Sen~\cite{BhamidiSen}, proved by different methods.  For $\alpha \in (1,2)$, Bhamidi, Dhara, van der Hofstad and Sen~\cite{BhDhvdHSeMetric} considered the setting of critical percolation on a \emph{supercritical} uniform random graph with given degree sequence, having similar tail behaviour to ours, and proved a scaling limit theorem in the sense of the product Gromov-weak topology.  (This has recently been improved to a convergence in the product Gromov--Hausdorff--Prokhorov topology for degree sequences satisfying certain conditions in \cite{BDvdHSinprep}.)  We will describe the results of \cite{BhDhvdHSeMetric} in more detail below and will, for the moment, simply observe that there are situations which are covered by both theorems, and where the limit objects must therefore be the same, but where this is certainly not obvious from their respective constructions.

One of the most striking aspects of our results is the characterisation of the limit spaces which we are able to give, which is completely new for $\alpha \in (1,2)$, and generalises one which was already known for $\alpha=2$.   In order to give this characterisation, we must first introduce some stochastic processes which play a key role.

Let $\mu = \E{D_1}$.  For $\alpha \in (1,2)$, let $L$ be a spectrally positive $\alpha$-stable L\'evy process with Laplace transform
\[
\E{\exp(-\lambda L_t)} = \exp \left(\frac{c \Gamma(2-\alpha)}{\mu \alpha(\alpha-1)} \lambda^{\alpha} t \right), \quad \lambda \ge 0, \quad t \ge 0,
\]
where $c>0$ is the constant such that $\Prob{D_1 = k} \sim c k^{-(\alpha+2)}$. Such a process can be thought of as encoding a forest of continuum trees; the standard way to do this goes via a (somewhat complicated) functional of $L$ called the height process $H$ (we will define this properly below). Let
\[
C_{\alpha} = \frac{c \Gamma(2-\alpha)}{\alpha(\alpha-1)}.
\] 
We will create a new pair $(\widetilde{L}, \widetilde{H})$ of processes via change of measure as follows: for suitable test-functions $f: \mathbb{D}([0,t], \R)^2 \to \R$, let
\begin{equation} \label{eqn:Ltildedefstable}
\E{f(\widetilde{L}_u, \widetilde{H}_u, 0 \le u \le t)} = \E{\exp\left(-\frac{1}{\mu} \int_0^t sdL_s - \frac{C_{\alpha} t^{\alpha+1}}{(\alpha+1) \mu^{\alpha+1}} \right) f(L_u, H_u, 0 \le u \le t)}.
\end{equation}
For $\alpha = 2$, letting $\mu = \E{D_1}$ and $\beta = \E{D_1(D_1-1)(D_1-2)}$, we instead take
\[
L_t = \sqrt{\frac{\beta}{\mu}} B_t \quad \text{and} \quad H_t = 2 \sqrt{\frac{\mu}{\beta}} \left(B_t - \inf_{0 \le s \le t} B_s \right),
\]
where $B$ is a standard Brownian motion (in the Brownian setting, the associated height process has the same distribution as a reflected Brownian motion, up to a scaling constant).  In this case, define
\begin{equation} \label{eqn:LtildedefBrownian}
\E{f(\widetilde{L}_u, \widetilde{H}_u, 0 \le u \le t)} = \E{\exp\left(-\frac{1}{\mu} \int_0^t sdL_s - \frac{\beta t^3}{6 \mu^3} \right) f(L_u, H_u, 0 \le u \le t)}.
\end{equation}
In either case, let
\[
R_t = \widetilde{L}_t - \inf_{0 \le s \le t} \widetilde{L}_s, \quad t \ge 0.
\]

Now write $(\zeta_i, i \ge 1)$ for the ordered sequence of lengths of excursions of $R$ above 0.  These excursions give rise to spanning $\R$-trees for the limiting components.  For $i \ge 1$, let $\widetilde{\varepsilon}_i: [0,\zeta_i] \to \R_+$ be the $i$th longest excursion of $R$ (with its argument translated in the natural way to $[0,\zeta_i]$). For $i \ge 1$, let $\widetilde{h}_i : [0,\zeta_i] \to \R_+$ be the corresponding (continuous) excursion of $H$ above 0 (which has the same length as $\widetilde{\varepsilon}_i$).  Let $(\widetilde{\mathcal T}_1, \widetilde{d}_1, \widetilde{\mu}_1), (\widetilde{\mathcal T}_2, \widetilde{d}_2, \widetilde{\mu}_2), \ldots$ be the measured $\R$-trees encoded by $\widetilde{h}_1, \widetilde{h}_2, \ldots$ respectively, and write $p_i$ for the canonical projection from $[0,\zeta_i]$ to $\widetilde{\mathcal{T}}_i$, for $i \ge 1$.  Conditionally on $R$, now consider a Poisson point process on $\R_+ \times \R_+$ of intensity $\frac{1}{\mu} \I{x \le R_t} dt dx$. (Equivalently, we can think of this as a Poisson point process of intensity $1/\mu$ in the area under the graph of $R$.)  The points tell us how to identify vertices in the $\R$-trees in order to create cycles.  For $i \ge 1$, suppose that a number $M_i \ge 0$ of points fall within the $i$th longest excursion $\widetilde{\varepsilon}_i$.  Given $\widetilde{\varepsilon}_i$, we then have $M_i \sim \mathrm{Poisson}\left(\frac{1}{\mu} \int_0^{\infty} \widetilde{\varepsilon}_i(u) du\right)$.  If $M_i \ge 1$, write 
\[
\left(s_{i,1}, x_{i,1}\right), \left(s_{i,2}, x_{i,2} \right), \ldots, \left(s_{i, M_i}, x_{i, M_i} \right)
\]
for the points themselves (with their first co-ordinates translated to the interval $[0,\zeta_i]$). For $i \ge 1$ and $1 \le k \le M_i$, let
\[
t_{i,k} = \inf\{t \ge s_{i,k}: \widetilde{\varepsilon}_i(t) \le x_{i,k}\}.
\]
Now for $i \ge 1$, let $(\mathcal{G}_i, d_i, \mu_i)$ be the measured metric space obtained from $(\widetilde{\mathcal{T}}_i, \widetilde{d}_i, \widetilde{\mu}_i)$ by making no change if $M_i = 0$ or, if $M_i \ge 1$, by identifying the $M_i$ pairs of points
\[
\left(p_i(s_{i,1}), p_i(t_{i,1})\right), \ldots, \left(p_i(s_{i,M_i}), p_i(t_{i, M_i})\right).
\]
(Formally, this is done by taking the quotient metric space in a standard way which is described in detail, for example, just before Lemma 21 of \cite{ABBrGo1}.) 

Conditionally on the ordered lengths $\zeta_1, \zeta_2, \ldots$ of the excursions and numbers $M_1, M_2, \ldots$ of Poisson points, we may give an attractive alternative description of the excursions encoding the spanning forests of the $\alpha$-stable and Brownian graphs. These are closely related to the canonical family of random $\R$-trees encompassing the Brownian continuum random tree \cite{AldousCRT1,AldousCRT2,AldousCRT3} and $\alpha$-stable trees~\cite{DuquesneLeGall, Duquesne}, which are the scaling limits of critical Galton--Watson trees conditioned to have size $n$ with offspring distribution in the domain of attraction of a Normal or $\alpha$-stable distribution respectively.

First consider $\alpha \in (1,2)$, and let $\mathbbm{e}$ be a normalised (i.e.\ length 1) excursion of the stable process $L$, and let $\mathbbm{h}$ be the associated normalised excursion of the height process, which would encode an $\alpha$-stable tree.  Now for $m \in \Z_+$, define tilted excursions $\widetilde{\mathbbm{e}}^{(m)}$ and $\widetilde{\mathbbm{h}}^{(m)}$ via
\begin{equation} \label{eqn:reln}
\E{g(\widetilde{\mathbbm{e}}^{(m)}, \widetilde{\mathbbm{h}}^{(m)})} = \frac{\E{g(\mathbbm{e}, \mathbbm{h}) \left( \int_0^1 \mathbbm{e}(u) du \right)^m} } {\E{\left( \int_0^1 \mathbbm{e}(u) du \right)^m} },
\end{equation}
for suitable test-functions $g: \mathbb{D}([0,1], \R_+) \times \mathbb{C}([0,1], \R_+) \to \R$. Let $(\widetilde{\mathcal{T}}^{(m)}, \widetilde{d}^{(m)}, \widetilde{\mu}^{(m)})$ be the $\R$-tree $(\widetilde{\mathcal{T}}^{(m)}, \widetilde{d}^{(m)})$ encoded by $\widetilde{\mathbbm{h}}^{(m)}$, along with its natural mass measure $\widetilde{\mu}^{(m)}$.  Write $\widetilde{p}^{(m)}$ for the projection $[0,1] \to \widetilde{\mathcal{T}}^{(m)}$.  If $m \ge 1$, now sample $m$ pairs of points in the tree as follows.  First pick $(s_1, x_1), \ldots, (s_m,x_m)$ independently and uniformly from the area below the excursion $\widetilde{\mathbbm{e}}^{(m)}$ and above the $x$-axis according to the normalised Lebesgue measure.  Define $t_i = \inf\{t \ge s_i: \widetilde{\mathbbm{e}}^{(m)}(t) \le x_i\}$. Finally, identify $\widetilde{p}^{(m)}(s_i)$ and $\widetilde{p}^{(m)}(t_i)$ for $1 \le i \le m$ in order to obtain $(\mathcal{G}^{(m)}, d^{(m)}, \mu^{(m)})$.  Set $(\mathcal{G}^{(0)}, d^{(0)}, \mu^{(0)}) = (\widetilde{\mathcal{T}}^{(0)}, \widetilde{d}^{(0)}, \widetilde{\mu}^{(0)})$.

Something very similar works in the Brownian case.  Here, we take $\mathbbm{e}$ to be a normalised Brownian excursion (which is, in particular, continuous); in this context, $\mathbbm{h} = 2\mathbbm{e}$, so there is no need to consider two different excursions.  The function $2 \mathbbm{e}$ encodes the Brownian continuum random tree (in the normalisation adopted by Aldous~\cite{AldousCRT1}).  Again define $\widetilde{\mathbbm{e}}^{(m)}$ as at (\ref{eqn:reln}) and let $(\widetilde{\mathcal{T}}^{(m)}, \widetilde{d}^{(m)}, \widetilde{\mu}^{(m)})$ be the measured $\R$-tree encoded by $2 \widetilde{\mathbbm{e}}^{(m)}$, and write $\widetilde{p}^{(m)}$ for the projection $[0,1] \to \widetilde{\mathcal{T}}^{(m)}$.  If $m \ge 1$, now sample $m$ pairs of points in the tree as follows.  First pick $(s_1, x_1), \ldots, (s_m,x_m)$ independently and uniformly from the area below the excursion $\widetilde{\mathbbm{e}}^{(m)}$ and above the $x$-axis according to the normalised Lebesgue measure.  Define $t_i = \inf\{t \ge s_i: \widetilde{\mathbbm{e}}^{(m)}(t) \le x_i\}$. Finally, identify $\widetilde{p}^{(m)}(s_i)$ and $\widetilde{p}^{(m)}(t_i)$ for $1 \le i \le m$ in order to obtain $(\mathcal{G}^{(m)}, d^{(m)}, \mu^{(m)})$.  Set $(\mathcal{G}^{(0)}, d^{(0)}, \mu^{(0)}) = (\widetilde{\mathcal{T}}^{(0)}, \widetilde{d}^{(0)}, \widetilde{\mu}^{(0)})$.

\begin{thm} \label{thm:conditionaldescription}
Conditionally on the lengths $\zeta_1, \zeta_2, \ldots$ of the excursions and the numbers $M_1, M_2, \ldots$ of points, the measured $\R$-graphs $(\mathcal{G}_1, d_1, \mu_1)$, $(\mathcal{G}_2, d_2, \mu_2), \ldots$ are independent with
\[
\left(\mathcal{G}_i, d_i, \mu_i\right) \equidist \left(\mathcal{G}^{(M_i)}, \zeta_i^{(\alpha-1)/\alpha} d^{(M_i)}, \zeta_i \mu^{(M_i)} \right)
\]
for each $i \ge 1$.
\end{thm}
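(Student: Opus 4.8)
\emph{Proof proposal.} My plan is to work directly from the continuum construction above, in three stages: first, decompose the change-of-measure density defining $\widetilde L$ over the excursions of $R$; second, fold in the Poisson point process and exploit a cancellation; third, conclude by a scaling argument. Only the first stage is genuinely delicate.

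For the first stage I would set $I_t=\inf_{0\le s\le t}L_s$ and $R^0=L-I$; since $L$ is spectrally positive, $R^0\ge 0$ has no negative jumps and its excursions above $0$ start and end at $0$. For deterministic $t$, integration by parts gives
\begin{equation*}
\int_0^t s\,dL_s \;=\; \int_0^t s\,dI_s \;+\; \sum_j\int_0^{\zeta_j}u\,d\varepsilon_j(u) \;+\; (\text{straddling term}),
\end{equation*}
where the sum runs over the excursions $[g_j,d_j]\subseteq[0,t]$ of $R^0$ completed by time $t$ ($\varepsilon_j$ the $j$th such excursion, translated to $[0,\zeta_j]$), and $\int_0^{\zeta_j}u\,d\varepsilon_j(u)=-\int_0^{\zeta_j}\varepsilon_j(u)\,du$ because $\varepsilon_j(0)=\varepsilon_j(\zeta_j)=0$. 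Hence the density $M_t$ of $\widetilde L|_{[0,t]}$ relative to $L|_{[0,t]}$ in \eqref{eqn:Ltildedefstable} (resp.\ \eqref{eqn:LtildedefBrownian}) factorises as $M_t=\exp\big(\tfrac1\mu\sum_j\int_0^{\zeta_j}\varepsilon_j\big)\cdot\Phi_t$, where $\Phi_t$ is a function of $(I_s,0\le s\le t)$, the straddling excursion, and $t$ alone, and does not involve the shapes of the completed excursions once their time-intervals are fixed. Using the standard excursion decomposition of spectrally positive stable L\'evy processes and their height processes (see \cite{DuquesneLeGall}), under the law of $L$ and conditionally on the time-intervals of the completed excursions of $R^0$, the pairs $(\varepsilon_j,h_j)$ --- $h_j$ the corresponding excursion of $H$, a measurable functional of $\varepsilon_j$ --- are independent, with $\varepsilon_j$ equal in law to the normalised excursion of $L$ rescaled to length $\zeta_j$, i.e.\ $u\mapsto\zeta_j^{1/\alpha}\mathbbm{e}(u/\zeta_j)$ (and $\zeta_j^{1/2}\mathbbm{e}(u/\zeta_j)$ for $\alpha=2$). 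Applying the change of measure and integrating out $\Phi_t$ then shows that, under the law of $\widetilde L$ and conditionally on those time-intervals, the completed excursions are independent with the $j$th distributed as this reference law tilted by $\exp\big(\tfrac1\mu\int_0^{\zeta_j}\varepsilon\big)$; since the tilted law depends only on $\zeta_j$, the same holds after conditioning on the lengths alone. Letting $t\to\infty$, and using that $\int_0^\infty R_s\,ds<\infty$ almost surely --- so any fixed finite set of longest excursions is a.s.\ completed by some finite time --- yields: conditionally on $(\zeta_i)_{i\ge1}$, the pairs $(\widetilde\varepsilon_i,\widetilde h_i)$ are independent, with $\widetilde\varepsilon_i$ the $\exp(\tfrac1\mu\int\varepsilon)$-tilt of the length-$\zeta_i$ rescaled normalised $L$-excursion.

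For the second stage, conditionally on $R$ the Poisson point process restricts independently to disjoint regions, so given $\widetilde\varepsilon_i$ it has intensity $\tfrac1\mu$ on the region under $\widetilde\varepsilon_i$, whence $M_i\mid\widetilde\varepsilon_i\sim\mathrm{Poisson}\big(\tfrac1\mu\int_0^{\zeta_i}\widetilde\varepsilon_i\big)$ and, given $M_i=m$, the $m$ marked points are uniform in that region. Combined with the first stage, the joint law of $(\widetilde\varepsilon_i,\widetilde h_i,M_i)_{i\ge1}$ given $(\zeta_j)$ factorises over $i$, and for each $i$ the conditional law of $(\widetilde\varepsilon_i,\widetilde h_i)$ given $\zeta_i$ and $M_i=m$ is proportional, as a function of the reference excursion $\varepsilon$, to $\exp\big(\tfrac1\mu\int_0^{\zeta_i}\varepsilon\big)\cdot\tfrac1{m!}\big(\tfrac1\mu\int_0^{\zeta_i}\varepsilon\big)^m\exp\big(-\tfrac1\mu\int_0^{\zeta_i}\varepsilon\big)=\tfrac1{\mu^m m!}\big(\int_0^{\zeta_i}\varepsilon\big)^m$ times the reference law: the tilting factor $e^{+\frac1\mu\int\varepsilon}$ from the change of measure cancels exactly against the normalising constant $e^{-\frac1\mu\int\varepsilon}$ of the Poisson weight, leaving a pure power of the area. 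Since the area under $u\mapsto\zeta^{1/\alpha}\mathbbm{e}(u/\zeta)$ is $\zeta^{(\alpha+1)/\alpha}\int_0^1\mathbbm{e}$ (and $\zeta^{3/2}\int_0^1\mathbbm{e}$ for $\alpha=2$), the deterministic $\zeta$-power is absorbed by the normalisation, so this conditional law is exactly that of $u\mapsto\zeta_i^{1/\alpha}\widetilde{\mathbbm{e}}^{(m)}(u/\zeta_i)$ for $\widetilde{\mathbbm{e}}^{(m)}$ as at \eqref{eqn:reln}, paired with $u\mapsto\zeta_i^{(\alpha-1)/\alpha}\widetilde{\mathbbm{h}}^{(m)}(u/\zeta_i)$ by the self-similarity of the height process (exponent $\tfrac12$ for reflected Brownian motion); and given $M_i=m$ the marked points are uniform under $\widetilde\varepsilon_i$, as the $(s_1,x_1),\dots,(s_m,x_m)$ in the construction of $\mathcal G^{(m)}$. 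For the third stage, I would observe that the map sending an excursion $\varepsilon$, its height excursion $h$ and its marked points to $(\mathcal G,d,\mu)$ --- encode the $\R$-tree by $h$ with its mass measure, form $t_{i,k}=\inf\{t\ge s_{i,k}:\varepsilon(t)\le x_{i,k}\}$, quotient by the identifications, as in \cite{ABBrGo1} --- is measurable and covariant under $\varepsilon(u)\mapsto\zeta^{1/\alpha}\varepsilon(u/\zeta)$, $h(u)\mapsto\zeta^{(\alpha-1)/\alpha}h(u/\zeta)$, $(s,x)\mapsto(\zeta s,\zeta^{1/\alpha}x)$: $\R$-tree distances scale by $\zeta^{(\alpha-1)/\alpha}$, the mass measure by $\zeta$, and the identification points transform accordingly. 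Pushing the conditional law from the second stage through this map gives the conditional independence of the $(\mathcal G_i,d_i,\mu_i)$ and $(\mathcal G_i,d_i,\mu_i)\equidist(\mathcal G^{(M_i)},\zeta_i^{(\alpha-1)/\alpha}d^{(M_i)},\zeta_i\mu^{(M_i)})$ for each $i$ (with $M_i=0$ reducing to the statement for measured $\R$-trees).

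The hard part is the first stage: transferring the change of measure to the excursion level. The issues are (a) upgrading the only locally absolutely continuous relation at finite horizon $t$ to a statement conditional on the whole sequence $(\zeta_i)$, which needs the a priori control that the long excursions of $R$ are completed early (equivalently $\int_0^\infty R_s\,ds<\infty$, together with the finiteness of the exponential tilt); (b) checking that the backbone factor $\Phi_t$ genuinely decouples the shapes of distinct completed excursions once their intervals are fixed; and (c) carrying the height process along with the right self-similarity, which for $\alpha\in(1,2)$ relies on the height-process theory of \cite{DuquesneLeGall} rather than on the identity $\mathbbm h=2\mathbbm e$ available at $\alpha=2$.
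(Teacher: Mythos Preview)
Your three-stage plan mirrors the paper's proof closely: Proposition~\ref{prop:exclaw} carries out your stage~1 (factorising the density over excursions to get conditional independence and the $\exp(\tfrac1\mu\int\varepsilon)$ tilt), Proposition~\ref{prop:condindep} passes to the full sequence of excursions, and the proof of Theorem~\ref{thm:conditionaldescription} itself is exactly your stage~2 cancellation followed by your stage~3 scaling.

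There is one concrete error and one place where the paper's execution is cleaner. The error: your assertion that $\int_0^\infty R_s\,ds<\infty$ almost surely is false. Since $M_i\mid\widetilde\varepsilon_i$ is Poisson with mean $\tfrac1\mu\int\widetilde\varepsilon_i$, finiteness of $\int_0^\infty R_s\,ds$ would force $\sum_i M_i<\infty$, i.e.\ only finitely many components with positive surplus; but for these critical graphs there are a.s.\ infinitely many components with surplus~$1$ (only $\#\{i:M_i\ge2\}$ is finite, cf.\ Lemma~\ref{lem:nmax}). The fact you actually need --- that any fixed finite collection of longest excursions is completed by some finite time --- is Lemma~\ref{lem:Josephprops}(i), which follows from $\widetilde L_t\to-\infty$, not from integrability of $R$.

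The cleaner execution: rather than stopping at deterministic $t$ and carrying a straddling term, the paper stops at the inverse local times $\sigma_\ell$, where by construction there is no straddling excursion and the integration-by-parts identity becomes the exact formula~\eqref{eqn:stoppedmart}. The price is that one must justify applying the change of measure at the random time $\sigma_\ell$, which requires uniform integrability of $\big(\Phi(t\wedge\sigma_\ell)\big)_{t\ge0}$; this is Proposition~\ref{prop:stoppedmart}, and it is precisely the ``upgrading'' step you flag as issue~(a). Once that is in place, letting $\ell\to\infty$ (Proposition~\ref{prop:condindep}) replaces your $t\to\infty$ limit without any appeal to $\int R<\infty$.
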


This shows that, in order to understand further the geometric properties of our limit object, a key role will be played by the family of random $\R$-graphs $((\mathcal{G}^{(m)}, d^{(m)}, \mu^{(m)}), m \ge 0)$.  These are studied in depth for $\alpha \in (1,2)$ in the companion paper~\cite{GoldschmidtHaasSenizergues}; the Brownian case was the subject of the earlier paper \cite{ABBrGo2}.    From the absolute continuity relation (\ref{eqn:reln}), for any $\alpha \in (1,2]$ it is straightforward to see that $(\mathcal{G}^{(m)}, d^{(m)}, \mu^{(m)})$ has Hausdorff dimension $\alpha/(\alpha-1)$ almost surely, since this is true of the appropriate Brownian/$\alpha$-stable tree and one cannot change the fractal dimension by making finitely many vertex-identifications.  

The branch-points of the $\alpha$-stable tree are almost surely all infinitary (i.e.\ removing any of them breaks the tree into infinitely many connected components), and this property is inherited, via absolute continuity, by $(\widetilde{\mathcal{T}}^{(m)}, \widetilde{d}^{(m)})$ for $\alpha \in (1,2)$.  It follows from the properties of the excursion $\mathbbm{e}^{(m)}$ (see \cite{GoldschmidtHaasSenizergues} for an in-depth discussion) that the vertex-identifications in $(\widetilde{\mathcal{T}}^{(m)}, \widetilde{d}^{(m)})$ are   almost surely all from a leaf to a branch-point of infinite degree.  In contrast, in the $\alpha=2$ case, the vertex-identifications are almost surely all from a leaf to a point of degree 2 (see \cite{ABBrGo1,ABBrGo2}).

In \cite{ABBrGo2,GoldschmidtHaasSenizergues}, it is further shown that one may explicitly determine the law of the \emph{kernel} of $\mathcal{G}^{(m)}$ (that is, the multigraph with edge-lengths which encodes its cycle structure), and that $\mathcal{G}^{(m)}$ may be constructed by gluing together randomly rescaled Brownian/stable trees.  Finally, it is shown in \cite{ABBrGo2,GoldschmidtHaasSenizergues} that $\mathcal{G}^{(m)}$ possesses a \emph{line-breaking construction}, that is, a recursive construction which starts from the kernel and successively glues on line-segments of random lengths to random points, obtaining a convergent sequence of approximations to the final $\R$-graph.

\section{Background} \label{sec:background}

In this section, we give some background material on our random graph model, and discuss the previously known results on its critical behaviour.  We also give a brief account of the scaling limit theory for Galton--Watson trees.  We then give an overview of the proof of Theorem~\ref{thm:main}.  This is followed by a brief summary of some related literature, and some open problems.  Finally, at the end of this section, we give a plan of the rest of the paper.

For a sequence of random variables $(A_n)_{n \ge 0}$ and a sequence $(a_n)_{n \ge 0}$ of real numbers, we write $A_n = O_{\mathbb{P}}(a_n)$ to mean that $(A_n/a_n)_{n \ge 0}$ is tight.  We write $A_n = \Theta_{\mathbb{P}}(a_n)$ to mean that $A_n = O_{\mathbb{P}}(a_n)$ and $A_n^{-1} = O_{\mathbb{P}}(a_n^{-1})$.  We write $A_n = o_{\mathbb{P}}(a_n)$ to mean $A_n/a_n \convprob 0$ as $n \to \infty$.

\subsection{The configuration model} \label{sec:configmodel}
We wish to sample a graph uniformly at random from among the graphs with the given degrees $D_1, D_2, \ldots, D_n$.  There is a standard method for doing this, which originated (in varying degrees of generality) in the work of Bender and Canfield~\cite{BenderCanfield}, Bollob\'as~\cite{Bollobas} and Wormald~\cite{Wormald}, called the \emph{configuration model}.  (We refer the reader to the recent book of van der Hofstad~\cite{RemcoBook} for a full account of the configuration model and for proofs of the results quoted below.)  We begin by first describing the setting where the vertex degrees are deterministic.  More precisely, suppose that we have vertices labelled $1, 2, \ldots, n$ where vertex $i$ has degree $d_i$, for $1 \le i \le n$.  Suppose that $d_i \ge 1$ for all $1 \le i \le n$ and that $\sum_{i=1}^n d_i$ is even.  To vertex $i$, attach $d_i$ \emph{stubs} or \emph{half-edges}.  Label the $\sum_{i=1}^n d_i$ half-edges in some arbitrary way, and then choose a pairing of them, uniformly at random.  Join the paired half-edges together to make full edges, and then forget the labelling of the half-edges.  In general, this procedure yields a multigraph (i.e.\ with self-loops, or multiple edges).  However, if there exist one or more simple graphs with the given degree sequence (that is, if the degree sequence is \emph{graphical}) then, conditionally on the event that the configuration model yields a simple graph, that graph is \emph{uniform} among the possibilities.

We are concerned with the setting where the degrees themselves are independent and identically distributed random variables $D_1, D_2, \ldots, D_n$.  An immediate issue is that we cannot guarantee that $\sum_{i=1}^n D_i$ is even.  We get around this problem by always assuming that if $\sum_{i=1}^n D_i$ is odd, then we in fact give vertex $n$ degree $D_n + 1$.  For the regime and properties in which we are interested, this makes only a negligible difference, and we will ignore it in the sequel.  Write $\nu = (\nu_k)_{k \ge 1}$ for the probability mass function of $D_1$, that is $\nu_k = \Prob{D_1 = k}$, $k \ge 1$.  Let $\mathbf{M}_n(\nu)$ be the multigraph resulting from the configuration model with these degrees.  It remains to resolve the issue that the degree sequence may, in principle, be non-graphical.  However, it is possible to show that if $D_1$ has finite variance and $\theta = \theta(\nu) =\E{D_1(D_1-1)}/\E{D_1}$ then
\[
\lim_{n \to \infty} \Prob{\mathbf{M}_n(\nu) \text{ is simple}} = \exp(- \theta/2 - \theta^2/4),
\]
and the right-hand side is strictly positive (see for instance Proposition 7.13 of \cite{RemcoBook}).  Let $\mathbf{G}_n(\nu)$ be a graph with the distribution of $\mathbf{M}_n(\nu)$ conditioned to be simple; this is our uniform random graph with i.i.d.\ $\nu$-distributed degrees, and is the main object of study in this paper. 

 If $\nu_k \sim c k^{-(\alpha+2)}$ for some $\alpha \in (1,2)$ as $k \to \infty$, we will have that $\max_{1 \le i \le n} D_i = \Theta_{\mathbb{P}}(n^{1/(\alpha+1)})$.  We will see in the sequel that vertices of degree $\Theta(n^{1/{(\alpha+1)}})$ play an important role in the structure of the graph, and ``show up'' in the scaling limit as vertices of infinite degree (often known as \emph{hubs}).  However, since $\alpha > 1$, with high probability we will not observe edges directly joining two vertices of degree $\Theta(n^{1/(\alpha+1)})$ and, indeed, the vertices of highest degree will be typically well-separated.  If $\E{D_1^3} < \infty$, on the other hand, then $\max_{1 \le i \le n} D_i = o_{\mathbb{P}}(n^{1/3})$ and there are no hubs in the limit.

An important property of the configuration model is that the pairing of the edges may be generated in a progressive manner.  This makes possible the use of an exploration process in order to capture properties of the (multi-)graph.  We do this in a depth-first manner, conditionally on the vertex-degrees, and making use of the arbitrary labelling we gave the half-edges, as follows.  Start from a vertex $v$ chosen with probability proportional to its degree $D_v$.  We will maintain a \emph{stack}, namely an ordered list of half-edges which we have seen but not yet explored.  Put the $D_v$ half-edges attached to $v$ onto this stack, in increasing order of label, so that the lowest labelled half-edge is on top of the stack.  At every subsequent step, if the stack is non-empty, take the top half-edge and sample its pair uniformly at random from those available (i.e.\ the others on the stack and those which we have not yet observed in our exploration).  If the pair half-edge belongs to a vertex $w$ which has not yet been observed (i.e.\ if the pair half-edge does not lie in the stack), remove the paired half-edges from the system, and add the remaining $D_w - 1$ half-edges attached to $w$ to the top of the stack, again in increasing order of label.  If ever the stack becomes empty, select a new vertex with probability proportional to its degree, and put all of its half-edges onto the stack.  Repeat until the whole graph has been exhausted.  Notice that the stack is empty at the end of a step if and only if we have reached the end of a component, and that in each step except the one at the start of a component, we pair two half-edges.  Let $R^n(k)$ be the size of the stack at step $k$.  Then, for example, we may read off the numbers of edges in the successive components as the lengths minus 1 of the excursions above 0 of the process $(R^n(k), k \ge 0)$.  It turns out that this process, as we shall explain below, in fact encodes much more information about the multigraph.

Write $|G|$ for the size of the vertex set of a graph $G$.  For a connected graph $G$, write $s(G)$ for its \emph{surplus}, that is how many more edges it has than any of its spanning trees (which necessarily have $|G|-1$ edges).  Write $G_1^n, G_2^n, \ldots$ for the connected components of $\mathbf{G}_n(\nu)$, in decreasing order of size, with ties broken arbitrarily.  Similarly, write $M_1^n, M_2^n, \ldots$ for the ordered connected components of $\mathbf{M}_n(\nu)$. 

\subsection{The phase transition and critical behaviour of the component sizes}

As we have already mentioned, $\mathbf{G}_n(\nu)$ undergoes a phase transition in its component sizes depending on its parameters~\cite{MolloyReed1, MolloyReed2, JansonLuczak}.  Indeed, if $\theta(\nu) \le 1$, then the largest connected component $G_1^n$ of $\mathbf{G}_n(\nu)$ is such that $|G_1^n|/n \convprob 0$.  On the other hand, if $\theta(\nu) > 1$ then $|G_1^n|/n \convprob \rho(\nu)$, where $\rho(\nu)$ is some strictly positive constant.  These results also hold for $\mathbf{M}_n(\nu)$.  To give an idea of why the quantity $\theta(\nu)$ is important, imagine performing the depth-first exploration outlined above, but ignoring any edges which create cycles.  Then it is not hard to see that, at each step which is not the start of a component, the degree of the vertex to which the half-edge on the top of the stack connects (as long as it does not connect to something on the stack and thus create a cycle) is a \emph{size-biased pick} from among the remaining possibilities.  So, at least close to the beginning, the exploration process should look approximately like a branching process with offspring distribution given by $D^* - 1$, where $\Prob{D^* = k} = k \nu_k/\E{D_1}$.  But then $\theta(\nu) = \E{D^* - 1}$, and so the critical point for the approximating branching process is indeed $\theta(\nu) = 1$.  Our interest is in this precisely critical case, and a significant part of this paper is devoted to making the heuristic argument just outlined precise.

The following theorem, due to Joseph~\cite{Joseph}, summarises some of the possible behaviours for the component sizes in the case $\theta(\nu) = 1$. A version of part (i) was proved independently by Riordan~\cite{Riordan} (see below for further discussion).  Let
\[
\ell_{\downarrow}^2 = \left\{(x_1, x_2, \ldots) \in \R^{\N}: x_1 \ge x_2 \ge \ldots \ge 0, \sum_{i \ge 1} x_i^2 < \infty\right\}.
\]

\begin{thm} \label{thm:JosephRiordan}
\begin{enumerate}
\item[(i)] Suppose that $\Prob{D_1 = 2} < 1$, $\E{D_1} = \mu$ and \linebreak $\E{D_1(D_1-1)(D_1-2)} = \beta$.  Let $B$ be a standard Brownian motion, and let
\begin{equation} \label{eqn:RBrownian}
\widetilde{L}_t = \sqrt{\frac{\beta}{\mu}} B_t - \frac{\beta}{2 \mu^2} t^2, \quad t \ge0 \qquad \text{and} \qquad R_t = \widetilde{L}_t - \inf_{0 \le s \le t} \widetilde{L}_s, \quad t \ge 0.
\end{equation}
Then 
\[
n^{-2/3} (|G_1^n|, |G_2^n|, \ldots) \convdist (\zeta_1, \zeta_2, \ldots)
\]
as $n \to \infty$ in $\ell_{\downarrow}^2$, where $(\zeta_1, \zeta_2, \ldots)$ are the lengths of the excursions above 0 of the process $(R_t)_{t \ge 0}$. The same result holds with $(|G_1^n|, |G_2^n|, \ldots)$ replaced by $(|M_1^n|, |M_2^n|, \ldots)$.
\item[(ii)] Suppose that $\lim_{k \to \infty} k^{\alpha+2} \Prob{D_1 = k} = c$ for some constant $c > 0$ and some $\alpha \in (1,2)$, and that $\E{D_1} = \mu$.  Let $X$ be the process with independent increments whose law is specified by its Laplace transform
\begin{equation*}
\E{\exp(-\lambda X_t)}  = \exp \left( \int_0^t ds \int_0^{\infty} dx (e^{-\lambda x}-1+ \lambda x) \frac{c}{\mu}\frac{1}{x^{\alpha+1}} e^{-xs/\mu} \right), \ \lambda \ge 0, \ t \ge 0,
\end{equation*}
and let
\begin{equation} \label{eqn:Rstable}
\widetilde{L}_t = X_t - \frac{c \Gamma(2-\alpha)}{\alpha(\alpha-1) \mu^{\alpha}} t^{\alpha}, \quad t \ge 0 \qquad \text{and} \qquad  R_t = \widetilde{L}_t - \inf_{0 \le s \le t} \widetilde{L}_s \quad t \ge 0.
\end{equation}
Then 
\[
n^{-\alpha/(\alpha+1)}(|M_1^n|, |M_2^n|, \ldots) \convdist (\zeta_1, \zeta_2, \ldots)
\]
as $n \to \infty$ in $\ell^2_{\downarrow}$, where $(\zeta_1, \zeta_2, \ldots)$ are the lengths of the excursions above 0 of the process $(R_t)_{t \ge 0}$.
\end{enumerate}
\end{thm}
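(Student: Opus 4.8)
The plan is to encode the components of $\mathbf{M}_n(\nu)$ by an exploration walk, prove a functional scaling limit for that walk, and then deduce convergence of the ordered rescaled component sizes by a continuity argument; this is, in essence, the strategy of \cite{Joseph}. I describe the argument for part (ii), part (i) being the analogous (and in some respects simpler) ``$\alpha=2$'' case, and I indicate the few extra ingredients needed for part (i) — including the passage from $\mathbf{M}_n(\nu)$ to $\mathbf{G}_n(\nu)$ — at the end.

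\emph{Encoding.} Run the depth-first exploration of Section~\ref{sec:configmodel} but, rather than letting the stack empty at a component boundary, immediately record a fresh size-biased vertex and continue. Re-indexing by discovered vertices, this yields a walk $(\mathsf{S}^n_k)_{0 \le k \le n}$ whose increments are governed by a size-biased reordering without replacement $\sigma(1), \sigma(2), \ldots$ of $\{1, \ldots, n\}$ — each newly discovered vertex is found by pairing a half-edge with a uniformly chosen available one, and is thus a size-biased pick among those not yet seen — together with deterministic corrections of size $O(1)$ at component starts and at surplus steps. In the standard way, the excursions of $\mathsf{S}^n$ above its running infimum are in bijection with the components of $\mathbf{M}_n(\nu)$ listed in order of discovery, and the excursion corresponding to $M^n_i$ has length $|M^n_i|$ up to an additive term bounded by the surplus of $M^n_i$. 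A half-edge popped off the stack (of size $O_{\mathbb{P}}(n^{1/(\alpha+1)})$ during a macroscopic excursion, and smaller otherwise) re-pairs into the stack with probability $O(n^{-\alpha/(\alpha+1)})$ at each step, so the total surplus accumulated over the first $\fl{T n^{\alpha/(\alpha+1)}}$ steps is $O_{\mathbb{P}}(1)$; hence, after rescaling lengths by $n^{-\alpha/(\alpha+1)}$, it suffices to prove convergence of the ordered rescaled excursion lengths of $\mathsf{S}^n$ itself.

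\emph{Functional convergence of the walk.} The heart of the proof is to show
\[
\left( n^{-1/(\alpha+1)} \mathsf{S}^n_{\fl{t n^{\alpha/(\alpha+1)}}} \right)_{0 \le t \le T} \convdist \left( \widetilde{L}_t \right)_{0 \le t \le T}
\]
in $\mathbb{D}([0,T], \R)$ for each $T>0$, with $\widetilde{L}$ as in \eqref{eqn:Rstable}; reflection at the running infimum then gives $R^n \convdist R$. Write $\mathsf{S}^n_k = A^n_k + \mathsf{M}^n_k$, where $A^n_k = \sum_{i=1}^{k} \E{D_{\sigma(i)} - 2 \mid (D_j)_j}$ is the drift and $\mathsf{M}^n$ is a martingale. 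For the drift I would use the exponential-clock representation of size-biased sampling without replacement: attach i.i.d.\ $\Exp(1)$ marks $E_i$ and explore the vertices in increasing order of $E_i/D_i$; then the $k$th vertex corresponds to a ``clock level'' $u_k$ with $\#\{i : E_i/D_i \le u_k\} = k$, so $u_k \approx k/(\mu n)$, and its degree has (approximately) the law proportional to $d\, \nu_d\, e^{-d u_k}$. Using criticality ($\E{D_1^2} = \E{D_1(D_1-1)} + \E{D_1} = 2\mu$), this gives $\E{D_{\sigma(k)} - 2 \mid (D_j)_j} \approx -\E{D_1(D_1-2)(1-e^{-D_1 u_k})}/\E{D_1 e^{-D_1 u_k}}$. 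Since $\nu_k \sim c k^{-(\alpha+2)}$ and $\E{D_1^2} < \infty$, one has $\E{D_1(D_1-2)(1-e^{-D_1 u})} \sim \frac{c\Gamma(2-\alpha)}{\alpha-1} u^{\alpha-1}$ as $u \to 0$; summing over $k \le t n^{\alpha/(\alpha+1)}$ and rescaling produces exactly $-\frac{c\Gamma(2-\alpha)}{\alpha(\alpha-1) \mu^\alpha} t^\alpha$ (the powers of $n$ match because $\alpha^2/(\alpha+1) - (\alpha-1) = 1/(\alpha+1)$). For the fluctuations, the dominant contribution comes from occasionally discovering a vertex of degree $\Theta(n^{1/(\alpha+1)})$: a vertex of rescaled degree near $x$ is discovered around time $t$ at rate $\approx \frac{c}{\mu} x^{-(\alpha+1)} e^{-xt/\mu}\, dx\, dt$ — the exponential cut-off encoding the progressive depletion of high-degree vertices — contributing an up-jump of size $\approx x$, which is precisely the time-inhomogeneous Lévy structure of the process $X$. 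Convergence then follows from a standard criterion for convergence of processes with independent increments, together with an estimate controlling the error made by sampling without (rather than with) replacement; tightness in $\mathbb{D}([0,T], \R)$ follows from Aldous's criterion, or from the closeness of $\mathsf{S}^n$ to a genuine independent-increment process. I expect this step to be the main obstacle: one must pin down both the exact polynomial drift and the exact time-inhomogeneous jump intensity of the limit while rigorously handling the dependence created by sampling without replacement.

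\emph{From the walk to $\ell^2$-convergence, and part (i).} Given the above, joint convergence of the finitely many longest rescaled excursion lengths of $R^n$ on $[0,T]$ to those of $R$ follows because $R$ has, a.s., only countably many excursions, of distinct lengths, with finitely many of length $>\varepsilon$ on $[0,T]$ and $\sum_i \zeta_i^2 < \infty$ (a consequence of the confining polynomial drift of $\widetilde{L}$), and because the excursion-length functional is a.s.\ continuous at $R$ in the relevant sense (one checks that $R^n$ spends negligible time just below its running infimum, and that near-ties disappear in the limit). To upgrade to convergence in $\ell^2$ one shows that the components discovered after the first $\fl{T n^{\alpha/(\alpha+1)}}$ steps carry negligible $\ell^2$-mass, uniformly in $n$, as $T \to \infty$ — i.e.\ $\E{\sum_i \bigl( n^{-\alpha/(\alpha+1)} |M^n_i| \bigr)^2 \, \I{M^n_i \text{ discovered after step } \fl{Tn^{\alpha/(\alpha+1)}}}} \to 0$ as $T \to \infty$ — which can be extracted from a truncated first/second-moment estimate on the configuration model, or by domination by a subcritical branching process once the accumulated drift is sufficiently negative; combined with the walk convergence this gives tightness of $(n^{-\alpha/(\alpha+1)}|M^n_i|)_{i \ge 1}$ in $\ell^2$ and identifies the limit. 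Finally, for part (i) one runs the same argument with $\E{D_1^3} < \infty$: the analogue $\E{D_1(D_1-2)(1-e^{-D_1 u})} \sim \beta u$ yields the parabolic drift $-\frac{\beta}{2\mu^2} t^2$, and the fluctuation part, now having increments of finite variance $\beta/\mu$, converges to $\sqrt{\beta/\mu}\, B$ by Donsker's theorem. To pass from $\mathbf{M}_n(\nu)$ to $\mathbf{G}_n(\nu)$ one uses that the numbers of self-loops and of multiple edges converge jointly with the rescaled component sizes to independent Poisson limits (in particular, independent of $(\zeta_i)_i$), so that conditioning on simplicity leaves the limit unchanged.
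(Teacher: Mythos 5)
Your proposal is correct in outline, but it is essentially a reconstruction of Joseph's original argument \cite{Joseph} (direct drift/fluctuation analysis of the size-biased exploration), whereas this paper deliberately takes a different, "conceptual" route. You identify the limit by hand: the exponential-clock representation of size-biased sampling without replacement gives the drift $-\frac{c\Gamma(2-\alpha)}{\alpha(\alpha-1)\mu^{\alpha}}t^{\alpha}$, and the depletion-tilted jump rate $\frac{c}{\mu}x^{-(\alpha+1)}e^{-xs/\mu}\,ds\,dx$ gives the inhomogeneous process $X$; you then need an independent-increments convergence criterion plus control of the without-replacement dependence, a uniform-in-$n$ $\ell^2$ tail estimate for components discovered after time $Tn^{\alpha/(\alpha+1)}$, and a Janson-type independence argument to pass to $\mathbf{G}_n(\nu)$. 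The paper instead proves an \emph{exact} discrete change of measure (Proposition~\ref{prop:changemeas}) between the size-biased reordering $(\hat{D}^n_i)$ and an i.i.d.\ size-biased sequence $(Z_i)$, shows the Radon--Nikodym derivative converges with uniform integrability (Proposition~\ref{prop:discretechangeofmeas}), and thereby transfers the classical i.i.d.\ functional limit theorems — jointly with Duquesne--Le Gall's height-process convergence, Theorem~\ref{thm:DLGconvergence} — to the tilted walk (Theorem~\ref{thm:jointdfwheight}); after the back-edge control of Lemma~\ref{lem:back}, the $\ell^2$ statement is obtained via Aldous's size-biased point process machinery (Proposition~\ref{prop:pointproc}, with the deterministic Lemma~\ref{lem:Aldous7}), and the extension to the simple graph via Proposition~\ref{prop:backedgesarelate}, which shows anomalous edges are w.h.p.\ only discovered after time $n^{\beta}$ with $\beta>\alpha/(\alpha+1)$. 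What each route buys: yours is self-contained and exhibits the probabilistic mechanism (drift from depletion of high-degree vertices, jumps from hubs), but its two thin points are precisely the ones the paper engineers around — (a) your $\ell^2$ step needs genuine barely-subcritical second-moment estimates (a crude "domination by a subcritical branching process" is delicate here because the size-biased offspring tail is heavy and the depletion must be quantified), whereas the SBPP proposition requires only vague convergence of the point process together with a.s.\ properties of the limit (Proposition~\ref{prop:transient}, Lemma~\ref{lem:Josephprops}); and (b) your asserted asymptotic independence of the loop/multi-edge counts from $(\zeta_i)_{i\ge 1}$ is a nontrivial claim needing proof, while the timing argument of Section~\ref{subsec:backedges} avoids it and also settles the $\mathbf{G}_n(\nu)$ version in the heavy-tailed case. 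Conversely, the measure-change route never has to compute the inhomogeneous drift and jump intensity at all, handles the without-replacement dependence exactly, and yields the joint convergence with the height process that the rest of the paper needs for the metric space scaling limit — something your walk-only argument would not provide without substantial extra work.
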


The sequences $(\zeta_1, \zeta_2, \ldots)$ appearing in Theorem~\ref{thm:JosephRiordan} must, of course, have the same distributions as the lengths of the excursions above 0 of the processes $(R_t, t \ge 0)$ from the Introduction.  Indeed, the processes $\tilde{L}$ defined in (\ref{eqn:RBrownian}) and (\ref{eqn:Rstable}) have the same distributions as those defined at (\ref{eqn:LtildedefBrownian}) and (\ref{eqn:Ltildedefstable}), respectively.  We prove this in Proposition~\ref{prop:stablemeasch} below.

Joseph~\cite{Joseph} conjectures that Theorem~\ref{thm:JosephRiordan} (ii) should also hold with $\mathbf{M}_n(\nu)$ replaced by $\mathbf{G}_n(\nu)$.  We show in the sequel (Section~\ref{subsec:backedges}) that this is indeed true (this has been proved independently by Dhara, van der Hofstad, van Leeuwaarden and Sen~\cite{DharavdHvLSen2}).  In consequence, all of our scaling limit results hold interchangeably for $\mathbf{G}_n(\nu)$ and $\mathbf{M}_n(\nu)$.

The common structure exhibited by the two parts of Theorem~\ref{thm:JosephRiordan} is no coincidence.  In both cases, the proof proceeds via an exploration of the graph similar to the one described earlier.  As outlined above, locally, the components resemble critical branching processes.  Since the components have small surplus, the lengths of the excursions of the stack-size process above 0 approximately encode the component sizes.  Moreover, the stack-size process behaves approximately like a reflected random walk.  A weak convergence result for the stack-size process then yields the convergence of the component sizes.

Riordan~\cite{Riordan}, in fact, proves a more refined version of Theorem~\ref{thm:JosephRiordan} (i), but under the (non-optimal) assumption that the degrees are bounded.  Firstly, his results are stated for a uniform random graph with a given $n$-dependent deterministic degree sequence $(d_i^{(n)})_{i \ge 1}$, where the moment conditions on $D_1$ are replaced by appropriate convergence results for the moments of the degree of a uniformly chosen vertex.  In particular, he is able to consider the components anywhere in the critical window, rather than precisely at $\theta = 1$.  (We refer the reader to \cite{Riordan} for the details.)  Secondly, he takes account also of the surplus of each component.  Jointly with the convergence of the rescaled component sizes, he shows that
\[
(s(G_1^n), s(G_2^n), \ldots) \convdist (M_1, M_2, \ldots)
\]
for a non-trivial random sequence $(M_1, M_2, \ldots) \in \Z_+^{\N}$.  The sequence $(M_1, M_2, \ldots)$ is again obtained using the process $R$ in (\ref{eqn:RBrownian}): on top of the graph of the random function $R$, superpose a Poisson point process of intensity $1/\mu$ in the plane.  Then $M_i$ is the number of points falling in the area beneath the excursion $\widetilde{\varepsilon}_i$ and above the $x$-axis, for $i \ge 1$.

The first result of this kind was proved by Aldous~\cite{AldousCritRG} for the Erd\H{o}s-R\'enyi random graph, $G(n,p)$ at its critical point.  More precisely, consider the graph $\mathbf{G}^{\text{ER}}_n$ obtained by taking $n$ vertices and connecting any pair of them by an edge independently with probability $p=1/n$.  Write $G_1^{\text{ER}, n}, G_2^{\text{ER}, n}, \ldots$ for the  components listed in decreasing order of size.  Define $\widetilde{L}$ and $R$ as in (\ref{eqn:RBrownian}) with $\beta=\mu=1$, let $\zeta_1, \zeta_2, \ldots$ be the lengths of the excursions of $R$ and let $M_1, M_2, \ldots$ be the numbers of points of a Poisson process of intensity 1 falling in each excursion.

\begin{thm}[Aldous~\cite{AldousCritRG}] \label{thm:Aldous}
As $n \to \infty$,
\[
\left( n^{-2/3}(|G_1^{\mathrm{ER}, n}|, |G^{\mathrm{ER}, n}_2|, \ldots), (s(G^{\mathrm{ER}, n}_1), s(G_1^{\mathrm{ER}, n}), \ldots) \right) \convdist ((\zeta_1, \zeta_2, \ldots), (M_1, M_2, \ldots))
\]
where the convergence is in $\ell^2_{\downarrow}$ for the component sizes and in the sense of the product topology for the surpluses.
\end{thm}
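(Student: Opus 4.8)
The plan is to run a depth-first exploration of $\mathbf{G}^{\mathrm{ER}}_n$ directly analogous to the one described above for the configuration model, to show that the associated walk, suitably rescaled, converges to $\widetilde{L}$, and to show that the surplus edges, conditionally on this walk, form a Poisson point process in the limit. Explore the components one at a time; when a vertex $v$ is explored, reveal all not-yet-revealed edges incident to $v$: an edge to an as-yet-undiscovered vertex makes that vertex newly discovered, while an edge to a vertex that is discovered but not yet explored is a \emph{surplus} edge. Let $c_k$ be the number of vertices discovered at step $k$, and set $Z^n_0 = 0$, $Z^n_k = Z^n_{k-1} + c_k - 1$. Then, exactly as in the configuration-model discussion, the ordered component sizes of $\mathbf{G}^{\mathrm{ER}}_n$ are the lengths of the excursions of $Z^n$ above its running minimum (each step using up one vertex, so an excursion of length $\ell$ corresponds to a component with $\ell$ vertices), while the size of the active set (discovered but unexplored vertices) at step $k$ is, up to a bounded convention-dependent correction, $Z^n_k - \min_{j \le k} Z^n_j$.

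Conditionally on the first $k-1$ steps of the exploration, $c_k$ is binomial with roughly $n-k$ trials and success probability $1/n$, so $c_k - 1$ has conditional mean $\approx -k/n$ and conditional variance tending to $1$. Writing $Z^n$ as a martingale plus compensator, applying the martingale functional central limit theorem, and time-changing $k = \lfloor n^{2/3} t \rfloor$ and rescaling space by $n^{-1/3}$ gives
\[
\left( n^{-1/3} Z^n_{\lfloor n^{2/3} t \rfloor} \right)_{t \ge 0} \convdist \left( B_t - \tfrac{1}{2} t^2 \right)_{t \ge 0}
\]
in $\mathbb{D}([0,\infty), \R)$; with $\beta = \mu = 1$ the limit is precisely $\widetilde{L}$ of (\ref{eqn:RBrownian}). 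Since $\widetilde{L}$ a.s.\ has no interval of constancy at the level of its running infimum and its excursion lengths above that infimum are a.s.\ all distinct, the map sending a path to its ordered sequence of such excursion lengths is a.s.\ continuous at $\widetilde{L}$; the continuous mapping theorem then gives $n^{-2/3}(|G_1^{\mathrm{ER},n}|, |G_2^{\mathrm{ER},n}|, \ldots) \convdist (\zeta_1, \zeta_2, \ldots)$ at the level of finite-dimensional distributions. Upgrading to convergence in $\ell^2$ requires a uniform square-summability estimate --- that $\sum_i (n^{-2/3} |G_i^{\mathrm{ER},n}|)^2 \I{|G_i^{\mathrm{ER},n}| \le \epsilon n^{2/3}}$ has expectation tending to $0$ as $\epsilon \to 0$, uniformly in $n$ --- which follows from standard estimates on the number of components of each size in $G(n,1/n)$ (there being $\Theta(n k^{-5/2})$ components of size $k$ for $1 \ll k \ll n^{2/3}$), together with the square-integrability of $(\zeta_i)$.

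For the surpluses, observe that while exploring the vertex at step $k$, each of the $|A_k| := Z^n_k - \min_{j \le k} Z^n_j$ (up to a bounded correction) active vertices is joined to it, independently and with its edge not previously revealed, with probability $1/n$; thus the surplus edges form a thinned point process in exploration-time of local intensity $|A_k|/n \approx n^{-2/3} R_{k/n^{2/3}}$. Over the excursion corresponding to $G_i^{\mathrm{ER},n}$ --- which in rescaled coordinates has length $\zeta_i$ and height profile $\widetilde{\varepsilon}_i$ --- the expected number of surplus edges is $\approx n^{-1}\sum_k |A_k| \to \int_0^{\zeta_i} \widetilde{\varepsilon}_i(u)\, du$, and since each such edge is present with probability $O(n^{-2/3})$ and the relevant events are asymptotically independent, a Poisson approximation shows that, jointly over $i$ and conditionally on the limiting process $R$, the surplus counts converge to independent $\mathrm{Poisson}\!\left(\int_0^{\zeta_i}\widetilde{\varepsilon}_i(u)\,du\right)$ random variables --- equivalently, to the numbers $M_i$ of points of an independent rate-$1$ Poisson process falling beneath the excursions of $R$. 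Combining this with the convergence of the walk (the surplus counts being a continuous functional of the rescaled walk together with the independent auxiliary coin-flips, off a null set) yields the joint convergence claimed.

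The delicate steps, which I expect to be the main obstacle, are the last two: the $\ell^2$ (as opposed to merely finite-dimensional) convergence of the component sizes needs genuine, uniform-in-$n$ control of the contribution of the small components; and the Poisson limit for the surpluses must be established \emph{jointly} with the walk, which requires showing that the surplus-counting map is a.s.\ continuous at the limit --- concretely, that the limiting Poisson points a.s.\ avoid both the countable set of excursion endpoints and the zero set of $R$, so that no point is ambiguously assigned and small perturbations of the walk cannot move points between excursions.
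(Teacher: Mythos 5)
This theorem is not proved in the paper at all: it is quoted from Aldous~\cite{AldousCritRG} as background, so there is no in-paper argument to compare against. Your sketch is essentially Aldous's original strategy (exploration walk, martingale FCLT giving $B_t-\tfrac12 t^2$, excursion lengths, Poisson limit for surplus edges), and most of it is sound in outline; the surplus step, which you present via Poisson approximation and asymptotic independence, is more cleanly done by showing the compensator of the surplus-counting process converges to $\int_0^t R_s\,ds$ and invoking a Cox-process limit theorem, which is exactly how the present paper handles the analogous statement for the configuration model in Theorem~\ref{thm:Coxprocess}; you flag this delicacy yourself, so I would not call it a gap.

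There is, however, one genuinely false step: the claim that, because $\widetilde{L}$ a.s.\ has distinct excursion lengths and no flat piece at its running infimum, the map sending a path to its ordered sequence of excursion lengths above the running minimum is a.s.\ continuous at $\widetilde{L}$. No almost-sure property of the limit can make this map continuous for the Skorokhod topology on $[0,\infty)$: paths agreeing with $\widetilde{L}$ on $[0,T_n]$ with $T_n\to\infty$ but containing an enormous excursion after time $T_n$ converge to $\widetilde{L}$, yet their largest excursion length does not converge. So your finite-dimensional convergence does not follow from the continuous mapping theorem, and the patch you propose (uniform square-summability of components of size at most $\epsilon n^{2/3}$) controls only the $\ell^2$ tail of small components; it does not rule out a macroscopic component being discovered very late in the exploration. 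What is needed is a uniform-in-$n$ statement that large excursions occur early — e.g.\ exploiting that components are discovered in size-biased order — and this is precisely what Aldous's Proposition 15 on size-biased point processes delivers (restated in this paper as Proposition~\ref{prop:pointproc}, whose condition (3) is exactly the "no long excursions late" requirement, and applied in the configuration-model setting via Lemmas~\ref{lem:Aldous7} and~\ref{lem:pointprocexc}). To make your route rigorous you would either have to import that machinery or prove directly that, with probability tending to one uniformly in $n$, every component of size at least $\epsilon n^{2/3}$ is discovered within the first $O_\epsilon(n^{2/3})$ steps; as written, this step is missing and the continuity assertion covering it is incorrect.
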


The limit is the same as in Theorem~\ref{thm:JosephRiordan} (i) in the case $\beta = \mu = 1$. 
This should be intuitively unsurprising, since the vertex degrees in the Erd\H{o}s--R\'enyi random graph approximately behave like i.i.d.\ Poisson(1) random variables, for which Theorem~\ref{thm:JosephRiordan} would apply with $\beta=\mu=1$. 
(Aldous' theorem in fact treats the whole critical window, i.e.\ $G(n,1/n + \lambda n^{-4/3})$ for $\lambda \in \R$.  The effect is to introduce an extra drift of $\lambda t$ into the process $\widetilde{L}$; we omit the very similar details for the sake of brevity.)

\subsection{Branching processes and their metric space scaling limits} \label{sec:branching}
As alluded to above, the components of our critical random graphs behave approximately like critical branching process trees.  It will be useful to spend a little time now exploring what happens in the true branching process setting, since what we do later will be analogous.  Suppose that we take a sequence of i.i.d.\ Galton--Watson trees, with offspring distribution represented by some non-negative random variable $Y$ with $\E{Y}=1$ and $\Prob{Y = 1} < 1$. (This entails that each of the trees has finite size almost surely.)  We use the standard encoding of this forest in terms of its \emph{{\L}ukasiewicz path} or \emph{depth-first walk}, given by $S(0) = 0$ and $S(k) = \sum_{i=1}^k (Y_i - 1)$ for $k \ge 1$ (see Le Gall~\cite{LeGall} or Duquesne and Le Gall~\cite{DuquesneLeGall} for more details).   Here, as usual, we explore the vertices of the forest in depth-first order, and $Y_i$ is the number of children of the $i$th vertex that we visit; these get added to the stack to await processing.  The stack-size process is essentially a reflected version of $S$, given by $(1 + S(k) - \min_{0 \le j \le k} S(j))_{k \ge 0}$.  It is  straightforward to see that the individual trees correspond to excursions above the running minimum of $(S(k))_{k \ge 0}$; it is technically easier to work with the depth-first walk than with the stack-size process, since $S$ it is an \emph{unreflected} random walk.  An even more convenient encoding of the forest is given by the height process, which tracks the generation of the successive vertices listed in depth-first order.  (It is, however, considerably harder to understand its distribution.)  In terms of the depth-first walk, the height process $(G(n))_{n \ge 0}$ is defined by $G(0) = 0$ and
\begin{equation}\label{def:height}
G(n) := \#\left\{j \in \{0,1,\ldots,n-1\}: S(j) = \min_{j \le k \le n} S(k) \right\}.
\end{equation}
The different trees now correspond to excursions above 0 of $G$.

The following generalised functional central limit theorem indicates some of the possible scaling limits for $S$ in this setting (see, for example, Theorem 3.7.2 of Durrett~\cite{Durrett}).

\begin{thm} \label{thm:FCLT}
\begin{enumerate}
\item[(i)] Suppose that $\E{Y} = \sigma^2 < \infty$.  Then
\[
n^{-1/2} (S(\fl{n t}), t \ge 0) \convdist \sigma (B_t, t \ge 0)
\]
as $n \to \infty$, in $\mathbb{D}(\R_+, \R)$, where $B$ is a standard Brownian motion.
\item[(ii)] Suppose that $\lim_{k \to \infty} k^{\alpha+1} \Prob{Y = k} = c$ for some constant $c > 0$ and some $\alpha \in (1,2)$.  Then
\[
n^{-1/\alpha}(S(\fl{n t}), t \ge 0) \convdist (L_t, t \ge 0)
\]
as $n \to \infty$, in $\mathbb{D}(\R_+, \R)$, where $L$ is a spectrally positive $\alpha$-stable L\'evy process, with Laplace transform
\[
\E{\exp(-\lambda L_t)} = \exp \left(\frac{c \Gamma(2-\alpha)}{\alpha(\alpha-1)} \lambda^{\alpha} t \right), \quad \lambda \ge 0, \quad t \ge 0.
\]
\end{enumerate}
\end{thm}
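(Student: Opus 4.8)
The plan is to recognise both parts as standard invariance principles for the random walk $S(k) = \sum_{i=1}^k \xi_i$ with i.i.d.\ increments $\xi_i = Y_i - 1$. Since $\E{Y} = 1$, these increments are centred, $\E{\xi_i} = 0$, and $\xi_i \ge -1$ almost surely, so the left tail is trivial.

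For part (i), writing $\sigma^2$ for the variance of $Y$ (which under the hypothesis of (i) is finite and equals the variance of $\xi_i$), the claim is precisely Donsker's functional central limit theorem: $n^{-1/2}(S(\fl{nt}), t \ge 0) \convdist (\sigma B_t, t \ge 0)$ in $\mathbb{D}(\R_+, \R)$ with the Skorokhod $J_1$ topology. Nothing is required beyond quoting it.

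For part (ii), I would first pass from the pointwise asymptotic $\Prob{Y = k} \sim c k^{-(\alpha+1)}$ to a tail asymptotic for the increments: summing,
\[
\Prob{\xi_1 > x} = \Prob{Y > x+1} = \sum_{k > x+1} \Prob{Y = k} \sim c \int_x^\infty y^{-(\alpha+1)}\, dy = \frac{c}{\alpha}\, x^{-\alpha}
\]
as $x \to \infty$, while $\Prob{\xi_1 < -1} = 0$. Thus $\xi_1$ lies in the domain of attraction of a spectrally positive $\alpha$-stable law, and since the tail is \emph{exactly} regularly varying (with no slowly varying correction) the norming sequence may be taken to be $a_n = n^{1/\alpha}$. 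The generalised functional central limit theorem for partial sums of such i.i.d.\ increments (e.g.\ Theorem~3.7.2 of \cite{Durrett}, or the classical results of Skorokhod) then gives $n^{-1/\alpha}(S(\fl{nt}), t \ge 0) \convdist (L_t, t \ge 0)$ in $\mathbb{D}(\R_+, \R)$ for some spectrally positive $\alpha$-stable L\'evy process $L$; the convergence is in the $J_1$ topology because, at scale $n^{1/\alpha}$, a single atypically large increment $\xi_i$ is carried onto a single jump of $L$ and the remaining increments contribute negligibly and uniformly.

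It remains to pin down the constant in the Laplace exponent of $L$. A spectrally positive $\alpha$-stable process with L\'evy measure $\Pi(dx) = K x^{-\alpha-1}\, dx$ on $(0,\infty)$ satisfies, by the L\'evy--Khintchine formula, $\E{\exp(-\lambda L_t)} = \exp(t\,\psi(\lambda))$ with
\[
\psi(\lambda) = \int_0^\infty \left(e^{-\lambda x} - 1 + \lambda x\right) \frac{K}{x^{\alpha+1}}\, dx = K \lambda^\alpha \int_0^\infty \left(e^{-u} - 1 + u\right) u^{-\alpha-1}\, du = \frac{K\,\Gamma(2-\alpha)}{\alpha(\alpha-1)}\, \lambda^\alpha,
\]
the last integral equalling $\Gamma(2-\alpha)/(\alpha(\alpha-1))$ by two integrations by parts (legitimate for $\alpha \in (1,2)$, the boundary terms vanishing at both ends). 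On the other hand, the L\'evy measure delivered by the domain-of-attraction convergence satisfies $\Pi((x,\infty)) = \lim_{n \to \infty} n\, \Prob{\xi_1 > n^{1/\alpha} x} = \frac{c}{\alpha}\, x^{-\alpha}$, i.e.\ $K = c$; substituting this into the display above reproduces exactly the Laplace transform in the statement.

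There is no genuine obstacle here: both parts are textbook invariance principles. The only steps demanding a little attention are the passage from the density to the tail asymptotic (immediate), the remark that $J_1$ (rather than merely finite-dimensional) convergence holds for the stable limit of an i.i.d.\ sum, and the bookkeeping that matches the normalisation of the limiting L\'evy measure against the constant appearing in the Laplace exponent.
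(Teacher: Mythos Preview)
Your proposal is correct and takes essentially the same approach as the paper: the paper does not prove this theorem at all but simply cites it as a standard result (``see, for example, Theorem 3.7.2 of Durrett~\cite{Durrett}''), which is exactly the reference you invoke. Your additional bookkeeping --- passing from the density asymptotic to the tail, and computing the constant in the Laplace exponent via L\'evy--Khintchine --- is more than the paper provides and is all correct.
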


We now turn to the behaviour of the height process.  In the Brownian case, this turns out to be asymptotically the same as that of the reflected depth-first walk, up to a scaling constant.  In the stable case, however, matters are a little more complicated.  Consider the $\alpha$-stable L\'evy process $L$.  Chapter 1 of Duquesne \& Le Gall~\cite{DuquesneLeGall} shows that it is possible to make sense of a corresponding continuous height process, defined as follows.  First, for $0 \le s \le t$, let $\hat{L}^{(t)}_s = L_t - L_{(t-s)-}$ and let $\hat{M}^{(t)}_s = \sup_{0 \le r \le s} \hat{L}_r^{(t)}$. Then define $H_t$ to be the local time at level 0 of the process $\hat{L}^{(t)} - \hat{M}^{(t)}$.  We may choose the normalization in such a way that
\begin{equation} \label{eqn:heightprocess}
H_t = \lim_{\epsilon \downarrow 0} \frac{1}{\epsilon} \int_0^t \I{\hat{M}^{(t)}_s -\hat{L}^{(t)}_s \le \epsilon} ds 
\end{equation}
in probability. Theorem 1.4.3 of \cite{DuquesneLeGall} shows that $H$ has continuous sample paths with probability 1, and so we may (and will) work with a continuous version in the sequel.  Corollary 2.5.1 of \cite{DuquesneLeGall} entails the following joint convergences.

\begin{thm}[Duquesne \& Le Gall~\cite{DuquesneLeGall}] \label{thm:DLGconvergence}
\begin{enumerate}
\item[(i)] Suppose that $\E{Y} = \sigma^2 < \infty$.  Then
\[
(n^{-1/2} S(\fl{n t}), n^{-1/2} G(\fl{n t}), t \ge 0) \convdist  \left(\sigma B_t, \frac{2}{\sigma} \left(B_t - \inf_{0 \le s \le t}B_s\right),  t \ge 0 \right).
\]
as $n\to \infty$ in $\mathbb{D}(\R_+,\R^2)$.
\item[(ii)] Suppose that $\lim_{k \to \infty} k^{\alpha+1} \Prob{Y = k} = c$ for some constant $c > 0$ and some $\alpha \in (1,2)$.  Then we have
\[
\left(n^{-1/\alpha} S(\fl{n t}), n^{-(\alpha - 1)/\alpha} G(\fl{n t}), t \ge 0 \right) \convdist (L_t, H_t, t \ge 0)
\]
as $n\to \infty$ in $\mathbb{D}(\R_+,\R^2)$.
\end{enumerate}
\end{thm}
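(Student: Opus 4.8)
The plan is to feed in the convergence of the rescaled {\L}ukasiewicz path from Theorem~\ref{thm:FCLT} as a black box — passing, via Skorokhod's representation theorem, to a probability space on which it holds almost surely — and then to \emph{promote} it to the joint convergence with the height process by exhibiting $G$ as an explicit functional of the time-reversed walk. The starting point is the exact discrete identity behind (\ref{def:height}): fix $n$, reverse time by setting $\hat{S}^{(n)}_j = S(n) - S(n-j)$ for $0 \le j \le n$, and observe (by the substitution $i = n-j$) that $\{S(j) = \min_{j \le k \le n} S(k)\}$ is exactly the event that $i = n-j$ is a weak ascending ladder index of $\hat{S}^{(n)}$, i.e.\ $\hat{S}^{(n)}_i = \hat{M}^{(n)}_i := \max_{0 \le l \le i}\hat{S}^{(n)}_l$. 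Hence $G(n)$ is the number of weak records of $\hat{S}^{(n)}$ up to time $n$, which is a discrete local time at the running maximum of the reflected walk $\hat{M}^{(n)} - \hat{S}^{(n)}$. Since the increments of $S$ are i.i.d., $\hat{S}^{(n)}$ has the same law as $(S(j))_{0 \le j \le n}$; after a time-reversal on $[0,t]$ its rescaling converges jointly with $S$ to the time-reversed Brownian motion / stable process, which in the stable case is precisely the process $\hat{L}^{(t)}$ used to define $H_t$ at (\ref{eqn:heightprocess}).

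In the Brownian case this essentially finishes the argument: the reversed reflected walk converges to a reflected Brownian motion, the number of its ladder epochs up to time $\fl{nt}$ (a mean-zero, variance-$\var{Y}$ walk) is of order $\sqrt{n}$ and converges, after rescaling by $n^{-1/2}$, to the local time at the maximum of $\sigma \hat{B}$; by L\'evy's theorem this local time equals $\frac{2}{\sigma}\bigl(B_t - \inf_{0 \le s \le t} B_s\bigr)$ (the constant $2/\sigma$ coming from the standard normalisation of Brownian local time and the weak-record count). The only extra care needed is that the reversal point depends on $t$, which is handled by a uniform modulus-of-continuity bound showing that the increments of $G$ over a block of $\epsilon n$ steps are $O_{\bP}(\sqrt{\epsilon n})$, uniformly in $n$; this both transfers the convergence to a process-level statement and gives tightness in $\mathbb{D}(\R_+, \R^2)$ with a continuous limit.

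The stable case is the substantive part. Now $G(\fl{nt})$ scales like $n^{(\alpha-1)/\alpha}$ rather than $n^{1/2}$, because the reversed walk makes macroscopic upward jumps and its record structure is governed by the way $\hat{L}^{(t)}$ approaches $\hat{M}^{(t)}$ — precisely the occupation-density functional of (\ref{eqn:heightprocess}). I would: (i) approximate $n^{-(\alpha-1)/\alpha}G(\fl{nt})$ by the rescaled occupation functional $\frac{1}{\epsilon n^{1/\alpha}}\#\bigl\{j < \fl{nt}: \hat{M}^{(\fl{nt})}_j - \hat{S}^{(\fl{nt})}_j \le \epsilon n^{1/\alpha}\bigr\}$ — the exact discrete analogue of (\ref{eqn:heightprocess}) — bounding the error uniformly in $n$ for small $\epsilon$; (ii) pass to the limit in this functional using the almost-sure convergence of the reversed rescaled walks together with convergence of the associated occupation measures in a neighbourhood of the running supremum, which requires a tightness estimate for these pre-local-times, obtained from bounds on the Green function of the $\alpha$-stable process reflected at its supremum (equivalently, a uniform-in-$n$ moment bound on excursion counts of the reflected walk below a threshold); (iii) let $\epsilon \downarrow 0$ and identify the limit with $H_t$ via (\ref{eqn:heightprocess}). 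Finally, tightness in $\mathbb{D}(\R_+, \R^2)$ — in fact convergence with a continuous limit, consistent with the continuity of $H$ from Theorem 1.4.3 of \cite{DuquesneLeGall} — reduces to a modulus-of-continuity bound for the discrete height process, which I would get from moment estimates of the form $\E{\bigl(G(n+m) - \inf_{n \le j \le n+m} G(j)\bigr)^p} \le C\, m^{p(\alpha-1)/\alpha}$, proved by comparing the relevant increment of $G$ to the height process of the subforest explored between steps $n$ and $n+m$. The main obstacle is step (ii): establishing the joint convergence of the reversed walk together with the local time at its running supremum, with enough uniformity to send $\epsilon \to 0$. This is exactly where \cite{DuquesneLeGall} do the hard work (their Corollary~2.5.1 is the statement being quoted), and in a self-contained treatment it is by far the most delicate point; in the present paper we simply invoke their result.
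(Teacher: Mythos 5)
This statement is not proved in the paper at all: it is quoted directly from Duquesne and Le Gall (their Corollary 2.5.1), which is exactly what you do at the end of your proposal, so your treatment matches the paper's. Your sketch of what lies behind the citation — the time-reversal identity exhibiting $G(n)$ as the number of weak records of the reversed walk, and the approximation of the height process by the occupation functional in (\ref{eqn:heightprocess}) — is a fair account of the Duquesne--Le Gall strategy, but since the paper simply invokes their result, there is nothing further to compare.
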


There is also a conditional version of Theorem~\ref{thm:DLGconvergence} for the depth-first walk $S^n$ and height process $G^n$ of a \emph{single} Galton--Watson tree, conditioned to have total progeny $n$.  (Let us assume that $\Prob{Y=k} > 0$ for all $k \ge 0$, so that the event of having total progeny $n$ has positive probability for all $n$; this is not really necessary, but will facilitate the statement of the theorem.)

\begin{thm} \label{thm:singletree}
\begin{enumerate}
\item[(i)] \emph{(Marckert \& Mokkadem~\cite{MarckertMokkadem}).} Suppose that $\E{Y} = \sigma^2 < \infty$.  Then
\[
(n^{-1/2} S^n(\fl{n t}), n^{-1/2} G^n(\fl{n t}), 0 \le t \le 1) \convdist  \left(\sigma \mathbbm{e}(t), \frac{2}{\sigma} \mathbbm{e}(t),  0 \le t \le 1\right),
\]
as $n\to \infty$ in $\mathbb{D}([0,1],\R^2)$, where $\mathbbm{e}$ is a standard Brownian excursion.
\item[(ii)] \emph{(Duquesne~\cite{Duquesne}).} Suppose that $\lim_{k \to \infty} k^{\alpha+1} \Prob{Y = k} = c$ for some constant $c > 0$ and some $\alpha \in (1,2)$.  Then we have
\[
\left(n^{-1/\alpha} S^n(\fl{n t}), n^{-(\alpha - 1)/\alpha} G^n(\fl{n t}), t \ge 0 \right) \convdist (\mathbbm{e}(t), \mathbbm{h}(t), 0 \le t \le 1)
\]
as $n\to \infty$ in $\mathbb{D}([0,1],\R^2)$, where $\mathbbm{e}$ is a normalised excursion of $L$ and $\mathbbm{h}$ is the corresponding normalised excursion of $H$.
\end{enumerate}
\end{thm}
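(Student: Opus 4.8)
\emph{Proof proposal.}
Both statements are classical; I sketch the strategy one uses to prove them. Let $T_{-1} = \inf\{k \ge 1 : S(k) = -1\}$. Since each increment $Y_i - 1$ of $S$ is at least $-1$ (the walk is skip-free to the left), Kemperman's cycle lemma gives $\Prob{T_{-1} = n} = \frac{1}{n}\Prob{S(n) = -1}$; under the standing assumption that $\Prob{Y=k}>0$ for all $k$ this is positive, $\{T_{-1}=n\}$ is precisely the event that the first explored Galton--Watson tree has total progeny $n$, and on that event $(S^n(k), G^n(k))_{0\le k\le n}$ coincides with $(S(k), G(k))_{0\le k\le n}$. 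The plan in each part is to transfer the \emph{unconditioned} convergence of Theorem~\ref{thm:DLGconvergence} to this conditioned setting.

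\emph{Part (i).} A local central limit theorem gives $\sqrt n\,\Prob{S(n) = -1} \to (2\pi\sigma^2)^{-1/2}$, so $\Prob{T_{-1}=n}$ is of order $n^{-3/2}$, and the random-walk bridge $(S(k))_{0\le k\le n}$ conditioned on $\{S(n) = -1\}$ satisfies $n^{-1/2}S_{\mathrm{br}}(\fl{n\cdot}) \convdist \sigma b$ for a standard Brownian bridge $b$. A size-$n$ Galton--Watson tree is obtained from such a bridge by the Vervaat transform (cyclic re-rooting of the path at its running infimum, as encoded by the cycle lemma), and this transform is a.s.\ continuous at $b$; applying it yields $n^{-1/2}S^n(\fl{n\cdot}) \convdist \sigma\mathbbm{e}$. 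The joint convergence of the rescaled height function to $\tfrac{2}{\sigma}\mathbbm{e}$ is then obtained by analysing $G^n$ directly through its relation --- via a time-reversal --- to the ascending ladder structure of the walk, so that $n^{-1/2}G^n(\fl{n\cdot})$ has the same scaling limit as $\tfrac{2}{\sigma^2}$ times the walk reflected at its running infimum; this is the content of \cite{MarckertMokkadem}.

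\emph{Part (ii).} In the stable regime the height process is a genuinely non-trivial, non-continuous functional of the walk --- the backwards local time of \eqref{eqn:heightprocess} --- so its convergence does not follow mechanically from that of the walk, and the argument is more delicate. The route is: (a) fix $\delta\in(0,1)$; using Kemperman and the stable local limit theorem ($\Prob{T_{-1}=n}$ of order $n^{-1-1/\alpha}$), show that the law of $(S(\fl{nt}))_{0\le t\le 1-\delta}$ --- and hence of $(G(\fl{nt}))_{0\le t\le 1-\delta}$, which it determines --- under $\Prob{\,\cdot \mid T_{-1}=n}$ is absolutely continuous with respect to its unconditioned law, with density converging to the bridge density built from the transition kernel of the $\alpha$-stable process $L$; (b) the analogous statement for the last $\fl{\delta n}$ steps follows by time-reversal; (c) combined with the unconditioned joint convergence of Theorem~\ref{thm:DLGconvergence}(ii), this gives convergence of $\big(n^{-1/\alpha}S^n(\fl{n\cdot}),\, n^{-(\alpha-1)/\alpha}G^n(\fl{n\cdot})\big)$ on $[0,1-\delta]$ and on $[\delta,1]$ to the corresponding restrictions of the normalised excursion $(\mathbbm{e},\mathbbm{h})$; (d) a tightness estimate for the discrete height function near the two endpoints upgrades this to convergence on all of $[0,1]$, and letting $\delta\downarrow 0$ identifies the limit. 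The identification of the limiting pair as precisely $(\mathbbm{e},\mathbbm{h})$ rests on the excursion theory for the height process of \cite{DuquesneLeGall}, in particular on the invariance of its It\^o excursion measure under time-reversal.

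\emph{Main obstacle.} In both parts the substance is in the height process, and in the stable case it is concentrated in step (d): one must show that $\sup_{0\le t\le\delta}n^{-(\alpha-1)/\alpha}G^n(\fl{nt})$ and $\sup_{1-\delta\le t\le 1}n^{-(\alpha-1)/\alpha}G^n(\fl{nt})$ can be made uniformly small in $n$ by taking $\delta$ small --- i.e.\ that no mass of the height profile escapes to the endpoints of the excursion. This requires quantitative control of the increments (equivalently, the modulus of continuity) of the discrete height function, obtained through moment bounds on the numbers of ancestors of vertices revisited during the depth-first exploration, and is exactly the technical heart of \cite{MarckertMokkadem, Duquesne}, whose conclusions we quote.
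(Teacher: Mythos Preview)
The paper does not give a proof of this theorem at all: it is quoted as a known result, with part~(i) attributed to Marckert and Mokkadem and part~(ii) to Duquesne, and is used as a black box in Section~\ref{sec:singlecompo}. So there is no ``paper's own proof'' to compare against.

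Your sketch is a reasonable summary of the strategies in the cited papers, with one caveat. In part~(i), Marckert and Mokkadem in fact work under an exponential-moment hypothesis on the offspring distribution, not merely finite variance; the extension to finite variance (as the theorem is stated here) is covered by Duquesne's paper, which treats the full domain-of-attraction setting and recovers the Brownian case as $\alpha=2$. Your outline for part~(ii) --- absolute continuity on $[0,1-\delta]$ via the cycle lemma and the stable local limit theorem, time-reversal for the right endpoint, and a tightness/modulus-of-continuity estimate for the discrete height process near the endpoints --- is an accurate high-level description of Duquesne's argument, and your identification of step~(d) as the technical crux is correct.
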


We now describe briefly how a limiting height process excursion may be used to define a limit $\R$-tree (and the reader to the survey paper of Le Gall~\cite{LeGall} for more details).  Suppose first that $h: [0,\zeta_h] \to \R_+$ is any continuous function such that $h(0) = h(\zeta_h) = 0$.  Define a pseudo-metric on $[0,1]$ via
\[
d_h(x,y) = h(x) + h(y) - 2 \min_{x \wedge y \le z \le x \vee y} h(z), \quad x,y \in [0,\zeta_h].
\]
Define an equivalence relation $\sim$ on $[0,\zeta_h]$ by declaring $x \sim y$ if $d_h(x,y) = 0$.  Now let $\mathcal{T}_h = [0,\zeta_h]/{\sim}$ and endow it with the distance $d_h$ in order to obtain a metric space. This metric space is compact: one can easily check that it is a Hausdorff space, and that it is sequentially compact. Then $(\mathcal{T}_h, d_h)$ is the $\R$-tree encoded by $h$. Write $p_h: [0,\zeta_h] \to \mathcal{T}_h$ for the canonical projection.  We may additionally endow $(\mathcal{T}_h, d_h)$ with a natural ``uniform'' measure $\mu_h$ having total mass $\zeta_h$, obtained as the push-forward of the Lebesgue measure on $[0,\zeta_h]$ onto the tree.  Write $\mathbb{M}$ for the space of compact metric spaces each endowed with a finite (non-negative) Borel measure, up to measure-preserving isometry. We equip $\mathbb{M}$ with the Gromov--Hausdorff--Prokhorov distance $\mathrm{d_{GHP}}$, defined as follows.  (See Section 2.1 of \cite{ABBrGoMi} for more details and proofs of the results claimed below, as well as further references to the literature.) Let $(X,d,\mu)$ and $(X',d',\mu')$ be elements of $\mathbb{M}$.  We say that $C$ is a \emph{correspondence} between $X$ and $X'$ if $C \subset X \times X'$ and, whenever $x \in X$, there exists $x' \in X'$ such that $(x,x') \in C$ and vice versa.  The \emph{distortion} of the correspondence $C$ is
\[
\mathrm{dist}(C) := \sup\{|d(x_1, x_2) - d'(x_1', x_2')|: (x_1, x_1'), (x_2, x_2') \in C\}.
\]
Write $\mathcal{C}(X,X')$ for the set of correspondences between $X$ and $X'$.  Write $M(X,X')$ for the set of non-negative Borel measures on $X \times X'$.  Write $p$ and $p'$ for the canonical projections from $X \times X'$ to $X$ and $X'$ respectively.  We define the \emph{discrepancy} of $\pi \in M(X,X')$ with respect to $\mu$ and $\mu'$ to be
\[
\mathrm{disc}(\pi;\mu, \mu') = \|\mu - p_* \pi\| + \|\mu' - p'_*\pi \|,
\]
where $\|\nu\|$ is the total variation of the signed measure $\nu$. We define the Gromov--Hausdorff--Prokhorov distance by
\[
\mathrm{d_{GHP}}((X,d,\mu),(X',d',\mu')) :=  \inf_{C \in \mathcal{C}(X,X'), \ \pi \in M(X,X')} \left\{ \frac{1}{2}\mathrm{dist}(C) \vee \mathrm{disc}(\pi; \mu,\mu') \vee \pi(C^c) \right\}.
\]
Then $(\mathbb{M}, \mathrm{d_{GHP}})$ is a Polish space.  We observe a very useful upper bound for the Gromov--Hausdorff--Prokhorov distance between $\R$-trees encoded by continuous excursions:
\begin{align}
& \mathrm{d_{GHP}}((\mathcal{T}_h, d_h, \mu_h), (\mathcal{T}_g, d_g, \mu_g)) \notag \\
& \quad \le 2 \max \left\{ \sup_{0 \le x \le \zeta_h \wedge \zeta_g} |h(x) - g(x)|,  \sup_{\zeta_h\wedge \zeta_g < x \le \zeta_h} h(x) + \sup_{\zeta_h\wedge \zeta_g < x \le \zeta_g} g(x)  , \frac{1}{2} |\zeta_h-\zeta_g|\right\},  \label{eqn:GHbound}
\end{align}
The \emph{random} $\R$-trees encoded by $2 \mathbbm{e}$ for $\alpha = 2$ and $\mathbbm{h}$ for $\alpha \in (1,2)$ are known as the \emph{Brownian continuum random tree}, for which we will write $(\mathcal{T}^{(2)}, d^{(2)})$ (with mass measure $\mu^{(2)}$), and the \emph{$\alpha$-stable tree}, for which we will write $(\mathcal{T}^{(\alpha)}, d^{(\alpha)})$ (with mass measure $\mu^{(\alpha)}$), respectively.  (Note that because of our choice of Laplace exponent, this is a constant multiple of the usual $\alpha$-stable tree.) We note the important point that, for any $\alpha \in (1,2]$, the mass measure $\mu^{(\alpha)}$ is concentrated on the leaves of $\mathcal{T}^{(\alpha)}$.

Let $T_n$ be our Galton--Watson tree conditioned to have size $n$.  The natural way to take a scaling limit of the tree itself is to consider it as a metric space using the graph distance $d_n$.  Create a (probability) measure $\mu_n$ by assigning mass $1/n$ to each vertex of $T_n$.  An important consequence of Theorem~\ref{thm:singletree} and the bound (\ref{eqn:GHbound}) is the following.

\begin{thm}
\begin{enumerate}
\item[(i)] Suppose that $\E{Y} = \sigma^2 < \infty$.  Then
\[
\left(T_n, \frac{\sigma}{\sqrt{n}} d_n, \mu_n \right) \convdist \left(\mathcal{T}^{(2)}, d^{(2)}, \mu^{(2)} \right).
\]
as $n \to \infty$ for the Gromov--Hausdorff--Prokhorov topology.
\item[(ii)] Suppose that $\lim_{k \to \infty} k^{\alpha+1} \Prob{Y = k} = c$ for some constant $c > 0$ and some $\alpha \in (1,2)$.  Then
\[
\left(T_n, n^{-(\alpha-1)/\alpha} d_n, \mu_n \right)  \convdist \left( \mathcal{T}^{(\alpha)}, d^{(\alpha)}, \mu^{(\alpha)} \right),
\]
as $n \to \infty$ for the Gromov--Hausdorff--Prokhorov topology.
\end{enumerate}
\end{thm}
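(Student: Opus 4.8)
The plan is to derive the statement directly from the convergence of the discrete height process in Theorem~\ref{thm:singletree}, together with the elementary Gromov--Hausdorff--Prokhorov bound~(\ref{eqn:GHbound}), by realising $T_n$ --- equipped with its graph metric and uniform measure --- as the (random) $\R$-tree encoded by a continuous interpolation of its height function. Both cases can be treated in parallel: put $a_n = \sigma n^{-1/2}$, $\mathbbm{g} = 2\mathbbm{e}$ when $\alpha = 2$, and $a_n = n^{-(\alpha-1)/\alpha}$, $\mathbbm{g} = \mathbbm{h}$ when $\alpha \in (1,2)$, so that in either case Theorem~\ref{thm:singletree} gives $a_n G^n(\fl{nt}) \convdist \mathbbm{g}(t)$, $0 \le t \le 1$, in $\mathbb{D}([0,1],\R)$, where $\mathbbm{g}$ is a continuous excursion of length $1$ and $(\mathcal{T}_{\mathbbm{g}}, d_{\mathbbm{g}}, \mu_{\mathbbm{g}})$ is the claimed limit.

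First I would set up the encoding. List the vertices of $T_n$ as $v_0, v_1, \ldots, v_{n-1}$ in depth-first order, so that $G^n(i)$ is the generation of $v_i$; then the classical identity for the height process reads
\[
d_n(v_i, v_j) = G^n(i) + G^n(j) - 2 \min_{i \le k \le j} G^n(k), \qquad 0 \le i \le j \le n-1,
\]
because $\min_{i \le k \le j} G^n(k)$ is the generation of the most recent common ancestor of $v_i$ and $v_j$. Let $\bar{G}^n : [0,1] \to \R_+$ be the piecewise-linear interpolation of the values $G^n(i)$ at the points $i/n$, using the convention $G^n(n) := 0$; this is a continuous excursion of length $1$, and since the minimum of a piecewise-linear function over an interval with lattice endpoints is attained at a lattice point, we have $d_{\bar{G}^n}(i/n, j/n) = d_n(v_i, v_j)$ \emph{exactly}. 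Now consider the correspondence $C^n = \{(v_{\fl{nx}}, p_{\bar{G}^n}(x)) : x \in [0,1]\}$ between $T_n$ and $\mathcal{T}_{\bar{G}^n}$, and the coupling $\pi^n$ obtained by pushing Lebesgue measure on $[0,1]$ forward under $x \mapsto (v_{\fl{nx}}, p_{\bar{G}^n}(x))$: then $\pi^n$ is carried by $C^n$, its first marginal is $\mu_n$, and its second marginal is the mass measure $\mu_{\bar{G}^n}$, so the discrepancy term and the $\pi^n((C^n)^c)$ term both vanish in the definition of $\mathrm{d_{GHP}}$. Using the triangle inequality for the pseudo-metric $d_{\bar{G}^n}$, the displayed identity, and the monotonicity of $\bar{G}^n$ on each lattice interval, one finds that the distortion of $C^n$ for the metrics $a_n d_n$ and $a_n d_{\bar{G}^n}$ is at most $2 a_n \Delta_n$, where $\Delta_n := \max_{0 \le k \le n-1} |G^n(k+1) - G^n(k)|$. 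Since $(\mathcal{T}_{\bar{G}^n}, a_n d_{\bar{G}^n}, \mu_{\bar{G}^n})$ is precisely the $\R$-tree encoded by $a_n \bar{G}^n$, this gives
\[
\mathrm{d_{GHP}}\bigl((T_n, a_n d_n, \mu_n),\ (\mathcal{T}_{a_n \bar{G}^n}, d_{a_n\bar{G}^n}, \mu_{a_n\bar{G}^n})\bigr) \le a_n \Delta_n .
\]

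To finish, I would pass to the limit by means of the Skorokhod representation theorem. Since $\mathbbm{g}$ is continuous, convergence in $\mathbb{D}([0,1],\R)$ to it implies uniform convergence, and uniform convergence to a continuous limit forces the maximal jump to tend to $0$; hence $a_n \Delta_n \convprob 0$, and since $\sup_{x \in [0,1]} |a_n \bar{G}^n(x) - a_n G^n(\fl{nx})| \le a_n \Delta_n$ we also get $a_n \bar{G}^n \convdist \mathbbm{g}$ uniformly on $[0,1]$. Working on a probability space on which all these convergences hold almost surely, the right-hand side of the last display tends to $0$ a.s., while the bound~(\ref{eqn:GHbound}) --- with its length and boundary terms vanishing because both excursions have length $1$ --- gives
\[
\mathrm{d_{GHP}}\bigl((\mathcal{T}_{a_n\bar{G}^n}, d_{a_n\bar{G}^n}, \mu_{a_n\bar{G}^n}),\ (\mathcal{T}_{\mathbbm{g}}, d_{\mathbbm{g}}, \mu_{\mathbbm{g}})\bigr) \le 2 \sup_{x \in [0,1]} |a_n \bar{G}^n(x) - \mathbbm{g}(x)| \to 0 \quad \text{a.s.}
\]
Combining the two displays via the triangle inequality for $\mathrm{d_{GHP}}$, $(T_n, a_n d_n, \mu_n) \to (\mathcal{T}_{\mathbbm{g}}, d_{\mathbbm{g}}, \mu_{\mathbbm{g}})$ almost surely under this coupling, hence in distribution; and $(\mathcal{T}_{\mathbbm{g}}, d_{\mathbbm{g}}, \mu_{\mathbbm{g}})$ is $(\mathcal{T}^{(2)}, d^{(2)}, \mu^{(2)})$ when $\alpha = 2$ and $(\mathcal{T}^{(\alpha)}, d^{(\alpha)}, \mu^{(\alpha)})$ when $\alpha \in (1,2)$, by definition. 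This proves (i) and (ii) respectively.

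Granting Theorem~\ref{thm:singletree}, the argument is essentially bookkeeping; the one step that really needs care is the control of $\Delta_n$. A priori the discrete height process can have \emph{large downward jumps}, which would spoil the comparison between $T_n$ and $\mathcal{T}_{\bar{G}^n}$; the resolution --- and the crux of the proof --- is that convergence of the rescaled height process to a \emph{continuous} limit already forces all of its rescaled increments to be uniformly $o(1)$. This is also the reason to encode with the height process rather than, say, the contour function: only then is the push-forward of Lebesgue measure on $[0,1]$ equal to the uniform vertex measure $\mu_n$, so that no rescaling of the measure is needed.
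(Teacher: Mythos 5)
Your overall route is exactly the one the paper has in mind: the theorem is stated there without proof as a consequence of Theorem~\ref{thm:singletree} and the bound (\ref{eqn:GHbound}), and your correspondence/coupling scheme, with the distortion controlled by the maximal increment $\Delta_n$ of the height process and the limit handled via Skorokhod representation, is the standard way to flesh that deduction out. The measure part is also handled correctly: pushing Lebesgue measure forward along $x \mapsto (v_{\fl{nx}}, p_{\bar G^n}(x))$ does give exact marginals $\mu_n$ and $\mu_{\bar G^n}$, so discrepancy and $\pi^n((C^n)^c)$ vanish.

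One step, however, is false as written: the height process does \emph{not} satisfy the exact identity $d_n(v_i,v_j)=G^n(i)+G^n(j)-2\min_{i\le k\le j}G^n(k)$, because $\min_{i\le k\le j}G^n(k)$ is in general not the generation of the most recent common ancestor. Take a root with two children $a,b$, where $a$ has a single child $c$: in depth-first order the heights are $0,1,2,1$, the MRCA of $a$ and $b$ is the root at generation $0$, yet $\min_{1\le k\le 3}G^n(k)=1$, so your formula would give $d_n(a,b)=0$ instead of $2$. (The exact identity is a property of the \emph{contour} function, not of the height process.) What is true is that every vertex visited between $v_i$ and $v_j$ is a descendant of their MRCA while the child of the MRCA on the path to $v_j$ is among those vertices, so $\min_{i\le k\le j}G^n(k)$ exceeds the MRCA's generation by at most $1$, whence $\bigl|d_n(v_i,v_j)-\bigl(G^n(i)+G^n(j)-2\min_{i\le k\le j}G^n(k)\bigr)\bigr|\le 2$ uniformly in $i,j$. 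Consequently your claims that $d_{\bar G^n}(i/n,j/n)=d_n(v_i,v_j)$ exactly and that the distortion is at most $2a_n\Delta_n$ must be replaced by a bound of the form $2a_n\Delta_n+2a_n$; since $a_n\to 0$ this still vanishes, and with that correction the remainder of your argument goes through unchanged.
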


Returning now to the setting of Theorem~\ref{thm:DLGconvergence}, the excursions of the limiting height process can heuristically be thought of as defining a forest of random $\R$-trees.  (Since there is neither a shortest nor a longest excursion, there is no sensible way to list these trees.  For definiteness, let us instead think of restricting to an interval $[0,t]$ in time, for which there \emph{is} a longest excursion, and then list the trees in decreasing order of size.)  Using the scaling properties of the underlying L\'evy processes, these consist of randomly rescaled copies of the Brownian continuum random tree in case (i) or $\alpha$-stable trees in case (ii), respectively.  We refer to these as the \emph{Brownian} and \emph{stable forests}.

\subsection{Our method}
Our approach to proving Theorem~\ref{thm:main} is as follows.  Firstly, we show that the law of the depth-first walk of the graph is (up to a small error) absolutely continuous with respect to that of a centred random walk which is in the domain of attraction of the spectrally positive $\alpha$-stable L\'evy process $L$.  This enables us to give an alternative (and perhaps more ``conceptual'') proof of Theorem~\ref{thm:JosephRiordan}.  We also show that the convergence of the depth-first walk can be boosted to a joint convergence with the corresponding height process.  The joint convergence of this pair of coding functions in the setting of a sequence of i.i.d.\ Galton--Watson trees, Theorem~\ref{thm:DLGconvergence}, is a highly non-trivial result.  The corresponding result in our setting, however, follows relatively straightforwardly from Theorem~\ref{thm:DLGconvergence} via absolute continuity and some integrability lemmas.

The height process is the key ingredient in proving a metric space convergence for these graphs, and allows us to show the convergence of a spanning forest of our graph.  In order to obtain the full metric space convergence, we must also control the edges which form cycles.  We call these \emph{back-edges}.  We prove that the number of back-edges edges in the ``large components'' is a tight quantity.  This firstly allows us to resolve Conjecture 8.5 of Joseph~\cite{Joseph}, by showing that all of the above results extend to the case where the multigraph is conditioned to be simple.  Secondly, we are able to capture the full graph structure by tracking also the locations of these back-edges in the spanning forest.  We finally show that all of these quantities can be passed through to the limit in such a way that we get convergence to the stable graph.

\subsection{Related work on scaling limits of critical random graphs, universality, and open problems}

This paper is a contribution to a now extensive literature on scaling limits of critical random graphs. In this section, we will place our work in context by giving a summary of related results.

As mentioned above, the first critical random graph to be studied from the perspective of scaling limits was the Erd\H{o}s-R\'enyi random graph, in the work of Aldous~\cite{AldousCritRG}, who considered both component sizes and surpluses.  Addario-Berry, Broutin and Goldschmidt~\cite{ABBrGo1, ABBrGo2} built on Aldous' work in order to prove convergence to the $\beta=\mu=1$ Brownian graph, in the sense of an $\ell^4$ version of the Gromov--Hausdorff distance.  (It is straightforward to improve this to a convergence in an $\ell^4$ version of the Gromov--Hausdorff--Prokhorov distance, which appears as Theorem 4.1 of Addario-Berry, Broutin, Goldschmidt \& Miermont~\cite{ABBrGoMi}.)

Several models have been proved to lie in the same universality class as the Erd\H{o}s--R\'enyi random graph, which is roughly characterised by the property that the degree of a uniformly chosen vertex converges to a limit with finite third moment.  Already in \cite{AldousCritRG}, Aldous had, in fact, also considered another model: a rank-one inhomogeneous random graph in which, for each $n \ge 1$, we are given a sequence of weights $\mathbf{w}^{(n)} = (w^{(n)}_1, w^{(n)}_2, \ldots, w^{(n)}_n)$ and each pair of vertices $\{i,j\}$ is connected independently with probability $1 - \exp(-q^{(n)} w^{(n)}_i w^{(n)}_j)$, for $1\le i,j \le n$.  Such graphs may be constructed dynamically by assigning an exponential clock to each potential edge and including the edge when the clock rings.  It is straightforward to see that, in consequence, the component sizes then evolve according to the multiplicative coalescent. In his Proposition 4, Aldous gave conditions on sequences $(\mathbf{w}^{(n)}, q^{(n)})_{n \ge 1}$ for which one gets convergence of the rescaled component \emph{weights} to the same limit as for the component sizes in the Erd\H{o}s--R\'enyi case. These results were generalised by Bhamidi, van der Hofstad and van Leeuwaarden~\cite{BhamidivdHvLScalingLimitsCriticalInhomogeneousFinite3rdMoment} (following on from work of van der Hofstad~\cite{vdHCriticalInhomogeneous}) to give convergence of the rescaled component \emph{sizes} in the Norros--Reittu model~\cite{NorrosReittu} (for which $q^{(n)}$ above is replaced by $1/\sum_{i=1}^n w^{(n)}_i$) to the sequence $(\zeta_1, \zeta_2, \ldots)$ appearing in Theorem~\ref{thm:JosephRiordan} (i), with a general $\beta$ and $\mu$.  The convergence of the component sizes was also treated in a similar setting but with i.i.d.\ vertex weights by Turova~\cite{Turova}.  

Nachmias and Peres~\cite{NachmiasPeres} proved the convergence of the rescaled component sizes for critical percolation on a random $d$-regular graph, for $d \ge 3$, to the excursion lengths of the reflected Brownian motion with parabolic drift for appropriate $\beta$ and $\mu$. As we have already detailed above, Riordan~\cite{Riordan} and Joseph~\cite{Joseph} proved analogous results for the critical configuration model with asymptotic degree distribution possessing finite third moment, with Riordan treating the surpluses as well as the component sizes.  Dhara, van der Hofstad, van Leeuwaarden and Sen~\cite{DharavdHvLSen1} improved these results to give the scaling limit of the sizes and the surpluses under a minimal set of conditions on the (deterministic) vertex degrees, which essentially amount to the convergence in distribution of the degree of a uniform vertex, along with the convergence of its third moment.  In a somewhat different direction, Bhamidi, Budhiraja and Wang~\cite{BhamidiEtAlAugmentedMultCoal} considered critical random graphs generated by Achlioptas processes \cite{BohmanFrieze} with bounded size rules.  They again proved convergence of the rescaled component sizes, along with the surpluses, as a process evolving in the critical window, building on results for the barely subcritical regime proved in \cite{BhamidiEtAlBoundedSize}.   Federico~\cite{Federico} has recently proved a scaling limit for the component sizes of the \emph{random intersection graph} which is related to that of the Erd\H{o}s--R\'enyi model.

Turning now to the metric structure, very general results concerning the domain of attraction of the Brownian graph have been proved by Bhamidi, Broutin, Sen and Wang~\cite{BhamidiBroutinSenWang}, building on earlier work for the Norros--Reittu model by Bhamidi, Sen and Wang~\cite{BhamidiSenWang}. In particular, \cite{BhamidiBroutinSenWang} gives a set of sufficient conditions under which one obtains convergence in the Gromov--Hausdorff--Prokhorov sense to the Brownian graph.  It is also demonstrated in that paper that these conditions are fulfilled for certain critical inhomogeneous random graphs (of the stochastic block model variety), and for critical percolation on a supercritical configuration model with finite third moment degree distribution.  A crucial role is played by dynamical constructions of the graphs in question, and by the idea that some pertinent statistic of the evolving graph may be well approximated by the multiplicative coalescent.  Bhamidi and Sen~\cite{BhamidiSen} later proved convergence to the Brownian graph for the critical configuration model (rather than for percolation on the supercritical case) in the Gromov--Hausdorff--Prokhorov sense, under the same set of minimal conditions as in \cite{DharavdHvLSen1}, and used it to deduce geometric properties of the \emph{vacant set} left by a random walk on various models of graph.

We have mentioned a few examples of critical percolation on graphs for which the resulting cluster sizes lie in the universality class of the Brownian graph.  This is expected to be true in much greater generality: for a wide variety of finite base graphs which are sufficiently ``high dimensional'', although the percolation critical point will be model-dependent, the behaviour in the vicinity of that critical point should essentially be the same as in the mean-field case of percolation on the complete graph, i.e.\ the Erd\H{o}s--R\'enyi model.  We refer the reader to the book of Heydenreich and van der Hofstad~\cite{HeydenreichvdH} for an in-depth discussion of this universality conjecture.

The results of the present paper primarily concern cases where the degree of a uniformly chosen vertex has infinite third moment and a power-law tail with exponent $\alpha+1 \in (2,3)$, and in this context the picture is more complicated.  As in the Brownian case, it is to be expected that, as long as the degree of a uniformly chosen vertex has the right properties, we should get the same scaling limit irrespective of precisely which model we consider.  It is technically more straightforward to consider rank-one inhomogeneous random graphs than the configuration model.  In the context of component sizes, this was first done by Aldous and Limic~\cite{AldousLimic} for the rank-one model treated by Aldous in \cite{AldousCritRG} but with appropriately altered conditions on the weight sequence.  These different conditions correspond to different extremal entrance laws for the multiplicative coalescent.  Aldous and Limic obtained the analogue of Theorem~\ref{thm:JosephRiordan} for the component \emph{weights}, where the limit is now given by the ordered lengths of the excursions above the running infimum of the \emph{thinned L\'evy process},
\begin{equation} \label{eqn:thinnedLevy}
\left(\kappa B_t + \lambda t + \sum_{i \ge 1} \vartheta_i(\I{E_i \le t} - \vartheta_i t) \right)_{t \ge 0},
\end{equation}
where $E_i \sim \text{Exp}(\vartheta_i)$ for each $i \ge 1$, $\kappa \ge 0$, $\lambda \in \R$, and $\vartheta_1 \ge \vartheta_2 \ge \ldots \ge 0$ is a sequence such that $\sum_{i \ge 1} \vartheta_i^3 < \infty$ and, if $\kappa = 0$, also $\sum_{i \ge 1} \vartheta_i^2 = \infty$.  This was extended in the $\kappa = 0$ case by Bhamidi, van der Hofstad and van Leeuwaarden~\cite{BhamidivdHvLNovelScalingLimits} to give the convergence of the component \emph{sizes} for the Norros--Reittu model with a specific weight sequence.  Heuristically, the choice of entrance law for the multiplicative coalescent is determined by the properties of the barely subcritical graph.  For the configuration model, the first work in the power-law setting was that of Joseph~\cite{Joseph} detailed above for the case of i.i.d.\ degrees.  The convergence of the sizes and surpluses for much more general (deterministic) degree sequences were treated by Dhara, van der Hofstad, van Leeuwaarden and Sen~\cite{DharavdHvLSen2}, with the possible scaling limits being driven by the same $\kappa = 0$ thinned L\'evy processes as in the Norros--Reittu model. 

A significant challenge in obtaining a metric space convergence in the power-law setting is that one often does not have direct access to a scaling limit result for the height process of the spanning forest discovered by a depth-first exploration.  (That we have such a result in the case of i.i.d.\ degrees is of considerable help to us.) The first metric space scaling limit in the power-law setting was obtained by Bhamidi, van der Hofstad and Sen~\cite{BhamidiVdHSenCoalesc} for the Norros--Reittu model with the specific weight sequence used in \cite{BhamidivdHvLNovelScalingLimits}.  Here, the convergence is in the product Gromov--Hausdorff--Prokhorov sense, and the limit object is constructed by making vertex identifications in tilted inhomogeneous continuum random trees (of the sort introduced by Aldous and Pitman in \cite{AldousPitmanICRT}). 

Broutin, Duquesne and Wang~\cite{BroutinDuquesneWang1,BroutinDuquesneWang2} use a very different approach in order to prove a unified metric space scaling limit for the Norros--Reittu model with very general weight sequences.  They are able to treat situations where the scaling limit of the depth-first walk is a thinned L\'evy process for any $\kappa \ge 0$, $\lambda \in \R$ and sequence $(\vartheta_1, \vartheta_2, \ldots)$, recovering the generality of Aldous and Limic's paper \cite{AldousLimic}.  They embed spanning subtrees of the components of the graph inside a forest of Galton--Watson trees, and exploit the convergence of this (bigger) forest on rescaling to the sequence of $\R$-trees encoded by a L\'evy process (as in Duquesne and Le Gall~\cite{DuquesneLeGall}), whose height process also converges.  This enables them to obtain the convergence of the height process of the true spanning forest in the Gromov--Hausdorff--Prokhorov sense; the surplus edges can also be tracked, in order to obtain a product Gromov--Hausdorff--Prokhorov convergence of the whole ordered sequence of graph components.

Let us finally turn to the work of Bhamidi, Dhara, van der Hofstad and Sen~\cite{BhDhvdHSeMetric}, who proved a metric space scaling limit analogous to that of \cite{BhamidiVdHSenCoalesc} for critical percolation on a supercritical configuration model. Among the settings studied so far, theirs is the closest to ours, although the technical content is rather different.  Their work relies on a connection to the multiplicative coalescent that, in particular, requires the careful analysis of susceptibility functions in the barely subcritical regime; we do not need to study the latter at all. We will describe their setting precisely, in order to provide a comparison with Theorem~\ref{thm:main}.  They take a (deterministic) degree sequence $\mathfrak{d}^n_1, \mathfrak{d}^n_2, \ldots, \mathfrak{d}^n_n$ such that $\sum_{i=1}^n \mathfrak{d}_i^n$ is even and, if $\mathfrak{D}_n$ is the degree of a typical vertex, then
\begin{enumerate}
\item[(i)] $n^{-1/(\alpha+1)} \mathfrak{d}^n_i \to \vartheta_i$ as $n \to \infty$ for each $i \ge 1$, where $\vartheta_1 \ge \vartheta_2 \ge \ldots \ge 0$ is such that $\sum_{i \ge 1} \vartheta_i^3 < \infty$ but $\sum_{i \ge 1} \vartheta_i^2 = \infty$;
\item[(ii)] $\mathfrak{D}_n \convdist \mathfrak{D}$ as $n \to \infty$, along with the convergence of its first two moments, for some random variable $\mathfrak{D}$ with $\Prob{\mathfrak{D} = 1} > 0$, $\E{\mathfrak{D}} = \mu$ and $\E{\mathfrak{D}(\mathfrak{D} -1)}/\E{\mathfrak{D}} = \theta > 1$,  and 
\[
\lim_{K \to \infty} \limsup_{n \to \infty} n^{-3/(\alpha+1)} \sum_{i \ge K+1} (\mathfrak{d}_i^n)^3 = 0.
\]
\end{enumerate}
Let $\theta_n = \E{\mathfrak{D}_n(\mathfrak{D}_n -1)}/\E{\mathfrak{D}_n}$ (which, by (ii), converges to $\theta > 1$).  They then perform percolation at parameter 
\[
p_n(\lambda) = \frac{1}{\theta_n} + \lambda n^{-(\alpha-1)/(\alpha+1)},
\]
for some $\lambda \in \R$, which yields a graph in the critical window.  In this setting, their Theorem 2.2 is the precise analogue of our Theorem~\ref{thm:main} but with the convergence in the product Gromov-weak topology and with the limit object $((\mathcal{G}_i, d_i, \mu_i), i \ge 1)$ constructed by making vertex identifications in the tilted inhomogeneous continuum random trees mentioned above.  Subsequent to the first version of our paper, this convergence has been improved to a product Gromov--Hausdorff--Prokhorov convergence under an extra technical condition in \cite{BDvdHSinprep}.  A precise description of the limit object would be too lengthy to undertake here, but it is instructive to compare the scaling limit of the depth-first walk in the two settings.  For us, this is the measure-changed stable L\'evy process $\widetilde{L}$; for Bhamidi, Dhara, van der Hofstad and Sen it is the thinned L\'evy process in (\ref{eqn:thinnedLevy}) with $\kappa = 0$.  To make the connection between the results, suppose now we take $\mathfrak{D}$ such that $\Prob{\mathfrak{D} = 1} > 0$, $\E{\mathfrak{D}} = \mu$, $\E{\mathfrak{D}(\mathfrak{D} -1)}/\E{\mathfrak{D}} = \theta > 1$ and $\Prob{\mathfrak{D} = k} \sim c k^{-\alpha-2}$.  Let $\mathfrak{d}_1^n, \ldots, \mathfrak{d}_n^n$ be an ordered sample of i.i.d.\ random variables $\mathfrak{D}_1, \ldots, \mathfrak{D}_n$ with the same distribution as $\mathfrak{D}$.  Then conditions (i) and (ii) above are satisfied almost surely for some sequence of random variables $\vartheta_1, \vartheta_2, \ldots$ (see Section 2.2 of \cite{DharavdHvLSen2}). Perform percolation at parameter $p = 1/\theta$ to obtain new degrees $D_1, D_2, \ldots, D_n$ which are mildly dependent but whose ordered version behaves very similarly to the order statistics of a i.i.d.\ sample which satisfy the conditions of our theorem. (In particular, using results of Janson~\cite{JansonPerc}, such mild dependence can be shown to have a negligible effect on the properties of the graph.) Then it should be the case that Bhamidi, Dhara, van der Hofstad and Sen's limit object is the same as the stable graph.  In particular, the process defined at (\ref{eqn:thinnedLevy}) with $\kappa = 0$ and $\lambda = 0$ should, for this particular random sequence $(\vartheta_1, \vartheta_2, \ldots)$, have the same law as $\widetilde{L}$.  Similarly, if it is the case that the scaling limit is the same as for the analogous inhomogeneous random graph setting, then our limit object should also coincide with a particular annealed version of that of Broutin, Duquesne and Wang~\cite{BroutinDuquesneWang1,BroutinDuquesneWang2}.

It is perhaps worth emphasising that, in contrast to the bulk of the other papers cited here, the multiplicative coalescent (and its relationship to percolation) appears nowhere in our proofs, and is conceptually absent from our approach.

Let us now give a list of open problems and conjectures arising from our work.

\begin{enumerate}
\item[(i)] Prove that the stable graph is, indeed, an annealed version of the limit object from \cite{BhDhvdHSeMetric} or \cite{BroutinDuquesneWang1,BroutinDuquesneWang2}.
\item[(ii)] The convergence in our main theorem occurs with respect to the product Gromov--Hausdorff--Prokhorov topology.  For sequences $\mathbf{A} = (A_1, A_2, \ldots)$ and $\mathbf{B}=(B_1, B_2, \ldots)$ of compact measured metric spaces, we may obtain stronger topologies using the distances
\begin{equation} \label{eqn:distdefn}
\mathrm{dist}_{p}(\mathbf{A},\mathbf{B}) = \left( \sum_{i \ge 1} d_{\mathrm{GHP}}(A_i, B_i)^p \right)^{1/p}
\end{equation}
for $p \ge 1$. For the Erd\H{o}s--R\'enyi random graph, the analogous convergence to the Brownian graph holds in the sense of $\mathrm{dist}_{4}$.  We conjecture that it should be possible to improve our main result for $\alpha \in (1,2]$ to a convergence in the sense of $\mathrm{dist}_{2\alpha/(\alpha-1)}$.  
\item[(iii)] One reason for wanting to prove such a result is that it would imply the convergence in distribution of the diameter of the whole graph (i.e.\ the largest distance between any two vertices in the same component).  In order to prove convergence in $\mathrm{dist}_{2\alpha/(\alpha-1)}$, we would need bounds on the component diameters in terms of powers of their sizes for the whole graph (we can do this for the parts explored up to time $O(n^{\alpha/(\alpha+1)})$ using the methods of this paper, but that is not sufficient).  A finer understanding of the \emph{barely subcritical regime} for the configuration model would presumably help to resolve this issue.
\item[(iv)] As shown in Proposition~\ref{prop:Levymeasch}, the measure change used in this paper makes sense for a large family of spectrally positive L\'evy processes (see Section~\ref{sec:measurechangeLevy} for the precise conditions).  Any such L\'evy process may be intuitively thought of as encoding a forest of continuum trees, although the analogue of Theorem~\ref{thm:DLGconvergence} holds only with the imposition of extra regularity conditions (see Theorem 2.3.1 of \cite{DuquesneLeGall}). Is it possible to find a sequence of degree distributions $(\nu^n)_{n \ge 1}$, depending now on $n$ and such that the regularity conditions are satisfied, so that if we take $D_1^{(n)}, \ldots, D_n^{(n)}$ to be i.i.d.\ random variables with distribution $\nu^n$ then we get convergence of our discrete measure change to its continuum analogue?  Or does the self-similarity inherent in the Brownian and stable settings play a key role?  If a generalisation to the L\'evy case is possible, what is the connection to thinned L\'evy processes, or to the approach of Broutin, Duquesne and Wang~\cite{BroutinDuquesneWang1,BroutinDuquesneWang2}?
\end{enumerate}

For simplicity we have restricted our attention in this paper to the case where $\theta(\nu) = 1$.  The critical window is obtained by considering the situation where the degrees $D_1^n, \ldots, D_n^n$ are i.i.d.\ but now with some $n$-dependent degree distribution $\nu^n$, such that $\E{D_1^n} \to \mu$ for some $\mu$ as $n \to \infty$,  $\theta(\nu^n) = 1 + \lambda n^{-(\alpha-1)/(\alpha+1)}$ and $\Prob{D_1^n = k} \sim c k^{-\alpha-2}$ as $k \to \infty$, for some fixed $\lambda \in \R$.  An extension of our approach to this regime has very recently been made by Donderwinkel~\cite{Donderwinkel}.

\subsection{Plan of the rest of the paper}
In Section~\ref{sec:stable}, we study the process $\widetilde{L}$ which gives rise to the stable graph.  In particular, we establish the local absolute continuity relation between $\widetilde{L}$ and $L$, and present some results in excursion theory.  The section concludes with the proof of Theorem~\ref{thm:conditionaldescription}. In Section~\ref{sec:forest}, we study a forest which is closely related to $\mathbf{M}_n(\nu)$. We show that the absolute continuity relation (\ref{eqn:Ltildedefstable}), (\ref{eqn:LtildedefBrownian}) may be seen as the limit of a discrete measure change between the degrees in the order we observe them when we explore this forest in a depth-first manner and an i.i.d.\ sequence of random variables whose law is the size-biased version of $\nu$. The main result of this section is Theorem~\ref{thm:jointdfwheight}, which gives the joint convergence of the depth-first walk and height process of the discrete forest to their continuum counterparts.  In Section~\ref{sec:graph}, we explore the multigraph $\mathbf{M}_n(\nu)$ in a depth-first manner, and record its structure via coding functions close to those of the forest in Section~\ref{sec:forest}, and show their convergence in law.  We also deal with the occurrence of the back-edges, and prove that $\mathbf{M}_n(\nu)$ and $\mathbf{G}_n(\nu)$ cannot have different scaling limits.  We must then extract the individual components of the graph in decreasing order of size, and prove that their individual coding functions converge. We adapt an approach of Aldous~\cite{AldousCritRG} using size-biased point processes; this is perhaps the most technical part of the paper.  Section~\ref{sec:graph} culminates in the proof of Theorem~\ref{thm:main}. The \hyperref[appn]{Appendix} contains various technical results.  In particular, in Section~\ref{sec:measurechangeLevy} we give a formulation of the measure change in (\ref{eqn:Ltildedefstable}) and (\ref{eqn:LtildedefBrownian}) for a general class of L\'evy processes, which may be of independent interest. In Section~\ref{sec:singlecompo}, we show the natural result that a single component of $\mathbf{G}_n(\nu)$ or $\mathbf{M}_n(\nu)$ \emph{conditioned to have size $\fl{x n^{\alpha/(\alpha+1)}}$} has a component of the $\alpha$-stable graph of size $x$ as its scaling limit.

\section{The limit object: the stable graph} \label{sec:stable}

\subsection{An absolute continuity relation for spectrally positive $\alpha$-stable L\'evy processes}
We begin by discussing the coding function $R$ discovered by Joseph~\cite{Joseph}, which was defined in (\ref{eqn:Rstable}). 
Fix $\alpha \in (1,2)$, $\mu \in (1,2)$ and $c > 0$. Recall that $L$ is the spectrally positive $\alpha$-stable L\'evy process having L\'evy measure $\pi(dx) = \frac{c}{\mu} x^{-(\alpha+1)}dx$.  This process has Laplace transform 
\[
\E{\exp(-\lambda L_t)} = \exp(t \Psi(\lambda)), \qquad \lambda \ge 0, \quad t \ge 0,
\]
where
\[
\Psi(\lambda) = \int_0^{\infty} \frac{c}{\mu} x^{-(\alpha+1)} dx (e^{-\lambda x} - 1 + \lambda x) = \frac{C_{\alpha} }{\mu} \lambda^{\alpha},
\]
with
\[
C_{\alpha} = \frac{c \Gamma(2-\alpha)}{\alpha(\alpha-1)}.
\]
Recall also that $X$ is the unique process with independent increments such that
\[
\E{\exp(-\lambda X_t)}  = \exp \left( \int_0^t ds \int_0^{\infty} dx (e^{-\lambda x}-1+ \lambda x) \frac{c}{\mu}\frac{1}{x^{\alpha+1}} e^{-xs/\mu} \right), \quad \lambda \ge 0, \quad t \ge 0.
\]
Let
\[
A_t = - C_{\alpha} \frac{t^{\alpha}}{\mu^{\alpha}}
\]
and define
\[
\widetilde{L}_t = X_t + A_t.
\]
We observe that $X$ is a martingale and $A$ is a finite-variation process, so this is, in fact, the Doob--Meyer decomposition of the process $\widetilde{L}$.

\begin{prop} \label{prop:transient}
We have
\[
\widetilde{L}_t \to -\infty \quad \text{ a.s.}
\]
as $t \to \infty$.
\end{prop}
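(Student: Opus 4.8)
The decomposition $\widetilde{L}_t = X_t + A_t$ with $A_t = -C_\alpha t^\alpha/\mu^\alpha$ reduces everything to showing that the martingale part is of smaller order than the drift: I claim it suffices to prove $X_t/t^\alpha \to 0$ almost surely as $t \to \infty$. Indeed, granting this, $\widetilde{L}_t/t^\alpha = X_t/t^\alpha - C_\alpha/\mu^\alpha \to -C_\alpha/\mu^\alpha < 0$ almost surely, and since $t^\alpha \to \infty$ this forces $\widetilde{L}_t \to -\infty$ a.s. So the whole proof is an estimate on the growth of $X$.

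The key computation is the variance of $X_t$, which I would extract by differentiating the Laplace exponent twice at $\lambda = 0$ (legitimate since the exponent is real-analytic near $0$, with $X$ a centred process so that the first derivative vanishes). Using $\frac{\partial^2}{\partial\lambda^2}(e^{-\lambda x}-1+\lambda x)\big|_{\lambda = 0} = x^2$ and then $\int_0^\infty x^{1-\alpha}e^{-xs/\mu}\,dx = \Gamma(2-\alpha)(\mu/s)^{2-\alpha}$ and $\int_0^t s^{\alpha - 2}\,ds = t^{\alpha-1}/(\alpha-1)$ (finite at the origin because $\alpha - 1 > 0$), one gets
\[
\var{X_t} = \frac{c}{\mu}\int_0^t ds\int_0^\infty x^{1-\alpha}e^{-xs/\mu}\,dx = \frac{c\,\Gamma(2-\alpha)\,\mu^{1-\alpha}}{\alpha-1}\,t^{\alpha-1} =: \kappa\, t^{\alpha-1},
\]
with $\kappa > 0$. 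The crucial feature is the exponent: $\alpha - 1 \in (0,1)$, which is much smaller than $\alpha$, so the centred fluctuations of $X$ are of order $t^{(\alpha-1)/2}$, negligible next to the drift of order $t^\alpha$.

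To turn this into an almost-sure statement I would combine Doob's $L^2$ maximal inequality with Borel--Cantelli along a dyadic sequence. Since $X$ is a càdlàg $L^2$-martingale (stochastic continuity of the given exponent gives a càdlàg version, and $\var{X_t} < \infty$), Doob gives $\E{\sup_{0\le s\le T}|X_s|^2} \le 4\var{X_T} = 4\kappa T^{\alpha-1}$. Taking $T = 2^k$ and applying Chebyshev,
\[
\Prob{\sup_{0\le s\le 2^k}|X_s| > \delta\, 2^{k\alpha}} \le 4\kappa\,\delta^{-2}\,2^{-k(\alpha+1)},
\]
which is summable in $k$. By Borel--Cantelli, almost surely $\sup_{0\le s\le 2^k}|X_s| \le \delta\,2^{k\alpha}$ for all large $k$; for $t \in [2^{k-1},2^k)$ this yields $|X_t|/t^\alpha \le \delta\,2^{k\alpha}/2^{(k-1)\alpha} = 2^\alpha\delta$, so $\limsup_{t\to\infty}|X_t|/t^\alpha \le 2^\alpha\delta$ a.s. Letting $\delta \downarrow 0$ along a countable sequence gives $X_t/t^\alpha \to 0$ a.s., which completes the argument.

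\textbf{Main obstacle.} There is no serious difficulty here; the only thing requiring a little care is the variance computation (justifying differentiation under the integral in the Laplace exponent and the convergence of the $ds$-integral at $0$), together with the elementary but essential observation that $\alpha - 1 < \alpha$, so that the martingale part $X$ is automatically $o(t^\alpha)$ and the deterministic drift $A_t$ dominates.
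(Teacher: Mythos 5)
Your proof is correct. The overall strategy coincides with the paper's (which follows Joseph): write $\widetilde{L}_t = X_t + A_t$, note the deterministic drift $A_t \asymp -t^{\alpha}$, and show the martingale part satisfies $X_t = o(t^{\alpha})$ almost surely via an $L^2$ maximal inequality plus Borel--Cantelli along a deterministic sequence of times. Where you genuinely diverge is in how the martingale part is controlled: the paper never uses that $X$ itself is square-integrable, but instead splits it as $X = X^{(1)} + X^{(2)}$, with $X^{(1)}$ the compensated jumps of size at most $1$ (handled exactly as you do, via Doob's inequality, a variance bound of order $t^{\alpha-1}$, and Borel--Cantelli, with threshold $n^{(\alpha+1)/2} \ll n^{\alpha}$) and $X^{(2)}$ the large-jump part, which is bounded by the total sum of all jumps of size at least $1$ over all time; the exponential damping $e^{-xs/\mu}$ makes this sum have finite mean $c/\alpha$, hence it is a.s.\ finite and trivially $o(t^{\alpha})$. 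You instead observe that the same exponential damping already makes the \emph{full} jump measure square-integrable after integration in time, so that $\mathrm{var}(X_t) = \kappa t^{\alpha-1} < \infty$ with an explicit $\kappa$, and then a single application of Doob plus Borel--Cantelli along dyadics finishes the argument. This is a legitimate streamlining (your Tonelli/Gamma-function computation of the variance is correct, and $\alpha - 1 < 2\alpha$ makes the dyadic probabilities summable); what the paper's truncation buys in exchange is that one only ever needs the elementary bound $\int_0^t\int_0^1 x^{1-\alpha}e^{-xs/\mu}\,dx\,ds = O(t^{\alpha-1})$ together with an $L^1$ estimate, without having to justify that $X$ is a c\`adl\`ag $L^2$ martingale in the first place — a point you correctly flag and dispatch, so no gap remains.
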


\begin{proof}
Lemma B.3 of Joseph~\cite{Joseph} gives the convergence in probability; we adapt his argument.  Since $A_t$ is deterministic and tends to $-\infty$, it will be sufficient to prove that
\[
\limsup_{t \to \infty} t^{-\alpha} X_t = 0 \quad \text{a.s.}
\]
Consider a Poisson point process on $\R_+ \times \R_+$ of intensity 
\[
\frac{c}{\mu} x^{-(\alpha+1)} e^{-xs/\mu} \ ds \ dx,
\]
with points $\{(s, \Delta_s)\}$.  Let $X^{(1)}$ be the martingale arising from compensating the jumps of magnitude at most 1, formally defined as the $\epsilon \to 0$ limit of the family $\{X^{(1,\epsilon)}, \epsilon > 0\}$ of processes given by
\[
X^{(1,\epsilon)}_t = \sum_{s \le t} \Delta_s \I{\epsilon < \Delta_s <1)} - \int_0^t ds \int_{\epsilon}^{1} dx \frac{c}{\mu} x^{-\alpha} e^{-xs/\mu},
\]
and let
\[
X^{(2)}_t = \sum_{s \le t} \Delta_s \I{\Delta_s \ge 1} - \int_0^t ds \int_1^{\infty} dx \frac{c}{\mu} x^{-\alpha} e^{-xs/\mu}.
\]
Then $X_t = X^{(1)}_t + X^{(2)}_t$.  By Doob's $L^2$-inequality, we have
\[
\E{\left(\sup_{0 \le s \le t} \left|X^{(1)}_s \right|\right)^2} \le 4 \E{(X_t^{(1)})^2} =  4 \int_0^t ds \int_0^1 dx \frac{c}{\mu} x^{-(\alpha-1)} e^{-xs/\mu}.
\]
For every $s>0$, 
\[
\int_0^1 dx \frac{c}{\mu} x^{-(\alpha-1)} e^{-xs/\mu}= s^{\alpha -2}\int_{0}^{s/\mu}\frac{c}{\mu^{\alpha-1}}u^{-(\alpha-1)}e^{-u}du\leq \frac{c\Gamma(2-\alpha)}{\mu^{\alpha-1}} s^{\alpha -2},
\]  
and so
\[
\E{\left(\sup_{0 \le s \le t} \left|X^{(1)}_s \right|\right)^2} \le C t^{\alpha-1}
\]
for some constant $C > 0$. Hence, applying Markov's inequality, we get
\[
\Prob{ \sup_{n-1 < s \le n} \left| X^{(1)}_s \right| > n^{(\alpha+1)/2}} \le \frac{C}{n^2}.
\]
As this is summable in $n$, the Borel--Cantelli lemma gives that
\[
\Prob{ \sup_{n-1 < s \le n} \left| X^{(1)}_s \right| > n^{(\alpha+1)/2} \text{ i.o.}} = 0.
\]
Since $\alpha > 1$, it follows that
\[
\limsup_{t \to \infty} t^{-\alpha} X^{(1)}_t = 0 \quad \text{a.s.}
\]
Turning now to $X^{(2)}$, for all $t \ge 0$ we have the straightforward bound
\[
\sup_{t\geq 0}X_s^{(2)} \le \sum_{s \ge 0} \Delta_s \I{\Delta_s \ge 1}.
\]
The right-hand side has expectation
\[
\int_0^{\infty} ds \int_1^{\infty} dx \frac{c}{\mu} x^{-\alpha} e^{-xs/\mu} = c \int_1^{\infty} dx \ x^{-(\alpha+1)} \int_0^{\infty} ds \frac{x}{\mu} e^{-xs/\mu} = c \int_1^{\infty} x^{-(\alpha+1)} dx = \frac{c}{\alpha}.
\]
Since the jumps are all non-negative, this computation also entails that
\[
\inf_{t\geq 0} X^{(2)}_t \geq -\frac{c}{\alpha}.
\]
Hence by Markov's inequality, we have for all $n\geq 1$:
\[
\Prob{ \sup_{n-1 < s \le n} \left| X^{(2)}_s \right| >  n^{(\alpha+1)/2}}\le \frac{c}{\alpha  n^{(\alpha+1)/2}}.
\]
As for $X^{(1)}$, the Borel--Cantelli lemma 
gives that
\[
\limsup_{t \to \infty} t^{-\alpha} X^{(2)}_t = 0 \quad \text{a.s.}
\]
The result follows.
\end{proof}

The main purpose of this section is to expand considerably our understanding of the processes $\widetilde{L}$ and $R$.    Our first new result says that the law of the process $\widetilde{L}$ is absolutely continuous with respect to the law of the L\'evy process $L$ on compact time-intervals.

\begin{prop} \label{prop:stablemeasch}
For every $t \ge 0$, we have the following absolute continuity relation: for every non-negative integrable functional $f: \mathbb{D}([0,t], \R) \to \R_+$,
\begin{align*}
\E{f(\widetilde{L}_s, 0 \le s \le t)} & = \E{\exp\left( - \frac{1}{\mu} \int_0^{t} s dL_s - C_{\alpha}\frac{t^{\alpha+1}}{(\alpha+1) \mu^{\alpha+1}} \right) f(L_s, 0 \le s \le t)}.
\end{align*}
\end{prop}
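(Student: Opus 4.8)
The plan is to identify the Radon--Nikodym derivative via an exponential martingale argument. Recall that $L$ is a spectrally positive $\alpha$-stable L\'evy process with L\'evy measure $\pi(dx) = \frac{c}{\mu} x^{-(\alpha+1)}dx$ on $(0,\infty)$, while $\widetilde{L} = X + A$ has the decomposition described above, where $X$ has time-inhomogeneous L\'evy measure $\frac{c}{\mu} x^{-(\alpha+1)} e^{-xs/\mu}\,ds\,dx$. The key observation is that the change of L\'evy measure from $\frac{c}{\mu}x^{-(\alpha+1)}$ to $\frac{c}{\mu}x^{-(\alpha+1)}e^{-xs/\mu}$ (together with the deterministic drift $A$) is precisely an Esscher-type exponential tilt with the time-dependent ``tilting parameter'' $s/\mu$. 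So I would first guess that the candidate density process is
\[
D_t = \exp\left( -\frac{1}{\mu}\int_0^t s\,dL_s - \int_0^t \frac{s^\alpha}{\mu^\alpha} \frac{C_\alpha}{\mu}\cdot\alpha\,ds\cdot(\text{correction})\right),
\]
but more cleanly: I would define $D_t = \exp(-\frac{1}{\mu}\int_0^t s\,dL_s - g(t))$ where $g$ is the unique deterministic function making $D$ a martingale, and then compute $g$ explicitly, expecting $g(t) = C_\alpha t^{\alpha+1}/((\alpha+1)\mu^{\alpha+1})$.

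The main steps are as follows. First I would compute, using the L\'evy--It\^o decomposition of $L$ and the exponential formula for Poisson integrals, the compensator needed to turn $\exp(-\frac{1}{\mu}\int_0^t s\,dL_s)$ into a martingale. Writing $L_t = \int_0^t\int_0^\infty x\,\widetilde{N}(ds,dx)$ with $\widetilde N$ the compensated Poisson measure of intensity $ds\,\pi(dx)$, the stochastic integral $\int_0^t s\,dL_s$ equals $\int_0^t\int_0^\infty sx\,\widetilde N(ds,dx)$, and a standard computation (e.g.\ via It\^o's formula applied to the exponential, or Doleans--Dade) gives that
\[
\exp\left(-\frac{1}{\mu}\int_0^t s\,dL_s - \int_0^t\int_0^\infty\Bigl(e^{-sx/\mu}-1+\tfrac{sx}{\mu}\Bigr)\frac{c}{\mu}x^{-(\alpha+1)}\,dx\,ds\right)
\]
is a local martingale, hence (after checking integrability --- it is bounded in $L^1$ by a direct moment estimate, or one invokes that the exponent is controlled) a true martingale $D_t$. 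Second, I would evaluate the deterministic integral in the exponent: $\int_0^\infty (e^{-sx/\mu}-1+sx/\mu)\frac{c}{\mu}x^{-(\alpha+1)}\,dx = \Psi(s/\mu) = \frac{C_\alpha}{\mu}(s/\mu)^\alpha$ by the very definition of $\Psi$ recalled just above, and then $\int_0^t \frac{C_\alpha}{\mu}\frac{s^\alpha}{\mu^\alpha}\,ds = \frac{C_\alpha t^{\alpha+1}}{(\alpha+1)\mu^{\alpha+1}}$, which matches the claimed constant exactly. Third, having established that $D_t$ is a mean-one martingale, I would define a new measure $\widetilde{\mathbb{P}}$ on $\mathcal F_t$ by $d\widetilde{\mathbb{P}}/d\mathbb{P} = D_t$ and verify that under $\widetilde{\mathbb{P}}$, the canonical process has the law of $\widetilde L$ on $[0,t]$ --- this is done by computing the Laplace transform of the increments under $\widetilde{\mathbb{P}}$ (an exponential-tilt-of-an-exponential computation, using the explicit form of $D_t$) and checking it agrees with the characteristic exponent of $\widetilde L = X + A$ given by the displayed formula for $\E{\exp(-\lambda X_t)}$ together with the drift $A_t = -C_\alpha t^\alpha/\mu^\alpha$. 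By uniqueness of laws determined by finite-dimensional Laplace transforms for processes with independent (inhomogeneous) increments, this identifies the law, and the absolute continuity statement with the stated density follows immediately.

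The main obstacle I anticipate is the integrability/uniform-integrability verification needed to promote the exponential local martingale $D_t$ to a genuine martingale (so that it is a legitimate density). Because the jumps of $L$ are one-sided and the integrand $-s/\mu$ in $-\frac{1}{\mu}\int_0^t s\,dL_s$ is negative while the jumps are positive, the exponential is of the form $e^{-(\text{positive jumps})}$ tilted by a positive compensator, so it is actually bounded above pathwise by $\exp(\int_0^t\Psi(s/\mu)\,ds) < \infty$ on $[0,t]$ --- this should make the argument clean, but one must be slightly careful about the negative-exponent contribution of the compensator term $+\frac{sx}{\mu}$ and confirm $\E{D_t} = 1$ rather than $\le 1$, e.g.\ by a localization-plus-Fatou argument combined with the explicit bound, or by directly checking that the Laplace transform computation closes. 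An alternative, perhaps cleaner route that sidesteps some of this is to verify the identity directly on exponential test functionals $f(\omega) = \exp(-\sum_j \lambda_j \omega_{t_j})$ (which determine the law), reducing everything to the one-dimensional Laplace-transform identity $\E{\exp(-\lambda \widetilde L_t)} = \E{D_t \exp(-\lambda L_t)}$ and its multi-time analogue via the Markov property, each of which is a finite explicit computation using only $\Psi$ and the independent-increments structure; I would likely present the argument this way to keep it self-contained.
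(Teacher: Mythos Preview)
Your proposal is correct and follows essentially the same approach as the paper. The paper defers the proof to a more general result (Proposition~\ref{prop:Levymeasch} in the Appendix) for a broad class of spectrally positive L\'evy processes, but the argument is precisely your ``alternative, cleaner route'': one first computes $\E{\exp(-\theta\int_0^t s\,dL_s)} = \exp(\int_0^t \Psi(\theta s)\,ds)$ directly from the Poisson random measure representation (this is the paper's Lemma~\ref{lem:laplint}, which immediately establishes that $D_t$ has mean $1$ and is a true martingale, sidestepping your UI concern entirely), and then verifies the absolute continuity on finite-dimensional Laplace transforms $\E{\exp(-\sum_j \lambda_j(\widetilde L_{t_j}-\widetilde L_{t_{j-1}}))}$ via an explicit computation using the independent-increments structure and the identity $\int_0^t \Psi(\lambda+\theta s)\,ds - \int_0^t\Psi(\theta s)\,ds$ arising from a decomposition of the tilted L\'evy measure.
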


This proposition is a consequence of a more general change of measure for spectrally positive L\'evy processes, Proposition~\ref{prop:Levymeasch}, which is proved in the \hyperref[appn]{Appendix} below.

In the Brownian case, we instead have $L_t = \sqrt{\frac{\beta}{\mu}} B_t$, where $B$ is a standard Brownian motion,
\begin{equation} \label{eqn:paradrift}
\widetilde{L}_t = \sqrt{\frac{\beta}{\mu}} B_t - \frac{\beta}{2 \mu^2} t^2,
\end{equation}
and Proposition~\ref{prop:Levymeasch} gives
\[
\E{ f(\widetilde{L}_s, 0 \le s \le t)} = \E{\exp \left( - \frac{1}{\mu} \int_0^t s dL_s -  \frac{\beta}{6 \mu^3} t^3 \right) f \left(L_s, 0 \le s \le t\right)}.
\]
In order to harmonise notation, let us define $C_2 := \beta/2$, so that Proposition~\ref{prop:stablemeasch} is valid as stated for all $\alpha \in (1,2]$.

\begin{rem}
The absolute continuity cannot be extended to $t = \infty$: the process $(L_t, t \ge 0)$ is recurrent whereas, by Proposition~\ref{prop:transient} for $\alpha \in (1,2)$ or (\ref{eqn:paradrift}) for $\alpha = 2$, we have $\widetilde{L}_t \to -\infty$ a.s.\ as $t \to \infty$.  (In particular, $\left(\exp\left( - \frac{1}{\mu} \int_0^{t} s dL_s - C_{\alpha}\frac{t^{\alpha+1}}{(\alpha+1) \mu^{\alpha+1}} \right), t \ge 0 \right)$ is a martingale which is not uniformly integrable.)
\end{rem}

Recall that $H$ is the height process which corresponds to $L$.  Then for any $\alpha \in (1,2]$, we may define a pair $(\widetilde{L}, \widetilde{H})$ of processes via change of measure as follows: for suitable test-functions $f: \mathbb{D}([0,t], \R)^2 \to \R$,
\[
\E{f(\widetilde{L}_u, \widetilde{H}_u, 0 \le u \le t)} = \E{\exp\left(-\frac{1}{\mu} \int_0^t sdL_s - \frac{C_{\alpha} t^{\alpha+1}}{(\alpha+1) \mu^{\alpha+1}} \right) f(L_u, H_u, 0 \le u \le t)}.
\]

\subsection{Excursion theory}
We begin with some notation.  Write $\mathbb{D}_+(\R_+, \R_+)$ for the space of c\`adl\`ag functions $f: \R_+ \to \R_+$ with only positive jumps. We write $\mathcal{E}$ for the space of excursions, that is 
\[
\mathcal{E} = \{ \varepsilon \in \mathbb{D}_+(\R_+, \R_+): \exists t>0 \text{ s.t.\ $\varepsilon(s) > 0$ for $s \in (0,t)$ and $\varepsilon(s) = 0$ for $s \ge t$}\}. 
\]
For $\varepsilon \in \mathcal{E}$, let $\zeta(\varepsilon)$ be the lifetime of $\varepsilon$, that is the smallest $t$ such that $\varepsilon(s) = 0$ for $s \ge t$.  Let $\mathcal{E}^* = \mathcal{E}\cup\{\partial\}$, where the extra state, $\partial$, represents the empty excursion, with $\zeta(\partial) = 0$.  

Let $I_t = \inf_{0 \le s \le t}L_s$.  It is standard that the process $-I$ acts as a local time at 0 for the reflected L\'evy process $(L_t - \inf_{0 \le s \le t} L_s, t \ge 0)$ (see Chapter VII of Bertoin~\cite{BertoinBook} or Section 1.1.2 of Duquesne and Le Gall~\cite{DuquesneLeGall}).  Indeed, we may decompose the path of the reflected process into excursions above 0.    Now write 
\[
\sigma_{\ell} = \inf\left\{t \ge 0: I_t < -\ell\right\},
\]
so that $(\sigma_{\ell}, \ell \ge 0)$ is the inverse local time.  We observe that $\sigma_{\ell}$ is a stopping time for the (usual augmentation of the) natural filtration of $(L_t)_{t \ge 0}$.  For $\ell \ge 0$, write
\[
\varepsilon^{(\ell)} = \begin{cases} (\ell + L_{\sigma_{\ell-} + u}, 0 \le u \le \sigma_{\ell} - \sigma_{\ell-}) & \text{ if } \sigma_{\ell} - \sigma_{\ell-} > 0 \\
\partial & \text{ otherwise.}
\end{cases}
\]
Then the following theorem is standard (see Theorem VII.1.1 of Bertoin~\cite{BertoinBook}, converting from the spectrally negative case, or Miermont~\cite{Mier03} for a convenient reference).

\begin{thm} \label{thm:localtimeexc}
\begin{enumerate}
\item[(a)] The inverse local time process $(\sigma_{\ell}, \ell \ge 0)$ is a stable subordinator of index $1/\alpha$ and, more specifically, with L\'evy measure
\[
\frac{\mu^{1/\alpha}}{C_{\alpha}^{1/\alpha} \alpha \Gamma(1-1/\alpha)} x^{-1 - 1/\alpha} dx.
\]
\item[(b)] There exists a $\sigma$-finite measure $\N$ on $\mathcal{E}$ such that the point measure on $\R_+ \times \mathcal{E}$ given by
\begin{equation} \label{eqn:pointmeas}
\sum_{s \ge 0: \sigma_s - \sigma_{s-} > 0} \delta_{(s,\varepsilon^{(s)})}
\end{equation}
is a Poisson random measure of intensity $d\ell \otimes \mathbb{N}(d \mathbbm{e})$. Moreover, the excursion measure $\N$ satisfies
\[
\N(\zeta(\mathbbm{e}) \in dx) = \frac{\mu^{1/\alpha}}{C_{\alpha}^{1/\alpha} \alpha \Gamma(1-1/\alpha)} x^{-1 - 1/\alpha} dx.
\]
\end{enumerate}
\end{thm}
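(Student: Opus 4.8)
The plan is to derive everything from the classical fluctuation theory of spectrally negative L\'evy processes applied to $\hat L := -L$, which is spectrally negative with Laplace exponent equal to $\Psi$ (i.e.\ $\E{\exp(\lambda \hat L_t)} = \exp(t \Psi(\lambda))$ for $\lambda \ge 0$, with $\Psi(\lambda) = \frac{C_{\alpha}}{\mu}\lambda^{\alpha}$). First I would observe that $-I_t = \sup_{0 \le s \le t}(-L_s)$ is the running supremum of $\hat L$, so that $L - I$ is precisely $\hat L$ reflected at its running supremum. Since $\alpha > 1$, the process $L$ has infinite variation, hence $0$ is regular for itself for $L-I$ and the set $\{t : L_t = I_t\}$ has zero Lebesgue measure; therefore, following Chapter VII of Bertoin~\cite{BertoinBook}, $-I$ is (in the natural normalisation) a local time at $0$ for $L - I$, and $(\sigma_{\ell}, \ell \ge 0)$ is its right-continuous inverse. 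Because $\hat L$ has no positive jumps, $\sigma_{\ell} = \inf\{t : \hat L_t > \ell\} = \inf\{t : \hat L_t = \ell\}$ is the first-passage time of $\hat L$ above level $\ell$, across which $\hat L$ creeps.

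Next I would identify the law of $\sigma$. By the strong Markov property and the stationarity and independence of the increments of $\hat L$, $(\sigma_{\ell}, \ell \ge 0)$ is a subordinator, and the standard first-passage identity for spectrally negative L\'evy processes (Theorem VII.1 of~\cite{BertoinBook}) gives $\E{\exp(-q\sigma_{\ell})} = \exp(-\ell \Phi(q))$, where $\Phi$ is the inverse of $\Psi$ on $\R_+$. Solving $\frac{C_{\alpha}}{\mu}\Phi(q)^{\alpha} = q$ yields $\Phi(q) = (\mu/C_{\alpha})^{1/\alpha} q^{1/\alpha}$, so $\sigma$ is a stable subordinator of index $1/\alpha$, with zero drift (automatic since $1/\alpha < 1$) and Lévy measure $\varrho$ determined by $\int_0^{\infty}(1 - e^{-qx})\varrho(dx) = \Phi(q)$. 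Combined with the elementary computation $\int_0^{\infty}(1 - e^{-qx}) x^{-1-1/\alpha}\,dx = \alpha \Gamma(1-1/\alpha)\,q^{1/\alpha}$, this forces $\varrho(dx) = \frac{\mu^{1/\alpha}}{C_{\alpha}^{1/\alpha}\,\alpha\,\Gamma(1-1/\alpha)}\,x^{-1-1/\alpha}\,dx$, as claimed.

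For the second assertion I would invoke It\^o's excursion theory directly: since $-I$ is a local time at $0$ for $L - I$, the point measure $\sum_{s :\, \sigma_s > \sigma_{s-}} \delta_{(s, \varepsilon^{(s)})}$ recording the excursions of $L - I$ away from $0$, indexed by local time, is a Poisson random measure on $\R_+ \times \mathcal{E}$ with intensity $d\ell \otimes \N$ for some $\sigma$-finite excursion measure $\N$ (see~\cite{BertoinBook}, or Section~1.1.2 of~\cite{DuquesneLeGall}). It then remains only to compute the lifetime distribution under $\N$: at a jump time $s$ of $\sigma$, the jump $\sigma_s - \sigma_{s-}$ is \emph{exactly} the lifetime $\zeta(\varepsilon^{(s)})$ of the corresponding excursion (during $(\sigma_{s-}, \sigma_s)$ the infimum $I$ is constant and $\varepsilon^{(s)}$ coincides with $L - I$). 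Pushing the Poisson random measure forward under $(\ell, \varepsilon) \mapsto (\ell, \zeta(\varepsilon))$ recovers the Poisson random measure of jumps of the subordinator $\sigma$, which has intensity $d\ell \otimes \varrho$. Hence $\N(\zeta(\mathbbm{e}) \in dx) = \varrho(dx) = \frac{\mu^{1/\alpha}}{C_{\alpha}^{1/\alpha}\,\alpha\,\Gamma(1-1/\alpha)}\,x^{-1-1/\alpha}\,dx$, completing the argument.

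I do not expect any substantive obstacle here; the only delicate points are bookkeeping ones. One is checking that the normalisation of local time is indeed the one for which $-I$ itself (rather than a constant multiple of it) is a local time at $0$ for $L-I$: this is exactly Bertoin's convention for L\'evy processes that do not drift to $+\infty$, and our $L$ oscillates. The other is transcribing the spectrally negative theory of~\cite{BertoinBook} faithfully to our spectrally positive $L$ via $\hat L = -L$, in particular keeping track of which passage is ``upward''. If one prefers to avoid this translation, one may instead quote Miermont~\cite{Mier03}, where the spectrally positive case is handled directly, and simply verify that the constants match.
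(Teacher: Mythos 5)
Your proposal is correct and follows essentially the same route as the paper, which treats this result as standard and simply cites Theorem VII.1 of Bertoin (converting from the spectrally negative case via $\hat L = -L$) together with Miermont for the spectrally positive formulation; your argument just spells out that conversion, the first-passage identity $\Phi(q) = (\mu/C_{\alpha})^{1/\alpha}q^{1/\alpha}$, and the It\^o excursion step. The constant bookkeeping, including $\int_0^{\infty}(1-e^{-qx})x^{-1-1/\alpha}\,dx = \alpha\Gamma(1-1/\alpha)q^{1/\alpha}$ and the identification of the subordinator's jump measure with the image of $\N$ under the lifetime map, checks out.
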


Consider the excursions occurring before time $\sigma_\ell$.  With probability 1, only finitely many of these are longer than $\eta$ in duration for any $\eta > 0$.  So, in particular, they may be listed in decreasing order of length as $(\varepsilon_i^{(\ell)}, i \ge 1)$.

Since $L$ is self-similar, it is possible to make sense of normalised versions of $\N$ i.e.\ $\N^{(x)}(\cdot) = \N(\cdot | \zeta(\mathbbm{e}) = x)$, which are probability measures.  (Again see Miermont~\cite{Mier03} for more details.)  For example, the law of $\mathbbm{e}$ under $\N^{(x)}$ is the same as the law of
\[
\left( (x/\zeta(\mathbbm{e}))^{1/\alpha} \mathbbm{e}(\zeta(\mathbbm{e})s/x), 0 \le s \le x\right)
\]
under $\mathbb{N}(\cdot | \zeta(\mathbbm{e}) > \eta)$ for any fixed $\eta > 0$.  In particular, we have that under $\N^{(x)}$, the rescaled excursion $(x^{-1/\alpha} \mathbbm{e}(xu), 0 \le u \le 1)$ has the same law as $\mathbbm{e}$ under $\N^{(1)}$.  It follows that the excursions $\varepsilon^{(s)}$ appearing in (\ref{eqn:pointmeas}) may be thought of in two parts: as their lengths $\zeta(\varepsilon^{(s)}) = \sigma_s - \sigma_{s-}$ and their normalised ``shapes'' $e^{(s)}:= \left(\zeta(\varepsilon^{(s)})^{-1/\alpha} \varepsilon^{(s)}(\zeta(\varepsilon^{(s)}) u), 0 \le u \le 1\right)$ where, crucially, the collection of shapes $(e^{(s)}, s \ge 0)$ is \emph{independent} of the collection of excursion lengths $(\zeta(\varepsilon^{(s)}), s \ge 0)$.  We will write $\mathbb{E}_{\mathbb{N}^{(x)}}$ for the expectation with respect to $\mathbb{N}^{(x)}$.

We observe that the excursions of the L\'evy process $L$ above its running infimum and the excursions of the height process $H$ are in one-to-one correspondence and have the same lengths.  In particular, we can make sense of an excursion of the height process $\mathbbm{h}$ derived from $\mathbbm{e}$, under $\N$ or its conditioned versions.  The scaling relation for the height process is that under $\N^{(x)}$ the rescaled excursion $(x^{-(\alpha-1)/\alpha} \mathbbm{h}(xu), 0 \le u \le 1)$ has the same law as $\mathbbm{h}$ under $\N^{(1)}$.  The usual \emph{stable tree} is encoded by (a scalar multiple of) a height process with the distribution of $\mathbbm{h}$ under $\N^{(1)}$.

Much of this structure can be transferred into our setting, by absolute continuity. Recall that
\[
R_t = \widetilde{L}_t - \inf_{0 \le s \le t} \widetilde{L}_s, \quad t \ge 0.
\]
We will make use of the following properties.

\begin{lem} \label{lem:Josephprops} The following statements hold almost surely.
\begin{enumerate}
\item[(i)] For each $\epsilon > 0$, $R$ has only finitely many excursions of length greater than or equal to $\epsilon$.
\item[(ii)] The set $\{t: R_t = 0\}$ has Lebesgue measure 0.
\item[(iii)] If $(l_1,r_1)$ and $(l_2,r_2)$ are excursion-intervals of $R$ and $l_1<l_2$, then $\widetilde{L}_{l_1}> \widetilde{L}_{l_2}$.
\item[(iv)] For $a \ge 0$, let $\mathcal{B}_a = \{b > a: \widetilde{L}_{b-} = \inf_{a \le s \le b} \widetilde{L}_s\}$.  Then $\mathcal{B}_a$ does not intersect the set of jump times of $\widetilde{L}$. 
\end{enumerate}
\end{lem}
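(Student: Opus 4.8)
The plan is to treat (ii), (iii) and (iv) by transferring the corresponding almost-sure properties of the spectrally positive $\alpha$-stable L\'evy process $L$ (and of its reflection $L-I$) across the local absolute continuity of Proposition~\ref{prop:stablemeasch}, and to prove (i) by a separate argument, since it genuinely relies on the transience of $\widetilde{L}$: the reflected process $L-I$ has infinitely many excursions of any fixed length over $[0,\infty)$, so (i) cannot follow from absolute continuity alone.

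The first step is a transfer principle: if $A\subseteq\mathbb{D}([0,t],\R)$ is an event determined by the path on $[0,t]$ with $\Prob{(L_s,0\le s\le t)\in A}=1$, then by Proposition~\ref{prop:stablemeasch} the law of $(\widetilde{L}_s,0\le s\le t)$ is absolutely continuous with respect to that of $(L_s,0\le s\le t)$, so $\Prob{(\widetilde{L}_s,0\le s\le t)\in A}=1$ as well. Each of (ii)--(iv) is an event of this type for every $t\in\N$, and one recovers the stated statement by a countable union over $t\in\N$; in (iv) one additionally takes a union over rational $a$, using that if $b\in\mathcal{B}_a$ is a jump time of $\widetilde{L}$ then $\widetilde{L}_b>\widetilde{L}_{b-}$ forces $\inf_{a\le s\le b}\widetilde{L}_s=\inf_{a\le s<b}\widetilde{L}_s$, whence $b\in\mathcal{B}_{a'}$ for every rational $a'\in(a,b)$. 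It then remains to record the matching facts for $L$: for (ii), that $L-I$ spends zero Lebesgue time at $0$, which is standard for a L\'evy process that is neither compound Poisson nor a subordinator (see~\cite{BertoinBook}); for (iii), that distinct excursion intervals of $L-I$ above $0$ have distinct floor levels, which follows from Theorem~\ref{thm:localtimeexc}, since the left endpoints are the points $\sigma_{\ell-}$ indexed by the distinct jump times $\ell$ of the stable subordinator $\sigma$ and $L_{\sigma_{\ell-}}=I_{\sigma_{\ell-}}=-\ell$; and for (iv), that since $L$ has no negative jumps, at a jump time $t$ one has $\inf_{0\le s\le t}L_s=\inf_{0\le s<t}L_s\le L_{t-}$, so $t$ can lie in the $L$-analogue of $\mathcal{B}_0$ only if $L_{t-}$ is a new running infimum in the left-limit sense, and the set of such times lies in the closed regenerative zero set of $L-I$ (the closure of the range of $\sigma$), which is a.s.\ disjoint from the jump times of $L$; the general-$a$ version follows from the Markov property at $a$ together with Theorem~\ref{thm:localtimeexc}.

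For (i), the key input is a quantitative form of the transience of $\widetilde{L}$ (Proposition~\ref{prop:transient}), via the decomposition $X=X^{(1)}+X^{(2)}$ used in its proof. Fix $\epsilon>0$ and work with the intervals $[n,n+1]$, $n\in\N$. If $R$ has an excursion of length at least $\epsilon$ starting at some $l\in[n,n+1]$, then, because $\widetilde{L}$ is spectrally positive, $\widetilde{L}$ stays at or above its value $\widetilde{L}_l$ throughout that excursion, so the martingale $X=\widetilde{L}-A$ must rise, over a subinterval of $[n,n+2]$, by at least $A_l-A_{l+\epsilon}=C_\alpha\mu^{-\alpha}\big((l+\epsilon)^\alpha-l^\alpha\big)\ge\alpha C_\alpha\mu^{-\alpha}\epsilon\,n^{\alpha-1}$. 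On the other hand, a.s.\ for all large $n$ there is no jump of $X^{(2)}$ in $[n,n+2]$ (only finitely many jumps of size at least $1$ occur, as shown in the proof of Proposition~\ref{prop:transient}), so the oscillation of $X^{(2)}$ over $[n,n+2]$ is then bounded by that of its deterministic compensator, which tends to $0$; while, by Doob's $L^2$-inequality and the bound $\E{(X^{(1)}_{n+2}-X^{(1)}_n)^2}=\int_n^{n+2}ds\int_0^1\frac{c}{\mu}x^{1-\alpha}e^{-xs/\mu}\,dx=O(n^{\alpha-2})$, the event that the oscillation of $X^{(1)}$ over $[n,n+2]$ exceeds $\tfrac12\alpha C_\alpha\mu^{-\alpha}\epsilon\,n^{\alpha-1}$ has probability $O(n^{-\alpha})$, which is summable since $\alpha>1$. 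Borel--Cantelli then gives that a.s.\ only finitely many excursions of $R$ have length at least $\epsilon$ (finitely many excursion starts fall in any fixed compact window, again by absolute continuity together with the standard fact for $L$), and intersecting over $\epsilon=1/k$, $k\ge 1$, yields (i).

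The main obstacle is the summability in (i): it is precisely the bound $\E{(X^{(1)}_{n+2}-X^{(1)}_n)^2}=O(n^{\alpha-2})$ (rather than the crude $O(1)$) that pushes the relevant probabilities down to $O(n^{-\alpha})$ and makes the series converge for every $\alpha\in(1,2)$; the Brownian case is easier, since there one compares a Gaussian oscillation over $[n,n+2]$ against a drift decrement that is linear in $n$. The other point deserving care is the measurability underlying the transfer principle --- that statements such as ``$R$ has excursion intervals $(l_1,r_1),(l_2,r_2)\subseteq[0,N]$ with $l_1<l_2$ and $\widetilde{L}_{l_1}\le\widetilde{L}_{l_2}$'' are $\sigma(\widetilde{L}|_{[0,N]})$-measurable --- but this is routine from the c\`adl\`ag regularity of the paths.
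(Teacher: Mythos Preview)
Your argument is correct. For parts (ii)--(iv) you take essentially the paper's route: establish the property almost surely for $L$ and transfer it via the local absolute continuity of Proposition~\ref{prop:stablemeasch}, with a countable-union step to pass from finite intervals to $[0,\infty)$. Your justifications for $L$ differ only cosmetically from the paper's (the paper simply cites Chapter~VII of Bertoin for (ii)--(iii) and Corollary~1 of Rogers for (iv)); your reduction of (iv) to rational $a$ is in fact a little more careful than the paper, which is silent on passing from a fixed $a$ to all $a\ge 0$.

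The one genuine difference is part (i). The paper does not prove it at all but simply invokes Lemma~B.3 of Joseph. You instead give a self-contained Borel--Cantelli argument: on $[n,n+2]$ an excursion of length $\ge\epsilon$ forces the martingale part $X$ to climb by at least $c\,\epsilon\,n^{\alpha-1}$, whereas the oscillation of $X^{(1)}$ on $[n,n+2]$ has second moment $O(n^{\alpha-2})$ (from the explicit intensity $e^{-xs/\mu}\,ds\,\pi(dx)$), giving probabilities $O(n^{-\alpha})$ which are summable; and $X^{(2)}$ eventually has no jumps. This is a clean, more elementary replacement for the citation; the only small point to make explicit is that you should restrict to $\epsilon\le 1$ so that $[l,l+\epsilon]\subset[n,n+2]$ (the general case then follows by monotonicity). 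What your approach buys is self-containment and a transparent mechanism; what the citation buys is brevity.
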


\begin{proof} 

Part (i) is a consequence of Lemma B.3 of Joseph~\cite{Joseph}.  For parts (ii), (iii) and (iv), we first argue that the claimed properties are almost surely true for the L\'evy process $L$ and then use absolute continuity to deduce them for $\widetilde{L}$. 

The analogues of both (ii) and (iii) are standard for $L$ (see, for example, Chapter VII of Bertoin~\cite{BertoinBook}; indeed, these properties are necessary for Theorem~\ref{thm:localtimeexc} to work).  It follows by absolute continuity that $\Prob{\text{Leb}(\{s \le t: R_s = 0\}) = 0}=1$ and 
\[
\Prob{\text{$\widetilde{L}_{l_1}> \widetilde{L}_{l_2}$ for all $(l_1,r_1), (l_2,r_2)$ excursion-intervals of $R$ with $l_1<l_2 \le t$}} = 1,
\]
for fixed $t>0$.  But then (ii) and (iii) follow by monotone convergence.

 By the stationarity and independence of increments of $L$, it is sufficient to prove (iv) for $a=0$.  But this then follows from Corollary 1 of Rogers~\cite{Rogers}.  In particular, if we let $\mathcal{J}$ be the set of jump-times of $\tilde{L}$, by absolute continuity we get $\Prob{\mathcal{B}_a \cap [0,t] \cap \mathcal{J} \neq \emptyset} = 0$ for any $t > 0$. By monotone convergence again, we obtain $\Prob{\mathcal{B}_a \cap \mathcal{J} \neq \emptyset} = 0$.
\end{proof}

Let $\widetilde{I}_t = \inf_{0 \le s \le t}\widetilde{L}_s$.  As for the reflected stable process, we have that $-\widetilde{I}$ acts as a local time at 0 for $R$.  We write $(\widetilde{\sigma}_{\ell}, \ell \ge 0)$ with $\widetilde{\sigma}_{\ell} = \inf \{t > 0: \widetilde{I}_t < -\ell\}$ for  the inverse local time, $(\widetilde{\varepsilon}^{(\ell)}, \ell \ge 0)$ for the collection of excursions above 0, indexed by local time (with $\widetilde{\varepsilon}^{(\ell)} = \partial$ if $\widetilde{\sigma}_{\ell} - \widetilde{\sigma}_{\ell-} = 0$), and $(\widetilde{e}^{(\ell)}, \ell \ge 0)$ for their shapes.  In order to understand the laws of these quantities, we first need to prove two preliminary results, Lemma~\ref{lem:expbound} and Proposition~\ref{prop:stoppedmart}. 

\begin{lem} \label{lem:expbound}
Let $\alpha \in (1,2]$.  Then for any $\theta > 0$,
\[
\mathbb{E}_{\N^{(1)}}\left[\exp \left( \theta \int_0^1 \mathbbm{e}(t) dt \right) \right]  < \infty.
\]
\end{lem}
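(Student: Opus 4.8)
The plan is to prove finiteness of the exponential moment $\N^{(1)}[\exp(\theta \int_0^1 \mathbbm{e}(t)\,dt)]$ by controlling the tail of the random variable $A := \int_0^1 \mathbbm{e}(t)\,dt$ under $\N^{(1)}$, i.e.\ by showing that $\N^{(1)}(A > a)$ decays faster than $e^{-\theta a}$ for every $\theta$ — in fact superexponentially. The heuristic is that $\mathbbm{e}$ under $\N^{(1)}$ is an excursion of the $\alpha$-stable L\'evy process conditioned to have length $1$; its area being large forces the excursion to reach an atypically large height, and large deviations of a stable excursion's supremum decay at a superexponential (in fact stretched-exponential or faster) rate because the process is spectrally positive and, over a bounded time interval, is very unlikely to climb high via many small jumps (the Laplace transform $\E{\exp(-\lambda L_t)} = \exp(C_\alpha \mu^{-1} \lambda^\alpha t)$ is finite for all $\lambda \ge 0$, which controls downward moves, but one needs the one-sided nature to bound upward excursions).

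Concretely, I would proceed as follows. First, relate the area to the supremum: $A = \int_0^1 \mathbbm{e}(t)\,dt \le \sup_{0 \le t \le 1}\mathbbm{e}(t) =: \|\mathbbm{e}\|_\infty$, so it suffices to bound $\N^{(1)}[\exp(\theta \|\mathbbm{e}\|_\infty)]$, or equivalently to show $\N^{(1)}(\|\mathbbm{e}\|_\infty > a) \le C\exp(-c\,a^{\gamma})$ for some $\gamma > 1$ (or indeed for $\gamma = 1$ with arbitrarily large $c$, which already would need a bit of care, so superexponential is the clean target). Second, I would transfer this to a statement about the unconditioned excursion measure $\N$, or directly about the L\'evy process $L$ itself, using the scaling relation recorded just before Lemma~\ref{lem:expbound}: under $\N^{(x)}$ the rescaled excursion $(x^{-1/\alpha}\mathbbm{e}(xu), 0 \le u \le 1)$ has the law of $\mathbbm{e}$ under $\N^{(1)}$, and the disintegration $\N(d\mathbbm{e}) = \int \N^{(x)}(d\mathbbm{e})\,\N(\zeta \in dx)$ from Theorem~\ref{thm:localtimeexc} with $\N(\zeta \in dx) \propto x^{-1-1/\alpha}dx$. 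The cleanest route is probably to bound $\N(\|\mathbbm{e}\|_\infty > a,\ \zeta(\mathbbm{e}) \le 1)$ directly: an excursion of $L$ above its infimum that exceeds height $a$ within time $1$ forces $\sup_{0 \le t \le 1}(L_t - I_t) > a$, and one can estimate $\Prob{\sup_{0 \le t \le 1}(L_t - I_t) > a}$ using a union bound over the excursion straddling the record together with the stopping-time structure, or simply via $\Prob{\sup_{0 \le t \le 1} L_t - \inf_{0 \le t \le 1} L_t > a}$ and known large-deviation estimates for spectrally positive stable processes on a compact interval.

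For that last L\'evy-process estimate I would use a Chernoff/exponential-martingale argument exploiting spectral positivity. The oscillation $\mathrm{Osc}_1(L) := \sup_{[0,1]} L - \inf_{[0,1]} L$ being large is dominated by the event that $L$ makes either a large upward jump (probability governed by $\pi([a/2,\infty)) \asymp a^{-\alpha}$, which alone only gives polynomial decay and is not enough) \emph{or} accumulates a large displacement from many moderate increments. The key point is that after removing jumps larger than, say, $1$, the remaining process has increments with finite exponential moments of all orders, so Bernstein-type inequalities give superexponential decay for \emph{that} part; and the number of jumps of size exceeding $1$ in $[0,1]$ is Poisson with bounded mean, each contributing a heavy-tailed but independent increment, and for the area the relevant thing is actually that each such jump is immediately "used up" — but more simply, conditioning on there being $k$ big jumps, the area contribution is at most the sum of $k$ jump sizes times the remaining time, and the $k$ big jump sizes are i.i.d.\ with a Pareto tail, so $P(\text{their sum} > a) \le k\cdot C a^{-\alpha}$ — again polynomial, which is insufficient on its own. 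Therefore the honest argument must use the conditioning on $\zeta(\mathbbm{e}) = 1$ more cleverly: I would instead directly cite or adapt the known result (e.g.\ from Duquesne--Le Gall~\cite{DuquesneLeGall}, or from the literature on stable excursions, or Chaumont's work) that the normalised stable excursion has a supremum with a superexponential upper tail — indeed $\N^{(1)}(\|\mathbbm{h}\|_\infty > a)$ and hence $\N^{(1)}(\|\mathbbm{e}\|_\infty > a)$ decay like $\exp(-c\,a^{\alpha/(\alpha-1)})$ — and then integrate against $e^{\theta a}$. \textbf{The main obstacle} is precisely this tail bound: a naive union bound over jumps gives only polynomial decay, so one must genuinely exploit the length-$1$ conditioning (which penalises the long time a tall spectrally-positive excursion would need to return to $0$ via its negative drift-free dynamics), and the cleanest implementation is to invoke an existing superexponential estimate for stable-excursion heights rather than to reprove it from scratch; if no citable statement is available in the precise normalisation needed, I would fall back on an exponential-martingale computation for the reflected process $L - I$ killed at an independent exponential (using that $-I$ is a stable subordinator of index $1/\alpha$ from Theorem~\ref{thm:localtimeexc}), which yields a bound on $\N(\|\mathbbm{e}\|_\infty > a)$ under a size-biasing by $\zeta$ that can then be de-biased using the explicit density $x^{-1-1/\alpha}$.
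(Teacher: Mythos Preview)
Your approach is essentially the same as the paper's: bound the area by the supremum, $\int_0^1 \mathbbm{e}(t)\,dt \le \|\mathbbm{e}\|_\infty$, and then use a superexponential tail bound on $\|\mathbbm{e}\|_\infty$ under $\N^{(1)}$. The paper resolves what you call the ``main obstacle'' in one line by citing Theorem~9 of Kortchemski~\cite{KortchemskiSubExpTailBounds}, which gives precisely $\N^{(1)}(\|\mathbbm{e}\|_\infty \ge u) \le C_1\exp(-C_2 u^{\delta})$ for any $\delta \in (0,\alpha/(\alpha-1))$; since one may take $\delta>1$ this immediately yields $\N^{(1)}[\exp(\theta\|\mathbbm{e}\|_\infty)]<\infty$. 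For $\alpha=2$ the paper simply refers to Janson~\cite{Janson}. Your detours through jump decompositions, exponential martingales for $L-I$, and de-biasing via the excursion-length density are therefore unnecessary, though your instinct that the length-$1$ conditioning is what makes the tail superexponential (rather than merely polynomial from a jump-counting argument) is exactly right.
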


\begin{proof}
This is well known in the $\alpha = 2$ case; see, for example, Section 13 of Janson~\cite{Janson}.
For $\alpha \in (1,2)$, observe that 
\[
\int_0^1 \mathbbm{e}(t) dt \le \sup_{t \in [0,1]} \mathbbm{e}(t).
\]

By Theorem 9 of Kortchemski~\cite{KortchemskiSubExpTailBounds} (see also the discussion at the top of the 12th page), for any $\delta \in (0,\frac{\alpha}{\alpha-1})$, there exist constants $C_1, C_2 > 0$ such that
\[
\N^{(1)} \left(\sup_{t \in [0,1]} \mathbbm{e}(t) \ge u \right) \le C_1 \exp(-C_2 u^{\delta}),
\]
for every $u \ge 0$. (Note that since Kortchemski works with the L\'evy process having Laplace exponent $\lambda^{\alpha}$, his normalised excursions are a constant scaling factor different from ours.  But this changes the bound only by a constant.) Since we may take $\delta > 1$, the result follows.
\end{proof}

For $t \ge 0$ write 
\[
\Phi(t) := \exp \left( - \frac{1}{\mu} \int_0^t sdL_s - \frac{C_{\alpha} t^{\alpha+1}}{(\alpha+1) \mu^{\alpha + 1}} \right).
\]
In Lemma~\ref{lem:laplint} in the \hyperref[appn]{Appendix} we show that $\Phi$ is a particular instance of a family of exponential martingales for spectrally positive L\'evy processes.

\begin{prop} \label{prop:stoppedmart}
Let $\alpha \in (1,2]$.  For any $\ell \ge 0$ we have that  $(\Phi(t \wedge \sigma_{\ell}), t \ge 0)$ is a uniformly integrable martingale and, thus, $\E{\Phi(\sigma_{\ell})} = 1$.
\end{prop}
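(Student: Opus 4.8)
The plan is to show first that $(\Phi(t), t\ge 0)$ is a nonnegative local martingale — hence a supermartingale — and then to upgrade $(\Phi(t\wedge\sigma_\ell), t\ge 0)$ to a uniformly integrable \emph{true} martingale by exhibiting an integrable dominating random variable. That $\Phi$ is a nonnegative local martingale is essentially the content of Proposition~\ref{prop:Levymeasch} (the general L\'evy change of measure proved in the Appendix): $\Phi(t) = \exp(-\tfrac1\mu\int_0^t s\,dL_s - \tfrac{C_\alpha t^{\alpha+1}}{(\alpha+1)\mu^{\alpha+1}})$ is the stochastic exponential associated to that measure change, so it is a local martingale with $\Phi(0)=1$; being nonnegative, it is a supermartingale, and in particular $\E{\Phi(t)}\le 1$ for every $t$. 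The task is then to show there is no mass loss when we stop at $\sigma_\ell$.

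The key step is a deterministic pathwise bound on $\Phi(t\wedge\sigma_\ell)$ in terms of the running supremum of $L$. Integrating by parts, $\int_0^t s\,dL_s = tL_t - \int_0^t L_s\,ds$, so
\[
\Phi(t) = \exp\!\left(-\frac{t}{\mu}L_t + \frac{1}{\mu}\int_0^t L_s\,ds - \frac{C_\alpha t^{\alpha+1}}{(\alpha+1)\mu^{\alpha+1}}\right).
\]
Now for $t\le\sigma_\ell$ we have $I_t = \inf_{0\le s\le t}L_s \ge -\ell$, i.e.\ $L_s \ge -\ell$ for all $s\le t$. This controls the negative part of $L$ but not the positive part; however, on $[0,\sigma_\ell]$ we can bound the whole expression using only a one-sided control. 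Writing $\overline{L}_t = \sup_{0\le s\le t}L_s$, we get $-\tfrac{t}{\mu}L_t \le \tfrac{t}{\mu}\ell$ only when $L_t\ge -\ell$, which holds, and $\tfrac1\mu\int_0^t L_s\,ds \le \tfrac{t}{\mu}\overline{L}_t$; this still leaves a $\overline{L}$ term. The cleaner route is to observe that on $\{\,t\le\sigma_\ell\,\}$ the process $L$ restricted to $[0,t]$ decomposes along its excursions above the infimum, each of which has been run only up to total local time $\ell$, and to use the excursion description of Theorem~\ref{thm:localtimeexc}: $\sigma_\ell$ is a.s.\ finite (index-$1/\alpha$ stable subordinator) and, more importantly, $\overline{L}_{\sigma_\ell} = \sup_{s\le\ell}\sup_u \varepsilon^{(s)}(u)$ is, by the Poissonian structure and the tail $\N(\zeta>x)\asymp x^{-1/\alpha}$ together with the subexponential excursion-height bound of Lemma~\ref{lem:expbound}, a random variable with exponential moments of all orders. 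Combining with $\Phi(t\wedge\sigma_\ell)\le \exp\!\big(\tfrac{1}{\mu}(\sigma_\ell\vee t)\,(\ell + \overline{L}_{\sigma_\ell})\big)$-type bounds — and using that $\sigma_\ell$ itself has a stable-$(1/\alpha)$ law with $1/\alpha<1$, hence finite moments of order $<1/\alpha$, which after a further application of Young's/Hölder's inequality suffices against the negative drift term $-C_\alpha t^{\alpha+1}/((\alpha+1)\mu^{\alpha+1})$ that dominates for large $t$ — yields an integrable dominating random variable $Z$ with $\Phi(t\wedge\sigma_\ell)\le Z$ for all $t$.

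Given such a $Z\in L^1$, uniform integrability of $(\Phi(t\wedge\sigma_\ell), t\ge0)$ is immediate, so the local martingale $(\Phi(t\wedge\sigma_\ell))$ is a genuine uniformly integrable martingale; applying the optional stopping theorem (or simply $L^1$-convergence as $t\to\infty$, using $\sigma_\ell<\infty$ a.s.) gives $\E{\Phi(\sigma_\ell)} = \E{\Phi(0)} = 1$. I expect the main obstacle to be the domination step: one must control $\Phi(t\wedge\sigma_\ell)$ \emph{uniformly in $t$}, and for $t$ in the range $(\sigma_\ell,\infty)$ the exponent equals $\Phi(\sigma_\ell)$ (frozen), so the real work is the single bound on $\Phi(\sigma_\ell)$ via the excursion decomposition and the moment estimates for $\sigma_\ell$ and for $\overline{L}_{\sigma_\ell}$, the latter leaning on Lemma~\ref{lem:expbound}. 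An alternative that sidesteps some of this is to note that $\Phi(t\wedge\sigma_\ell)$ is the Radon--Nikodym density, restricted to $\F_{t\wedge\sigma_\ell}$, of the law of $(\widetilde L_s, s\le t\wedge\widetilde\sigma_\ell)$ with respect to that of $L$, and to verify directly that $\widetilde\sigma_\ell<\infty$ a.s.\ under the tilted measure (which follows from Proposition~\ref{prop:transient}, since $\widetilde L_t\to-\infty$ forces $\widetilde I_t\to-\infty$, hence $\widetilde\sigma_\ell<\infty$); this non-explosion of the stopping time under the new measure is exactly equivalent to $\E{\Phi(\sigma_\ell)}=1$, and is perhaps the conceptually shortest path.
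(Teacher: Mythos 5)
Your primary route (domination by an integrable $Z$ built from excursion estimates) has a genuine gap. For $\alpha\in(1,2)$ the claim that $\overline{L}_{\sigma_\ell}:=\sup_{s\le\sigma_\ell}L_s$ has exponential moments of all orders is false: if a jump of size $J$ occurs at a time $s<\sigma_\ell$ then $L_{s-}\ge I_{s-}\ge-\ell$, so $\overline{L}_{\sigma_\ell}\ge J-\ell$, and the largest jump before $\sigma_\ell$ is heavy-tailed (the L\'evy measure is $\propto x^{-\alpha-1}dx$), so $\Prob{\overline{L}_{\sigma_\ell}>x}$ decays only polynomially. Lemma~\ref{lem:expbound} cannot rescue this: it concerns the excursion \emph{normalised to have length one} under $\N^{(1)}$, whereas the unconditioned excursions appearing before local time $\ell$ have heavy-tailed lengths and suprema. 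Consequently the pathwise bound $\sup_{t\le\sigma_\ell}\Phi(t)\le\exp\bigl(\sigma_\ell(\ell+\overline{L}_{\sigma_\ell})/\mu\bigr)$, which discards the negative drift term, is not integrable (already $\E{\exp(\theta L_t)}=\infty$ for every $\theta>0$ at fixed $t$), and no Young/H\"older manipulation of these heavy-tailed quantities will produce an integrable dominating variable: the finiteness of $\E{\Phi(\sigma_\ell)}$ comes from the cancellation between the area term and the drift term in (\ref{eqn:stoppedmart}), not from separate moment bounds. Note also that domination is more than is needed: once one knows $\E{\Phi(\sigma_\ell)}=1$, uniform integrability of the stopped martingale follows from almost sure convergence together with convergence of the means (Scheff\'e), so the whole excursion-theoretic machinery is superfluous here.

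By contrast, the alternative you sketch in your final paragraph is correct and is, in substance, the paper's proof. Since $\Phi$ is a true martingale of mean one at fixed times (this is Lemma~\ref{lem:laplint}/Proposition~\ref{prop:Levymeasch}, and it is genuinely needed -- the local-martingale/supermartingale framing you start with is not enough to identify $\Phi(t)$ as the density of the tilted law on $\F_t$), the stopped process is a nonnegative martingale; Fatou gives $\E{\Phi(\sigma_\ell)}\le1$, and then $\E{|\Phi(\sigma_\ell)-\Phi(t\wedge\sigma_\ell)|}\le\E{\Phi(\sigma_\ell)\I{\sigma_\ell>t}}+\E{\Phi(t)\I{\sigma_\ell>t}}$, where the change of measure identifies the second term as $\Prob{\widetilde{\sigma}_\ell>t}$. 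The first term vanishes as $t\to\infty$ because $\sigma_\ell<\infty$ a.s., and the second vanishes because $\widetilde{L}_t\to-\infty$ a.s.\ (Proposition~\ref{prop:transient}) forces $\widetilde{\sigma}_\ell<\infty$ a.s.; this yields $L^1$ convergence, hence uniform integrability and $\E{\Phi(\sigma_\ell)}=1$. So the fix is simply to drop the domination step and promote your ``conceptually shortest path'' to the proof, making explicit the optional-sampling identity $\E{\Phi(t)\I{\sigma_\ell>t}}=\Prob{\widetilde{\sigma}_\ell>t}$ on which it rests.
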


\begin{proof}
By Lemma~\ref{lem:laplint} with $\theta =\frac{1}{\mu}$, $\gamma=\delta=0$ and $\pi(dx)=\frac{c}{\mu}x^{-(\alpha+1)}dx$, the process $(\Phi(t), t \ge 0)$ is a non-negative martingale of mean 1. Since for any $\ell\ge 0$, $\sigma_{\ell}$ is a stopping time for $L$ and since $\Phi$ has right-continuous trajectories, $(\Phi(t \wedge \sigma_{\ell}), t \ge 0)$ is a martingale with respect to the natural filtration of $L$. So by the almost sure martingale convergence theorem, we must have $\Phi(t \wedge \sigma_{\ell}) \to \Phi(\sigma_{\ell})$ almost surely as $t \to \infty$.  Then $(\Phi(t \wedge \sigma_{\ell}), t \ge 0)$ is uniformly integrable if and only if this convergence also holds in $L^1$.  By Fatou's lemma, we get
$\E{\Phi(\sigma_{\ell})} \le 1$, so that $\Phi(\sigma_{\ell})$ is integrable.  Now, for any $t > 0$,
\[
\E{|\Phi(\sigma_{\ell}) - \Phi(t \wedge \sigma_{\ell})|} = \E{|\Phi(\sigma_{\ell} ) - \Phi(t)| \I{\sigma_{\ell} > t}} \le \E{\Phi(\sigma_{\ell}) \I{\sigma_{\ell} > t}} + \E{\Phi(t) \I{\sigma_{\ell} > t}}.
\]
Observe that by the definition of the measure-changed process, we have $\E{\Phi(t) \I{\sigma_{\ell} > t}} = \Prob{\widetilde{\sigma}_{\ell} > t}$.  So
\[
\E{|\Phi(\sigma_{\ell}) - \Phi(t \wedge \sigma_{\ell})|} \le \Prob{\widetilde{\sigma}_{\ell} > t} +  \E{\Phi(\sigma_{\ell}) \I{\sigma_{\ell} > t}}.
\]
Since $\Phi(\sigma_{\ell})$ is integrable (and hence uniformly integrable) and since $\sigma_{\ell} < \infty$ almost surely, we have $\lim_{t \to \infty} \E{\Phi(\sigma_{\ell}) \I{\sigma_{\ell} > t}} = 0$. By Proposition~\ref{prop:transient}, we have that $\widetilde{L}_t \to -\infty$ almost surely as $t \to \infty$, and so $\widetilde{\sigma}_{\ell} < \infty$ almost surely.  So 
$ \lim_{t \to \infty} \Prob{\widetilde{\sigma}_{\ell} > t} = 0$ and we get
\[
\E{|\Phi(\sigma_{\ell}) - \Phi(t \wedge \sigma_{\ell})|}  \to 0
\]
as $t \to \infty$. Hence, $(\Phi(t \wedge \sigma_{\ell}), t \ge 0)$ is uniformly integrable and, in particular, we may deduce that $\E{\Phi(\sigma_{\ell})} = 1$.
\end{proof}

We are now in a position to characterise the joint law of $(\widetilde{\sigma}_s, 0 \le s \le \ell)$ and $(\widetilde{\varepsilon}^{(s)}, s \le \ell)$.  We will find it convenient to list the excursions occurring before local time $\ell$ has been accumulated in decreasing order of length, as $(\widetilde{\varepsilon}^{(\ell)}_i, i \ge 1)$.  Proposition~\ref{prop:stoppedmart} implies that we may use the Radon--Nikodym derivative $\Phi(t)$ to change measure at the \emph{random} times $\sigma_{\ell}$.  As earlier, we write $(\varepsilon^{(\ell)}_i, i \ge 1)$ for the excursions of $L$ occurring before time $\sigma_{\ell}$ in decreasing order of length. For an excursion $\varepsilon \in \mathcal{E}^*= \mathcal{E}\cup\{\partial\}$, write $a(\varepsilon) = \int_0^{\zeta(\varepsilon)} \varepsilon(u) du$ for its area.

\begin{prop} \label{prop:exclaw}
For suitable test functions $f$ and $g_1, g_2, g_3, \ldots$, and any $n \ge 1$, we have
\begin{align*}
& \E{f(\widetilde{\sigma}_s, 0 \le s \le \ell) \prod_{i=1}^n g_i \left(\widetilde{\varepsilon}_i^{(\ell)} \right)} \\
& = \mathbb{E} \left[ \exp \left(  \frac{1}{\mu} \int_0^{\ell} \sigma_r dr  -  \frac{C_{\alpha} \sigma_{\ell}^{\alpha+1}}{(\alpha+1) \mu^{\alpha+1}} \right) f(\sigma_s, 0 \le s \le \ell) \right. \\
& \quad   \left. \times\E{\exp \left( \frac{1}{\mu} \sum_{j > n} a(\varepsilon^{(\ell)}_j) \right) \Bigg| \zeta(\varepsilon_k^{(\ell)}), k > n}   \prod_{i=1}^n \E{\exp \left( \frac{1}{\mu} a(\varepsilon^{(\ell)}_i) \right) g_i \left(\varepsilon_i^{(\ell)}\right) \Bigg| \zeta(\varepsilon_i^{(\ell)})} \right] .
\end{align*}
In particular, the excursions $(\widetilde{\varepsilon}_i^{(\ell)}, i \ge 1)$ are conditionally independent given their lengths.  Moreover, for any $i \ge 1$ and any suitable test function $g$,
\begin{align*}
\E{g \left(\widetilde{\varepsilon}_i^{(\ell)} \right) \ \Big| \ \zeta(\widetilde{\varepsilon}_i^{(\ell)}) = x}  & = \frac{\mathbb{E}_{\N^{(x)}}  \left[ \exp \left( \frac{1}{\mu} \int_0^x \mathbbm{e}(t) dt \right) g(\mathbbm{e}) \right]}{\mathbb{E}_{\N^{(x)}}  \left[ \exp \left( \frac{1}{\mu} \int_0^x \mathbbm{e}(t) dt \right) \right]} \\
& = \frac{\mathbb{E}_{\N^{(1)}}  \left[ \exp \left( \frac{x^{1+1/\alpha}}{\mu} \int_0^1 \mathbbm{e}(t) dt \right) g(x^{1/\alpha}\mathbbm{e}(\cdot/x)) \right]}{\mathbb{E}_{\N^{(1)}} \left[ \exp \left( \frac{x^{1+1/\alpha}}{\mu} \int_0^1 \mathbbm{e}(t) dt \right) \right]}.
\end{align*}
\end{prop}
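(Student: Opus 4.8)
The plan is to transfer the excursion decomposition of $L$ (Theorem~\ref{thm:localtimeexc}) to $\widetilde{L}$ by changing measure at the inverse‑local‑time stopping time $\sigma_\ell$, and then to rewrite the density $\Phi(\sigma_\ell)$ as a product of a factor that depends only on the excursion \emph{lengths} and a product over the excursions of factors that depend on each excursion's area.

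\textbf{Step 1 (change of measure at $\sigma_\ell$).} The quantities $(\sigma_s, 0\le s\le\ell)$ and the excursions $(\varepsilon_i^{(\ell)}, i\ge 1)$ are measurable with respect to $\F_{\sigma_\ell}$, and by Proposition~\ref{prop:stoppedmart} the process $(\Phi(t\wedge\sigma_\ell), t\ge 0)$ is a uniformly integrable martingale with $\E{\Phi(\sigma_\ell)}=1$. A standard optional‑stopping argument (using that $\E{\Phi(t)\I{\sigma_\ell>t}}=\Prob{\widetilde\sigma_\ell>t}\to 0$, which was already established inside the proof of Proposition~\ref{prop:stoppedmart}) then shows that the law of $\big((\widetilde\sigma_s,0\le s\le\ell),(\widetilde\varepsilon_i^{(\ell)},i\ge1)\big)$ is that of $\big((\sigma_s,0\le s\le\ell),(\varepsilon_i^{(\ell)},i\ge1)\big)$ reweighted by $\Phi(\sigma_\ell)$, so that the left‑hand side of the claimed identity equals $\E{\Phi(\sigma_\ell)\,f(\sigma_s,0\le s\le\ell)\prod_{i=1}^n g_i(\varepsilon_i^{(\ell)})}$.

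\textbf{Step 2 (rewriting $\Phi(\sigma_\ell)$).} Writing $a(\varepsilon)=\int_0^{\zeta(\varepsilon)}\varepsilon(u)\,du$, the key identity is
\[
\int_0^{\sigma_\ell} s\,dL_s \;=\; -\int_0^\ell \sigma_r\,dr \;-\;\sum_{r\le\ell:\ \sigma_r>\sigma_{r-}} a(\varepsilon^{(r)}).
\]
This follows from integration by parts, $\int_0^{\sigma_\ell}s\,dL_s=\sigma_\ell L_{\sigma_\ell}-\int_0^{\sigma_\ell}L_s\,ds$, together with $L_{\sigma_\ell}=-\ell$ and the decomposition $\int_0^{\sigma_\ell}L_s\,ds=\int_0^{\sigma_\ell}(L_s-I_s)\,ds+\int_0^{\sigma_\ell}I_s\,ds$, in which the first term equals $\sum_r a(\varepsilon^{(r)})$ because $L_s-I_s$ is exactly the value of the current excursion, and the second term equals $-\ell\sigma_\ell+\int_0^\ell\sigma_r\,dr$ by a layer‑cake computation using that $-I$ is the local time. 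Consequently
\[
\Phi(\sigma_\ell)=\exp\!\left(\frac1\mu\int_0^\ell\sigma_r\,dr-\frac{C_\alpha\sigma_\ell^{\alpha+1}}{(\alpha+1)\mu^{\alpha+1}}\right)\prod_{i\ge1}\exp\!\left(\frac1\mu a(\varepsilon_i^{(\ell)})\right),
\]
where the prefactor is a measurable function of the point measure of excursion lengths (equivalently, of $(\sigma_s,0\le s\le\ell)$).

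\textbf{Step 3 (conditioning on the lengths, and the single‑excursion law).} Substituting this into Step~1 and conditioning on the $\sigma$‑algebra generated by the excursion lengths and their local‑time positions, Theorem~\ref{thm:localtimeexc} and the self‑similarity discussion following it give that, conditionally, the $\varepsilon_i^{(\ell)}$ are independent with $\varepsilon_i^{(\ell)}$ having law $\N^{(\zeta(\varepsilon_i^{(\ell)}))}$, while $(\sigma_s,0\le s\le\ell)$ is measurable (here one uses that a.s.\ no two excursions have equal length, so ``listing in decreasing order of length'' is unambiguous and measurable). Pulling out the prefactor and $f$, factoring over $i$ by conditional independence, and noting that $\N^{(x)}$ depends only on $x$ so each conditional expectation may be taken given only the relevant length, yields exactly the displayed formula; the conditional independence of $(\widetilde\varepsilon_i^{(\ell)},i\ge1)$ given their lengths is immediate from it. All conditional expectations are finite since, by scaling, $\N^{(x)}[\exp(\tfrac1\mu\int_0^x\mathbbm{e})]=\N^{(1)}[\exp(\tfrac{x^{1+1/\alpha}}{\mu}\int_0^1\mathbbm{e})]<\infty$ by Lemma~\ref{lem:expbound}. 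For the ``Moreover'' part, the conditional law of $\widetilde\varepsilon_i^{(\ell)}$ given all lengths is $\N^{(\zeta(\widetilde\varepsilon_i^{(\ell)}))}$ biased by $\exp(\tfrac1\mu\int_0^\zeta\mathbbm{e}(t)\,dt)$ and renormalised; as this depends only on $\zeta(\widetilde\varepsilon_i^{(\ell)})$, it equals the conditional law given $\zeta(\widetilde\varepsilon_i^{(\ell)})=x$, giving the first equality, and the second is the change of variables $\mathbbm{e}\mapsto x^{1/\alpha}\mathbbm{e}(\cdot/x)$ under the scaling relation $\N^{(x)}=\mathrm{law}\big(x^{1/\alpha}\mathbbm{e}(\cdot/x)\big)$ under $\N^{(1)}$, using $\int_0^x x^{1/\alpha}\mathbbm{e}(t/x)\,dt=x^{1+1/\alpha}\int_0^1\mathbbm{e}(u)\,du$.

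\textbf{Main obstacle.} The delicate step is Step~2: justifying the integration by parts for the infinite‑variation L\'evy integral $\int_0^{\sigma_\ell}s\,dL_s$ evaluated at the random time $\sigma_\ell$, and carefully controlling the decomposition of $\int_0^{\sigma_\ell}L_s\,ds$ into the (length‑measurable) inverse‑local‑time part and the a.s.\ convergent sum of excursion areas, via the bookkeeping that $L_s-I_s$ is the running excursion and $-I$ is the local time. A secondary subtlety is the measure‑theoretic justification that one may condition on ``excursion lengths and positions'' and obtain conditionally i.i.d.\ $\N^{(1)}$‑distributed shapes together with a measurable $(\sigma_s,0\le s\le\ell)$ — but this is exactly the content of Theorem~\ref{thm:localtimeexc} and the ensuing discussion.
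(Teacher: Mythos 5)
Your proposal is correct and follows essentially the same route as the paper: change of measure at the stopping time $\sigma_\ell$ justified by Proposition~\ref{prop:stoppedmart}, the integration-by-parts/local-time rewriting of $\int_0^{\sigma_\ell} s\,dL_s$ into $-\frac{1}{\mu}\int_0^\ell\sigma_r\,dr-\frac{1}{\mu}\sum_{s\le\ell}a(\varepsilon^{(s)})$, and then conditional independence of the excursions given their lengths plus the stable scaling property and Lemma~\ref{lem:expbound}. No gaps to report.
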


\begin{proof}
By integration by parts and writing $L_s = I_s + (L_s - I_s)$, noting that $L_{\sigma_{\ell}} = -\ell$, we get
\[
-\frac{1}{\mu} \int_0^{\sigma_{\ell}} s dL_s  
 = \frac{\ell \sigma_{\ell}}{\mu} + \frac{1}{\mu} \int_0^{\sigma_{\ell}} L_s ds 
 =  \frac{\ell \sigma_{\ell} }{\mu} + \frac{1}{\mu} \int_0^{\sigma_{\ell}} I_s ds + \frac{1}{\mu} \int_0^{\sigma_{\ell}} (L_s - I_s) ds. 
\]
Changing variable in the middle term, and using the fact that $I_{\sigma_s} = -s$, we obtain
\[
 \frac{\ell \sigma_{\ell} }{\mu} + \frac{1}{\mu} \int_0^{\ell} I_{\sigma_s} d\sigma_s + \frac{1}{\mu} \int_0^{\sigma_{\ell}} (L_s - I_s) ds = \frac{\ell \sigma_{\ell}}{\mu} - \frac{1}{\mu} \int_0^{\ell} s d\sigma_s +  \frac{1}{\mu} \int_0^{\sigma_{\ell}} (L_s - I_s) ds.
\]
Another integration by parts yields that this is equal to
\[
\frac{1}{\mu} \int_0^{\ell} \sigma_s ds +  \frac{1}{\mu} \int_0^{\sigma_{\ell}} (L_s - I_s) ds.
\]
Finally, we can integrate the excursions of $L-I$ separately to obtain that this is equal to
\[
\frac{1}{\mu} \int_0^{\ell} \sigma_s ds +  \frac{1}{\mu} \sum_{s \le \ell} a(\varepsilon^{(s)}).
\]
Hence,
\begin{equation} \label{eqn:stoppedmart}
\Phi(\sigma_{\ell}) = \exp \left( \frac{1}{\mu} \int_0^{\ell} \sigma_r dr +  \frac{1}{\mu} \sum_{s \le \ell}  a(\varepsilon^{(s)}) - C_{\alpha} \frac{ \sigma_{\ell}^{\alpha+1}}{(\alpha+1) \mu^{\alpha+1}} \right).
\end{equation}
Now,
\begin{align*}
& \E{f(\widetilde{\sigma}_s, 0 \le s \le \ell) \prod_{i=1}^n g_i \left(\widetilde{\varepsilon}_i^{(\ell)} \right)} \\
& = \E{\exp \left( \frac{1}{\mu} \int_0^{\ell} \sigma_r dr +  \frac{1}{\mu} \sum_{s \le \ell}  a(\varepsilon^{(s)}) - C_{\alpha} \frac{ \sigma_{\ell}^{\alpha+1}}{(\alpha+1) \mu^{\alpha+1}} \right)  f(\sigma_s, 0 \le s \le \ell) \prod_{i=1}^n g_i \left(\varepsilon_i^{(\ell)} \right)} \\
& = \mathbb{E} \left[ \exp \left( \frac{1}{\mu} \int_0^{\ell} \sigma_r dr - C_{\alpha} \frac{ \sigma_{\ell}^{\alpha+1}}{(\alpha+1) \mu^{\alpha+1}} \right) f(\sigma_s, 0 \le s \le \ell)  \right. \\
& \qquad \qquad \left. \times \E{\exp \left( \frac{1}{\mu} \sum_{s \le \ell}  a(\varepsilon^{(s)})\right) \prod_{i=1}^n f_i \left(\varepsilon_i^{(\ell)} \right) \Big| (\sigma_s, 0 \le s \le \ell)} \right],
\end{align*}
As discussed below Theorem~\ref{thm:localtimeexc}, the excursions of the stable L\'evy process are conditionally independent given their lengths, which yields the first expression in the statement of the proposition.  The final statement is an immediate consequence of the scaling property for stable excursions; we observe that this change of measure for the excursions is well-defined by Lemma~\ref{lem:expbound}.
\end{proof}

Lemma~\ref{lem:Josephprops} (i) implies that we can list \emph{all} the excursions of $R$ in decreasing order of length: write $(\widetilde{\varepsilon}_i, i \ge 1)$ for this list.  Write $(\widetilde{h}_i, i \ge 1)$ for the corresponding height process excursions.

\begin{prop} \label{prop:condindep}
The pairs of excursions $(\widetilde{\varepsilon}_i, \widetilde{h}_i, i \ge 1)$ are conditionally independent given their lengths $(\zeta(\widetilde{\varepsilon}_i), i \ge 1)$, with law specified by
\begin{align*}
\E{g \left(\widetilde{\varepsilon}_i, \widetilde{h}_i\right) \ \Big| \ \zeta(\widetilde{\varepsilon}_i) = x}  & = \frac{\mathbb{E}_{\N^{(x)}} \left[ \exp \left( \frac{1}{\mu} \int_0^x \mathbbm{e}(t) dt \right) g(\mathbbm{e}, \mathbbm{h}) \right]}{\mathbb{E}_{\N^{(x)}}  \left[ \exp \left( \frac{1}{\mu} \int_0^x \mathbbm{e}(t) dt \right) \right]} \\
& = \frac{\mathbb{E}_{\N^{(1)}}  \left[ \exp \left( \frac{x^{1+1/\alpha}}{\mu} \int_0^1 \mathbbm{e}(t) dt \right) g(x^{1/\alpha}\mathbbm{e}(\cdot/x), x^{(\alpha-1)/\alpha} \mathbbm{h}(\cdot/x)) \right]}{\mathbb{E}_{\N^{(1)}}  \left[ \exp \left( \frac{x^{1+1/\alpha}}{\mu} \int_0^1 \mathbbm{e}(t) dt \right) \right]}.
\end{align*}
\end{prop}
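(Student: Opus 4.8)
The plan is to derive this from Proposition~\ref{prop:exclaw} in two moves: first carry the height-process excursions through the argument of that proposition, and then pass from the excursions completed by a fixed amount of local time to the complete list.

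\emph{Step 1: adding the height process.} I would revisit the proof of Proposition~\ref{prop:exclaw}. The excursions of $L-I$ enter the Radon--Nikodym density $\Phi(\sigma_\ell)$ only through their areas $a(\varepsilon^{(s)})=\int_0^{\zeta(\varepsilon^{(s)})}\varepsilon^{(s)}(u)\,du$, via the identity $\int_0^{\sigma_\ell}(L_s-I_s)\,ds=\sum_{s\le\ell}a(\varepsilon^{(s)})$ used to derive \eqref{eqn:stoppedmart}; in particular the density does not involve the height process. As recalled just above Lemma~\ref{lem:Josephprops}, the excursion $\mathbbm{h}$ of the height process associated with an excursion $\mathbbm{e}$ of $L-I$ is a measurable functional of $\mathbbm{e}$ of the same length (this is part of the construction of Chapter~1 of~\cite{DuquesneLeGall}, see \eqref{eqn:heightprocess}; the height excursion restarts at each zero of $L-I$). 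Hence the pairs $(\varepsilon^{(s)},h^{(s)})$ of Lévy/height excursions of $L$ are, like the $\varepsilon^{(s)}$ alone, conditionally independent given their common lengths (discussion below Theorem~\ref{thm:localtimeexc}), and the computation of Proposition~\ref{prop:exclaw} goes through verbatim, but now with each test function $g_i$ evaluated at the pair $(\widetilde\varepsilon^{(\ell)}_i,\widetilde h^{(\ell)}_i)$ (where $\widetilde h^{(\ell)}_i$ is the excursion of $\widetilde H$ matched to $\widetilde\varepsilon^{(\ell)}_i$). This yields that, conditionally on the lengths, the pairs $(\widetilde\varepsilon^{(\ell)}_i,\widetilde h^{(\ell)}_i)_{i\ge1}$ are independent, with
\[
\E{g(\widetilde\varepsilon^{(\ell)}_i,\widetilde h^{(\ell)}_i)\,\big|\,\zeta(\widetilde\varepsilon^{(\ell)}_i)=x}=\frac{\N^{(x)}\left[\exp\left(\tfrac1\mu\int_0^x\mathbbm{e}(t)\,dt\right)g(\mathbbm{e},\mathbbm{h})\right]}{\N^{(x)}\left[\exp\left(\tfrac1\mu\int_0^x\mathbbm{e}(t)\,dt\right)\right]},
\]
the denominator being finite by Lemma~\ref{lem:expbound}. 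The second form of the conditional law in the statement follows from the scaling relations for $\mathbbm{e}$ and $\mathbbm{h}$ under $\N^{(x)}$ recorded before Lemma~\ref{lem:Josephprops}, exactly as in Proposition~\ref{prop:exclaw}.

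\emph{Step 2: from the local-time-$\ell$ list to the full list.} By Lemma~\ref{lem:Josephprops}(i) and Proposition~\ref{prop:transient} ($\widetilde L_t\to-\infty$, hence the accumulated local time is unbounded), all excursions of $R$ can be listed in decreasing order of length, each is completed at a finite value of the local time, and for every fixed $m$ the $m$ longest excursions $\widetilde\varepsilon_1,\dots,\widetilde\varepsilon_m$ are completed by some a.s.\ finite local time $N_m$. For $\ell\ge N_m$, the $m$ longest among the excursions completed by local time $\ell$ are precisely $\widetilde\varepsilon_1,\dots,\widetilde\varepsilon_m$, so that $(\widetilde\varepsilon^{(\ell)}_i,\widetilde h^{(\ell)}_i,\zeta(\widetilde\varepsilon^{(\ell)}_i))=(\widetilde\varepsilon_i,\widetilde h_i,\zeta(\widetilde\varepsilon_i))$ identically for $1\le i\le m$. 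Now fix $n$, take bounded measurable $g_1,\dots,g_n$ and a bounded measurable $\psi$ of $\zeta(\widetilde\varepsilon_1),\dots,\zeta(\widetilde\varepsilon_m)$ for some $m\ge n$, and write $\Psi_g(x)$ for the right-hand side of the claimed conditional-law formula. The conditioned form of Step~1,
\[
\E{\psi\big((\zeta(\widetilde\varepsilon^{(\ell)}_j))_{j\ge1}\big)\prod_{i=1}^n g_i(\widetilde\varepsilon^{(\ell)}_i,\widetilde h^{(\ell)}_i)}=\E{\psi\big((\zeta(\widetilde\varepsilon^{(\ell)}_j))_{j\ge1}\big)\prod_{i=1}^n \Psi_{g_i}\big(\zeta(\widetilde\varepsilon^{(\ell)}_i)\big)},
\]
has, for $\ell\ge N_m$, both sides equal to the corresponding expressions with the superscript $(\ell)$ removed; since all integrands are bounded, dominated convergence as $\ell\to\infty$ gives the identity for the full list. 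Letting $\psi$ range over all bounded measurable functions of finitely many coordinates and applying a monotone-class argument extends this to bounded measurable functionals of the whole length sequence, which is exactly the asserted conditional independence of $(\widetilde\varepsilon_i,\widetilde h_i)_{i\ge1}$ given $(\zeta(\widetilde\varepsilon_i))_{i\ge1}$, together with the stated marginal conditional laws.

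\emph{Anticipated difficulty.} Neither step is deep, and the main points needing care are not in the computation but in its justification. In Step~1 one must be precise that the height-process excursions can be carried along, i.e.\ that $\mathbbm{e}\mapsto\mathbbm{h}$ is a well-defined measurable map under $\N$ and that $\widetilde H$ restricts to the corresponding height excursion on each excursion interval of $R$; this is standard~\cite{DuquesneLeGall} but is the only non-bookkeeping input there. In Step~2 the crucial point is the \emph{stabilisation of the ordering} of the excursions --- it is essential that, for $\ell$ large, the top-$m$ list before local time $\ell$ coincides exactly with the global top-$m$ list --- and this is precisely what Lemma~\ref{lem:Josephprops}(i) (finitely many long excursions) and Proposition~\ref{prop:transient} ($\widetilde L_t\to-\infty$, hence unbounded local time and an infinite family of excursions) guarantee; without this one could not transfer the conditional-independence statement to the limit.
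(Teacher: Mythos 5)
Your proposal is correct and takes essentially the same route as the paper: the paper likewise deduces the result from Proposition~\ref{prop:exclaw} by letting $\ell\to\infty$, using Proposition~\ref{prop:transient} and Lemma~\ref{lem:Josephprops} to show that the ordered lengths $(\zeta(\widetilde{\varepsilon}_i^{(\ell)}))_{i\ge1}$ converge almost surely to $(\zeta(\widetilde{\varepsilon}_i))_{i\ge1}$ and noting that the conditional-law expressions do not depend on $\ell$. The only difference is one of explicitness: your Step~1 spells out the bookkeeping of carrying the height-process excursions through Proposition~\ref{prop:exclaw} (the density depending only on the areas, $\mathbbm{h}$ being a measurable functional of $\mathbbm{e}$), which the paper leaves implicit.
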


\begin{proof}
The excursions of $R$ occurring before local time $\ell$ has been accumulated are a strict subset of all the excursions that ever occur.  By Lemma~\ref{lem:Josephprops}, we have that
\[
\sup \{\zeta(\varepsilon): \text{ $\varepsilon$ is an excursion of $R$ starting after time $t$}\} \convprob 0
\]
as $t \to \infty$ and $-\widetilde{I}_t \to \infty$ as $t \to \infty$. The latter implies that $\widetilde{\sigma}_{\ell} < \infty$ a.s., and since $-\widetilde{I}_t < \infty$ for each $t > 0$, we also have $\widetilde{\sigma}_{\ell} \to \infty$ as $\ell \to \infty$.  Hence,
\[
\sup \{\zeta(\varepsilon): \text{ $\varepsilon$ is an excursion of $R$ starting after time $\widetilde{\sigma}_{\ell}$}\} \convprob 0
\]
as $\ell \to \infty$. It follows that
\[
(\zeta(\widetilde{\varepsilon}_i^{(\ell)}), i \ge 1) \to (\zeta(\widetilde{\varepsilon}_i), i \ge 1) \quad \text{ a.s.}
\] 
in the product topology, as $\ell \to \infty$. The result then follows from Proposition~\ref{prop:exclaw} since the expressions there do not depend on the value of $\ell$.
\end{proof}

This enables us to give the proof of Theorem~\ref{thm:conditionaldescription} assuming the definition of $((\mathcal{G}_i, d_i, \mu_i), i \ge 1)$ from $\widetilde{L}$ given following Theorem~\ref{thm:main}.

\begin{proof}[Proof of Theorem~\ref{thm:conditionaldescription}]
With Proposition~\ref{prop:condindep} in hand, it remains to deal with the Poisson points which give rise to the vertex-identifications.  We have straightforwardly that, given $\widetilde{\varepsilon}_i$, the number $M_i$ of points falling under the excursion is conditionally independent of the other excursions and has a Poisson distribution with parameter $\frac{1}{\mu} \int_0^{\infty} \widetilde{\varepsilon}_i(u) du$.  Moreover, conditionally on the number of points, their locations are i.i.d.\ uniform random variables in the area under the excursion.  For any suitable test function $g$,
\begin{align*}
& \E{g \left(\widetilde{\varepsilon}_i, \widetilde{h}_i\right) \I{M_i=m} \ \Big| \ \zeta(\widetilde{\varepsilon}_i) = x} \\
& \qquad = \E{g \left(\widetilde{\varepsilon}_i, \widetilde{h}_i\right) \exp \left(-\frac{1}{\mu} \int_0^{\infty} \widetilde{\varepsilon}_i(u) du \right) \frac{1}{m!} \left(\frac{1}{\mu} \int_0^{\infty} \widetilde{\varepsilon}_i(u) du \right)^m \ \Bigg| \ \zeta(\widetilde{\varepsilon}_i) = x} \\
& \qquad = \frac{\mathbb{E}_{\N^{(x)}} \left[ \exp \left( \frac{1}{\mu} \int_0^x \mathbbm{e}(t) dt \right) g(\mathbbm{e}, \mathbbm{h}) \exp \left(- \frac{1}{\mu} \int_0^x \mathbbm{e}(t) dt \right) \frac{1}{m!} \left( \frac{1}{\mu} \int_0^x \mathbbm{e}(t) dt \right)^m  \right]}{\mathbb{E}_{\N^{(x)}}  \left[ \exp \left( \frac{1}{\mu} \int_0^x \mathbbm{e}(t) dt \right) \right]}
\end{align*}
and so
\begin{align*}
\E{g \left(\widetilde{\varepsilon}_i, \widetilde{h}_i\right) \ \Big| \ \zeta(\widetilde{\varepsilon}_i) = x, M_i = m}  
& =  \frac{\mathbb{E}_{\N^{(x)}}  \left[ \left( \frac{1}{\mu} \int_0^x \mathbbm{e}(t) dt \right)^m g(\mathbbm{e}, \mathbbm{h})   \right]}{\mathbb{E}_{\N^{(x)}}  \left[\left( \frac{1}{\mu} \int_0^x \mathbbm{e}(t) dt \right)^m \right]} \\
& = \frac{\mathbb{E}_{\N^{(1)}}  \left[ \left(  \int_0^1 \mathbbm{e}(t) dt \right)^m g(x^{1/\alpha}\mathbbm{e}(\cdot/x), x^{(\alpha-1)/\alpha} \mathbbm{h}(\cdot/x)) \right]}{\mathbb{E}_{\N^{(1)}}  \left[ \left( \int_0^1 \mathbbm{e}(t) dt \right)^m \right]}.
\end{align*}
The claimed result follows.
\end{proof}

\section{Convergence of a discrete forest} \label{sec:forest}

The multigraph $\mathbf{M}_n(\nu)$ contains cycles with probability tending to 1 as $n \to \infty$.  However, its components will turn out to be tree-like, in that they each have a finite surplus, with probability 1. In this section, we study an idealised version of the depth-first walk of the multigraph, ignoring cycles.

Let $(\hat{D}_1^n, \hat{D}_2^n, \ldots, \hat{D}_k^n)$ be $D_1, D_2, \ldots, D_n$ arranged in size-biased random order.  More precisely, let $\Sigma$ be a random permutation of $\{1,2,\ldots,n\}$ such that
\[
\Prob{\Sigma = \sigma| D_1, \ldots, D_n} = \frac{D_{\sigma(1)}}{\sum_{j=1}^n D_{\sigma(j)}} \frac{D_{\sigma(2)}}{\sum_{j=2}^n D_{\sigma(j)}} \cdots \frac{D_{\sigma(n)}}{D_{\sigma(n)}}
\]
and define
\[
(\hat{D}_1^n, \hat{D}_2^n, \ldots, \hat{D}_n^n) = (D_{\Sigma(1)}, D_{\Sigma(2)}, \ldots, D_{\Sigma(n)}).
\]
Now let $\widetilde{S}^n(0) = 0$ and, for $k \ge 1$,
\[
\widetilde{S}^n(k) = \sum_{i=1}^k (\hat{D}_i^n - 2).
\]
Then $\widetilde{S}^n$ is the depth-first walk of a forest of trees in which the $i$th vertex visited in depth-first order has $\hat{D}_i^n - 1 \ge 0$ children.  Define the corresponding height process,
\[
\widetilde{G}^n(k) = \#\left\{j \in \{0,1,\ldots,k-1\}: \widetilde{S}^n(j) = \inf_{j \le \ell \le k} \widetilde{S}^n(\ell) \right\}.
\] 
The purpose of this section is to recover Theorem 8.1 of Joseph~\cite{Joseph} and, indeed, to strengthen it by adding the convergence of the height process to that of the depth-first walk. We will prove the following.

\begin{thm} \label{thm:jointdfwheight}
We have
\[
\left( n^{-\frac{1}{\alpha+1}} \widetilde{S}^n(\fl{n^{\frac{\alpha}{\alpha+1}} t}),  n^{-\frac{\alpha-1}{\alpha+1}} \widetilde{G}^n(\fl{n^{\frac{\alpha}{\alpha+1}} t}), t \ge 0\right)
\convdist (\widetilde{L}_t, \widetilde{H}_t, t \ge 0)
\]
as $n \to \infty$ in $\mathbb{D}(\R_+, \R)^2$.
\end{thm}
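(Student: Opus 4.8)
The plan is to deduce the theorem from the corresponding (and far harder) convergence for a sequence of i.i.d.\ Galton--Watson trees, Theorem~\ref{thm:DLGconvergence}, by producing a \emph{discrete} absolute continuity relation whose limit is precisely the continuous change of measure defining $(\widetilde{L},\widetilde{H})$. Fix $T>0$; since convergence in $\mathbb{D}(\R_+,\R)^2$ amounts to convergence of the restriction to each $[0,T]$, it is enough to control the first $m=m_n:=\fl{T n^{\alpha/(\alpha+1)}}$ degrees seen in size-biased order, which is a vanishing fraction of all $n$. Let $(\hat{D}_k)_{k\ge 1}$ be i.i.d.\ with the size-biased law $\hat{\nu}_k=k\nu_k/\mu$, put $S^n(k)=\sum_{i=1}^k(\hat{D}_i-2)$ and let $G^n$ be the associated height process. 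These code a forest of i.i.d.\ Galton--Watson trees with offspring law $\hat{D}_1-1$, which is critical ($\E{\hat{D}_1-1}=\theta=1$) and in the domain of attraction of $L$ in the right sense: $k^{\alpha+1}\Prob{\hat{D}_1-1=k}\to c/\mu$ for $\alpha\in(1,2)$, while $\mathrm{var}(\hat{D}_1)=\beta/\mu$ for $\alpha=2$. Thus Theorem~\ref{thm:DLGconvergence} (part (ii), resp.\ part (i)), applied with scaling parameter $n^{\alpha/(\alpha+1)}$, already gives
\[
\left(n^{-\frac{1}{\alpha+1}}S^n(\fl{n^{\frac{\alpha}{\alpha+1}}t}),\; n^{-\frac{\alpha-1}{\alpha+1}}G^n(\fl{n^{\frac{\alpha}{\alpha+1}}t}),\; t\ge 0\right)\convdist (L_t,H_t,\,t\ge 0).
\]

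Next I would set up the discrete change of measure. Using the exponential-clocks representation of size-biased ordering --- attach to vertex $i$ an independent $\mathrm{Exp}(D_i)$ clock and read off the degrees in increasing order of ringing times --- and integrating out the arrival times, the law of $(\hat{D}^n_1,\ldots,\hat{D}^n_m)$ is absolutely continuous with respect to that of $(\hat{D}_1,\ldots,\hat{D}_m)$, with Radon--Nikodym derivative
\[
\mathcal{L}_{n,m}(d_1,\ldots,d_m)=\mu^m\frac{n!}{(n-m)!}\int_{0<t_1<\cdots<t_m}\Big(\prod_{j=1}^m e^{-d_j t_j}\Big)\,\psi(t_m)^{n-m}\,dt_1\cdots dt_m,
\]
where $\psi(t)=\E{e^{-D_1 t}}$; in particular $\mathcal{L}_{n,m}$ is a measurable functional of the i.i.d.\ walk $(S^n(k))_{k\le m}$. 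The core of the argument is to show that, jointly with the rescaled walk, $\mathcal{L}_{n,m}\convdist \Phi(T):=\exp\big(-\tfrac1\mu\int_0^T s\,dL_s-C_\alpha\tfrac{T^{\alpha+1}}{(\alpha+1)\mu^{\alpha+1}}\big)$. The mechanism is an \emph{exponential thinning of the hubs}: a vertex of degree $d$ has been reached by scaled time $s$ with probability $\approx 1-e^{-ds/(\mu n^{1/(\alpha+1)})}$, so a hub of degree $\asymp x\,n^{1/(\alpha+1)}$ survives past $s$ with the genuinely nontrivial probability $e^{-xs/\mu}$; thinning the large jumps this way and compensating produces exactly $\int_0^T\!\!\int_0^\infty (e^{-xs/\mu}-1+xs/\mu)\tfrac{c}{\mu}x^{-(\alpha+1)}\,dx\,ds=C_\alpha\tfrac{T^{\alpha+1}}{(\alpha+1)\mu^{\alpha+1}}$, while the bulk contributes the stochastic term $-\tfrac1\mu\int_0^T s\,dL_s=-\tfrac1\mu\big(TL_T-\int_0^T L_s\,ds\big)$. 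The regular-variation hypothesis $\nu_k\sim c k^{-(\alpha+2)}$ is what pins down this constant, through the expansion $\psi(t)=1-\mu t+C_\alpha t^\alpha+o(t^\alpha)$, and integrability estimates (of the flavour of Lemma~\ref{lem:expbound}) supply the necessary tail control; for $\alpha=2$ there are no hubs and the correction $-\tfrac{\beta}{6\mu^3}T^3$ comes instead from the $-\tfrac{\mu^2 t^2}{2}$ term in $\log\psi$.

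Finally I would assemble these ingredients. By Skorokhod's representation theorem, realise the convergence in the first display almost surely; combined with the functional convergence $\mathcal{L}_{n,m}\to\Phi(T)$ of the previous paragraph (which is continuous along the a.s.-convergent walk since $L$ has no deterministic jump times, so that $TL_T-\int_0^T L_s\,ds$ is a continuity point of the relevant maps), this gives joint almost-sure, hence distributional, convergence $\big(S^n\text{-walk},\,G^n\text{-height},\,\mathcal{L}_{n,m}\big)\to(L,H,\Phi(T))$. The family $\{\mathcal{L}_{n,m}\}_n$ is uniformly integrable: each is a genuine Radon--Nikodym derivative, so $\E{\mathcal{L}_{n,m}}=1$, while $\E{\Phi(T)}=1$ as well (take $f\equiv 1$ in Proposition~\ref{prop:stablemeasch}, or use that the exponential process there is a true martingale), and a non-negative sequence converging in distribution with convergent means is uniformly integrable. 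Hence for every bounded continuous $F:\mathbb{D}([0,T],\R)^2\to\R$,
\[
\E{F\big(\widetilde{S}^n,\widetilde{G}^n\big)}=\E{\mathcal{L}_{n,m}\,F\big(S^n,G^n\big)}\longrightarrow \E{\Phi(T)\,F(L,H)}=\E{F(\widetilde{L},\widetilde{H})},
\]
the last equality being the definition of $(\widetilde{L},\widetilde{H})$; since $T$ was arbitrary this is the claim. I expect the main obstacle to be the second paragraph: in $\log\mathcal{L}_{n,m}$ the contributions of the falling factorial $n!/(n-m)!$, of $\psi(t_m)^{n-m}$ at arrival times $t_m\asymp m/(\mu n)$, and of $\int\prod_j e^{-d_j t_j}$ each diverge as $n\to\infty$, and one must show they cancel down to the finite limit, isolating the surviving bulk term and the $t^{\alpha+1}$ hub-thinning correction with uniform-in-$n$ tail estimates.
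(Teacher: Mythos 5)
Your overall strategy is the one the paper uses: compare the size-biased degrees seen up to time $\fl{Tn^{\alpha/(\alpha+1)}}$ with an i.i.d.\ sample from the size-biased law, invoke Theorem~\ref{thm:DLGconvergence} for the i.i.d.\ forest, and pass the discrete Radon--Nikodym derivative to the limit using uniform integrability. Your $\mathcal{L}_{n,m}$ is in fact \emph{identical} to the paper's $\phi^n_m$ from Proposition~\ref{prop:changemeas}: substituting the gap variables $u_i=t_i-t_{i-1}$ in your integral and averaging over the unseen degrees turns it into $\E{\prod_{i=1}^m \frac{(n-i+1)\mu}{\sum_{j=i}^m k_j+\Xi_{n-m}}}$, and your final step (joint convergence plus mean-one UI against bounded continuous $F$) is the paper's proof of Theorem~\ref{thm:jointdfwheight} almost verbatim. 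The problem is that the step you yourself flag as "the main obstacle" — the joint convergence $\mathcal{L}_{n,m}\convdist\Phi(T)$ — is exactly Proposition~\ref{prop:discretechangeofmeas}, i.e.\ the entire technical content of the section, and you leave it as a heuristic. Worse, the expansion you propose to drive it, $\psi(t)=1-\mu t+C_\alpha t^{\alpha}+o(t^{\alpha})$, is wrong: since $D_1$ has finite second moment ($\E{D_1^2}=2\mu$), the correct statement is (\ref{eqn:Lasymp}), $\mathcal{L}(\lambda)=\exp\bigl(-\mu\lambda+\tfrac{\mu(2-\mu)}{2}\lambda^2-\tfrac{C_\alpha}{\alpha+1}\lambda^{\alpha+1}+o(\lambda^{\alpha+1})\bigr)$; a genuine $t^\alpha$ term would make $\psi(t_m)^{n-m}$ contribute a divergent $\Theta(n^{1/(\alpha+1)})$ exponent at $t_m\asymp m/(\mu n)$ rather than the finite limit your own thinning heuristic predicts (the $t^\alpha$-order correction belongs to the size-biased variable $Z$, not to $D_1$), and the quadratic term you omit is precisely what must cancel against the falling factorial and the bulk term (Lemma~\ref{lem:Laplace}).

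There is also a structural issue with how you propose to get uniform integrability. Your argument "nonnegative, converges in distribution, means converge, hence UI" is sound, but it presupposes that $\mathcal{L}_{n,m}\convdist\Phi(T)$ has already been proved, which on your plan requires controlling $\log\mathcal{L}_{n,m}$ from \emph{both} sides uniformly. The paper avoids this: Lemma~\ref{lem:lowerbound} gives only a one-sided lower bound $\Phi(n,m)\ge\underline{\Phi}(n,m)$ (via $\log(1+x)\le x$), the functional CLT and Lemma~\ref{lem:Levyintegral} give $\underline{\Phi}(n,m)\convdist\Phi(t)$, and then Lemma~\ref{lem:James} — a sandwich argument exploiting $\E{\Phi(n,m)}=\E{\Phi(t)}=1$ — upgrades this to convergence in distribution \emph{and} UI of $\Phi(n,m)$ itself, and in fact to $L^1$-closeness of $\Phi(n,m)$ and $\underline{\Phi}(n,m)$, which is what lets one replace the Radon--Nikodym derivative by the continuous functional $\Theta(\overline{S}^n,t)$ of the rescaled walk in the final limit. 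A genuine two-sided estimate of the kind your sketch implicitly needs is only carried out in the Appendix (Proposition~\ref{prop:upperbound}) and is considerably more delicate, requiring truncation events and exponential-moment bounds. So: right framework and an equivalent change-of-measure formula, but the central convergence statement is not established, and as sketched it rests on an incorrect Laplace expansion and misses the one-sided-bound-plus-equal-means device that makes the proof go through. (Minor additional point: the paper handles $\alpha=2$ separately in Section~\ref{subsec:changemeasbrown}, where the cancellations require a moderate-deviation tilting argument; your uniform treatment of both cases would need that extra work too.)
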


In order to prove this theorem, we will begin by showing that there is an analogue in the discrete setting of the change of measure used to define $\widetilde{L}$.

Write $Z_1, Z_2, \ldots, Z_n$ for i.i.d.\ random variables with the size-biased degree distribution, i.e.\
\[
\Prob{Z_1 = k} = \frac{k \nu_k}{\mu}, \quad k \ge 1.
\]
Observe that $\mu \in (1,2)$ since, firstly, $D_1 \ge 1$ and, secondly, $\E{D_1^2} = 2\mu$ and we must have $\var{D_1} = \mu(2-\mu) > 0$.
Then we have $\E{Z_1} = 2$, $\Prob{Z_1 \ge 1} = 1$ and $\Prob{Z_1 = k} \sim \frac{c}{\mu}k^{-(\alpha+1)}$ as $k \to \infty$ if $\alpha \in (1,2)$, or $\var{Z_1} = \beta/\mu$ if $\alpha = 2$.

\begin{prop} \label{prop:changemeas}
For any $k_1, k_2, \ldots, k_n \ge 1$, we have
\[
\Prob{\hat{D}_1^n = k_1, \hat{D}_2^n = k_2, \ldots, \hat{D}_n^n = k_n}
= k_1 \nu_{k_1} k_2 \nu_{k_2} \ldots k_n \nu_{k_n} \prod_{i=1}^n \frac{(n-i+1)}{\sum_{j=i}^n k_j}.
\]
Moreover, for $0 \le m \le n$ and $k_1, k_2, \ldots, k_m \ge 1$, let
\[
\phi_m^n(k_1, k_2, \ldots, k_m) := \E{\prod_{i=1}^m \frac{(n-i+1)\mu}{\sum_{j=i}^m k_j + \Xi_{n-m}}},
\]
where $\Xi_{n-m}$ has the same law as $D_{m+1}+D_{m+2} + \cdots + D_{n}$. Then for any suitable test-function $g: Z_+^m \to \R_+$,
\begin{equation} \label{eqn:discretechmeas}
\E{g(\hat{D}_1^n, \hat{D}_2^n, \ldots, \hat{D}_m^n)} = \E{\phi_m^n(Z_1,Z_2,\ldots,Z_m) g(Z_1, Z_2, \ldots, Z_m)}.
\end{equation}
\end{prop}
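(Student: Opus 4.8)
The plan is to determine the exact joint law of $(\hat{D}_1^n, \ldots, \hat{D}_m^n)$ by conditioning on the collection of degrees $D_1, \ldots, D_n$, and then to read off the change of measure by comparing with the i.i.d.\ size-biased law of $(Z_1, \ldots, Z_m)$.

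For the first identity, condition on $(D_1, \ldots, D_n)$. Given this, the event $\{\hat{D}_i^n = k_i \text{ for } 1 \le i \le n\}$ occurs precisely when the size-biased permutation $\Sigma$ satisfies $D_{\Sigma(i)} = k_i$ for all $i$; and for \emph{any} permutation $\sigma$ consistent with this constraint, its probability under the defining size-biased law is $\prod_{i=1}^n D_{\sigma(i)}/\sum_{j \ge i} D_{\sigma(j)} = \prod_{i=1}^n k_i/\sum_{j=i}^n k_j$, which depends only on $k_1, \ldots, k_n$. Hence, if $N(D,k)$ denotes the number of permutations $\sigma$ of $\{1, \ldots, n\}$ with $D_{\sigma(i)} = k_i$ for all $i$,
\begin{align*}
& \Prob{\hat{D}_1^n = k_1, \ldots, \hat{D}_n^n = k_n \mid D_1, \ldots, D_n} \\
& \qquad = N(D,k)\, \prod_{i=1}^n \frac{k_i}{\sum_{j=i}^n k_j}.
\end{align*}
Taking expectations and using $\E{N(D,k)} = \sum_{\sigma} \Prob{D_{\sigma(i)} = k_i \text{ for all } i} = n!\prod_{i=1}^n \nu_{k_i}$ (by the i.i.d.\ assumption), together with the identity $\prod_{i=1}^n(n-i+1) = n!$, gives the stated formula.

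For the second identity I would run the same argument, but prescribing only the first $m$ coordinates. Conditionally on $(D_1, \ldots, D_n)$, summing the step-by-step size-biasing weights over the possible choices $\vec\tau = (\tau_1, \ldots, \tau_m)$ of the first $m$ selected indices gives
\begin{align*}
& \Prob{\hat{D}_1^n = k_1, \ldots, \hat{D}_m^n = k_m \mid D_1, \ldots, D_n} \\
& \qquad = \sum_{\vec\tau} \I{D_{\tau_i} = k_i,\, 1 \le i \le m} \prod_{i=1}^m \frac{k_i}{T_n - (k_1 + \cdots + k_{i-1})},
\end{align*}
where the sum ranges over ordered $m$-tuples of distinct elements of $\{1, \ldots, n\}$ and $T_n = \sum_{j=1}^n D_j$. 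The new feature relative to the first part is that this weight now depends on the realisation of $D$ through $T_n$, so it cannot be pulled through the expectation directly.

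To deal with this, observe that on the event $\{D_{\tau_i} = k_i,\ 1 \le i \le m\}$ one has $T_n = (k_1 + \cdots + k_m) + \Xi_{\vec\tau}$, where $\Xi_{\vec\tau} := \sum_{j \notin \vec\tau} D_j$, so that $T_n - (k_1 + \cdots + k_{i-1}) = \sum_{l=i}^m k_l + \Xi_{\vec\tau}$; crucially $\Xi_{\vec\tau}$, being a sum of $n-m$ of the i.i.d.\ degrees disjoint from $D_{\tau_1}, \ldots, D_{\tau_m}$, is independent of the event $\{D_{\tau_i} = k_i,\ 1 \le i \le m\}$ and satisfies $\Xi_{\vec\tau} \equidist \Xi_{n-m}$. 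Hence, taking expectations factorises each summand, the factor is the same for every $\vec\tau$, and as there are exactly $\prod_{i=1}^m(n-i+1)$ tuples,
\begin{align*}
& \Prob{\hat{D}_1^n = k_1, \ldots, \hat{D}_m^n = k_m} \\
& \qquad = \Big(\prod_{i=1}^m \nu_{k_i}\Big) \sum_{\vec\tau} \E{\prod_{i=1}^m \frac{k_i}{\sum_{l=i}^m k_l + \Xi_{n-m}}} \\
& \qquad = \Big(\prod_{i=1}^m k_i \nu_{k_i}\Big)\, \E{\prod_{i=1}^m \frac{n-i+1}{\sum_{l=i}^m k_l + \Xi_{n-m}}}.
\end{align*}
Writing $\prod_{i=1}^m k_i \nu_{k_i} = \mu^m \prod_{i=1}^m \Prob{Z_i = k_i}$ and recognising the remaining expectation, once the factor $\mu^m$ is absorbed into the product, as $\phi_m^n(k_1, \ldots, k_m)$, we get $\Prob{\hat{D}_1^n = k_1, \ldots, \hat{D}_m^n = k_m} = \phi_m^n(k_1, \ldots, k_m)\, \prod_{i=1}^m \Prob{Z_i = k_i}$, and summing this against an arbitrary test function $g$ yields~(\ref{eqn:discretechmeas}). (Finiteness of $\phi_m^n$ is immediate, since each denominator there is at least $1$, so $\phi_m^n(k_1, \ldots, k_m) \le \mu^m\, n!/(n-m)!$.)

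The step I expect to be the main obstacle — indeed essentially the only step that is not bookkeeping — is the passage, for the second identity, from the conditional law given $D$ to its expectation: one must resist pulling the $D$-dependent weight straight through $\E{\cdot}$, and instead expand the count over the ordered tuples $\vec\tau$ and exploit the independence of the selected degrees $\{D_{\tau_i}\}$ from the sum $\Xi_{\vec\tau}$ of the remaining $n-m$ degrees, which is exactly what produces the clean factor $\Xi_{n-m}$ appearing inside $\phi_m^n$. The counting of permutations and tuples, the identity $\prod_{i=1}^n(n-i+1) = n!$, and the finiteness and summability claims are all routine.
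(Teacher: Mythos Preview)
Your proof is correct. The first identity is handled in essentially the same way as the paper: both sum over permutations $\sigma$ and use that the size-biased weight $\prod_i k_i/\sum_{j\ge i}k_j$ depends only on $(k_1,\ldots,k_n)$, together with the i.i.d.\ assumption to produce the factor $n!\prod_i\nu_{k_i}$.

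For the second identity you take a genuinely different route. The paper obtains the marginal by summing the full joint law from the first part over $k_{m+1},\ldots,k_n$, and then recognises the resulting sum as $\Prob{\hat{D}_1^{n-m}=k_{m+1},\ldots,\hat{D}_{n-m}^{n-m}=k_n}$ for a smaller size-biased sample, which collapses to an expectation involving $\Xi_{n-m}$. You instead compute the marginal directly: you sum over the ordered $m$-tuple $\vec\tau$ of first-selected indices, observe that on $\{D_{\tau_i}=k_i\}$ the residual sum $\Xi_{\vec\tau}$ is independent of the event and has the law of $\Xi_{n-m}$, and then count the $n!/(n-m)!$ tuples. Your argument is more direct and avoids the paper's trick of identifying a smaller size-biased problem inside the sum; the paper's approach, on the other hand, reuses part one verbatim and so is slightly more economical once that formula is in hand. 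Both are elementary and equally valid.
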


We have not found a precise reference for the contents of Proposition~\ref{prop:changemeas}.  The analogue of (\ref{eqn:discretechmeas}) for continuous random variables is equation (1) of Barouch \& Kaufman~\cite{BarouchKaufman}; see Proposition 1 of Pitman \& Tran~\cite{PitmanTran} for a proof.  The proof of Proposition~\ref{prop:changemeas} is elementary and may be found in the \hyperref[appn]{Appendix}.

We now show that the Radon--Nikodym derivative in the change-of-measure formula converges in distribution under appropriate conditions. Until the end of this section, we restrict our attention to the case $\alpha \in (1,2)$; the proof for the Brownian case is similar but a little more involved, so we defer it to Section \ref{subsec:changemeasbrown} in the \hyperref[appn]{Appendix}.

\begin{prop} \label{prop:discretechangeofmeas}
Let
\[
\Phi(n,m) := \phi_m^n(Z_1, Z_2, \ldots, Z_m) 
\]
and recall that
\[
\Phi(t) = \exp\left( - \frac{1}{\mu} \int_0^t s dL_s -\frac{C_{\alpha} t^{\alpha+1}}{(\alpha+1)\mu^{\alpha+1}} \right).
\]
Then for fixed $t > 0$, $\Phi(n,\fl{t n^{\frac{\alpha}{\alpha+1}}}) \convdist \Phi(t)$ as $n \to \infty$.  Moreover, the sequence of random variables $(\Phi(n,\fl{t n^{\frac{\alpha}{\alpha+1}}}))_{n \ge 1}$  is uniformly integrable.
\end{prop}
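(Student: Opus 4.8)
The plan is to identify an explicit representation of $\Phi(n,m)$ with $m = \fl{tn^{\alpha/(\alpha+1)}}$ in terms of the size-biased walk $\widetilde{S}^n$, recognising the exponent as a Riemann-sum approximation of $-\frac{1}{\mu}\int_0^t s\,dL_s - \frac{C_\alpha t^{\alpha+1}}{(\alpha+1)\mu^{\alpha+1}}$, and then pass to the limit using the functional convergence of the size-biased depth-first walk. First I would rewrite $\phi_m^n(Z_1,\dots,Z_m)$ using the identity $\sum_{j=i}^m Z_j + \Xi_{n-m} = \widetilde{S}^n_{\mathrm{size}}(m) - \widetilde{S}^n_{\mathrm{size}}(i-1) + 2(m-i+1) + \Xi_{n-m}$, where I write $\widetilde{S}^n_{\mathrm{size}}(k) = \sum_{j=1}^k (Z_j - 2)$ for the (unconditioned) walk built from the $Z_j$'s (so $\widetilde S^n_{\mathrm{size}}$ differs from $\widetilde S^n$ of the theorem only via the change of measure). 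Taking logarithms, $\log \Phi(n,m) = \sum_{i=1}^m \log\!\big((n-i+1)\mu\big) - \sum_{i=1}^m \log\!\big(\sum_{j=i}^m Z_j + \Xi_{n-m}\big)$, up to the conditional expectation over $\Xi_{n-m}$, which one handles by a concentration estimate: $\Xi_{n-m} = \sum_{j=m+1}^n D_j$ is a sum of $n-m = n(1-o(1))$ i.i.d.\ mean-$\mu$ variables with finite variance, so $\Xi_{n-m}/n \to \mu$ in probability with Gaussian-type fluctuations of order $\sqrt{n}$, negligible against the dominant terms. The dominant balance is between $\sum_{i=1}^m \log((n-i+1)\mu)$ and $\sum_{i=1}^m \log((n-i)\mu)$-type contributions coming from $\Xi_{n-m} \approx (n-m)\mu$ plus the fluctuation $\sum_{j=i}^m (Z_j - 2) + 2(m-i+1) = \widetilde S^n_{\mathrm{size}}(m) - \widetilde S^n_{\mathrm{size}}(i-1) + 2(m-i+1)$. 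A Taylor expansion of $\log(1+x)$ to second order inside the sum then produces a first-order term that telescopes into $-\frac{1}{\mu n}\sum_{i=1}^m i\,(Z_i - 2) \approx -\frac{1}{\mu}\int_0^t s\,dL_s$ after rescaling $i = s n^{\alpha/(\alpha+1)}$ and using $n^{-1/(\alpha+1)}\widetilde S^n_{\mathrm{size}}(\fl{\cdot\, n^{\alpha/(\alpha+1)}}) \convdist L$, and a deterministic second-order term that converges to $-\frac{C_\alpha t^{\alpha+1}}{(\alpha+1)\mu^{\alpha+1}}$ (this is exactly Joseph's computation: $\sum_{i=1}^m \frac{i^2}{n^2} \sim \frac{t^3}{3} n^{3\alpha/(\alpha+1)-2}$ in the $\alpha=2$ case, and the corresponding sum with the stable normalisation in the general case, where the $\alpha$-stable scaling makes $\sum (\text{increments})^2$ of order $n^{2/(\alpha+1)}\cdot n^{\alpha/(\alpha+1)}/n^2 \to 0$ so that only the drift $A_t = -C_\alpha t^\alpha/\mu^\alpha$ contributes the correction). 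Throughout I would use the joint convergence in Theorem~\ref{thm:FCLT}(ii) (or rather its stochastic-integral strengthening, e.g.\ via \cite{JacodShiryaev}-type convergence of $\int f\,dL^n$) to get the $\int_0^t s\,dL_s$ term; care is needed because $s \mapsto s$ is continuous but the integral against a jump process requires either integration by parts — writing $\int_0^t s\,dL_s = tL_t - \int_0^t L_s\,ds$, both terms being continuous functionals of $L$ in $\mathbb{D}$ — which I would adopt to avoid technical fuss.

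For the continuous-mapping step I would therefore express $\log\Phi(n,\fl{tn^{\alpha/(\alpha+1)}})$, after the Taylor expansion and the substitution of the concentrated value of $\Xi_{n-m}$, as $F_n(\widetilde S^n_{\mathrm{size}}) + o_{\mathbb P}(1)$ where $F_n$ is a functional converging (uniformly on compacts, or in the relevant Skorokhod sense) to the functional $\ell \mapsto -\frac{1}{\mu}\big(t\ell_t - \int_0^t \ell_s\,ds\big) - \frac{C_\alpha t^{\alpha+1}}{(\alpha+1)\mu^{\alpha+1}}$, and then invoke $n^{-1/(\alpha+1)}\widetilde S^n_{\mathrm{size}}(\fl{\cdot\,n^{\alpha/(\alpha+1)}}) \convdist L$ together with a continuous-mapping/Slutsky argument. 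One subtlety: the $Z_j$ here are i.i.d.\ size-biased, so the relevant FCLT is exactly Theorem~\ref{thm:FCLT}(ii) applied to $Y = Z_1 - 1$ with the stated tail $c/\mu$; this gives convergence to the process $L$ with the Laplace exponent in the statement, matching $\Phi(t)$'s definition.

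For the uniform integrability claim I would show that $\sup_n \E{\Phi(n,\fl{tn^{\alpha/(\alpha+1)}})^p} < \infty$ for some $p > 1$, which gives UI by de la Vallée-Poussin. This is where the exponential moment bound becomes essential: raising $\Phi(n,m)$ to the power $p$ essentially multiplies the exponent by $p$, so one needs $\E{\exp\big(-\frac{p}{\mu}\sum_{i=1}^m \frac{i}{n}(Z_i - 2) + O(1)\big)}$ to be bounded; because the $Z_i - 2$ are centred but have only an $\alpha$-stable right tail (heavy on the positive side), the sign here matters — one is exponentiating $-\sum (\text{positive-skewed})$, which is the favourable direction, and the deterministic $-C_\alpha t^{\alpha+1}/((\alpha+1)\mu^{\alpha+1})$-type correction is exactly the compensator that keeps this bounded. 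Concretely I would mimic the martingale argument behind Proposition~\ref{prop:stoppedmart}: $\phi_m^n$ arises precisely as the Radon--Nikodym derivative of the size-biased-ordered degrees against the i.i.d.\ size-biased law, so $\E{\Phi(n,m)} = 1$ exactly for every $n,m$; to upgrade to an $L^p$ bound one bounds $\phi_m^n \le \prod_{i=1}^m \frac{(n-i+1)\mu}{(n-i+1)\cdot 1} = \mu^m$ crudely (useless) — so instead I would use the sharper estimate obtained by applying Jensen to the inner expectation over $\Xi_{n-m}$ and then comparing $\sum_{j=i}^m Z_j + \Xi_{n-m}$ to its mean, controlling the lower-tail deviations of this sum (which are light, since it is a sum of $\ge 1$-valued variables bounded below) via a standard Chernoff bound, giving $\phi_m^n(Z_1,\dots,Z_m) \le \exp\big(C + \frac{1}{\mu}\sum_{s\le m}(\text{area-type terms})\big)$ whose $p$-th moment is finite uniformly in $n$ by the discrete analogue of Lemma~\ref{lem:expbound}. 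The main obstacle is precisely this uniform-in-$n$ $L^p$ control: one must show the heavy positive tail of $Z_1$ does not spoil the exponential moment, and this requires being careful that every occurrence of $Z_j$ in the exponent of $\phi_m^n$ comes with the favourable sign (i.e.\ in the denominator of $\phi_m^n$), which it does, but making the Chernoff/Bernstein estimate clean under only a heavy-tail assumption on the positive side — rather than a two-sided exponential moment — is the delicate point. The weak-convergence part, by contrast, is essentially bookkeeping around Joseph's Theorem~8.1 computation plus the integration-by-parts trick for $\int s\,dL_s$.
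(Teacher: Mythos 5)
Your computational skeleton (Taylor-expanding $\log\phi^n_m$, concentrating $\Xi_{n-m}$, recognising $-\frac1{\mu n}\sum_{i=0}^m(S(i)-S(m))$ as a Riemann approximation of $\frac1\mu\int_0^t(L_s-L_t)\,ds=-\frac1\mu\int_0^t s\,dL_s$, and invoking the FCLT for the size-biased walk with continuous mapping) is exactly the computation the paper performs. But your route has two genuine gaps, and they are precisely the two points you flag as delicate. First, you assert $\log\Phi(n,m)=F_n(\widetilde S^n_{\mathrm{size}})+o_{\mathbb P}(1)$, i.e.\ a \emph{two-sided} asymptotic identification. The inequality $\log(1+x)\le x$ gives only a lower bound $\Phi(n,m)\ge\underline\Phi(n,m)$ uniformly in the degrees (this is the paper's Lemma~\ref{lem:lowerbound}); a matching upper bound is not uniform in $k_1,\dots,k_m\ge1$ because the heavy tails allow individual $Z_j$ of order $n$ or larger, where the second-order Taylor control fails, and because the conditional expectation over $\Xi_{n-m}$ sits inside the product (your appeal to Jensen goes the wrong way: $\xi\mapsto\prod_i(a_i+\xi)^{-1}$ is convex, so plugging in the mean again gives a lower bound). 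The paper only ever proves a matching upper bound in the conditioned setting of Proposition~\ref{prop:upperbound}, and there it costs a substantial argument (splitting on $\sum_i E_i$, moderate-deviation events, exponential-moment control). Second, your uniform integrability argument via $\sup_n\mathbb E[\Phi(n,m)^p]<\infty$ rests on that same missing upper bound, and the ``discrete analogue of Lemma~\ref{lem:expbound}'' you invoke is not available off the shelf (that lemma concerns the limiting normalised excursion). So as written the proof does not close.

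The missing idea that lets the paper avoid both difficulties is an elementary sandwich lemma (Lemma~\ref{lem:James}): if $X_n\ge Y_n\ge0$, $\mathbb E[X_n]=1$, $Y_n\convdist X$ and $\mathbb E[X]=1$, then $X_n\convdist X$ \emph{and} $(X_n)$ is uniformly integrable. Since $\mathbb E[\Phi(n,m)]=1$ exactly (you noted this), $\Phi(n,m)\ge\underline\Phi(n,m)$, $\underline\Phi(n,\fl{tn^{\alpha/(\alpha+1)}})\convdist\Phi(t)$ by the FCLT and Lemma~\ref{lem:Levyintegral}, and $\mathbb E[\Phi(t)]=1$ by Proposition~\ref{prop:stablemeasch}, both the convergence in distribution and the uniform integrability follow in one stroke, with no upper bound on $\phi^n_m$ and no $L^p$ estimate. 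If you want to salvage your own route, you would need to restrict to a high-probability event on which the two-sided expansion holds and separately produce the uniform $L^p$ bound; note that $\frac1n\sum_{i=0}^m(S(i)-S(m))=-\frac1n\sum_{j=1}^m j(Z_j-2)$, so the needed exponential moments involve only negative multiples of the heavy-tailed variables and can be computed from the Laplace transform of $Z_1$, but this is considerably more work than the mean-one sandwich.
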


In order to prove this, we will need some technical lemmas. 

First, we consider the asymptotics of $S(k) = \sum_{i=1}^k (Z_i-2)$.  The generalised functional central limit theorem, Theorem~\ref{thm:FCLT} (ii), entails that
\begin{equation} \label{eqn:Levyconvergence}
n^{-1/(\alpha+1)} \left(S\left(\fl{tn^{\alpha/(\alpha+1)}}\right), t \ge 0\right) \convdist (L_t, t \ge 0)
\end{equation}
as $n \to \infty$ in $\mathbb{D}(\R_+,\R)$, where $L$ is the spectrally positive $\alpha$-stable L\'evy process introduced in the previous section. We will need to deal with functionals of $S$ converging, which we will do via the continuous mapping theorem (Theorem 3.2.4 of Durrett~\cite{Durrett}).  We give here the details for the functional which will arise most frequently in the sequel.

\begin{lem} \label{lem:Levyintegral}
For any $t \ge 0$, 
\[
\frac{1}{n} \sum_{k=0}^{\fl{tn^{\frac{\alpha}{\alpha+1}}}-1} S(k) \convdist \int_0^t L_s ds
\]
as $n \to \infty$.

\begin{proof}
We have
\begin{equation*}
\frac{1}{n} \sum_{k=0}^{\fl{tn^{\frac{\alpha}{\alpha+1}}}-1} S(k) = \frac{1}{n} \int_0^{\fl{t n^{\frac{\alpha}{\alpha+1}}}} S(\fl{v}) dv = \int_0^{n^{-\frac{\alpha}{\alpha+1}} \fl{tn^{\frac{\alpha}{\alpha+1}}}} n^{-\frac{1}{\alpha+1}}S(\fl{u n^{\frac{\alpha}{\alpha+1}}}) du,
\end{equation*}
by changing variable. Then the convergence in law follows from the fact that $n^{-\frac{\alpha}{\alpha+1}} \fl{t n^{\frac{\alpha}{\alpha+1}}} \to t$ and the continuous mapping theorem.
\end{proof}
\end{lem}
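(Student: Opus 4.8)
The plan is to recognise the sum as an integral of the rescaled {\L}ukasiewicz path and then pass to the limit using \eqref{eqn:Levyconvergence} and the continuous mapping theorem. Since $S$ is constant on each interval $[k,k+1)$, one has
\[
\frac{1}{n} \sum_{k=0}^{\fl{tn^{\frac{\alpha}{\alpha+1}}}-1} S(k) = \frac{1}{n} \int_0^{\fl{tn^{\frac{\alpha}{\alpha+1}}}} S(\fl v)\, dv = \int_0^{a_n} X_n(u)\, du,
\]
where, after the substitution $v = u n^{\alpha/(\alpha+1)}$ and the identity $\frac{\alpha}{\alpha+1}-1 = -\frac{1}{\alpha+1}$, I set $X_n(u) := n^{-1/(\alpha+1)} S(\fl{u n^{\alpha/(\alpha+1)}})$ and $a_n := n^{-\alpha/(\alpha+1)}\fl{t n^{\alpha/(\alpha+1)}}$, noting that $a_n \to t$.

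Next, fix $T > t$ and restrict \eqref{eqn:Levyconvergence} to $[0,T]$, so that $X_n \convdist L$ in $\mathbb{D}([0,T],\R)$ (permissible since the L\'evy process $L$ a.s.\ has no fixed jump at $T$). I would then invoke the continuity of the functional $\Psi : \mathbb{D}([0,T],\R) \to \R$, $\Psi(\omega) = \int_0^t \omega(s)\, ds$, for the Skorokhod $J_1$ topology: the map $\omega \mapsto \int_0^{\cdot}\omega(s)\,ds$ is continuous from $(\mathbb{D}([0,T],\R),J_1)$ into $(\mathbb{C}([0,T],\R),\|\cdot\|_\infty)$ — indeed, if $\lambda_n \to \mathrm{id}$ uniformly with $\omega_n \circ \lambda_n \to \omega$ uniformly, the resulting discrepancy in the integrated paths is controlled by $\|\lambda_n - \mathrm{id}\|_\infty\,\sup_{s\le T}|\omega_n(s)|$ — and evaluation at $t$ is continuous on $\mathbb{C}([0,T],\R)$. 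The continuous mapping theorem then yields $\int_0^t X_n(u)\, du \convdist \int_0^t L_s\, ds$. To account for the moving upper limit, note $|a_n-t| \le n^{-\alpha/(\alpha+1)}$ while $\sup_{u\le T}|X_n(u)|$ is tight (the sup functional is $J_1$-continuous, so this converges in law by \eqref{eqn:Levyconvergence}), whence $\bigl|\int_0^{a_n} X_n(u)\,du - \int_0^t X_n(u)\,du\bigr| \le |a_n-t|\sup_{u\le T}|X_n(u)| \convprob 0$, and Slutsky's lemma finishes the proof.

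The only genuinely delicate point is the $J_1$-continuity of the integration functional; everything else is routine change-of-variables bookkeeping plus a tightness-and-Slutsky argument for the endpoint. A perhaps cleaner alternative would be to apply the Skorokhod representation theorem to realise $X_n \to L$ almost surely in $\mathbb{D}([0,T],\R)$ on a common probability space; then $\sup_n\sup_{u\le T}|X_n(u)| < \infty$ a.s.\ (as $\sup_{u\le T}|X_n(u)| \to \sup_{u\le T}|L(u)| < \infty$), and dominated convergence together with $a_n\to t$ gives $\int_0^{a_n} X_n(u)\,du \to \int_0^t L_s\,ds$ almost surely, hence in distribution.
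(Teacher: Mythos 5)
Your proposal is correct and follows essentially the same route as the paper: rewrite the Riemann sum as $\int_0^{a_n} n^{-1/(\alpha+1)}S(\fl{un^{\alpha/(\alpha+1)}})\,du$ by a change of variables, then apply (\ref{eqn:Levyconvergence}) with the continuous mapping theorem and the fact that $a_n \to t$. You simply spell out the details (continuity of the integration functional in the $J_1$ topology, the endpoint correction via tightness and Slutsky, or the Skorokhod-representation shortcut) that the paper's one-line justification leaves implicit.
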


\begin{lem} Let $\mathcal{L}(\lambda) := \E{\exp(-\lambda D_1)}$.  Then as $\lambda \to 0$, 
\begin{equation} \label{eqn:Lasymp}
\mathcal{L}(\lambda) = \exp \left(-\mu \lambda + \frac{\mu(2-\mu)}{2} \lambda^2 - \frac{C_{\alpha}\lambda^{\alpha+1}}{(\alpha+1)} + o(\lambda^{\alpha+1}) \right).
\end{equation}

\end{lem}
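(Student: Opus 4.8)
The plan is to reduce everything to an Abelian-type estimate for $E(\lambda) := \E{q(\lambda D_1)}$, where $q(y) := e^{-y} - 1 + y - \tfrac12 y^2$, and then take logarithms. Write $\mathcal{L}(\lambda) = 1 + u(\lambda)$ with $u(\lambda) = \E{e^{-\lambda D_1} - 1}$. Using $\E{D_1} = \mu$ and $\E{D_1^2} = 2\mu$ (the latter being the criticality relation $\theta(\nu) = 1$, and finite since $\nu_k \sim c k^{-(\alpha+2)}$ with $\alpha > 1$), one has $u(\lambda) = -\mu\lambda + \mu\lambda^2 + E(\lambda)$. So it is enough to prove
\[
E(\lambda) = -\frac{C_\alpha}{\alpha+1}\,\lambda^{\alpha+1} + o(\lambda^{\alpha+1}) \quad\text{as } \lambda \downarrow 0 ,
\]
and then expand $\log(1+u)$.

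The first ingredient is the elementary two-sided bound $|q(y)| \le \min\{\tfrac16 y^3, \tfrac12 y^2\}$ for all $y \ge 0$, which follows from the alternating Taylor expansion of $e^{-y}$ (giving $-\tfrac16 y^3 \le q(y) \le 0$) and from convexity of $e^{-y}$ (giving $q(y) \ge -\tfrac12 y^2$). Next, fix $\epsilon > 0$ and pick $K$ with $|\nu_k - c k^{-(\alpha+2)}| \le \epsilon k^{-(\alpha+2)}$ for $k \ge K$, and split $E(\lambda) = c\sum_{k\ge1} k^{-(\alpha+2)} q(\lambda k) + \sum_{k\ge1}(\nu_k - c k^{-(\alpha+2)}) q(\lambda k)$. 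In the error sum, the $k<K$ part is $O(\lambda^3) = o(\lambda^{\alpha+1})$ (using $\alpha < 2$ and $|q(\lambda k)| \le \tfrac16(\lambda k)^3$), and the $k\ge K$ part is bounded by $\epsilon\sum_{k\ge1} k^{-(\alpha+2)}|q(\lambda k)|$. For the main sum, set $h(y) := y^{-(\alpha+2)} q(y)$; the bound on $q$ gives $|h(y)| \le \min\{\tfrac16 y^{1-\alpha}, \tfrac12 y^{-\alpha}\}$, so $h \in L^1(0,\infty)$ (integrable at $0$ since $\alpha < 2$, at $\infty$ since $\alpha > 1$). Writing $\lambda^{-(\alpha+1)}\sum_{k\ge1} k^{-(\alpha+2)} q(\lambda k) = \lambda\sum_{k\ge1} h(\lambda k) = \int_0^\infty h(\lambda\ce{y/\lambda})\,dy$ and using $\lambda\ce{y/\lambda}\in[y,y+\lambda]$ with $h$ continuous on $(0,\infty)$, the integrand converges pointwise to $h(y)$ and is dominated by $\min\{\tfrac16 y^{1-\alpha}, \tfrac12 y^{-\alpha}\}$ (since $z\mapsto z^{1-\alpha}$ and $z\mapsto z^{-\alpha}$ decrease and $\lambda\ce{y/\lambda}\ge y$). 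Dominated convergence yields $\lambda\sum_{k\ge1} h(\lambda k) \to \int_0^\infty h$, and the same argument with $|h|$ shows $\sum_{k\ge1} k^{-(\alpha+2)}|q(\lambda k)| = O(\lambda^{\alpha+1})$, so the error sum is $o(\lambda^{\alpha+1})$ on letting $\lambda\downarrow0$ then $\epsilon\downarrow0$.

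It then remains to identify $\int_0^\infty h = \int_0^\infty y^{-(\alpha+2)} q(y)\,dy$. One can either invoke the regularised Gamma identity $\int_0^\infty y^{s-1} q(y)\,dy = \Gamma(s)$ valid for $s\in(-3,-2)$ at $s = -(\alpha+1)$, simplifying $\Gamma(-\alpha-1)$ via $\Gamma(z+1) = z\Gamma(z)$; or differentiate $\lambda\mapsto\int_0^\infty x^{-(\alpha+2)} q(\lambda x)\,dx$ in $\lambda$ (its derivative is $-\int_0^\infty x^{-(\alpha+1)}(e^{-\lambda x}-1+\lambda x)\,dx = -\tfrac{C_\alpha}{c}\lambda^\alpha$ by $\Psi(\lambda) = (C_\alpha/\mu)\lambda^\alpha$ from the start of the section) and integrate from $0$. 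Either route gives $\int_0^\infty y^{-(\alpha+2)} q(y)\,dy = -\tfrac{C_\alpha}{c(\alpha+1)}$, hence $E(\lambda) = -\tfrac{C_\alpha}{\alpha+1}\lambda^{\alpha+1} + o(\lambda^{\alpha+1})$. To conclude, $u(\lambda)\to0$, so $\log(1+u) = u - \tfrac12 u^2 + O(u^3)$; since $u^2 = \mu^2\lambda^2 + O(\lambda^3) + O(\lambda^{\alpha+2}) = \mu^2\lambda^2 + o(\lambda^{\alpha+1})$ and $O(u^3) = O(\lambda^3) = o(\lambda^{\alpha+1})$ (both using $1 < \alpha < 2$),
\[
\log\mathcal{L}(\lambda) = -\mu\lambda + \Big(\mu - \tfrac{\mu^2}{2}\Big)\lambda^2 - \frac{C_\alpha}{\alpha+1}\lambda^{\alpha+1} + o(\lambda^{\alpha+1}) ,
\]
and $\mu - \mu^2/2 = \mu(2-\mu)/2$, which is exactly the asserted expansion. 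The only genuinely delicate point is the passage from the discrete sum $\E{q(\lambda D_1)}$ to the integral $\int_0^\infty y^{-(\alpha+2)} q(y)\,dy$, i.e.\ extracting the sharp $\lambda^{\alpha+1}$ asymptotics uniformly from the tail of $\nu$; the two-sided bound $|q(y)| \le \min\{\tfrac16 y^3, \tfrac12 y^2\}$ is what simultaneously makes $h$ integrable and supplies the dominating function, after which the rest is routine.
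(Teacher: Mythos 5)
Your proof is correct, but it takes a genuinely different route from the paper's. The paper differentiates three times: it computes $\mathcal{L}'''(\lambda) = -\sum_{k\ge1} k^3 e^{-\lambda k}\nu_k$, uses an Euler--Maclaurin comparison with $\int_0^\infty c\,x^{1-\alpha}e^{-\lambda x}\,dx = c\,\Gamma(2-\alpha)\lambda^{\alpha-2}$ to get $\mathcal{L}'''(\lambda) = -c\,\Gamma(2-\alpha)\lambda^{\alpha-2} + o(\lambda^{\alpha-2})$, and then integrates three times, the constants of integration being fixed by $\E{D_1}=\mu$ and $\E{D_1^2}=2\mu$, before exponentiating. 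You instead never differentiate $\mathcal{L}$: you subtract the quadratic Taylor polynomial explicitly, reduce everything to the remainder $E(\lambda)=\E{q(\lambda D_1)}$ with $q(y)=e^{-y}-1+y-\tfrac12 y^2$, and extract the $\lambda^{\alpha+1}$ coefficient by a Riemann-sum/dominated-convergence comparison of $\sum_k k^{-(\alpha+2)}q(\lambda k)$ with $\int_0^\infty y^{-(\alpha+2)}q(y)\,dy$, identifying the constant either via the Cauchy--Saalsch\"utz identity $\int_0^\infty y^{s-1}q(y)\,dy=\Gamma(s)$ on $(-3,-2)$ or via the Laplace exponent $\Psi$; the two-sided bound $|q(y)|\le \min\{\tfrac16 y^3,\tfrac12 y^2\}$ does double duty as integrability and domination, and your handling of the non-exact tail ($k<K$ versus $k\ge K$, letting $\lambda\downarrow0$ then $\epsilon\downarrow0$) is sound, as is the final $\log(1+u)$ expansion using $1<\alpha<2$. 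What your approach buys is a fully explicit sum-to-integral argument and error control in place of the paper's somewhat compressed Euler--Maclaurin and triple-integration steps; what it costs is length, plus the need either for the regularised Gamma identity or for a (routine) justification of differentiating $\int_0^\infty x^{-(\alpha+2)}q(\lambda x)\,dx$ under the integral sign. Both proofs yield exactly the stated expansion, including the coefficient $-C_\alpha/(\alpha+1)$ with $C_\alpha = c\,\Gamma(2-\alpha)/(\alpha(\alpha-1))$.
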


\begin{proof}
First observe that
\[
\mathcal{L}'''(\lambda) = - \E{D_1^3 \exp(-\lambda D_1)} = - \sum_{k=1}^{\infty} k^3 e^{-\lambda k} \nu_k.
\]
Since $\nu_k \sim c k^{-(\alpha+2)}$, the right-hand side is finite and, by the Euler--Maclaurin formula, asymptotically equivalent to
\[
\int_0^{\infty} c x^{1-\alpha} e^{-\lambda x} dx = c\lambda^{\alpha-2} \Gamma(2-\alpha),
\]
as $\lambda \to 0$.  In other words,
\[
\mathcal{L}'''(\lambda) = - c \lambda^{\alpha-2} \Gamma(2-\alpha) + o(\lambda^{\alpha-2}),
\]
as $\lambda \to 0$. We also have $\E{D_1} = \mu$ and $\E{D_1^2} = 2\mu$.  So integrating three times, we obtain
\[
\mathcal{L}(\lambda) = 1 - \mu \lambda + \mu \lambda^2 -  \frac{c \Gamma(2-\alpha)}{(\alpha-1)\alpha(\alpha+1)} \lambda^{\alpha+1} + o(\lambda^{\alpha+1}),
\]
and it is straightforward to see that this implies
\[
\mathcal{L}(\lambda) = \exp \left(-\mu \lambda + \frac{\mu(2-\mu)}{2} \lambda^2 - \frac{C_{\alpha}\lambda^{\alpha+1}}{(\alpha+1)} + o(\lambda^{\alpha+1}) \right). \qedhere
\]
\end{proof}

\begin{lem} \label{lem:Laplace}
For $m = O(n^{\alpha/(\alpha+1)})$, we have
\[
\exp\left(m - \frac{(2+\mu)}{2\mu} \frac{m^2}{n}\right) \left[ \mathcal{L}\left(\frac{m}{n\mu} \right) \right]^{n-m} = (1+o(1)) \exp \left(- \frac{C_{\alpha} m^{\alpha+1}}{(\alpha+1)\mu^{\alpha+1}n^{\alpha}} \right).
\]
\end{lem}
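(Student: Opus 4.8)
The plan is to take logarithms and substitute the expansion \eqref{eqn:Lasymp} of $\mathcal{L}$ near the origin. Set $\lambda_n := m/(n\mu)$. Since $m = O(n^{\alpha/(\alpha+1)})$, we have $\lambda_n = O(n^{-1/(\alpha+1)})$, and in fact $\sup_m \lambda_n \to 0$ over the admissible range of $m$, so \eqref{eqn:Lasymp} applies with a remainder of the form $\lambda_n^{\alpha+1}\epsilon(\lambda_n)$, where $\epsilon(\lambda)\to 0$ as $\lambda\to 0$. Then
\[
(n-m)\log\mathcal{L}(\lambda_n) = (n-m)\left( -\mu\lambda_n + \frac{\mu(2-\mu)}{2}\lambda_n^2 - \frac{C_\alpha}{\alpha+1}\lambda_n^{\alpha+1} + \lambda_n^{\alpha+1}\epsilon(\lambda_n) \right).
\]

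Next I would substitute $\lambda_n = m/(n\mu)$ into each term and expand in powers of $m$ and $n$. The linear term contributes exactly $-m + m^2/n$; the quadratic term contributes $\frac{(2-\mu)}{2\mu}\left(\frac{m^2}{n} - \frac{m^3}{n^2}\right)$; the $\lambda_n^{\alpha+1}$ term contributes $-\frac{C_\alpha m^{\alpha+1}}{(\alpha+1)\mu^{\alpha+1}n^\alpha} + \frac{C_\alpha m^{\alpha+2}}{(\alpha+1)\mu^{\alpha+1}n^{\alpha+1}}$; and the remainder contributes a term of size $O\!\left(\frac{m^{\alpha+1}}{n^\alpha}\,|\epsilon(\lambda_n)|\right)$. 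Adding the two surviving $m^2/n$ contributions gives the coefficient $\frac{m}{n}\cdot\frac{n}{m^2}\cdot$... more directly, $\frac{m^2}{n} + \frac{(2-\mu)}{2\mu}\frac{m^2}{n} = \frac{(2+\mu)}{2\mu}\frac{m^2}{n}$, so
\[
(n-m)\log\mathcal{L}(\lambda_n) = -m + \frac{(2+\mu)}{2\mu}\frac{m^2}{n} - \frac{C_\alpha m^{\alpha+1}}{(\alpha+1)\mu^{\alpha+1}n^\alpha} + r_n,
\]
where $r_n$ collects the error terms. I would then check that $r_n \to 0$: using that $m/n^{\alpha/(\alpha+1)}$ is bounded, one computes $m^3/n^2 = O(n^{(\alpha-2)/(\alpha+1)}) = o(1)$, $m^{\alpha+2}/n^{\alpha+1} = O(n^{-1/(\alpha+1)}) = o(1)$, while $m^{\alpha+1}/n^\alpha = O(1)$ and $\sup_m|\epsilon(\lambda_n)|\to 0$ (by uniformity of $\lambda_n\to 0$), so the last term is also $o(1)$.

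Exponentiating and multiplying by $\exp\!\left(m - \frac{(2+\mu)}{2\mu}\frac{m^2}{n}\right)$ then cancels the two leading (and possibly diverging) terms, leaving
\[
\exp\!\left(m - \frac{(2+\mu)}{2\mu}\frac{m^2}{n}\right)\left[\mathcal{L}(\lambda_n)\right]^{n-m} = \exp\!\left(-\frac{C_\alpha m^{\alpha+1}}{(\alpha+1)\mu^{\alpha+1}n^\alpha} + r_n\right) = (1+o(1))\exp\!\left(-\frac{C_\alpha m^{\alpha+1}}{(\alpha+1)\mu^{\alpha+1}n^\alpha}\right),
\]
since $e^{r_n} = 1 + o(1)$. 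The computation is essentially routine; the only point requiring a little care is the bookkeeping of which powers of $m/n$ survive after multiplication by the factor $n-m$ of order $n$, together with the observation that the $o(\lambda^{\alpha+1})$ remainder in \eqref{eqn:Lasymp}, once amplified by this factor, still yields an $o(1)$ contribution because $\lambda_n\to 0$ uniformly over $m = O(n^{\alpha/(\alpha+1)})$. (The condition $m = O(n^{\alpha/(\alpha+1)})$ is exactly what makes $m^{\alpha+1}/n^\alpha$ bounded, so that the stated limit expression is nontrivial, while all higher-order corrections vanish.)
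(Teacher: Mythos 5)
Your proposal is correct and follows essentially the same route as the paper: take logarithms, substitute the expansion (\ref{eqn:Lasymp}) at $\lambda_n=m/(n\mu)$, observe that the linear and quadratic contributions combine to cancel the prefactor $m-\tfrac{2+\mu}{2\mu}\tfrac{m^2}{n}$, and bound the remaining terms ($m^3/n^2$, $m^{\alpha+2}/n^{\alpha+1}$, and the amplified $o(\lambda^{\alpha+1})$ remainder) by $o(1)$ using $m=O(n^{\alpha/(\alpha+1)})$. The paper states this cancellation without writing out the bookkeeping; your version simply supplies those details.
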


\begin{proof}
By (\ref{eqn:Lasymp}), it is sufficient to show that 
\begin{align*}
& m - \frac{(2+\mu)}{2\mu} \frac{m^2}{n} + (n-m) \left(-\frac{\mu m}{n\mu} + \frac{\mu(2-\mu)}{2} \frac{m^2}{n^2\mu^2}- \frac{C_{\alpha}}{(\alpha+1)} \frac{m^{\alpha+1}}{n^{\alpha+1}\mu^{\alpha+1}}\right)\\
& \qquad = - \frac{C_{\alpha}m^{\alpha+1}}{(\alpha+1)\mu^{\alpha+1}n^{\alpha}} + o(1),
\end{align*}
as $n \to \infty$. But this is now easily seen to be true on cancellation and using $m = O(n^{\alpha/(\alpha+1)})$. The result follows.
\end{proof}

\begin{lem} \label{lem:lowerbound}
Let $s(0) = 0$ and $s(i) = \sum_{j= 1}^i (k_j-2)$ for $i \ge 1$. Then if $m = O(n^{\alpha/(\alpha+1)})$, we have
\[
\phi_n^m(k_1,k_2, \ldots,k_m) \ge \exp\left(\frac{1}{n \mu}\sum_{i=0}^{m} (s(i) - s(m)) - \frac{C_{\alpha}m^{\alpha+1}}{(\alpha+1)\mu^{\alpha+1} n^{\alpha}} \right) (1 + o(1)),
\]
where the $o(1)$ term is independent of $k_1, \ldots, k_m \ge 1$.
\end{lem}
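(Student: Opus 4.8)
The plan is to combine a convexity-free pointwise lower bound for the random product defining $\phi_m^n$ with a Laplace-transform estimate that is essentially a repackaging of Lemma~\ref{lem:Laplace}.

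First I would re-centre each factor at the mean of $\Xi_{n-m}$. Since $(m-i+1) + (n-m) = n-i+1$, we have $\sum_{j=i}^m k_j + (n-m)\mu = (n-i+1)\mu + \sum_{j=i}^m(k_j-\mu)$, so, writing $W := \Xi_{n-m} - (n-m)\mu$,
\[
\frac{(n-i+1)\mu}{\sum_{j=i}^m k_j + \Xi_{n-m}} = \frac{1}{1+x_i}, \qquad x_i := \frac{\sum_{j=i}^m(k_j-\mu) + W}{(n-i+1)\mu},
\]
where $1+x_i = \big(\sum_{j=i}^m k_j + \Xi_{n-m}\big)/((n-i+1)\mu) > 0$ since $\Xi_{n-m}\ge n-m$ and $k_j\ge 1$. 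Applying the elementary inequality $\frac{1}{1+x}\ge e^{-x}$, valid for every $x>-1$, to each factor and then taking expectations gives, with $a := \sum_{i=1}^m\frac{1}{(n-i+1)\mu}$,
\[
\phi_m^n(k_1,\dots,k_m) \ge \exp\!\Big(-\sum_{i=1}^m\frac{\sum_{j=i}^m(k_j-\mu)}{(n-i+1)\mu}\Big)\cdot\E{e^{-aW}}, \qquad \E{e^{-aW}} = e^{(n-m)\,(a\mu+\log\mathcal{L}(a))},
\]
the last equality because $\Xi_{n-m}$ is a sum of $n-m$ i.i.d.\ copies of $D_1$.

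Next I would evaluate the two exponents. For the second, $a = \frac1\mu\sum_{l=n-m+1}^n\frac1l = \frac{m}{n\mu}(1+O(m/n)) = O\big(n^{-1/(\alpha+1)}\big)\to 0$, and the expansion $\log\mathcal{L}(\lambda) = -\mu\lambda + \frac{\mu(2-\mu)}{2}\lambda^2 - \frac{C_\alpha}{\alpha+1}\lambda^{\alpha+1} + o(\lambda^{\alpha+1})$ (established just before Lemma~\ref{lem:Laplace}) gives $a\mu+\log\mathcal{L}(a) = \frac{\mu(2-\mu)}{2}a^2 - \frac{C_\alpha}{\alpha+1}a^{\alpha+1} + o(a^{\alpha+1})$, so, using $m = O(n^{\alpha/(\alpha+1)})$ to absorb the errors exactly as in the proof of Lemma~\ref{lem:Laplace},
\[
(n-m)\big(a\mu+\log\mathcal{L}(a)\big) = \frac{(2-\mu)m^2}{2\mu n} - \frac{C_\alpha m^{\alpha+1}}{(\alpha+1)\mu^{\alpha+1}n^\alpha} + o(1).
\]
For the first exponent, interchanging the order of summation gives $\sum_{i=1}^m\frac{\sum_{j=i}^m(k_j-\mu)}{(n-i+1)\mu} = \sum_{j=1}^m(k_j-\mu)\,b_j$ with $b_j := \sum_{i=1}^j\frac{1}{(n-i+1)\mu}$, and a direct estimate gives $\frac{j}{n\mu}\le b_j = \frac{j}{n\mu}+O(j^2/n^2)$. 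Since $b_j-\frac{j}{n\mu}\ge 0$, the terms with $k_j\ge\mu$ contribute at most $O(m^2/n^2)\sum_j k_j$ while those with $k_j<\mu$ only help; combined with $\sum_j j(k_j-\mu) = -\sum_{i=0}^m(s(i)-s(m)) + (2-\mu)\frac{m(m+1)}{2}$ this yields
\[
-\sum_{i=1}^m\frac{\sum_{j=i}^m(k_j-\mu)}{(n-i+1)\mu} \ge \frac1{n\mu}\sum_{i=0}^m(s(i)-s(m)) - \frac{(2-\mu)m(m+1)}{2n\mu} - O\!\Big(\frac{m^2}{n^2}\sum_{j=1}^m k_j\Big).
\]
Since $-\frac{(2-\mu)m(m+1)}{2n\mu} + \frac{(2-\mu)m^2}{2\mu n} = -\frac{(2-\mu)m}{2\mu n} = o(1)$, putting the three displays together gives
\[
\phi_m^n(k_1,\dots,k_m) \ge \exp\!\Big(\frac1{n\mu}\sum_{i=0}^m(s(i)-s(m)) - \frac{C_\alpha m^{\alpha+1}}{(\alpha+1)\mu^{\alpha+1}n^\alpha} + o(1) - O\big(m^2 n^{-2}\textstyle\sum_j k_j\big)\Big).
\]

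The remaining point — and the only genuinely delicate one — is disposing of the term $O(m^2 n^{-2}\sum_j k_j)$ uniformly. It is $o(1)$ as soon as $\sum_{j=1}^m k_j = o(n^2/m^2)$, which, since $m = O(n^{\alpha/(\alpha+1)})$, covers all $k$ with $\sum_j k_j = o(n^{2/(\alpha+1)})$; I expect the fully uniform statement to come from a dichotomy on $\sum_j k_j$, the complementary (large-sum) range being handled by returning to $\prod_i\frac{(n-i+1)\mu}{\cdots}\ge\exp(-\sum_i x_i)$ and checking, with crude constants, that both sides of the asserted inequality are then so small that it holds with room to spare. I would therefore expect the substance of the write-up to be the two lines of asymptotics above, with the case analysis needed for uniformity being the main thing to get right.
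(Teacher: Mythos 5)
Your main mechanism — bound each factor below by an exponential, factor out the $\Xi_{n-m}$-dependence, and feed the resulting Laplace transform into the expansion of $\mathcal{L}$ — is the same elementary device as the paper's proof (your $\frac{1}{1+x}\ge e^{-x}$ is just $\log(1+x)\le x$). The gap is exactly where you suspect it, and the dichotomy you sketch does not close it. Because you centre the $i$th factor at $(n-i+1)\mu$ rather than at $n\mu$, the $k$-dependence of your lower bound is $-\sum_j(k_j-\mu)b_j$ with $b_j=\sum_{i\le j}\frac{1}{(n-i+1)\mu}$, and the discrepancy $\sum_j(k_j-\mu)\bigl(b_j-\tfrac{j}{n\mu}\bigr)\asymp n^{-2}\sum_j j^2(k_j-\mu)$ is a genuine loss in the exponent, unbounded over admissible $k$ (take one large $k_j$ at an index $j$ of order $m$). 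The statement to be proved is multiplicative and uniform: $\phi_n^m(k)\ge(1+o(1))e^{T(k)}$ for \emph{every} $k$, so any exponent error that is not uniformly $o(1)$ destroys the conclusion regardless of how small both sides are. In particular the proposed treatment of the large-sum range — ``both sides are so small that it holds with room to spare'' — is not a valid argument: there your bound is $e^{T(k)-\Theta(m^2n^{-2}\sum_j k_j)}$, i.e.\ below the target by a factor that can be arbitrarily small, and smallness of $e^{T(k)}$ cannot repair a deficit in the \emph{ratio}. In the intermediate regime $\sum_j j^2k_j\asymp n^2$ (error of order $1$, while $T(k)$ is only of order $-n^{1/(\alpha+1)}$) I do not see any crude estimate recovering the $(1+o(1))$ factor.

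The fix is the paper's centering: write $\prod_{i=1}^m(n-i+1)=n^m\prod_{i=1}^{m-1}(1-\tfrac{i}{n})$ and apply $\log(1+x)\le x$ with $1+x=\frac{1}{n\mu}\bigl(\sum_{j\ge i}k_j+\Xi_{n-m}\bigr)$, the \emph{same} denominator $n\mu$ for every $i$. Then the $k$-dependence of the lower bound is exactly the linear functional $\frac{1}{n\mu}\sum_{i=0}^m(s(i)-s(m))$ plus terms depending only on $m$ and $n$, and the $\Xi_{n-m}$-dependence is exactly $-\frac{m}{n\mu}\Xi_{n-m}$, so its expectation is $[\mathcal{L}(\frac{m}{n\mu})]^{n-m}$ and Lemma~\ref{lem:Laplace} finishes, with every error term independent of $k$. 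With that single change your computation goes through and coincides with the paper's proof; as written, your argument proves a weaker, non-uniform statement (fine when $\sum_j j^2k_j=o(n^2)$, e.g.\ for typical values of the $Z_j$, but not the lemma as stated, whose uniformity is also needed later, for instance in Proposition~\ref{prop:upperbound}).
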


\begin{proof}
First rewrite
\[
\prod_{i=1}^m (n-i+1) = n^m \prod_{i=1}^{m-1} \left(1 - \frac{i}{n} \right).
\]
Then
\begin{align*}
 \phi_n^m(k_1,k_2,\ldots,k_m) & =  \prod_{i=1}^{m-1} \left(1 - \frac{i}{n} \right)\E{ \prod_{i=1}^m\left(\frac{n \mu}{\sum_{j=i}^m k_j + \Xi_{n-m}} \right)} \\
& = \E{\exp \left( \sum_{i=1}^{m-1} \log\left(1 - \frac{i}{n} \right) - \sum_{i=1}^m \log \left(\frac{\Xi_{n-m}}{n \mu} + \frac{1}{n \mu} \sum_{j=i}^m  k_j\right) \right)}.
\end{align*}
Now note that for any $x \in (-1,\infty)$, we have $\log(1+x) \le x$.  We also have $\log(1-i/n) \ge -i/n -m^2/n^2$ for $1 \le i \le m-1$.  So
\begin{align*}
& \phi_n^m(k_1,k_2,\ldots,k_m) \\
& \ge \E{ \exp \left( -\sum_{i=1}^{m-1} \frac{i}{n} - \frac{m^3}{n^2}- m\left[\frac{\Xi_{n-m}}{n \mu} - 1\right] - \frac{1}{n \mu}\sum_{i=1}^m  \sum_{j=i}^m  k_j \right)} \\
& = \exp \left( -\frac{m(m-1)}{2n} - \frac{m^3}{n^2}  + m + \frac{1}{n\mu} \sum_{i=1}^m \Big(s(i) - s(m) - 2(m-i+1)\Big) \right) \\
& \qquad \times \E{\exp \left(- \frac{m}{n\mu} \Xi_{n-m}\right)} \\
& =  \exp \left(- \frac{m(m-1)}{2n} - \frac{m^3}{n^2} + m + \frac{1}{n \mu}\sum_{i=0}^{m} (s(i) - s(m)) - \frac{m(m+1)}{n \mu} \right) \\
& \qquad \times \E{\exp \left( -\frac{m}{n \mu}\Xi_{n-m} \right)}  \\
& = \exp \left(\frac{1}{n \mu}\sum_{i=0}^{m} (s(i) - s(m)) \right) \exp\left(m - \frac{(2+\mu)}{2\mu} \frac{m^2}{n}\right) \left[ \mathcal{L}\left(\frac{m}{n\mu} \right) \right]^{n-m} \\ 
& \qquad \times \exp\left(\frac{(\mu-2)m}{2\mu n} - \frac{m^3}{n^2}\right).
\end{align*}
We have $m^3/n^2  = O(n^{\frac{\alpha-2}{\alpha+1}}) = o(1)$ and so the final exponential tends to 1 as $n \to \infty$. The desired result then follows from Lemma~\ref{lem:Laplace}.
\end{proof}

\begin{lem} \label{lem:James}
Let $(X_n)_{n \ge 1}$, $(Y_n)_{n \ge 1}$ be two sequences of non-negative random variables such that $X_n \ge Y_n$ and $\E{X_n} = 1$ for all $n$.  Suppose that $X$ is another non-negative random variable such that $\E{X} = 1$ and $Y_n \convdist X$.  Then $X_n \convdist X$ and $(X_n)_{n \ge 1}$ is uniformly integrable.
\end{lem}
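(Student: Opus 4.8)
The plan is to exploit the sandwiching $Y_n \le X_n$ together with the normalisation $\E{X_n}=1$ to pin down the asymptotics of $\E{Y_n}$, and then to run a Scheff\'e-type argument. First I would observe that $0 \le Y_n \le X_n$ gives $\E{Y_n} \le 1$ for all $n$, while the convergence $Y_n \convdist X$ together with the fact that $y \mapsto y$ is continuous and bounded below on $\R_+$ gives $\liminf_{n\to\infty} \E{Y_n} \ge \E{X} = 1$ by the portmanteau theorem (or by a Skorokhod coupling and Fatou's lemma). Hence $\E{Y_n} \to 1 = \E{X}$ as $n \to \infty$.

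Next I would upgrade $Y_n \convdist X$ to uniform integrability of $(Y_n)_{n \ge 1}$. The clean route is via Skorokhod's representation theorem: realise $Y_n' \equidist Y_n$ and $X' \equidist X$ on a common probability space with $Y_n' \to X'$ almost surely; since these variables are non-negative with $\E{Y_n'} = \E{Y_n} \to \E{X} = \E{X'}$, Scheff\'e's lemma gives $Y_n' \to X'$ in $L^1$, so $(Y_n')_{n\ge1}$, and hence $(Y_n)_{n\ge1}$, is uniformly integrable. Now set $W_n := X_n - Y_n \ge 0$; then $\E{W_n} = 1 - \E{Y_n} \to 0$, so $W_n \convprob 0$ by Markov's inequality, and $(W_n)_{n\ge1}$ is uniformly integrable because a sequence of non-negative random variables with vanishing mean is UI (handle the finitely many initial terms, which are integrable hence individually UI, separately, and dominate the tail of the rest by $\sup_{n\ge N}\E{W_n}$). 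Slutsky's theorem applied to $X_n = Y_n + W_n$ then yields $X_n \convdist X$, and $(X_n)_{n\ge1}$ is uniformly integrable as the termwise sum of the two uniformly integrable sequences $(Y_n)_{n\ge1}$ and $(W_n)_{n\ge1}$.

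I do not expect any serious obstacle here: the statement is essentially a repackaging of Scheff\'e's lemma and Slutsky's theorem. The only point that needs a moment's thought is the second step, namely recalling that for non-negative random variables convergence in distribution plus convergence of the first moments \emph{forces} uniform integrability (rather than merely being implied by it); the Skorokhod-coupling-plus-Scheff\'e route makes this transparent and sidesteps any direct truncation estimate.
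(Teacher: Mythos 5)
Your argument is correct. It differs from the paper's proof mainly in its packaging: you decompose $X_n = Y_n + W_n$ with $W_n := X_n - Y_n \ge 0$, establish $\E{Y_n}\to 1$ by Fatou under a Skorokhod coupling, invoke Scheff\'e's lemma to get uniform integrability of $(Y_n)$, note $\E{W_n}\to 0$ so $W_n\convprob 0$ and $(W_n)$ is UI, and finish with Slutsky and closure of UI under sums. The paper instead works directly with the positive part $(X-Y_n)_+$ in the Skorokhod coupling: a single truncation estimate (split on $\{X\ge K\}$ and $\{X<K\}$) gives $\E{(X-Y_n)_+}\to 0$, the sandwich $X_n\ge Y_n$ upgrades this to $\E{(X-X_n)_+}\to 0$, and then the equality of means yields $\E{(X_n-X)_+}=\E{(X-X_n)_+}$, hence $\E{|X_n-X|}\to 0$, i.e.\ $L^1$ convergence in the coupling, which encodes both conclusions at once. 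The underlying mechanism is the same in both proofs — the domination $Y_n\le X_n$ together with $\E{X_n}=\E{X}$ prevents any mass from escaping — but equal means enter for you through $\E{W_n}=1-\E{Y_n}\to 0$, and for the paper through the identity $\E{(X_n-X)_+}-\E{(X-X_n)_+}=\E{X_n}-\E{X}=0$. Your version is more modular, leaning on named lemmas (Scheff\'e, Slutsky, UI of sums); the paper's is more self-contained and slightly more economical, since it never needs the intermediate facts $\E{Y_n}\to 1$ or the UI of $(Y_n)$ explicitly. One small caveat: your parenthetical appeal to the portmanteau theorem for the unbounded function $y\mapsto y$ needs the lower-semicontinuous/bounded-below version (or just the Skorokhod--Fatou route you also mention), so it is worth stating that precisely.
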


\begin{proof}
Since $X_n \ge Y_n$, we have that
\[
\limsup_{n \to \infty} \E{|X_n - Y_n|} = \limsup_{n \to \infty} \E{X_n - Y_n} = 1 - \liminf_{n \to \infty} \E{Y_n}.
\]
But by the Portmanteau theorem,
\[
\liminf_{n \to \infty} \E{|Y_n|} \ge \E{X} = 1
\]
and so $X_n - Y_n \to 0$ in $L^1$. It follows that $X_n - Y_n \convdist 0$.  But then by Slutsky's lemma we have $X_n \convdist X$.
Now, we prove that $X_n \to X$ in $L^1$, which will imply the desired uniform integrability. By Skorokhod's representation theorem, we may work on a probability space where $X_n \to X$ almost surely. But then we also have $\E{X_n} = \E{X} = 1$ for all $n$ and so $\E{|X_n - X|} \to 0$ by Scheff\'e's lemma.
\end{proof}

\begin{proof}[Proof of Proposition~\ref{prop:discretechangeofmeas}]
Recall that $S(k) = \sum_{i=1}^k (Z_i - 2)$.  By Lemma~\ref{lem:lowerbound}, for $m=\fl{t n^{\frac{\alpha}{\alpha+1}}}$,
\[
\Phi(n,m) \ge \underline{\Phi}(n,m) := \exp\left(\frac{1}{n \mu}\sum_{i=0}^{m} (S(i) - S(m)) - \frac{C_{\alpha} m^{\alpha+1}}{(\alpha+1) \mu^{\alpha+1}n^{\alpha}} \right) (1+o(1)).
\]
By (\ref{eqn:Levyconvergence}) and Lemma~\ref{lem:Levyintegral}, we have
\[
\qquad \frac{1}{n \mu} \sum_{i=0}^{m} (S(i) - S(m)) \convdist \frac{1}{\mu} \int_0^t (L_s -L_t)ds.
\]
Hence, by the continuous mapping theorem,
\begin{align*}
\underline{\Phi}(n,\fl{t n^{\frac{\alpha}{\alpha+1}}}) & \convdist \exp\left(\frac{1}{\mu} \int_0^t (L_s -L_t) ds - \frac{C_{\alpha} t^{\alpha+1}}{(\alpha+1) \mu^{\alpha+1}} \right)\\
& \qquad = \exp \left( - \frac{1}{\mu} \int_0^t s dL_s - \frac{C_{\alpha} t^{\alpha+1}}{(\alpha+1) \mu^{\alpha+1}} \right),
\end{align*}
and the right-hand side is, of course, $\Phi(t)$, which has mean 1. We also have \linebreak $\E{\Phi(n,\fl{t n^{\frac{\alpha}{\alpha+1}}})} = 1$ for all $n$.  So by Lemma~\ref{lem:James}, we must have
\[
\Phi(n,\fl{t n^{\frac{\alpha}{\alpha+1}}}) \convdist \Phi(t)
\]
as $n \to \infty$, as well as the claimed uniform integrability.
\end{proof}

We are now ready to prove Theorem~\ref{thm:jointdfwheight}.

\begin{proof}[Proof of Theorem~\ref{thm:jointdfwheight}]
It is sufficient to show that for any $t \ge 0$ and any bounded continuous test-function $f: \mathbb{D}([0,t],\R)^2 \to \R$,
\[
\E{f\left( n^{-\frac{1}{\alpha+1}} \widetilde{S}^n(\fl{n^{\frac{\alpha}{\alpha+1}} u}),  n^{-\frac{\alpha-1}{\alpha+1}} \widetilde{G}^n(\fl{n^{\frac{\alpha}{\alpha+1}} u}), 0 \le u \le t\right)} \to \E{f(\widetilde{L}_u, \widetilde{H}_u, 0 \le u \le t)},
\]
as $n \to \infty$.  Let us write $\overline{S}^n(u) = n^{-\frac{1}{\alpha+1}} S(\fl{n^{\frac{\alpha}{\alpha+1}} u})$ and, similarly, \linebreak $\overline{G}^n(u)= n^{-\frac{\alpha-1}{\alpha+1}} G(\fl{n^{\frac{\alpha}{\alpha+1}} u})$. Then, by changing measure, we wish to show that for any $t \ge 0$ and any bounded continuous test-function $f: \mathbb{D}([0,t],\R)^2 \to \R$,
\[
\E{\Phi(n,\fl{t n^{\frac{\alpha}{\alpha+1}}}) f(\overline{S}^n(u), \overline{G}^n(u), 0 \le u \le t)} \to \E{\Phi(t) f(L_u, H_u, 0 \le u \le t)},
\]
as $n \to \infty$. From the proof of Proposition~\ref{prop:discretechangeofmeas}, we have that
\[
\E{ \Big|\Phi(n,\fl{t n^{\frac{\alpha}{\alpha+1}}}) - \underline{\Phi}(n,\fl{t n^{\frac{\alpha}{\alpha+1}}}) \Big|} \to 0
\]
as $n \to \infty$, and so it will suffice to show that
\[
\E{\underline{\Phi}(n,\fl{t n^{\frac{\alpha}{\alpha+1}}}) f(\overline{S}^n(u), \overline{G}^n(u), 0 \le u \le t)} \to \E{\Phi(t) f(L_u, H_u, 0 \le u \le t)}.
\]
But
\[
\underline{\Phi}(n,\fl{t n^{\frac{\alpha}{\alpha+1}}}) = \exp \left( \frac{1}{\mu} \int_0^t (\overline{S}^n(u) -\overline{S}^n(t))du - \frac{C_{\alpha}\fl{tn^{\frac{\alpha}{\alpha+1}}}^{\alpha+1}} {(\alpha+1)\mu^{\alpha+1}n^{\alpha}} \right).
\]
In particular, for a path $x \in \mathbb{D}([0,t],\R)$, let
\[
\Theta(x,t) = \exp \left(  \frac{1}{\mu} \int_0^t (x(u) -x(t)) du - \frac{C_{\alpha}t^{\alpha+1}}{(\alpha+1)\mu^{\alpha+1}} \right)
\]
and observe that $\Theta$ is a continuous functional of its first argument.
Then we have
\[
\E{ \Big| \underline{\Phi}(n,\fl{t n^{\frac{\alpha}{\alpha+1}}}) - \Theta(\overline{S}^n,t) \Big| } \to 0.
\]
So it suffices to show that
\[
\E{\Theta(\overline{S}^n,t) f(\overline{S}^n(u), \overline{G}^n(u), 0 \le u \le t)} \to \E{\Theta(L, t) f(L_u, H_u, 0 \le u \le t)}.
\]
But this now follows from Theorem~\ref{thm:DLGconvergence} and uniform integrability.
\end{proof}

\section{The configuration multigraph} \label{sec:graph}
The processes $\widetilde{S}^n$ and $\widetilde{G}^n$ encode a forest of trees where the numbers of children of the vertices, visited in depth-first order, are $\hat{D}^n_i-1$, $i \ge 1$.  Let us write $\widetilde{\mathbf{F}}_n(\nu)$ for this forest. In this section, we wish to encode similarly the multigraph $\mathbf{M}_n(\nu)$. Let us first describe the organisation of this section.

In Section~\ref{subsec:explo}, we simultaneously generate and explore $\mathbf{M}_n(\nu)$ using the depth-first approach outlined in the Introduction: we view each connected component of the graph as a spanning tree explored in a depth-first manner plus some additional edges, creating cycles, that we call \textit{back-edges}. In Section~\ref{subsec:cv-dfw}, we prove that the exploration process is close enough to $\widetilde{S}^n$ in order to have the same scaling limit, and add the joint convergence of the locations of the back-edges in Section~\ref{subsec:backedges} (Theorem \ref{thm:Coxprocess}). In Section~\ref{subsec:cmpts}, we split the multigraph into its components by showing that the (rescaled) ordered sequence of component sizes converges to the sequence of ordered excursion lengths of the continuous process $R$ (Proposition~\ref{prop:ordered}). We improve this result in Section~\ref{subsec:markedexc} by adding in the locations of the back-edges under each excursion (Proposition \ref{prop:orderedexmarks}). In Section~\ref{subsec:heightp}, we study the height process and show in Proposition~\ref{prop:joint} the joint convergence of the height process excursions and the locations of the back-edges. This finally allows us to prove Theorem \ref{thm:main} in Section~\ref{subsec:metricstructure}. 

\subsection{Exploration of the multigraph}\label{subsec:explo}
We work conditionally on the sequence \linebreak $(\hat{D}_1^n, \hat{D}_2^n, \ldots, \hat{D}_n^n)$.  Let us declare that the vertex of degree $\hat{D}_i^n$ is called $v_i$.  This means that we have already determined the (size-biased by degree) order in which we will observe new vertices.  We will couple $\widetilde{\mathbf{F}}_n(\nu)$ and $\mathbf{M}_n(\nu)$ by using the same ordering on the new vertices we explore.

Recall that we start from vertex $v_1$ with degree $\hat{D}_1^n$.  We maintain a \emph{stack}, namely an ordered list of half-edges which we have seen but not yet explored (remember that the half-edges come with an arbitrary labelling for this purpose).  We put the $\hat{D}_1^n$ half-edges of $v_1$ onto this stack, in increasing order of label, so that the lowest labelled half-edge is on top of the stack.  At a subsequent step, suppose we have already seen the vertices $v_1, v_2, \ldots, v_k$.  If the stack is non-empty, take the top half-edge and sample its pair.  This lies on the stack with probability proportional to the height of the stack minus 1 or belongs to $v_{k+1}$ with probability proportional to $\sum_{i=k+1}^n \hat{D}_i^n$.  In the first case we simply remove both half-edges from the stack.  In the second, we remove the half-edge at the top of the stack (which has just been paired) and replace it by the remaining half-edges (if any) of $v_{k+1}$.  If the stack is empty, we start a new component at $v_{k+1}$.

Let us now describe the forest $\mathbf{F}_n(\nu)$ from which we will recover $\mathbf{M}_n(\nu)$.  Whenever there is a back-edge in $\mathbf{M}_n(\nu)$, say from vertex $v_k$ to vertex $v_i$ with $i \le k$, remove the back-edge and replace it by two edges, one from $v_k$ to a new leaf and the other from $v_i$ to a new leaf.  To recover $\mathbf{M}_n(\nu)$ it is then sufficient to remove the edges to the new leaves and put in a new edge from $v_i$ to $v_k$.

Our aim is to encode the forest $\mathbf{F}_n(\nu)$, firstly via its depth-first walk and then by its height process. We will simultaneously keep track of marks which tell us which vertices we should identify in order to recover the multigraph.

Our first observation is that the vertex-sets of \emph{pairs} of components in $\widetilde{\mathbf{F}}_n(\nu)$ correspond precisely to the vertex-sets of subcollections of components in $\mathbf{M}_n(\nu)$.  (This is illustrated in Figure~\ref{fig:unplugging} to which the reader is referred in the following argument.) More precisely, without loss of generality, let the pair of components of $\widetilde{\mathbf{F}}_n(\nu)$ be the first two, on vertices $v_1, v_2, \ldots, v_{m_1}$ and $v_{m_1+1}, v_{m_1+2}, \ldots, v_{m_1+m_2}$.  Suppose that the same vertices in $\mathbf{M}_n(\nu)$ are adjacent to $b$ back-edges.  Now vertices $v_1$ and $v_{m_1+1}$ each possess one more half-edge in $\mathbf{M}_n(\nu)$ than they do in $\widetilde{\mathbf{F}}_n(\nu)$ (since in $\widetilde{\mathbf{F}}_n(\nu)$, if $v_i$ is the first vertex of a component, it has $\hat{D}_i-1$ edges and not $\hat{D}_i$).  In particular, adding an edge between $v_1$ and $v_{m_1+1}$ clearly produces a tree with $m_1+m_2$ vertices and $\frac{1}{2} \sum_{i=1}^{m_1+m_2} \hat{D}_i^n = m_1 + m_2 - 1$ edges.  We now ``rewire'' this tree to obtain the relevant components of $\mathbf{M}_n(\nu)$. The effect of adding a back-edge is to shunt all of the subsequent subtrees along in the depth-first order.  (See Figure~\ref{fig:unplugging}.)  The overall effect is that each back-edge causes a new component to come into existence.  Each time we observe a back-edge, it occupies two half-edges, so there are two subtrees which get pushed out of the component.  The earlier of these subtrees in the depth-first order becomes the basis for the next component of $\mathbf{M}_n(\nu)$.  The root of this component has one more child than it had in the original tree.  This allows the absorption of the second subtree, whose root gets attached by its free half-edge to the root of the component.  Subsequent back-edges similarly each generate one new component. Following this through, we see that we end up with $b+1$ components of $\mathbf{M}_n(\nu)$.  (For the purposes of intuition, note that because the vast majority of vertices lie in components of size $o(n^{\alpha/(\alpha+1)})$, with high probability at most one of them will be of size $\Theta(n^{\alpha/(\alpha+1)})$ and thus show up in the limit. So, at least heuristically, this rewiring process cannot affect what we see in the limit.)

\begin{figure}
\begin{center}
\includegraphics[width=9cm]{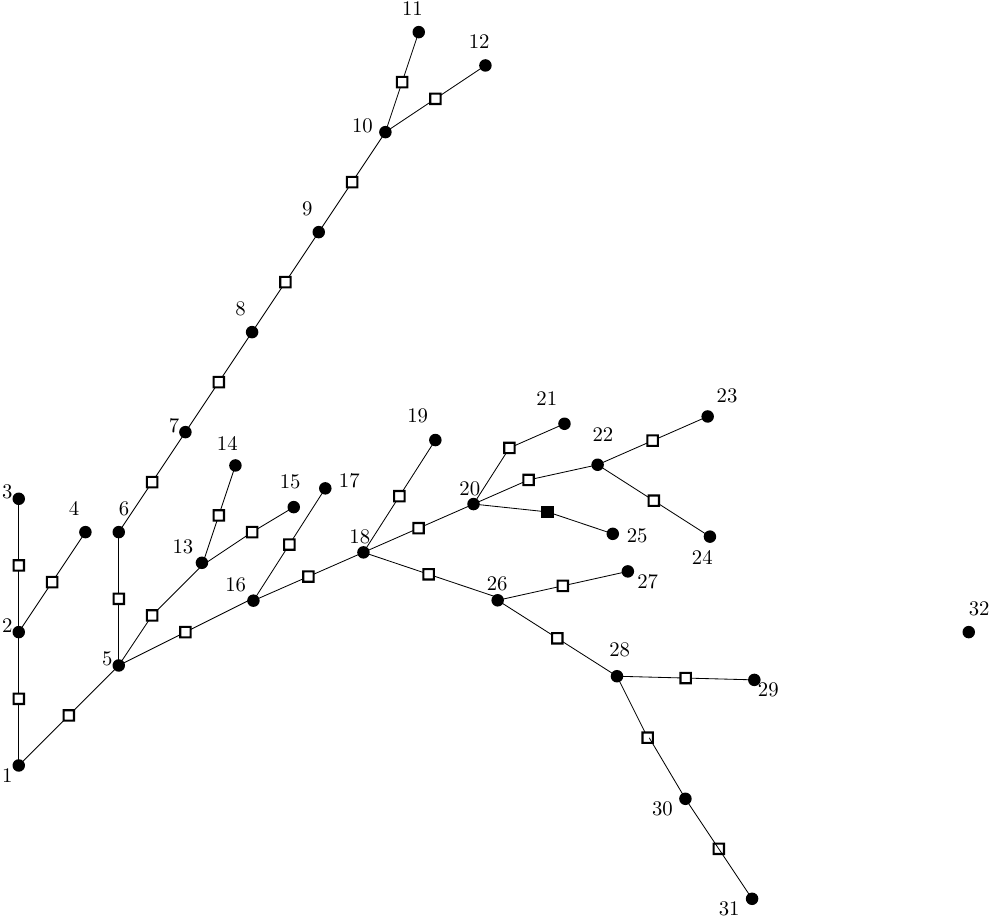} 

\includegraphics[width=9cm]{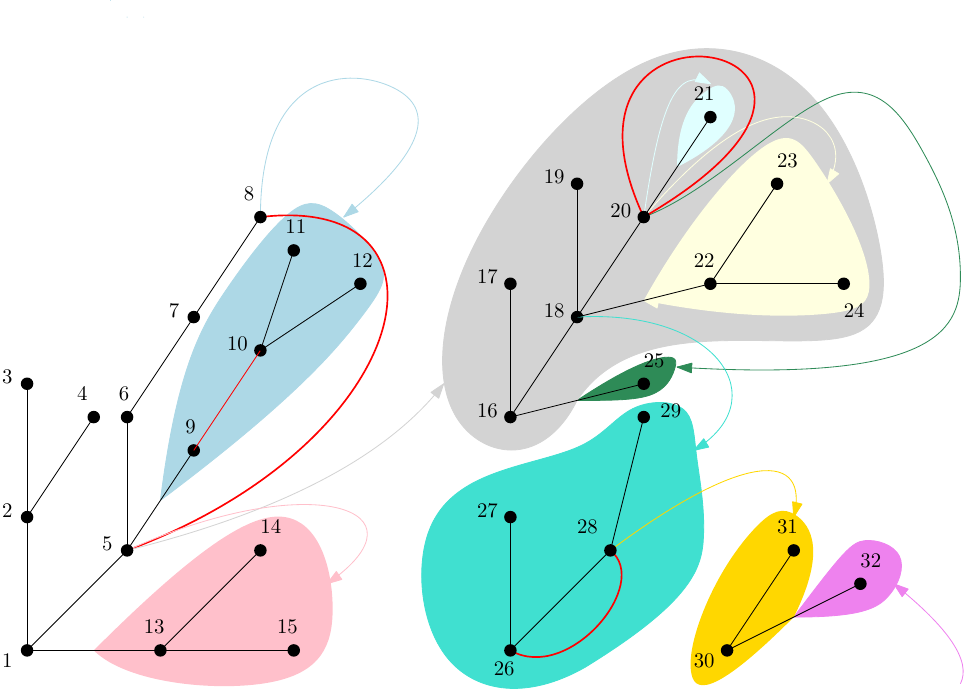} 

\includegraphics[width=9cm]{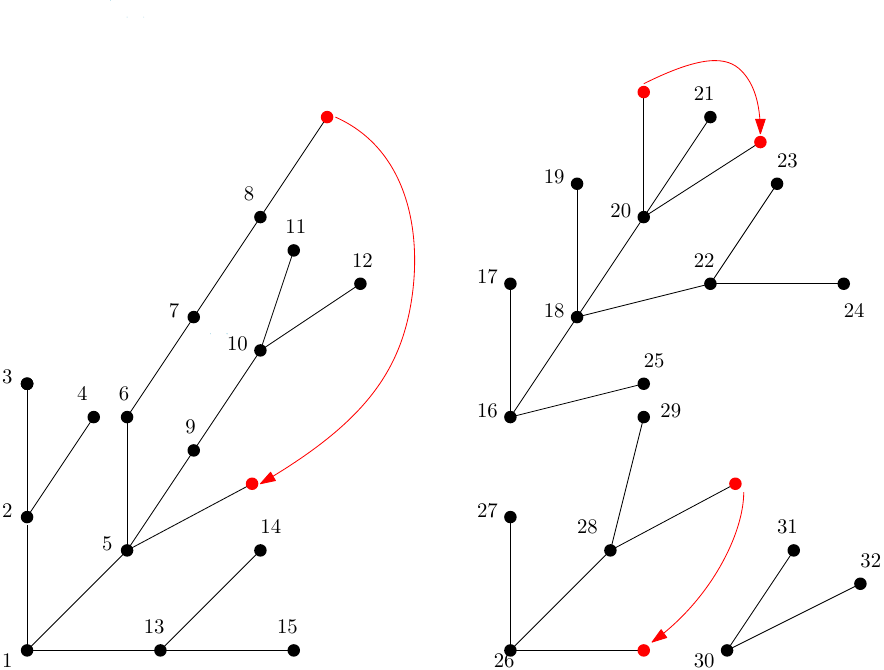}
\end{center}
\caption{For simplicity, the labels given are those corresponding to the depth-first order. Top: the first two components of the forest $\widetilde{\mathbf{F}}_n(\nu)$.  Middle: the first four components of $\mathbf{M}_n(\nu)$, on the same vertices.  Three back-edges are marked in red. The subtree surgery required to get from $\widetilde{\mathbf{F}}_n(\nu)$ to $\mathbf{M}_n(\nu)$ is indicated. The back-edge from $8$ to $5$ moves the subtree rooted at $9$ to the next available half-edge, also belonging to $5$. This shifts further the subtrees rooted at $13$ and $16$: $13$ gets connected to $1$, which has one more edge in $\mathbf{M}_n(\nu)$ than in $\widetilde{\mathbf{F}}_n(\nu)$ ($\hat{D}_1$ instead of $\hat{D}_1-1$), as any first vertex of a connected component of $\mathbf{M}_n(\nu)$. Vertex $16$ starts a new component. 
Bottom: the first four components of the forest $\mathbf{F}_n(\nu)$ with marks.}

\label{fig:unplugging}
\end{figure}

It is clear that the effects of adding back-edges are relatively local and so it is at least intuitively clear that the depth-first walk of the forest $\mathbf{F}_n(\nu)$ should be similar to that of $\widetilde{\mathbf{F}}_n(\nu)$, as long as there are not too many back-edges.  Let $X^n$ denote the depth-first walk of $\mathbf{F}_n(\nu)$.  We will now describe how to construct $X^n$ from $\widetilde{S}^n$, and also how to keep track of the back-edges. We will write $R^n(k)$ for the number of half-edges on the stack at step $k$.  We will let $N^n$ count the occurrences of back-edges, and $U^n$ the positions of their targets on the stack. We will write $\mathcal{M}^n(k)$ for a set of marks (in $\mathbb{N}$) at step $k$, indicating back-edges which have not yet been closed, and $\tau_n(k)$ for the number of vertices already seen at step $k$ (note that we see a new vertex if and only if the current step does not involve a back-edge). Finally, let $C^n(k)$ be the number of components of $\mathbf{F}_n(\nu)$ we have fully explored by time $k$. So we will have that for all $k\ge 1$, $C^n(k)=-\min_{0\le \ell\le k}X^n(\ell)$, and that $R^n(k) = X^n(k) + C^n(k)$.

We start from $X^n(0) =  N^n(0)  = 0$, $\mathcal{M}^n(0) = \emptyset$ and $\tau_n(0) = 0$.  For $k \ge 0$, we might encounter the following three situations.
\begin{itemize}
\item  \textbf{New component.}\\
If $X^n(k) = \min_{0 \le i \le k-1} X^n(i)-1$ or $k=0$, let $\tau_n(k+1) = \tau_n(k)+1$, $N^n(k+1) = N^n(k)$, $\mathcal{M}^n(k+1) = \mathcal{M}^n(k)$ and
\[
X^n(k+1) = X^n(k) + \widetilde{S}^n(\tau_n(k)+1) - \widetilde{S}^n(\tau_n(k)) + 1.
\]
\item \textbf{Start a back-edge or not.}
\\
If $X^n(k) > \min_{0 \le i \le k-1} X^n(i)-1$ and $X^n(k) \notin \mathcal{M}^n(k)$,
\begin{itemize}
\item With probability
\[
\frac{X^n(k) - \min_{0 \le i \le k} X^n(k)- |\mathcal{M}^n(k)|}{X^n(k) - \min_{0 \le i \le k} X^n(k) - |\mathcal{M}^n(k)|+ \sum_{j = \tau_n(k)+1}^{n} \hat{D}_j^n},
\]
let $\tau_n(k+1) = \tau_n(k)$ and $X^n(k+1) = X^n(k) - 1$.  Let $N^n(k+1) = N^n(k)+1$, sample $U^n(k+1)$ uniformly from 
\[
\left\{\min_{0 \le i \le k}X^n(i), \ldots, X^n(k)-1\right\} \setminus \mathcal{M}^n(k),
\]
and let $\mathcal{M}^n(k+1) = \mathcal{M}^n(k) \cup \{U^n(k+1)\}$.
\item With probability
\[
\frac{\sum_{j = \tau_n(k)+1}^{n} \hat{D}_j^n}{X^n(k) - \min_{0 \le i \le k} X^n(k) - |\mathcal{M}^n(k)| + \sum_{j = \tau_n(k)+1}^{n} \hat{D}_j^n}
\]
let $\tau_n(k+1) = \tau_n(k) + 1$, 
\[
X^n(k+1) = X^n(k) + \widetilde{S}^n(\tau_n(k)+1) - \widetilde{S}^n(\tau_n(k)),
\]
$N^n(k+1) = N^n(k)$ and $\mathcal{M}^n(k+1) = \mathcal{M}^n(k)$.
\end{itemize}
\item \textbf{Close a back-edge?}
\\ If $X^n(k) > \min_{0 \le i \le k-1} X^n(i)-1$ and $X^n(k) \in \mathcal{M}^n(k)$ then let $\tau_n(k+1) = \tau_n(k)$, $X^n(k+1) = X^n(k) - 1$, $N^n(k+1) = N^n(k)$, and $\mathcal{M}^n(k+1) = \mathcal{M}^n(k) \setminus \{X^n(k)\}$.
\end{itemize}

It is straightforward to check that this is the depth-first walk of the forest $\mathbf{F}^n(\nu)$.  We observe that for $k \ge 1$ we have
\[
X^n(k)+ \min_{0 \le i \le k} X^n(i) = \widetilde{S}^n(\tau_n(k)) +1  - N^n(k) - \#\{i \le k: |\mathcal{M}^n(i)| < |\mathcal{M}^n(i-1)|\}.
\]
Hence, $X^n(k)+\min_{0 \le i \le k} X^n(i)$ is the number of half-edges seen but not yet paired or reserved for back-edges, in the currently explored connected component (if $-\min_{0 \le i \le k} X^n(i)=j$, this is the $(j+1)$-th component). Indeed,
\[
\widetilde{S}^n(\tau_n(k)) +1=(\hat{D}_1-1)-1 +\hat{D}_2-2+\ldots +\hat{D}_{\tau_n(k)-1} -2+\hat{D}_{\tau_n(k)} -1 
\]
is the number of half-edges seen and not paired in the spanning forest of the components explored so far. $N^n(k) + \#\{i \le k: |\mathcal{M}^n(i)| < |\mathcal{M}^n(i-1)|\}$ is the number of half-edges that have been paired in or reserved for back-edges. Note that $\vert \mathcal{M}^n(i)\vert$ increases (resp. decreases) by 1 exactly when a back-edge is initiated (resp. closed). $N^n(k)$ counts the number of back-edges started.

In particular,
\begin{equation} \label{eqn:close}
\widetilde{S}^n(\tau_n(k)) - 2N^n(k) \le X^n(k) + \min_{0 \le i \le k} X^n(i) -1\le \widetilde{S}^n(\tau_n(k)).
\end{equation}

\subsection{Convergence of the depth-first walk and marks}\label{subsec:cv-dfw}
Let us first prove a bound on the number $N^n(k)$ of back-edges which have occurred by step $k$. 

\begin{lem} \label{lem:back}
For every $t>0$, the sequences of random variables $\left( N^n(\fl{t n^{\alpha/(\alpha+1)}})\right)_{n\geq 1}$ and \\
$\left( \sup_{0 \le k \le \fl{tn^{\alpha/(\alpha+1)}}} |\tau_n(k) - k|\right)_{n\geq 1}$ are tight. 
\end{lem}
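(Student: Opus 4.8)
The plan is to reduce the whole statement to tightness of $\left(N^n(\fl{tn^{\alpha/(\alpha+1)}})\right)_{n\ge1}$ and then establish that by a stopping-time argument. Write $K = K_n := \fl{tn^{\alpha/(\alpha+1)}}$, and let $(\mathcal{F}_k)_{k\ge0}$ denote the filtration generated by the first $k$ steps of the exploration, with $\mathcal{F}_0$ also containing the degree sequence (recall we work conditionally on $(\hat D^n_1,\dots,\hat D^n_n)$). For the reduction, observe from the construction that $\tau_n(k+1)=\tau_n(k)$ precisely at the steps where we start or close a back-edge, and $\tau_n(k+1)=\tau_n(k)+1$ otherwise. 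Since each closure removes a mark that was placed at a distinct earlier ``start'' step, there are at most $N^n(k)$ closures before step $k$, so $0\le k-\tau_n(k)\le 2N^n(k)$; as $k\mapsto k-\tau_n(k)$ is non-decreasing, this yields $\sup_{0\le k\le K}|\tau_n(k)-k| = K-\tau_n(K)\le 2N^n(K)$. Hence it is enough to show $(N^n(K))_{n\ge1}$ is tight.

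Next I would isolate two deterministic estimates. From the formula for the probability of starting a back-edge, at any ``start a back-edge or not'' step $k\le K$,
\[
\Prob{\text{a back-edge starts at step } k \mid \mathcal{F}_k} \le \frac{R^n(k)}{\sum_{j=\tau_n(k)+1}^n \hat{D}_j^n} \le \frac{R^n(k)}{\sum_{j=K+1}^n \hat{D}_j^n},
\]
using $\tau_n(k)\le k\le K$. For the stack height, applying \eqref{eqn:close} at step $k$ and again at the step $k_0\le k$ at which $\min_{0\le i\le k}X^n(i)$ is attained (so also $\min_{0\le i\le k_0}X^n(i)=X^n(k_0)$) gives $X^n(k)\le \widetilde{S}^n(\tau_n(k))-X^n(k_0)+1$ and $-2X^n(k_0)\le -\widetilde{S}^n(\tau_n(k_0))+2N^n(k_0)-1$, whence
\[
R^n(k) = X^n(k)-X^n(k_0) \le \widetilde{S}^n(\tau_n(k))-\widetilde{S}^n(\tau_n(k_0)) + 2N^n(k_0) \le \omega_n + 2N^n(k)
\]
for all $0\le k\le K$, where $\omega_n := \sup_{0\le j\le K}\widetilde{S}^n(j) - \inf_{0\le j\le K}\widetilde{S}^n(j)$. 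By Theorem~\ref{thm:jointdfwheight} and the continuous mapping theorem applied to the oscillation functional, $\omega_n = O_{\mathbb{P}}(n^{1/(\alpha+1)})$; and since $\sum_{j=1}^n\hat{D}_j^n = \sum_{j=1}^n D_j = \mu n(1+o(1))$ almost surely while $\sum_{j=1}^K\hat{D}_j^n = \widetilde{S}^n(K)+2K = o_{\mathbb{P}}(n)$, we have $\sum_{j=K+1}^n\hat{D}_j^n \ge \mu n/2$ with probability tending to $1$. Consequently, given $\delta>0$ we may fix $A=A(\delta)$ so large that the event
\[
E_n := \left\{\omega_n \le A n^{1/(\alpha+1)}\right\} \cap \left\{\sum_{j=K+1}^n\hat{D}_j^n \ge \mu n/2\right\},
\]
which lies in $\mathcal{F}_0$, satisfies $\Prob{E_n}\ge 1-\delta$ for all large $n$.

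Finally I would run the stopping-time argument, working conditionally on $\mathcal{F}_0$ on the event $E_n$. Let $T := \inf\{k\ge 0 : N^n(k)\ge n^{1/(\alpha+1)}\}$. For $k<T\wedge K$ we have $R^n(k)\le (A+2)n^{1/(\alpha+1)}$ on $E_n$, so the conditional probability of starting a back-edge at step $k$ is at most $\frac{2(A+2)}{\mu}n^{-\alpha/(\alpha+1)}$; since $N^n(k\wedge T)$ minus $\sum_{i=0}^{k\wedge T-1}\Prob{\text{a back-edge starts at step } i\mid\mathcal{F}_i}$ is a mean-zero martingale, we obtain on $E_n$
\[
\E{N^n(K\wedge T) \mid \mathcal{F}_0} \le K\cdot\frac{2(A+2)}{\mu}n^{-\alpha/(\alpha+1)} \le \frac{2(A+2)t}{\mu} =: C_0.
\]
Markov's inequality then gives $\Prob{\{T\le K\}\cap E_n} \le \Prob{N^n(K\wedge T)\ge n^{1/(\alpha+1)},\, E_n} \le C_0 n^{-1/(\alpha+1)}\to 0$, and for any $m>0$,
\[
\Prob{N^n(K)>m} \le \Prob{E_n^c} + \Prob{\{T\le K\}\cap E_n} + \Prob{\{N^n(K\wedge T)>m\}\cap E_n} \le \delta + o(1) + \frac{C_0}{m},
\]
using $N^n(K)=N^n(K\wedge T)$ on $\{T>K\}$ and the bound on $\E{N^n(K\wedge T)\mid\mathcal{F}_0}$ once more in the last term. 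Taking $\delta=\varepsilon/3$, which fixes $A$ hence $C_0$, and then $m=3C_0/\varepsilon$, yields $\limsup_n\Prob{N^n(K)>m}\le\varepsilon$, so $(N^n(K))_{n\ge1}$ is tight, and with it the lemma.

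The main obstacle is the apparent circularity: the back-edge probability at step $k$ is controlled by $R^n(k)$, which the estimate above controls only in terms of $N^n(k)$ itself; the role of the stopping time $T$ (at which $N^n$ first reaches $n^{1/(\alpha+1)}$) is precisely to break this, since before $T$ the stack is $O(n^{1/(\alpha+1)})$, which bounds $\E{N^n(K\wedge T)}$, which in turn forces $\Prob{T\le K}\to0$. The other slightly delicate point is the double use of \eqref{eqn:close} to bound $R^n(k)$ by the oscillation of $\widetilde{S}^n$ plus $2N^n(k)$, rather than by a quantity involving the uncontrolled number of components already explored. The Brownian case $\alpha=2$ is handled identically, with the $\alpha=2$ version of Theorem~\ref{thm:jointdfwheight} as input.
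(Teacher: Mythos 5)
Your proof is correct, and its skeleton matches the paper's: reduce $|\tau_n(k)-k|$ to $2N^n(k)$, bound the per-step probability of opening a back-edge by $R^n(k)$ divided by the remaining half-edges, show the relevant quantities are of order $n^{1/(\alpha+1)}$ and $n$ respectively on a high-probability $\mathcal{F}_0$-measurable event, and conclude that $O(n^{\alpha/(\alpha+1)})$ steps produce $O_{\mathbb P}(1)$ back-edges. The genuine difference is in how the stack height is controlled. The paper proves the \emph{deterministic} bound $R^n(i)\le 2+\max_{0\le j\le i}\widetilde{S}^n(j)-\min_{0\le j\le i}\widetilde{S}^n(j)$ directly, by counting half-edges within the component currently being explored (at most $\hat D^n_{\tau_n(j)}+\cdots+\hat D^n_{\tau_n(i)}$ seen, at least $2(\tau_n(i)-\tau_n(j))$ already used to connect its vertices); since this bound involves no $N^n$ term, there is no circularity and $N^n(\fl{tn^{\alpha/(\alpha+1)}})$ is simply stochastically dominated, conditionally on the degrees, by a $\mathrm{Bin}\bigl(\fl{tn^{\alpha/(\alpha+1)}},\,O_{\mathbb P}(n^{-\alpha/(\alpha+1)})\bigr)$ variable, which is tight. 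You instead extract $R^n(k)\le \omega_n+2N^n(k)$ from (\ref{eqn:close}) (your two-fold application at $k$ and at the argmin $k_0$ is valid), which reintroduces $N^n$ on the right-hand side, and you break the resulting circularity with the stopping time $T$ at level $n^{1/(\alpha+1)}$ together with a first-moment bound on the stopped compensator and two applications of conditional Markov. Both routes work; the paper's combinatorial bound buys a one-line binomial domination with no stopping time, while your bootstrap is self-contained given (\ref{eqn:close}) and does not require re-examining the half-edge accounting inside a component (any threshold $a_n\to\infty$ with $a_n=O(n^{1/(\alpha+1)})$ would serve in place of $n^{1/(\alpha+1)}$).
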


%

\begin{proof}
Fix $t>0$. We observe that $k - 2N^n(k) \le \tau_n(k) \le k$ so that it is enough to prove that $\left( N^n(\fl{t n^{\alpha/(\alpha+1)}})\right)_{n \ge 1}$ is tight. As underlined at the beginning of Section~\ref{subsec:explo}, we can realize $\mathbf{M}_n(\nu)$ by first generating the sequence $(\hat{D}_k^n)_{1\leq k \leq n}$, and then proceeding to pair the half-edges as we perform the exploration. At time $i$, the number of half-edges on the stack is $R^n(i)$, and the total number of unpaired half-edges is $\sum_{j=\tau_n(i)+1}^n\hat{D}_j^n$, so that the conditional probability of starting a back-edge is 
\[
\frac{R^n(i)}{\sum_{j=\tau_n(i)+1}^n\hat{D}_j^n}
\]
Note that if the component we are exploring at step $i$ began at step $j$, then $R^n(i)\le 2+\sum_{k=\tau_n(j)}^{\tau_n(i)} (\hat{D}^n_k-2)$, since at most $\hat{D}^n_{\tau_n(j)}+\ldots + \hat{D}^n_{\tau_n(i)}$ half-edges have been seen, and at least $2(\tau_n(i)-\tau_n(j))$ of them have been used to connect the first $\tau_n(i)-\tau_n(j)+1$ vertices of the component. Hence, 
\begin{align*}
R^n(i)& \le  2 + \max_{0\le j \le i}\widetilde{S}^n(\tau_n(j))-\min_{0\leq j\leq i}\widetilde{S}^n(\tau_n(j)) \le 2 + \max_{0\le j \le i}\widetilde{S}^n(j)-\min_{0\leq j\leq i}\widetilde{S}^n(j),
\end{align*}
since $\tau_n(\{1, \ldots, i\})\subset \{1, \ldots, i\}$.  Moreover, $\tau_n(i) +1\leq \fl{t n^{\alpha/(\alpha+1)}} +1 $ for every $i\leq \fl{t n^{\alpha/(\alpha+1)}}$.  Therefore, conditionally on $(\hat{D}_k^n)_{1\leq k \leq n}$, for every $1 \le i \le \fl{t n^{\alpha/(\alpha+1)}}$, irrespective of what happened in the first $i-1$ steps of the exploration, the probability of creating a back-edge at time $i$ is at most 
\[
\frac{2 + \max_{0\le j \le \fl{t n^{\alpha/(\alpha+1)}}} \widetilde{S}^n(j)-\min_{0\leq j\leq \fl{t n^{\alpha/(\alpha+1)}}}\widetilde{S}^n(j)}{\sum_{j=\fl{t n^{\alpha/(\alpha+1)}} +1}^n\hat{D}_j^n},
\] 
which is a measurable function of $(\hat{D}_i^n)_{1\leq i \leq n}$ only.  Therefore, conditionally on $(\hat{D}_i^n)_{1\leq i \leq n}$, the random variable $N^n(\fl{t n^{\alpha/(\alpha+1)}})$ is stochastically dominated by a Binomial random variable with parameters $\fl{tn^{\alpha/(\alpha+1)}}$ and
\[
\frac{2 + \max_{0\le j \le \fl{t n^{\alpha/(\alpha+1)}}} \widetilde{S}^n(j)-\min_{0\leq j\leq \fl{t n^{\alpha/(\alpha+1)}}}\widetilde{S}^n(j)}{\sum_{j=\fl{t n^{\alpha/(\alpha+1)}} +1}^n\hat{D}_j^n},
\] 
For $K > 0$, define 
\[
\mathcal{E}_1=\left\lbrace 2 + \max_{1\le i\le \fl{t n^{\alpha/(\alpha+1)}}}\widetilde{S}^n(i)-\min_{0\le i\le  \fl{t n^{\alpha/(\alpha+1)}}}\widetilde{S}^n(i) \le Kn^{1/(\alpha+1)}\right\rbrace
\]
and 
\[
\mathcal{E}_2 = \left\lbrace\sum_{j=\fl{t n^{\alpha/(\alpha+1)}} + 1}^n\hat{D}_j^n \ge n\right\rbrace.
\] 
Fix $\epsilon > 0$. Theorem \ref{thm:jointdfwheight}, Lemma \ref{lem:lln} and the fact that $\mu > 1$ imply that there exists $K>0$ such that for $n$ large enough,
\[
\Prob{\mathcal{E}_1 \cap \mathcal{E}_2}\ge 1-\epsilon.
\]
On the event $\mathcal{E}_1 \cap \mathcal{E}_2$, we have
\[
\frac{2 + \max_{0\le j \le \fl{t n^{\alpha/(\alpha+1)}}} \widetilde{S}^n(j)-\min_{0\leq j\leq \fl{t n^{\alpha/(\alpha+1)}}}\widetilde{S}^n(j)}{\sum_{j=\fl{t n^{\alpha/(\alpha+1)}} +1}^n\hat{D}_j^n} \le \frac{K}{n^{\alpha/(\alpha +1)}}.
\]
Let $Y\sim \mathrm{Bin}(\fl{tn^{\alpha/(\alpha+1)}},2K/n^{\alpha/(\alpha +1)} )$. Then there exists $K'>0$ such that for $n$ large enough,
\[
\Prob{ N^n(\fl{t n^{\alpha/(\alpha+1)}}) \ge K'}\le \Prob{Y\ge K'} +\Prob{(\mathcal{E}_1\cap\mathcal{E}_2)^c}\le 2\epsilon.
\]
Since $\epsilon > 0$ was arbitrary, the result follows.
\end{proof}

In particular, the steps on which back-edges occur are negligible on the timescale in which we are interested.
Write $\widetilde{I}_t = \inf_{0 \le s \le t} \widetilde{L}_s$ for $t\ge 0$ and recall that
\[
R_t = \widetilde{L}_t - \widetilde{I}_t.
\]

\begin{prop} \label{prop:Xconv}
As $n \to \infty$,
\begin{align*}
& \left(n^{-1/(\alpha+1)} \widetilde{S}^n(\fl{t n^{\alpha/(\alpha+1)}}), n^{-1/(\alpha+1)} R^n(\fl{tn^{\alpha/(\alpha+1)}}), n^{-1/(\alpha+1)} C^n(\fl{tn^{\alpha/(\alpha+1)}}), t \ge 0 \right) \\
& \qquad \convdist \left(\widetilde{L}_t, R_t, -\tfrac{1}{2} \widetilde{I}_t, t \ge 0 \right),
\end{align*}
in $\mathbb{D}(\R_+, \R)^3$.
\end{prop}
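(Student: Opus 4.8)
The plan is to deduce the proposition from three things already in hand: the first coordinate of Theorem~\ref{thm:jointdfwheight}, namely $n^{-1/(\alpha+1)}\widetilde S^n(\fl{tn^{\alpha/(\alpha+1)}})\convdist\widetilde L_t$ in $\mathbb{D}(\R_+,\R)$; the tightness estimates of Lemma~\ref{lem:back} for $N^n$ and for $\sup_k|\tau_n(k)-k|$; and the deterministic sandwich bound (\ref{eqn:close}). First I would make a purely algebraic reduction. Write $\underline x(k):=\min_{0\le i\le k}x(i)$ and set $Y^n(k):=X^n(k)+\underline{X^n}(k)$. Since $X^n(i)\ge\underline{X^n}(i)\ge\underline{X^n}(k)$ for $i\le k$ we get $Y^n(i)\ge 2\underline{X^n}(k)$, with equality at any index realising $\underline{X^n}(k)$, so $\min_{0\le i\le k}Y^n(i)=2\underline{X^n}(k)$. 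As $C^n(k)=-\underline{X^n}(k)$ and $R^n(k)=X^n(k)+C^n(k)=X^n(k)-\underline{X^n}(k)$, this yields the exact identities
\[
C^n(k)=-\tfrac12\min_{0\le i\le k}Y^n(i),\qquad R^n(k)=Y^n(k)-\min_{0\le i\le k}Y^n(i).
\]
In other words, after rescaling space by $n^{-1/(\alpha+1)}$ and time by $n^{\alpha/(\alpha+1)}$ (under which running infima commute with the rescaling), the rescaled $R^n$ and $C^n$ are the reflection $g\mapsto g-\underline g$ and the map $g\mapsto-\tfrac12\underline g$ applied to the rescaled $Y^n$.

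Second, I would prove the joint convergence $n^{-1/(\alpha+1)}\bigl(\widetilde S^n(\fl{tn^{\alpha/(\alpha+1)}}),Y^n(\fl{tn^{\alpha/(\alpha+1)}})\bigr)\convdist(\widetilde L_t,\widetilde L_t)$ in $\mathbb{D}(\R_+,\R^2)$. From (\ref{eqn:close}) one reads off $|Y^n(k)-\widetilde S^n(\tau_n(k))|\le 2N^n(k)+1$, so by Lemma~\ref{lem:back} (and monotonicity of $N^n$) we have $n^{-1/(\alpha+1)}\sup_{k\le\fl{Tn^{\alpha/(\alpha+1)}}}|Y^n(k)-\widetilde S^n(\tau_n(k))|\convprob 0$ for every $T$. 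Moreover $\lambda_n(t):=n^{-\alpha/(\alpha+1)}\tau_n(\fl{tn^{\alpha/(\alpha+1)}})$ is non-decreasing and, since $0\le k-\tau_n(k)\le 2N^n(k)$, Lemma~\ref{lem:back} gives $\|\lambda_n-\mathrm{id}\|_{\infty,[0,T]}\convprob 0$. Writing $\overline S^n(t)=n^{-1/(\alpha+1)}\widetilde S^n(\fl{tn^{\alpha/(\alpha+1)}})$, we then have $n^{-1/(\alpha+1)}Y^n(\fl{\cdot\,n^{\alpha/(\alpha+1)}})=\overline S^n\circ\lambda_n$ up to a uniformly negligible error; since $\overline S^n\convdist\widetilde L$ and the $J_1$ distance between $\overline S^n$ and $\overline S^n\circ\lambda_n$ is controlled by (a routine smoothing of) $\|\lambda_n-\mathrm{id}\|_\infty$, the pair $(\overline S^n,\overline S^n\circ\lambda_n)$, hence $(\overline S^n,n^{-1/(\alpha+1)}Y^n(\fl{\cdot\,n^{\alpha/(\alpha+1)}}))$, converges in law to $(\widetilde L,\widetilde L)$ (using the standard fact that $Z_n\convdist Z$ together with $d(Z_n,W_n)\convprob 0$ forces $(Z_n,W_n)\convdist(Z,Z)$).

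Finally I would apply the continuous mapping theorem to $\Psi\colon\mathbb{D}(\R_+,\R^2)\to\mathbb{D}(\R_+,\R^3)$, $\Psi(f,g)=(f,\,g-\underline g,\,-\tfrac12\underline g)$, which is continuous at every $(f,g)$ with $\underline g$ continuous; since $\widetilde L$ is spectrally positive, $\underline{\widetilde L}=\widetilde I$ is a.s.\ continuous, so $\Psi$ is a.s.\ continuous at $(\widetilde L,\widetilde L)$. Combining this with the identities of the first paragraph and $R=\widetilde L-\widetilde I$ gives convergence of $n^{-1/(\alpha+1)}(\widetilde S^n,R^n,C^n)(\fl{\cdot\,n^{\alpha/(\alpha+1)}})$ to $(\widetilde L,R,-\tfrac12\widetilde I)$ jointly in $\mathbb{D}(\R_+,\R^3)$, which in particular implies the stated convergence. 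The main obstacle is the second step: $Y^n$ and $\widetilde S^n$ are \emph{not} uniformly close, because $\tau_n$ can straddle one of the order-$n^{1/(\alpha+1)}$ jumps of $\widetilde S^n$, so one must genuinely work in the $J_1$ topology and argue that a non-decreasing time change by $o(n^{\alpha/(\alpha+1)})$ steps does not affect the Skorokhod limit; everything else is bookkeeping given Theorem~\ref{thm:jointdfwheight} and Lemma~\ref{lem:back}.
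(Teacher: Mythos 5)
Your proof is correct and takes essentially the same route as the paper's: both rest on the sandwich (\ref{eqn:close}), the tightness in Lemma~\ref{lem:back} and Theorem~\ref{thm:jointdfwheight}, reducing everything to the time-changed walk $\widetilde{S}^n(\tau_n(\cdot))$ and the continuity of the reflection/running-infimum functionals at the a.s.\ continuous limit $\widetilde{I}$. The only difference is presentational: you make the $J_1$ time-change step explicit (the paper asserts the convergence of $\widetilde{S}^n\circ\tau_n$ without detail, and your observation that $\tau_n$ advances by at most one step per unit time is exactly what justifies the ``routine smoothing'') and you conclude via exact reflection identities plus the continuous mapping theorem rather than the paper's direct sandwich bounds on $C^n$ and $R^n$.
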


\begin{proof}
Recall that (\ref{eqn:close}) says that
\[
\widetilde{S}^n(\tau_n(k)) - 2N^n(k) \le X^n(k) + \min_{0 \le i \le k} X^n(i) -1\le \widetilde{S}^n(\tau_n(k)).
\]
Let $1 \le j \le k$ be such that $X^n(j) = \min_{0 \le \ell \le k} X^n(\ell)$.  Then $X^n(j) + \min_{0 \le \ell \le j} X^n(\ell) = 2X^n(j)$ and so 
\[
\min_{0 \le \ell \le k} \left(X^n(\ell) + \min_{0 \le i \le \ell} X^n(i)\right) = 2 X^n(j) = 2 \min_{0 \le \ell \le k} X^n(\ell).
\]
It follows that
\begin{align*}
2 \min_{0 \le \ell \le k} X^n(\ell) \geq & 1+\min_{0 \le \ell \le k}(\widetilde{S}^n(\tau_n(\ell)) - 2 N^n(\ell))
\\
\geq & 1+ \min_{0 \le \ell \le k} \widetilde{S}^n(\tau_n(\ell)) - 2 N^n(k),
\end{align*}
since $(N^n(i))_{i\geq 0}$ is non-increasing. 
Thus, by (\ref{eqn:close}) again, we have
\[
1+ \min_{0 \le \ell \le k} \widetilde{S}^n(\tau_n(\ell)) - 2 N^n(k) \le 2 \min_{0 \le \ell \le k} X^n(\ell) \le 1+\min_{0 \le \ell \le k} \widetilde{S}^n(\tau_n(\ell)).
\]
By Theorem~\ref{thm:jointdfwheight} and the continuous mapping theorem,
\[
\left(n^{-1/(\alpha+1)} \min_{0 \le \ell \le \fl{s n^{\alpha/(\alpha+1)}}} \widetilde{S}^n(\ell), s \ge 0\right) \convdist (\widetilde{I}_s, s\ge 0)
\]
and combining this with Lemma~\ref{lem:back} and recalling that $C^n(k) = - \min_{0 \le \ell \le k} X^n(\ell)$ yields 
\[
\left(n^{-1/(\alpha+1)} C^n(\fl{s n^{\alpha/(\alpha+1)}}), s \ge 0\right) \convdist (-\tfrac{1}{2} \widetilde{I}_s, s\ge 0)
\]
Another application of (\ref{eqn:close}) gives
\[
1+ \widetilde{S}^n(\tau_n(k)) - 2 N^n(k) - 2 \min_{0 \le i \le k} X^n(i) \le R^n(k) \le 1+ \widetilde{S}^n(\tau_n(k)) - 2 \min_{0 \le i \le k} X^n(i)
\]
and since
\[
(n^{-1/(\alpha+1)} \widetilde{S}^n(\tau_n(\fl{t n^{\alpha/(\alpha+1)}})), t \ge 0) \convdist (\widetilde{L}_t, t \ge 0)
\]
we have
\[
\left(n^{-1/(\alpha+1)} R^n(\fl{tn^{\alpha/(\alpha+1)}}), t \ge 0\right) \convdist (R_t, t \ge 0),
\]
jointly with the convergence of the minimum.
\end{proof}

Thus the exploration of $\widetilde{\mathbf{F}}_n(\nu)$ sees approximately twice as many components as that of $\mathbf{F}_n(\nu)$ but the limiting reflected process is the same for both.  In particular, asymptotically the two processes have the same longest excursions.  This fact will play an important role in the sequel.

\subsection{Back-edges} \label{subsec:backedges}

We will now show that the parts of the multigraph we observe up until well beyond the timescale in which we are interested are, with high probability, simple.  To this end, let $A^n(k)$ be the number of loops and edges created parallel to an existing edge, up until step $k$ of the depth-first exploration of $\mathbf{F}_n(\nu)$.  Call these \emph{anomalous} edges.

\begin{prop} \label{prop:backedgesarelate}
Suppose $\frac{\alpha}{\alpha+1} < \beta < \frac{\alpha}{2}$.  Then we have
\[
\Prob{A^n(\fl{n^{\beta}}) > 0} \to 0
\]
as $n \to \infty$.
\end{prop}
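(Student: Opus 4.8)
The plan is to argue by a first–moment bound, but at the level of the configuration–model half‑edge pairing rather than at the level of the exploration steps.

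\textbf{Reduction.} First I would observe that any anomalous edge — a self‑loop, or an edge parallel to one already present — revealed during the first $\fl{n^{\beta}}$ steps of the exploration of $\mathbf{F}_n(\nu)$ has both of its endpoints among the at most $\fl{n^{\beta}}$ vertices discovered so far, namely $v_1,\ldots,v_{\fl{n^{\beta}}}$, which carry the size‑biased degrees $\hat{D}^n_1,\ldots,\hat{D}^n_{\fl{n^{\beta}}}$. Writing $\mathcal{H}_n=\sum_{j=1}^n D_j\ge n$ for the number of half‑edges and $W_n=\sum_{i=1}^{\fl{n^{\beta}}}(\hat{D}^n_i)^2$, a first–moment computation — a union bound over pairs of half‑edges of a discovered vertex (each paired with probability of order $\mathcal{H}_n^{-1}$, giving the loops) and over quadruples consisting of two half‑edges of $v_i$ and two of $v_j$ (each prescribed parallel configuration having probability of order $\mathcal{H}_n^{-2}$) — yields, conditionally on the discovery order,
\[
\Prob{A^n(\fl{n^{\beta}})>0 \mid (\hat{D}^n_i)_{1\le i\le n}} \le C\left(\frac{W_n}{\mathcal{H}_n}+\frac{W_n^2}{\mathcal{H}_n^2}\right)\le C\left(\frac{W_n}{n}+\frac{W_n^2}{n^2}\right).
\]
(For the loop term one may also work directly inside the exploration, since when the top half‑edge is paired its partner is uniform among at least $\mathcal{H}_n-2\fl{n^{\beta}}\ge n/2$ available half‑edges; it is for the parallel‑edge term that one must exploit the full pairing.) So it suffices to show $\E{W_n}=o(n)$ and $\E{W_n^2}=o(n^2)$, up to an event of vanishing probability.

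\textbf{Controlling $W_n$.} I would use the standard representation of the size‑biased permutation: attach to vertex $j$ an independent $\mathrm{Exp}(1)$ variable $E_j$ and list the vertices in increasing order of $E_j/D_j$, so that $\{v_1,\ldots,v_{\fl{n^{\beta}}}\}=\{j:E_j/D_j\le T\}$ for the $\fl{n^{\beta}}$‑th smallest value $T$. Taking $t^{*}=3n^{\beta-1}$, the sum $\sum_{j=1}^n\I{E_j\le t^{*}D_j}$ is a sum of i.i.d.\ indicators of mean at least $n(1-e^{-t^{*}})\ge \tfrac12 n t^{*}=\tfrac32 n^{\beta}$ (using $D_j\ge1$), so a Chernoff bound gives $\Prob{T>t^{*}}\to0$; and since $\max_i D_i=\Theta_{\mathbb{P}}(n^{1/(\alpha+1)})$, for every $\epsilon>0$ one can fix $K$ with $\Prob{\max_i D_i>Kn^{1/(\alpha+1)}}<\epsilon$ for all large $n$. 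On the intersection of these events, $W_n\le \widehat{W}_n:=\sum_{j=1}^n D_j^2\,\I{E_j\le t^{*}D_j}\,\I{D_j\le Kn^{1/(\alpha+1)}}$, a sum of $n$ i.i.d.\ terms. Its moments I would bound using $\nu_k\sim ck^{-(\alpha+2)}$ and the elementary asymptotics $\sum_{k\le K}k^3\nu_k\lesssim K^{2-\alpha}$, $\sum_{k>K}k^2\nu_k\lesssim K^{1-\alpha}$ and their analogues for higher powers of $k$. This gives $\E{\widehat{W}_n}\le n\,\E{D_1^2(1\wedge t^{*}D_1)}\lesssim n(t^{*})^{\alpha-1}\lesssim n^{1+(\beta-1)(\alpha-1)}=o(n)$ (as $\beta<1$, $\alpha>1$), and, by independence,
\[
\E{\widehat{W}_n^2}\le n\,\E{D_1^4(1\wedge t^{*}D_1)\I{D_1\le Kn^{1/(\alpha+1)}}}+n^2\left(\E{D_1^2(1\wedge t^{*}D_1)}\right)^2\lesssim n^{4/(\alpha+1)}+n^{2(1+(\beta-1)(\alpha-1))}=o(n^2),
\]
the first term because $4/(\alpha+1)<2$ for $\alpha>1$ — the degree truncation at $Kn^{1/(\alpha+1)}$ being precisely what keeps it finite, since $\nu$ has infinite third (a fortiori fourth) moment — and the second because $\beta<1$. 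Assembling, $\Prob{A^n(\fl{n^{\beta}})>0}\le \Prob{\max_i D_i>Kn^{1/(\alpha+1)}}+\Prob{T>t^{*}}+C\E{\widehat{W}_n}/n+C\E{\widehat{W}_n^2}/n^2$, and letting $n\to\infty$ and then $\epsilon\to0$ gives the claim. (The case $\alpha=2$ is analogous and easier: $\E{D_1^3}<\infty$ and $\max_i D_i=o_{\mathbb{P}}(n^{1/3})$ replace the power‑law estimates.)

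\textbf{The main obstacle.} What I expect to be the crux is realising that one must argue globally. The tempting local estimate — bounding the probability that a single step creates an anomaly, or conditioning on a step being a back‑edge and bounding the chance its target lies on a half‑edge of an already‑adjacent vertex — is far too lossy: a back‑edge pairs the top half‑edge with a uniform half‑edge of the current stack, whose height is of the same order, $n^{1/(\alpha+1)}$, as the degrees of the high‑degree vertices it contains, so that conditional probability does not tend to $0$, while the number of back‑edges revealed by step $\fl{n^{\beta}}$ diverges (this uses $\beta>\alpha/(\alpha+1)$). What makes the first–moment argument work instead is that in the configuration model two prescribed half‑edges are paired with probability only of order $1/n$, and four prescribed half‑edges form two prescribed parallel edges with probability only of order $1/n^2$. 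The remaining delicate point is the double truncation of $W_n$ — in ``time'', via $T\le t^{*}$, to identify which vertices are discovered early, and in degree, to make the second moment finite — together with the attendant power‑counting. Here the hypothesis $\beta<\alpha/2$ is comfortably more than the $\beta<1$ these moment estimates actually require, whereas $\beta>\alpha/(\alpha+1)$ is what makes the statement cover the relevant exploration timescale $n^{\alpha/(\alpha+1)}$.
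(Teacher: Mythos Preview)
Your argument is correct but considerably more elaborate than the paper's, and your diagnosis of the obstacle is inverted. The paper argues \emph{locally}, step by step in the exploration: for a vertex of degree $d$ seen before time $\fl{n^{\beta}}$, its $k$th processed half-edge creates a self-loop with conditional probability at most $(d-k)/\sum_{j>\fl{n^{\beta}}}\hat D_j^n$ and a multiple edge with conditional probability at most $(k-1)/\sum_{j>\fl{n^{\beta}}}\hat D_j^n$; summing over $k$ and over vertices yields the conditional first-moment bound $\bigl(W_n/\sum_{j>\fl{n^{\beta}}}\hat D_j^n\bigr)\wedge1$ with $W_n=\sum_{i\le\fl{n^{\beta}}}(\hat D_i^n)^2$. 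Then Lemma~\ref{lem:sumofsquares} (stochastic domination $(\hat D_i^n)\leqst(Z_i)$ by i.i.d.\ size-biased degrees, followed by a Borel--Cantelli argument, which is precisely where $\beta<\alpha/2$ enters) gives $W_n/n\to0$ in probability, Lemma~\ref{lem:lln} handles the denominator, and bounded convergence finishes. So there is no $W_n^2/n^2$ term for parallel edges, no second moment of $W_n$ to control, and no need for the exponential-clock representation or the degree truncation. What your approach \emph{does} buy is a wider range --- your moment estimates require only $\beta<1$, whereas the paper's Lemma~\ref{lem:sumofsquares} genuinely needs $\beta<\alpha/2$ --- but this extra generality is not exploited anywhere.
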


\begin{proof}
We adapt the proof of Lemma 7.1 of Joseph~\cite{Joseph} (which applies in the finite third moment setting).  Self-loops are obviously associated with a unique vertex.  We associate extra edges created parallel to an existing edge with their vertex which is discovered first in the depth-first exploration.  Consider a particular vertex of degree $d$ in the exploration before time $\fl{n^{\beta}}$.  Its $k$th half-edge (in the order that we process them) creates a self-loop with probability bounded above by
\[
\frac{d-k}{\sum_{i=\lfloor n^{\beta}\rfloor+1}^n \hat{D}^n_i}.
\]
It creates a multiple edge with probability at most
\[
\frac{k-1}{\sum_{i=\lfloor n^{\beta}\rfloor+1}^n \hat{D}^n_i}.
\]
This vertex therefore possesses an anomalous edge with probability bounded above by
\[
\frac{d(d-1)}{\sum_{i=\lfloor n^{\beta}\rfloor+1}^n \hat{D}^n_i}.
\]
Hence, by the conditional version of Markov's inequality,
\[
\Prob{A^n(\fl{n^{\beta}}) > 0 \ \Big| \ \hat{D}_1^n, \hat{D}_2^n, \ldots, \hat{D}_n^n} \le \left( \frac{\sum_{i=1}^{\fl{n^{\beta}}} (\hat{D}_i^n)^2}{\sum_{i=\lfloor n^{\beta}\rfloor+1}^n \hat{D}^n_i}\right) \wedge 1.
\]
But $\sum_{i=\lfloor n^{\beta}\rfloor+1}^n \hat{D}^n_i= \sum_{i=1}^n \hat{D}^n_i-\sum_{i=1}^{\lfloor n^{\beta}\rfloor} \hat{D}^n_i \ge \sum_{i=1}^n \hat{D}^n_i-\sum_{i=1}^{\lfloor n^{\beta}\rfloor} (\hat{D}^n_i)^2$. By Lemmas~\ref{lem:sumofsquares} and \ref{lem:lln} and the bounded convergence theorem, we obtain that
\[
\Prob{A^n(\fl{n^{\beta}}) > 0} \to 0
\]
as $n \to \infty$.
\end{proof}

Let $\rho(n) = \inf\{k \ge 0: A^n(k) > 0\}$ and note that the event that $\mathbf{M}_n(\nu)$ is simple is equal to $\{\rho(n) = \infty\}$.  The last proposition shows that we observe any anomalous edges long after the timescale in which we explore the largest components of the graphs.  This allows us to conclude that all of the results we prove using only the timescale $n^{\alpha/(\alpha+1)}$ for the multigraph are also true conditionally on $\{\rho(n) = \infty\}$.  In this way, we may give a proof of Conjecture 8.6 of Joseph~\cite{Joseph}.  (See also Theorem 3 of \cite{DharavdHvLSen2}.)

\begin{thm}
Conditional on $\{\rho(n) = \infty\}$, as $n \to \infty$,
\[
\left(n^{-1/(\alpha+1)} R^n(\fl{sn^{\alpha/(\alpha+1)}}, n^{-1/(\alpha+1)} C^n(\fl{sn^{\alpha/(\alpha+1)}}, s \ge 0 \right) \convdist \left(R_s, -\tfrac{1}{2} \widetilde{I}_s, s \ge 0 \right),
\]
in $\mathbb{D}(\R_+, \R)^2$.
\end{thm}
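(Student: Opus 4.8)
The plan is to deduce the statement from Proposition~\ref{prop:Xconv} (whose last two coordinates give exactly the unconditional version) together with Proposition~\ref{prop:backedgesarelate}, the point being that conditioning on $\{\rho(n)=\infty\}$ cannot affect the limit because all of the randomness that produces it is generated on a timescale much shorter than the one at which anomalous edges appear. First I would fix an exponent $\beta$ with $\frac{\alpha}{\alpha+1}<\beta<\frac{\alpha}{2}$; such a $\beta$ exists precisely because $\alpha>1$. Then $\fl{s n^{\alpha/(\alpha+1)}}=o(n^{\beta})$ for every fixed $s\ge 0$, so for $n$ large the restriction to $[0,s]$ of $(n^{-1/(\alpha+1)}R^n(\fl{\cdot\,n^{\alpha/(\alpha+1)}}),n^{-1/(\alpha+1)}C^n(\fl{\cdot\,n^{\alpha/(\alpha+1)}}))$ is measurable with respect to the $\sigma$-algebra $\mathcal{F}_{\fl{n^{\beta}}}$ generated by $(\hat D^n_i)_{1\le i\le n}$ and the first $\fl{n^{\beta}}$ steps of the exploration of $\mathbf{F}_n(\nu)$. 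I would record that $\mathcal{A}_1:=\{A^n(\fl{n^{\beta}})>0\}$ satisfies $\Prob{\mathcal{A}_1}\to 0$ by Proposition~\ref{prop:backedgesarelate}, and that $\Prob{\rho(n)=\infty}=\Prob{\mathbf{M}_n(\nu)\text{ simple}}\to p:=e^{-1/2-1/4}>0$ (here $\theta(\nu)=1$), as recalled in Section~\ref{sec:configmodel}.

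Next I would exploit the Markov property of the exploration to write $\Prob{\rho(n)=\infty\mid\mathcal{F}_{\fl{n^{\beta}}}}=\I{\mathcal{A}_1^c}\,\psi_n$, where $\psi_n$ is the conditional probability, given the state of the exploration at step $\fl{n^{\beta}}$, that no further anomalous edge is ever created; note $\{\rho(n)=\infty\}\subseteq\mathcal{A}_1^c\in\mathcal{F}_{\fl{n^{\beta}}}$. The heart of the proof is the claim that $\psi_n\convprob p$. Conditionally on $\mathcal{F}_{\fl{n^{\beta}}}$, the pairing of the remaining half-edges (those on the stack, together with all half-edges of the unexplored vertices) is uniform, so $\psi_n$ is the probability that this uniform pairing produces no self-loop and no edge parallel to an already-present edge; by step $\fl{n^{\beta}}$ one has consumed only $\fl{n^{\beta}}=o(n)$ vertices and, by a routine random-walk estimate on $\widetilde S^n$ (as in the proof of Lemma~\ref{lem:back}), only $R^n(\fl{n^{\beta}})=o(n)$ stack half-edges, so the remaining degree sequence still satisfies the hypotheses of Section~\ref{sec:configmodel}. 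The classical Poisson approximation for the numbers of self-loops and multiple edges in a configuration model (see \cite{RemcoBook}; cf.\ Theorem~3 of \cite{DharavdHvLSen2}) then gives that, irrespective of the typical state of the exploration at step $\fl{n^{\beta}}$, those two counts converge jointly to independent $\mathrm{Poisson}(1/2)$ and $\mathrm{Poisson}(1/4)$ variables, whence $\psi_n\to e^{-1/2-1/4}=p$ in probability.

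Finally I would assemble the pieces. Fix $s>0$ and a Borel set $F\subseteq\mathbb{D}([0,s],\R)^2$ whose boundary is null for the law of $(R_u,-\tfrac12\widetilde I_u,0\le u\le s)$, and let $E_n$ be the event that the rescaled pair $(R^n,C^n)$ restricted to $[0,s]$ lies in $F$; for $n$ large $E_n\in\mathcal{F}_{\fl{n^{\beta}}}$. Then $\Prob{E_n,\rho(n)=\infty}=\E{\I{E_n}\I{\mathcal{A}_1^c}\psi_n}$, and using $0\le\psi_n\le 1$, $\psi_n\convprob p$ and $\Prob{\mathcal{A}_1}\to 0$ this equals $p\,\Prob{E_n}+o(1)$; since $\Prob{E_n}\to\Prob{(R_u,-\tfrac12\widetilde I_u,0\le u\le s)\in F}$ by Proposition~\ref{prop:Xconv} and the choice of $F$, dividing by $\Prob{\rho(n)=\infty}\to p$ gives $\Prob{E_n\mid\rho(n)=\infty}\to\Prob{(R_u,-\tfrac12\widetilde I_u,0\le u\le s)\in F}$. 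As this holds for all such $s$ (outside a countable set) and all such $F$, the asserted convergence in $\mathbb{D}(\R_+,\R)^2$ follows by the standard reduction of Skorokhod convergence on $\R_+$ to convergence of restrictions to compact time intervals. The main obstacle is the claim $\psi_n\convprob p$ — that revealing the first $\fl{n^{\beta}}$ exploration steps does not perturb the asymptotic probability of simplicity; this is a mild strengthening of the classical fact $\Prob{\mathbf{M}_n(\nu)\ \mathrm{simple}}\to e^{-\theta/2-\theta^2/4}$, requiring one to re-run the Poisson approximation for a configuration model whose degree sequence and half-edge set have been perturbed by an $o(n)$ amount.
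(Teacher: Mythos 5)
Your proposal is correct and takes essentially the same route as the paper: the paper's own proof simply says that, given Propositions~\ref{prop:Xconv} and \ref{prop:backedgesarelate}, the statement follows ``in exactly the manner'' that Joseph's Theorem 3.2 follows from his Theorem 3.1, and that deduction is precisely the conditioning argument you spell out (measurability of the restricted processes with respect to the first $\fl{n^{\beta}}$ exploration steps, together with the conditional probability of simplicity given those steps converging in probability to $e^{-3/4}$, so that the conditioning does not disturb the limit law). The step you single out as the main obstacle, namely $\psi_n \convprob p$ via a Poisson approximation for the configuration model with an $o(n)$-perturbed degree sequence, is exactly the content of the paper's citation to Joseph, so your write-up is, if anything, more explicit than the paper's.
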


\begin{proof}
Given Propositions~\ref{prop:Xconv} and \ref{prop:backedgesarelate}, this follows in exactly the manner as Joseph's Theorem 3.2 follows from his Theorem 3.1.
\end{proof}

Henceforth, using exactly the same argument, statements about our processes should be understood to hold either unconditionally or conditionally on the event $\{\rho(n) = \infty\}$.

We now turn to the locations of the back-edges that do occur.  Recall that if a back-edge occurs at step $k$, then $U^n(k)$ is its index in the stack.  For steps $k$ such that $N^n(k) - N^n(k-1) = 0$, declare $U^n(k) = \partial$, where $\partial$ denotes that no mark occurs. 

For $s \ge 0$, let $(N_s, s \ge 0)$ be a Cox process driven by $(R_s/\mu, s \ge 0)$, and define marks
\[
U_s  \begin{cases}
= \partial & \text{if $N_s - N_{s-} = 0$} \\
\sim \mathrm{U}[0, R_s] & \text{if $N_s - N_{s-} = 1$}.
\end{cases}
\]
Then, conditionally on $(R_s, s \ge 0)$, we have that
\[
\sum_{\substack{s \ge 0 :\\N_s - N_{s-} = 1}} \delta_{(s, U_s)}
\]
is a Poisson point process on $\{(s,x) \in \R_+ \times \R_+: x \le R_s\}$ of constant intensity $1/\mu$.

\begin{thm} \label{thm:Coxprocess}
We have jointly
\begin{align*}
& \left( n^{-1/(\alpha+1)} R^n(\fl{sn^{\frac{\alpha}{\alpha+1}}}), n^{-1/(\alpha+1)} C^n(\fl{sn^{\alpha/(\alpha+1)}}, N^n(\fl{sn^{\frac{\alpha}{\alpha+1}}}), s \ge 0 \right) \\
& \qquad \convdist (R_s, -\tfrac{1}{2} \widetilde{I}_s, N_s, s \ge 0),
\end{align*}
in $\mathbb{D}(\R_+, \R)^3$ and
\[
\sum_{\substack{k \ge 1:\\N^n(k) - N^n(k-1) = 1}} \delta_{\left(n^{-\frac{\alpha}{\alpha+1}}k, n^{-\frac{1}{\alpha+1}} U^n(k)\right)} \convdist \sum_{\substack{s \ge 0 :\\N_s - N_{s-} = 1}} \delta_{(s, U_s)},
\]
for the topology of vague convergence for counting measures on $\R_+^2$.
\end{thm}

We observe that, in particular, for fixed $t \ge 0$ we have $\sup_{0 \le s \le t} R_s < \infty$ a.s.\ and so $N_t < \infty$ a.s.

\begin{proof}
We refine the argument from the proof of Lemma~\ref{lem:back}. We will find it convenient to build all of our random variables and stochastic processes, for all $n \ge 1$, on the same probability space.  By Proposition~\ref{prop:Xconv} and Skorokhod's representation theorem, we may assume that this probability space is such that the convergence
\begin{equation} \label{eqn:asc}
n^{-1/(\alpha+1)} (R^n(\fl{s n^{\frac{\alpha}{\alpha+1}}}), s \ge 0) \to (R_s, s \ge 0)
\end{equation}
occurs almost surely.  Since $R^n$ is a measurable function of $(\hat{D}_i)_{1\leq i\leq n}$ for each $n \ge 1$, we have that $R$ is a measurable function of the collection of random variables $\{\hat{D}_i^m: 1 \le i \le m, m \ge 1\}$.  For definiteness, let us assume that all \emph{additional} randomness needed to generate the processes $X^n, N^n, \mathcal{M}^n$ is independent between different values of $n$.

Let $\mathcal{F}^n_k = \sigma(\{\hat{D}_i^m: 1 \le i \le m, m \ge 1\}, X^n(i), N^n(i), \mathcal{M}^n(i), 0 \le i \le k)$ and so, in particular, $\mathcal{F}_0^n$ contains $\sigma(\{\hat{D}_i^m: 1 \le i \le m, m \ge 1\})$ for every $n \ge 1$. (Recall that in the construction of $\mathbf{M}_n(\nu)$ at the beginning of Section~\ref{subsec:explo}, we condition on $(\hat{D}_1^n, \hat{D}_2^n, \ldots, \hat{D}_n^n)$ \emph{before} doing the exploration, so that having this information available at time 0 is consistent with our construction.)

At step $k+1$, conditionally on $\mathcal{F}_{k}^n$, the probability of seeing a back-edge is
\[
\frac{R^n(k)- |\mathcal{M}^n(k)|}{R^n(k) - |\mathcal{M}^n(k)|+ \sum_{j = \tau_n(k)+1}^{n} \hat{D}_j^n},
\]
where $|\mathcal{M}^n(k)| \le N^n(k)$. Now
\[
\sum_{i=k+1}^n \hat{D}_i^n \le R^n(k) - |\mathcal{M}^n(k)| + \sum_{i=\tau^n(k) + 1}^{n} \hat{D}_i^n  \le \sum_{i=1}^n \hat{D}_i^n + R^n(k).
\]
But then by Lemma~\ref{lem:back}, the almost sure convergence (\ref{eqn:asc}) and Lemma~\ref{lem:lln} (in the \hyperref[appn]{Appendix}), we obtain
\begin{align}
& n^{\alpha/(\alpha+1)} \left(\frac{R^n(\fl{sn^{\alpha/(\alpha+1)}})- |\mathcal{M}^n(\fl{sn^{\alpha/(\alpha+1)}})|}{R^n(\fl{sn^{\alpha/(\alpha+1)}}) - |\mathcal{M}^n(\fl{sn^{\alpha/(\alpha+1)}})|+ \sum_{j = \tau_n(\fl{sn^{\alpha/(\alpha+1)}})+1}^{n} \hat{D}_j^n}, 0 \le s \le t \right) \notag \\
& \qquad \convprob \frac{1}{\mu} (R_s, 0 \le s \le t)  \label{eqn:intensity}
\end{align}
in $\mathbb{D}([0,t], \R_+)$.  Then $(N^n(k), k \ge 0)$ is a counting process with compensator
\[
N_{\mathrm{comp}}^n(k) = \sum_{j=1}^{k-1}\frac{R^n(j)- |\mathcal{M}^n(j)|}{R^n(j) - |\mathcal{M}^n(j)|+ \sum_{i = \tau_n(j)+1}^{n} \hat{D}_i^n} \I{X^n(j) \notin \mathcal{M}^n(j)} .
\]
Since
\begin{align*}
& \sum_{j=1}^{k-1}\frac{R^n(j)- |\mathcal{M}^n(j)|}{R^n(j) - |\mathcal{M}^n(j)|+ \sum_{i = \tau_n(j)+1}^{n} \hat{D}_i^n} \I{X^n(j) \in \mathcal{M}^n(j)} \\
& \qquad \le N^n(k-1) \max_{0 \le j \le k-1} \frac{R^n(j)- |\mathcal{M}^n(j)|}{R^n(j) - |\mathcal{M}^n(j)|+ \sum_{i = \tau_n(j)+1}^{n} \hat{D}_i^n}
\end{align*}
and $n^{-\alpha/(\alpha+1)} N^n(\fl{t n^{\alpha/(\alpha+1)}}) \convprob 0$ by Lemma~\ref{lem:back}, using (\ref{eqn:intensity}) we get that
\[
E^n(\fl{tn^{\alpha/(\alpha+1)}}) := \sum_{j=0}^{\fl{tn^{\alpha/(\alpha+1)}}-1}\frac{R^n(j)- |\mathcal{M}^n(j)|}{R^n(j) - |\mathcal{M}^n(j)|+ \sum_{i = \tau_n(j)+1}^{n} \hat{D}_i^n} \I{X^n(j) \in \mathcal{M}^n(j)}  \convprob 0.
\]
So by (\ref{eqn:intensity}), we obtain that for each $t \ge 0$,
\begin{align*}
N^n_{\mathrm{comp}}(\fl{t n^{\alpha/(\alpha+1)}}) 
& =  \sum_{j=1}^{k-1}\frac{R^n(j)- |\mathcal{M}^n(j)|}{R^n(j) - |\mathcal{M}^n(j)|+ \sum_{i = \tau_n(j)+1}^{n} \hat{D}_i^n} - E^n(\fl{tn^{\alpha/(\alpha+1)}}) \\
& \convprob \frac{1}{\mu} \int_0^t R_s ds.
\end{align*}
Finally, applying Theorem 14.2.VIII of Daley and Vere-Jones~\cite{DaleyVere-Jones} yields that
\[
 \left(N^n(\fl{sn^{\alpha/(\alpha+1)}}), 0 \le s \le t \right) \convdist (N_s, 0 \le s \le t).
\]
The marks are uniform on the vertices of the stack which do not already carry marks, and so it is straightforward to see that they must be uniform in the limit.
\end{proof}

\subsection{Components of the finite graph}\label{subsec:cmpts}

We now turn to the consideration of the individual components of the multigraph. Let $\sigma^n(0) = 0$ and for $k \ge 1$, write
\[
\sigma^n(k) = \inf\{j \ge 0: C^n(j) \ge k\}.
\]
This is the time at which we finish exploring the $k$th component of the forest $\mathbf{F}_n(\nu)$.  Let
\[
\zeta^n(k) = \sigma^n(k) - \sigma^n(k-1),
\]
the corresponding length of the excursion, which is equal to the total number vertices within the component, since precisely one of these is killed at each step. But then $\zeta^n(k)$ is also equal to the number of vertices in the corresponding component of $\mathbf{M}_n(\nu)$, plus twice the number of back-edges. Let
\[
\varepsilon^n_k(t) = n^{-1/(\alpha+1)} \left(X^n(\sigma^n(k-1) + \fl{t n^{\alpha/(\alpha+1)}}) - X^n(\sigma^n(k-1)) \right)
\]
for $0 \le t \le n^{-\alpha/(\alpha+1)} \zeta^n(k)$ be the $k$th rescaled excursion of $X^n$, with length $\zeta(\varepsilon_k^n)=n^{-\alpha/(\alpha+1)} \zeta^n(k)$ and rescaled left endpoint $g_k^n = n^{-\alpha/(\alpha+1)} \sigma^n(k-1)$.

Recall from Section~\ref{sec:stable} the notation $(\widetilde{\varepsilon}_i, i \ge 1)$ for the ordered excursions of $R$ above 0 and $\zeta(\widetilde{\varepsilon}_i) $ for the lifetime of $\widetilde{\varepsilon}_i$. Denote by $g_i$ the left endpoint of $\widetilde{\varepsilon}_i $.  Recall also that $\ell^2_{\downarrow} = \{(x_1, x_2, \ldots) \in \R^{\N}: x_1 \ge x_2 \ge \ldots \ge 0, \sum_{i \ge 1} x_i^2 < \infty\}$.  Let $\Gamma$ be a countable index set and write $\ell_+^2(\Gamma)$ for the set of non-negative sequences $(x_{\gamma}: \gamma \in \Gamma)$ such that $\sum_{\gamma \in \Gamma} x_{\gamma}^2 < \infty$.  Write $\text{ord}: \ell_+^2(\Gamma) \to \ell^2_{\downarrow}$ for the map which puts the elements of $(x_{\gamma}: \gamma \in \Gamma)$ into decreasing order. For a sequence $(\varepsilon_k, A_k)_{k\geq 1}$ where $\varepsilon_k$ is an excursion of length $\zeta(\varepsilon_k)$ and $A_k$ is some other random variable, write
\[
\text{ord}\left(\zeta(\varepsilon_k), A_k, k \ge 1\right)
\]
for the same sequence put in decreasing order of $\zeta(\varepsilon_k)$. 

This section is devoted to proving the following proposition.

\begin{prop} \label{prop:ordered} We have $(\zeta(\widetilde{\varepsilon}_i), i \ge 1) \in \ell^2_{\downarrow}$,
\[
 \mathrm{ord} \left( \zeta(\varepsilon^n_{k}), k \ge 1 \right) \convdist  (\zeta(\widetilde{\varepsilon}_i), i \ge 1)
\]
as $n \to \infty$ in $\ell^2_{\downarrow}$, and
\[
 \mathrm{ord} \left( \zeta(\varepsilon^n_{k}),  g_k^n, k\ge 1\right) \convdist  (\zeta(\widetilde{\varepsilon}_i), g_i, i \ge 1),
\]
where the convergence is in $\ell^2_{\downarrow}$ for the first coordinate and in the product topology for the second.
\end{prop}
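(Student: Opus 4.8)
The plan is to deduce everything from the functional convergence in Proposition~\ref{prop:Xconv} together with Joseph's $\ell^2$ convergence of the component sizes, Theorem~\ref{thm:JosephRiordan}(ii); the genuinely new content is the joint tracking of the left endpoints $g_k^n$. First I would dispose of the claim $(\zeta(\widetilde\varepsilon_i), i \ge 1) \in \ell^2_{\downarrow}$: by Lemma~\ref{lem:Josephprops}(i) the excursions of $R$ of length at least $\epsilon$ are finite in number for each $\epsilon>0$, so the lengths can be listed in decreasing order, and square-summability is immediate from Theorem~\ref{thm:JosephRiordan}(ii), whose limiting sequence lies in $\ell^2_{\downarrow}$ and, as noted just below that theorem, has the law of $(\zeta(\widetilde\varepsilon_i), i \ge 1)$.

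Next, the top-$K$ convergence with endpoints. Recall that $\varepsilon^n_k$ is the $k$-th excursion above $0$ of $R^n = X^n + C^n$, with rescaled length $\zeta(\varepsilon^n_k)=n^{-\alpha/(\alpha+1)}\zeta^n(k)$ and rescaled left endpoint $g_k^n = n^{-\alpha/(\alpha+1)}\sigma^n(k-1)$, where $\sigma^n(k-1)$ is the $(k-1)$-th jump time of $C^n = -\min_{0\le \ell\le \cdot} X^n(\ell)$. By Proposition~\ref{prop:Xconv} we have, jointly in $\mathbb{D}(\R_+,\R)^2$,
\[
\left( n^{-1/(\alpha+1)} R^n(\fl{t n^{\alpha/(\alpha+1)}}), \, n^{-1/(\alpha+1)} C^n(\fl{tn^{\alpha/(\alpha+1)}}), \, t \ge 0 \right) \convdist \left(R_t, -\tfrac12\widetilde I_t, \, t \ge 0 \right),
\]
and I would pass to a probability space on which this holds almost surely (Skorokhod representation). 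Lemma~\ref{lem:Josephprops} supplies exactly the regularity of $R$ needed for excursion-continuity: its zero set has zero Lebesgue measure (part (ii)), distinct excursion intervals start at distinct levels of $\widetilde L$ (part (iii)), and the right endpoint of an excursion is never a jump time of $\widetilde L$ (part (iv)). Consequently, for any $\epsilon>0$ which is almost surely not the length of an excursion of $R$, the finitely many excursions of $n^{-1/(\alpha+1)} R^n(\fl{\cdot\, n^{\alpha/(\alpha+1)}})$ of rescaled length exceeding $\epsilon$ converge --- in number, in rescaled length, and in rescaled left endpoint --- to the excursions of $R$ of length exceeding $\epsilon$; the left endpoints are read off using the convergence of $C^n$, and the underlying continuity statement for excursions of c\`adl\`ag functions is by now standard, cf.\ the arguments of Aldous~\cite{AldousCritRG} and of Addario-Berry, Broutin \& Goldschmidt~\cite{ABBrGo1}. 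Letting $\epsilon\downarrow 0$ along such values gives, for every fixed $K$,
\[
\left( \left(\zeta(\varepsilon^n_{(k)}),\, g_{(k)}^n\right), \, 1 \le k \le K \right) \convdist \left( \left(\zeta(\widetilde\varepsilon_i),\, g_i\right), \, 1 \le i \le K \right),
\]
where $(k)$ indexes the $k$-th longest excursion.

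Finally I would upgrade the lengths to $\ell^2$ convergence and conclude. Since $\mathbf{F}_n(\nu)$ is obtained from $\mathbf{M}_n(\nu)$ by replacing each back-edge by two pendant-leaf edges, its components are in bijection with those of $\mathbf{M}_n(\nu)$ and $\zeta^n(k)$ equals the size of the matched component $M^n_j$ plus twice its surplus $s_j$. Using that the total surplus of $\mathbf{M}_n(\nu)$ is $o_{\mathbb P}(n^{\alpha/(\alpha+1)})$ --- the surplus accumulated in the window $[0,Tn^{\alpha/(\alpha+1)}]$ is tight by Lemma~\ref{lem:back} and Theorem~\ref{thm:Coxprocess}, and the contribution of later steps is negligible for $T$ large --- one gets $n^{-2\alpha/(\alpha+1)}\big(\sum_k \zeta^n(k)^2 - \sum_j |M_j^n|^2\big)\convprob 0$, and then Theorem~\ref{thm:JosephRiordan}(ii) yields $\mathrm{ord}(\zeta(\varepsilon^n_k), k\ge 1) \convdist (\zeta(\widetilde\varepsilon_i), i\ge 1)$ in $\ell^2$. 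Combining this $\ell^2$ convergence of the ordered lengths with the top-$K$ joint convergence above, valid for every $K$, gives the joint convergence $\mathrm{ord}(\zeta(\varepsilon^n_k), g_k^n, k\ge 1) \convdist (\zeta(\widetilde\varepsilon_i), g_i, i\ge 1)$, the lengths converging in $\ell^2$ and the left endpoints in the product topology.

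The main obstacle is the $\ell^2$ control: ruling out a macroscopic amount of squared-length mass escaping either into a proliferation of small components or into components discovered beyond the critical window. My plan sidesteps a direct estimate by invoking Joseph's $\ell^2$ theorem and reducing the forest-excursion lengths to the multigraph component sizes up to the negligible surplus term; a self-contained argument would instead require a uniform-in-$n$ tail bound $\lim_{K\to\infty}\limsup_{n\to\infty}\Prob{\sum_{k>K}\zeta(\varepsilon^n_{(k)})^2>\delta}=0$, which amounts to showing that the depth-first exploration of $\mathbf{F}_n(\nu)$ builds up no macroscopic stack after time $Tn^{\alpha/(\alpha+1)}$ for $T$ large --- essentially a statement about the barely subcritical regime, of the kind flagged among the open problems in the Introduction.
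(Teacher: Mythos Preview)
Your approach diverges from the paper's, and the $\ell^2$ step has a genuine gap.

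The paper does not invoke Joseph's $\ell^2$ theorem at all. Instead it uses Aldous's size-biased point process (SBPP) machinery, recorded as Proposition~\ref{prop:pointproc}. The crucial observation is that the depth-first exploration discovers components with probability proportional to their \emph{edge count}, so the sequence $Y_k^n = n^{-\alpha/(\alpha+1)}(\text{edges of the $k$th component})$ is in size-biased random order and $\Xi^n = \{(\Sigma_k^n, Y_k^n)\}$ is an SBPP. The paper then proves a deterministic c\`adl\`ag excursion-matching lemma (Lemma~\ref{lem:Aldous7}) to show $\Xi^n \convdist \Xi^\infty$, and verifies that $\Xi^\infty$ satisfies the three hypotheses of Proposition~\ref{prop:pointproc} using Proposition~\ref{prop:transient} and Lemma~\ref{lem:Josephprops}. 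The $\ell^2$ convergence then comes \emph{for free}: the SBPP framework converts tail control of the \emph{limit} process (conditions (1)--(3)) into $\ell^2$ tightness of the discrete sequences, so no estimate on the exploration beyond the critical window is ever needed.

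Your route requires exactly such an estimate. To transfer Joseph's $\ell^2$ convergence from $(|M_j^n|)$ to $(\zeta^n(k))$ you need $n^{-2\alpha/(\alpha+1)}\bigl(\sum_j |M_j^n| s_j + \sum_j s_j^2\bigr) \convprob 0$, and your justification rests on ``total surplus of $\mathbf{M}_n(\nu)$ is $o_{\mathbb P}(n^{\alpha/(\alpha+1)})$'' with the later-steps contribution ``negligible for $T$ large''. But nothing in the paper controls $N^n$ past time $Tn^{\alpha/(\alpha+1)}$: Lemma~\ref{lem:back} and Theorem~\ref{thm:Coxprocess} are purely finite-window statements. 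Indeed, the limiting total Cox intensity $\frac{1}{\mu}\int_0^\infty R_s\,ds = \frac{1}{\mu}\sum_i \int \widetilde\varepsilon_i$ scales like $\sum_i \zeta(\widetilde\varepsilon_i)^{1+1/\alpha}$, and since $1+1/\alpha<2$ there is no reason for this to be finite even though $\sum_i \zeta(\widetilde\varepsilon_i)^2<\infty$; correspondingly, the discrete total surplus may well diverge, and bounding its growth rate is precisely the barely-subcritical analysis you flag at the end. So the reduction is circular: you have not sidestepped the hard estimate, only repackaged it.

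A smaller point: the excursion-matching you call ``by now standard, cf.\ \cite{AldousCritRG,ABBrGo1}'' is proved there only for \emph{continuous} limit functions. Here $\widetilde L$ has jumps, and the paper proves Lemma~\ref{lem:Aldous7} specifically to handle the c\`adl\`ag case, using conditions (3)--(6) on $\mathcal S$ that your sketch does not address.
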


We apply a method outlined in Proposition 15 of Aldous~\cite{AldousCritRG}, which is most conveniently recounted in Section 2.6 of Aldous and Limic~\cite{AldousLimic}.  This is very similar to Theorem 8.3 of Joseph~\cite{Joseph}, who omits many of the details of the proof. We feel that the argument is sufficiently subtle to merit a full account, which we now give.

Essentially, there are two steps to proving the desired convergence. First, we need to show that the longest excursions of $R^n$ and $R$ occurring before some finite time match up for large enough $n$.  Then we need to show that long excursions of $R^n$ cannot ``wander off to time $\infty$''. Proposition~\ref{prop:pointproc} below is designed to deal with these issues.  

Following Aldous, we introduce the concept of a size-biased point process. Suppose we have random variables $\mathbf{Y} = (Y_{\gamma}: \gamma \in \Gamma)$ in $\ell_+^2(\Gamma)$. Given $\mathbf{Y}$, let $E_{\gamma} \sim \text{Exp}(Y_{\gamma})$ independently for different $\gamma \in \Gamma$.  Set
\begin{equation} \label{eqn:sigma}
\Sigma(a) = \sum_{\gamma \in \Gamma} Y_{\gamma} \I{E_{\gamma} < a}
\end{equation}
and note that $\Sigma(a) < \infty$ a.s.  Let $\Sigma_{\gamma} = \Sigma(E_{\gamma})$. Then $\Xi = \{(\Sigma_{\gamma}, Y_{\gamma}): \gamma \in \Gamma\}$ is the \emph{size-biased point process} (SBPP) associated with $\mathbf{Y}$. Write $\pi$ for the projection onto the second co-ordinate, so that $\pi(\{(s_{\gamma}, y_{\gamma})\}) = \{y_{\gamma}\}$.

\begin{prop}[Proposition 15 of \cite{AldousCritRG} and Proposition 17 of \cite{AldousLimic}] \label{prop:pointproc} 
Let $\mathbf{Y}^n \in \ell_+^2(\Gamma^n)$ for each $n > 1$, let $\Sigma^n$ be the analogue of (\ref{eqn:sigma}) and let $\Xi^n$ be the associated SBPP. Suppose that
\[
\Xi^n \convdist \Xi^{\infty},
\]
as $n \to \infty$, for the topology of vague convergence of counting measures on $[0,\infty) \times (0,\infty)$, where $\Xi^{\infty}$ is some point process satisfying
\begin{enumerate}
\item $\sup\{s: (s,y) \in \Xi^{\infty} \text{ for some $y$}\} = \infty$ a.s.
\item if $(s,y) \in \Xi^{\infty}$ then $\sum_{(s',y') \in \Xi^{\infty}: s' < s} y' = s$ a.s.
\item $\max\{y: (s,y) \in \Xi^{\infty} \text{ for some $s > t$}\} \convprob 0$ as $t \to \infty$.
\end{enumerate}
Then $\mathbf{Y}^{\infty} := \mathrm{ord}(\pi(\Xi^{\infty})) \in \ell_{\downarrow}^2$ and $\mathrm{ord}(\mathbf{Y}^n) \convdist \text{ord}(\mathbf{Y}^{\infty})$ in $\ell^2_{\downarrow}$. In addition, 
\[
\mathrm{ord}(Y^n_{\gamma}, \Sigma^n_{\gamma}, \gamma\in \Gamma^n) \convdist \mathrm{ord}(Y_{\gamma},\Sigma_{\gamma},  \gamma\in \Gamma)
\]
as $n \to \infty$, where the convergence is in $\ell^2_{\downarrow}$ for the first coordinate and in the product topology for the second.
\end{prop}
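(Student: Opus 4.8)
The plan is to follow the route of Aldous~\cite{AldousCritRG} and Aldous--Limic~\cite{AldousLimic}: the proof has two ingredients, namely a moment estimate for size-biased point processes which bounds the $\ell^2$-mass carried by points with large first coordinate, and a soft truncation argument which upgrades the hypothesised vague convergence to convergence in $\ell^2$ once this tail has been controlled. Finite-dimensional convergence of the largest few values (and their $\Sigma$-coordinates) will come essentially for free from the vague convergence together with conditions (1) and (3); the work is entirely in the tails.

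First I would record the elementary fact that, for \emph{any} SBPP $\Xi = \{(\Sigma_\gamma, Y_\gamma) : \gamma \in \Gamma\}$ built from $\mathbf{Y} \in \ell^2_{\downarrow}$, one has $\{\gamma' : \Sigma_{\gamma'} < \Sigma_\gamma\} = \{\gamma' : E_{\gamma'} < E_\gamma\}$, whence $\sum_{\gamma' : \Sigma_{\gamma'} < \Sigma_\gamma} Y_{\gamma'} = \Sigma_\gamma$; in particular property~(2) holds automatically for each $\Xi^n$, so the content of the hypothesis is that it survives the vague limit, and one gets the mass bound $\sum_{\gamma : \Sigma_\gamma \le t} Y_\gamma \le t + \sup_\gamma Y_\gamma$. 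The key estimate I would then prove is that $\E{\sum_{\gamma : \Sigma_\gamma \ge t} Y_\gamma^2 \mid \mathbf{Y}} \to 0$ as $t \to \infty$, at a rate depending only on $\|\mathbf{Y}\|_2$ (for instance a bound of the form $\sum_\gamma Y_\gamma^2 e^{-Y_\gamma \sqrt{t}} + \|\mathbf{Y}\|_2^3 \, t^{-1/4}$). The proof of this — which I would not spell out in detail — conditions on the clock $E_\gamma$: given $E_\gamma = e$, the quantity $\Sigma_\gamma$ is a sum of independent non-negative variables with mean at most $e\|\mathbf{Y}\|_2^2$, so Markov's inequality controls $\Prob{\Sigma(\sqrt t) \ge t}$; integrating out the exponential clock handles the event $\{E_\gamma > \sqrt t\}$, and a Cauchy--Schwarz step combines the two contributions.

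Next I would apply Skorokhod's representation theorem to realise $\Xi^n \to \Xi^\infty$ vaguely almost surely on one probability space. Conditions (1) and (3) on $\Xi^\infty$ ensure that for each fixed $\delta > 0$ the points of $\Xi^\infty$ with $y$-coordinate exceeding $\delta$ are finite in number and have uniformly bounded $s$-coordinate, while (1) and (2) force $\sum_i Y^\infty_i = \infty$ (so each $Y^\infty_k$ is a.s.\ positive) and, via (3), $Y^\infty_k \to 0$ as $k \to \infty$. Vague convergence on $[0,\infty) \times (0,\infty)$ then delivers, for each fixed $k$, joint convergence of the $k$ largest values of $\mathbf{Y}^n$ together with their $\Sigma$-coordinates to those of $\Xi^\infty$ (using that $\Xi^\infty$ a.s.\ charges no relevant boundary). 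This is the finite-dimensional part of both asserted convergences.

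Finally, to pass from finite-dimensional to $\ell^2$ convergence I would prove the tail-tightness statement $\lim_{k \to \infty} \limsup_{n \to \infty} \Prob{\sum_{i > k} (\mathbf{Y}^n_i)^2 > \varepsilon} = 0$. Splitting the indices $\gamma$ with $Y^n_\gamma$ at most the $k$-th largest value according to whether $\Sigma^n_\gamma \le T$ or $\Sigma^n_\gamma > T$: the first piece is at most $(\text{$k$-th largest value})\cdot(T + \sup_\gamma Y^n_\gamma)$ by the mass bound, which tends to $0$ on letting $n \to \infty$ and then $k \to \infty$ with $T$ fixed (since the $k$-th largest tends to $Y^\infty_k \to 0$ and the maximum converges to the finite $Y^\infty_1$); the second piece is controlled in expectation, uniformly in $n$, by the key estimate. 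Taking $n \to \infty$ in this bound also gives $\mathbf{Y}^\infty \in \ell^2_{\downarrow}$, and combining with the finite-dimensional convergence yields convergence in distribution in $\ell^2$, as well as the joint convergence with the $\Sigma$-coordinates. The main obstacle is the key SBPP moment estimate and, more subtly, making its tail control uniform in $n$: this requires an a priori tightness of $\|\mathbf{Y}^n\|_2^2$, which is implicit in the abstract statement and which, in our application where $\mathbf{Y}^n = \mathrm{ord}(\zeta(\varepsilon^n_k))$ and $\Xi^\infty$ is the SBPP of the excursion lengths of $R$, follows from Proposition~\ref{prop:Xconv} together with the excursion theory of $R$ (Lemma~\ref{lem:Josephprops}); everything else is a routine truncation argument.
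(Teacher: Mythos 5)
First, a remark on what you are being compared against: the paper does not prove Proposition~\ref{prop:pointproc} at all. It is quoted from Proposition 15 of \cite{AldousCritRG} and Proposition 17 of \cite{AldousLimic}, the authors adding only that the final joint convergence with the $\Sigma$-coordinates is implicit in Aldous's proof, ``more precisely in the assertion that the tightness of the sequence $(\Sigma^n(a))_{n\ge1}$ for arbitrary $a>0$ and the convergence $\Xi^n \convdist \Xi^{\infty}$ together imply the convergence $\mathrm{ord}(\mathbf{Y}^n)\convdist\mathrm{ord}(\mathbf{Y}^{\infty})$ in $\ell^2$''. So your sketch is really an attempted reconstruction of Aldous's argument. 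Its sound parts are the finite-dimensional step and the treatment of the indices with $\Sigma^n_\gamma\le T$ via the exact SBPP identity behind (2) and the mass bound $\sum_{\gamma:\Sigma_\gamma\le T}Y_\gamma\le T+\sup_\gamma Y_\gamma$; these match the standard argument.

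The genuine gap is in the step you yourself identify as the crux: uniform-in-$n$ control of $\sum_{\gamma:\Sigma^n_\gamma>T}(Y^n_\gamma)^2$. Your ``key estimate'' is true for each fixed $\mathbf{Y}$, but its rate of decay is \emph{not} a function of $\|\mathbf{Y}\|_2$ alone: apart from the large values, the term $\sum_\gamma Y_\gamma^2e^{-Y_\gamma\sqrt{t}}$ is essentially $\sum_{\gamma:Y_\gamma\lesssim t^{-1/2}}Y_\gamma^2$, which is exactly the quantity you are trying to bound, so invoking it is circular, and tightness of $\|\mathbf{Y}^n\|_2$ cannot rescue it. Concretely, adjoin to a fixed nice sequence $n$ extra values of size $n^{-1/2}$: then $\|\mathbf{Y}^n\|_2^2$ increases by exactly $1$, your conditional estimate still holds but its right-hand side stays of order $1$ for every fixed $t$, and $\sum_{\gamma:\Sigma^n_\gamma>T}(Y^n_\gamma)^2$ does not become small for any fixed $T$. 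This configuration is excluded by the hypotheses of the proposition (the vanishing mass produces a drift in the $\Sigma$-coordinates, so the vague limit of $\Xi^n$ violates condition (2)), but nothing in your tail argument uses those hypotheses: you use (2) only in its automatic per-$n$ form, which the bad example satisfies too, so as written your argument would ``prove'' a false statement. The missing idea is that the uniform tail bound has to be extracted from $\Xi^n\convdist\Xi^\infty$ together with conditions (1)--(2) of the limit; this is precisely the role of the tightness of $(\Sigma^n(a))_{n\ge1}$ for fixed $a$ in Aldous's proof, which is then compared with its conditional mean $\sum_\gamma Y^n_\gamma(1-e^{-aY^n_\gamma})\ge c\,a\sum_{\gamma:Y^n_\gamma\le 1/a}(Y^n_\gamma)^2$ to bound the $\ell^2$-mass of small values uniformly in $n$ (and, as a by-product, to give the tightness of $\|\mathbf{Y}^n\|_2$, which is not a hypothesis of the abstract statement and should not be assumed). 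Note also that even your finite-dimensional step quietly needs an ingredient of this type, to rule out the largest prelimit values sitting at $\Sigma$-coordinates beyond the window seen by vague convergence.
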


The original statement does not mention the last convergence. It is, in fact, implicitly contained in the proof of Proposition 15 given in \cite{AldousCritRG}, more precisely in the assertion that the tightness of the sequence $(\Sigma^n(a))_{n\geq 1}$ for arbitrary $a>0$ and the convergence $\Xi^n \convdist \Xi^{\infty}$ together imply the convergence $\mathrm{ord}(\mathbf{Y}^n) \convdist \text{ord}(\mathbf{Y}^{\infty})$ in $\ell^2$.

For $k \ge 1$, we let
\[
Y^n_k = n^{-\alpha/(\alpha+1)} \left[\zeta^n(k) - (N^n(\sigma^n(k)) - N^n(\sigma^n(k-1)) +1)\right].
\]

Recall that at the end of the exploration of the $(k-1)$th component of $\mathbf{M}_n(\nu)$ in depth-first order, we choose a new vertex from the unexplored parts of the graph with probability proportional to its degree. So we pick a \emph{component} with probability proportional to the sum of its degrees, which is twice the number of its edges. Since the number of steps it takes to explore a component of $\mathbf{M}_n(\nu)$ is the number of its vertices (which is the number of its non-back-edges plus one) plus twice the number of back-edges (unlike the non-back-edges, a back-edge takes one step to create and another step to close), it follows that $Y^n_k$ is the number of edges of the $k$th component of $\mathbf{M}_n(\nu)$ times $n^{-\alpha/(\alpha+1)}$. 

For $k\geq 1$, let 
\[
\Sigma_k^n = \sum_{i=1}^{k-1} Y^n_i =   n^{-\alpha/(\alpha+1)} \left[\sigma^n(k-1)-N^n(\sigma^n(k-1))-k+1\right],
\]
and put
\[
\Xi^n = \{(\Sigma^n_k, Y_k^n): k \ge 1\}.
\]
It is easy to see that $\Xi^n$ then has the same distribution as the SBPP associated with 
\\
$n^{-\alpha/(\alpha+1)}(\zeta^n(k) -N^n(\sigma^n(k)) + N^n(\sigma^n(k-1))-1, k \ge 1)$.

Recall from Section~\ref{sec:stable} the notation $(\widetilde{\varepsilon}^{(\ell)}, \ell \ge 0)$ for the excursions of the reflected process $(R_t, t \ge 0)$ indexed by local time $\ell$, and $(\widetilde{\sigma}_{\ell}, \ell \ge 0)$ for the inverse local time process. Let $\Xi^{\infty}$ be the point process given by
\[
\Xi^{\infty} = \{(\widetilde{\sigma}_{\ell^-}, \zeta(\widetilde{\varepsilon}^{(\ell)})): \ell \ge 0, \widetilde{\sigma}_{\ell} - \widetilde{\sigma}_{\ell-} > 0\}.
\]
By Proposition~\ref{prop:transient} and Lemma~\ref{lem:Josephprops}, properties (1), (2) and (3) from Proposition~\ref{prop:pointproc} above hold for $\Xi^{\infty}$. In order to apply Proposition~\ref{prop:pointproc}, it thus remains to establish the convergence of $\Xi^n$ to $\Xi^{\infty}$.  We do this by first proving a deterministic result for a suitable class of functions, extending Lemma $7$ of Aldous~\cite{AldousCritRG} from the setting of continuous functions to the setting of c\`adl\`ag functions satisfying certain conditions.

For a c\`adl\`ag function $f: [0,\infty) \to \R$ with only positive jumps, let $E(f)$ be the set of non-empty intervals $e=(l,r)$ such that $f(l)=\inf_{s\leq l} f(s) = f(r)$ and $f(s) >f(l)$ for all $s\in (l,r)$. We say that such intervals are excursions of $f$.
Let $\mathcal{S}$ denote the set of functions $f: [0,\infty) \to \R$ satisfying the following conditions:
\begin{enumerate}
\item $f$ is c\`adl\`ag and has only non-negative jumps.
\item $f(x) \to -\infty$ as $x \to \infty$.
\item If $0 \le a < b$ and $f(b-) = \inf_{a \le s \le b} f(s)$ then $f(b) = f(b-)$. 
\item For each $\epsilon > 0$, $E(f)$ contains only finitely many excursions of length greater than or equal to $\epsilon$.
\item The complement of $\cup_{(l,r) \in E(f)} (l,r)$ has Lebesgue measure 0. 
\item If $(l_1,r_1), (l_2,r_2) \in E(f)$ and $l_1<l_2$, then $f(l_1)>f(l_2)$.
\end{enumerate}
\begin{lem}\label{lem:Aldous7}
Let $f \in \mathcal{S}$ and let $(f_n)_{n\geq 1}$ be a sequence of c\`{a}dl\`{a}g functions such that 
$f_n \to f$ as $n \to \infty$ in the Skorokhod sense. For each $n\in \N$, let $(t_{n,i})_{i\geq 1}$ be a strictly increasing sequence such that  
\begin{enumerate}
\item[(i)] $t_{n,1}=0$ and $\lim_{i\rightarrow \infty} t_{n,i} = \infty$, 

\item[(ii)] $f_n(t_{n,i})=\inf_{s \leq t_{n,i}}f_n(s)$,

\item[(iii)] for each $s < \infty$, $\lim_{n\rightarrow \infty} \max_{i : t_{n,i}\leq s}(f_n(t_{n,i})-f_n(t_{n,i+1}))=0.$
\end{enumerate}
Write $\Xi=\{(l,r-l): (l,r) \in E(f)\}$ and $\Xi_n=\{(t_{n,i},t_{n,i+1}-t_{n,i}): i \ge 1\}$ for $n \ge 1$. Then 
\[
\Xi_n \to \Xi
\]
as $n \to \infty$, where the convergence holds in the topology of vague convergence of counting measures on $[0, \infty) \times (0, \infty)$.
\end{lem}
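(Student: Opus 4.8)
The plan is to follow the scheme of Lemma~7 of Aldous~\cite{AldousCritRG}, upgrading the local uniform convergence of continuous functions used there to Skorokhod convergence, with the structural axioms (1)--(6) defining $\mathcal{S}$ — and especially axiom (3) — doing the work of making the excursion decomposition rigid enough to pass to the limit.

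First I would reduce to finitely many excursions. Vague convergence of the counting measures $\Xi_n$ to $\Xi$ on $[0,\infty)\times(0,\infty)$ is equivalent to $\langle\Xi_n,\varphi\rangle\to\langle\Xi,\varphi\rangle$ for every $\varphi\in C_c([0,\infty)\times(0,\infty))$. Fixing such a $\varphi$, supported in $[0,T]\times[\delta,M]$, axiom (4) guarantees that $E(f)$ contains only finitely many excursions $(l_1,r_1),\dots,(l_p,r_p)$ of length at least $\delta/2$ and left-endpoint at most $T+1$. It then suffices to prove: \textbf{(a)} each such $(l_j,r_j)$ is \emph{detected}, i.e.\ for all large $n$ there is an index $i_j(n)$ with $(t_{n,i_j(n)},t_{n,i_j(n)+1})\to(l_j,r_j)$; and \textbf{(b)} no \emph{spurious} long gap appears, i.e.\ whenever $(i_n)$ is a sequence with $t_{n,i_n}\le T$ and $t_{n,i_n+1}-t_{n,i_n}\ge\delta/2$, then every subsequential limit of $(t_{n,i_n},t_{n,i_n+1})$ is one of the $(l_j,r_j)$. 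Given (a) and (b), the atoms of $\Xi_n$ in $[0,T]\times[\delta,M]$ are eventually exactly those matched to $(l_1,r_1),\dots,(l_p,r_p)$ and their positions converge, so $\langle\Xi_n,\varphi\rangle\to\sum_j\varphi(l_j,r_j-l_j)=\langle\Xi,\varphi\rangle$ by continuity of $\varphi$.

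Next I would extract the consequences of $f\in\mathcal S$ that drive both parts. The running infimum $\underline f(u):=\inf_{s\le u}f(s)$ is continuous because the jumps of $f$ are non-negative (axiom (1)); hence $\underline{f_n}\to\underline f$ uniformly on compacts and $f_n\to f$ uniformly on compact subintervals of any open set on which $f$ is continuous. For an excursion $(l,r)$ at level $m=f(l)=\underline f(l)$: axiom (3), applied at left-limit points, gives $f(s-)>m$ for all $s\in(l,r)$, so together with $f>m$ on $(l,r)$ one gets $\inf_{[l+\rho,r-\rho]}f>m$ for each $\rho\in(0,(r-l)/2)$; axiom (6) forbids a second excursion at level $m$, which combined with axioms (2) and (5) forces $\underline f(a)>m$ for every $a<l$ and $\underline f(b)<m$ for every $b>r$. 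For \textbf{(a)}: fix $j$, a small $\rho>0$, and pick continuity points $a\in(l_j-\rho,l_j)$, $b\in(r_j,r_j+\rho)$, so that $\underline f(a)>m_j>\underline f(b)$; then for large $n$ one has $f_n\ge m_j+\kappa_\rho$ throughout $[l_j+\rho,r_j-\rho]$ for some fixed $\kappa_\rho>0$, while $\underline{f_n}(a)>m_j$, $\underline{f_n}(b)<m_j$ and $\underline{f_n}(u)<m_j$ for all $u\ge b$. Since $f_n>\underline{f_n}$ on $[l_j+\rho,r_j-\rho]$, no $t_{n,i}$ lies in that interval (by hypothesis (ii)), so exactly one consecutive pair $(t_{n,i_j(n)},t_{n,i_j(n)+1})$ straddles it. Using hypothesis (iii) together with the strict descent of $\underline{f_n}$ through level $m_j$ inside $O(\rho)$-windows around $l_j$ and around $r_j$, one checks that $t_{n,i_j(n)}\to l_j$ and $t_{n,i_j(n)+1}\to r_j$ on letting first $n\to\infty$ and then $\rho\downarrow0$; uniqueness of $i_j(n)$ is clear since $\underline{f_n}$ crosses level $m_j$ only once.

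Finally, for \textbf{(b)}: on a long gap $(t_{n,i_n},t_{n,i_n+1})$ one has $f_n\ge f_n(t_{n,i_n+1})=\underline{f_n}(t_{n,i_n+1})$, and $f_n(t_{n,i_n})-f_n(t_{n,i_n+1})\le\varepsilon_n(T):=\max_{i:\,t_{n,i}\le T}\bigl(f_n(t_{n,i})-f_n(t_{n,i+1})\bigr)\to0$ by hypothesis (iii). Passing to a subsequence with $t_{n,i_n}\to\alpha$ and $t_{n,i_n+1}\to\beta$ (so $\beta-\alpha\ge\delta/2$) and using $f_n\to f$, $\underline{f_n}\to\underline f$, one obtains $f\ge\underline f(\alpha)$ on $(\alpha,\beta)$ with $\underline f(\alpha)=\underline f(\beta)$. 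Since the set of running-infimum times of $f$ has Lebesgue measure zero (axiom (5)), so that $f>\underline f$ a.e., and $\underline f$ is constant on $[\alpha,\beta]$, the interval $(\alpha,\beta)$ must lie within a \emph{single} excursion of $f$ — it is here that axiom (6), preventing two excursions from sharing a base level, rules out $(\alpha,\beta)$ straddling a concatenation of excursions of nearly equal level — and that excursion has length $\ge\delta/2$, hence equals some $(l_j,r_j)$; part (a) then pins down $\alpha=l_j$, $\beta=r_j$. I expect the bookkeeping underlying (a) and (b) — making rigorous that the long gaps of $f_n$ neither overlook nor split the genuine long excursions of $f$ — to be the main obstacle, and it rests squarely on the rigidity supplied by axioms (3) and (6); the passage between Skorokhod and local uniform convergence (handled by selecting the auxiliary times $a$, $b$ among continuity points of $f$ and using continuity of $\underline f$) is routine.
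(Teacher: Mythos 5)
Your proposal is correct, and its two-step skeleton (detect each long excursion, then rule out spurious long gaps) is the same as the paper's, which adapts Lemma~4.8 of Martin and R\'ath; the separation estimates you derive from the axioms of $\mathcal{S}$ (in particular the use of axiom (3) to get $f>m+\kappa$ on $[l+\rho,r-\rho]$, and of (5)--(6) to get $\inf_{s\le a}f>m$ for $a<l$ and $\inf_{s\le b}f<m$ for $b>r$) are exactly the bounds the paper establishes with its $\epsilon$--$\delta$ bullet lists and Skorokhod time changes. Where you genuinely diverge is in how the matching with the sequence $(t_{n,i})$ is completed. The paper only locates running-infimum times of $f_n$ near $l$ and $r$ and then finishes globally, via a covering-in-measure argument: axiom (5) lets it cover $[0,t+\eta]$ up to measure $\eta/2$ by finitely many excursions, each of which is detected, and the leftover length is argued to be too small to accommodate a misplaced long gap $(t_{n,i},t_{n,i+1})$; hypothesis (iii) enters only implicitly in that final, rather terse, step. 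You instead argue gap-by-gap through the running infimum $\underline{f}_n\to\underline{f}$ and invoke (iii) explicitly twice: in the detection step, to force the unique straddling pair to have endpoints converging to $(l_j,r_j)$ (otherwise the drop of $\underline{f}_n$ across the gap would be bounded away from $0$), and in the no-spurious-gap step, where (iii) yields $\underline{f}(\alpha)=\underline{f}(\beta)$ for a subsequential limit $(\alpha,\beta)$ of a long gap, after which (5), (6) and disjointness of excursion intervals force $(\alpha,\beta)$ to lie inside a single excursion, which part (a) then identifies. Your route buys a purely local argument that makes the role of hypothesis (iii) transparent --- precisely the point the paper disposes of with ``follows straightforwardly'' --- at the cost of slightly more care with the structural claims you leave as sketches ($\underline{f}(b)<m_j$ for $b>r_j$, and the single-excursion claim), all of which do check out from (3), (5), (6) as you indicate; the paper's covering argument avoids any discussion of which running-infimum times of $f_n$ actually belong to $(t_{n,i})$, but at the price of leaving the use of (iii) implicit.
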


\begin{proof}
We adapt the proof of Lemma 4.8 of Martin \& R\'ath~\cite{MartinRath}.  Suppose that $(l,r)$ is an excursion of $f$.  Fix $\epsilon \in (0,r-l)$.  Since $f \in \mathcal{S}$, there exists $\delta > 0$ such that
\begin{itemize}
\item[] $f(x) \ge f(l) + \delta$ for all $x \in [0, l - \epsilon/2]$
\item[] $f(x) \ge f(l) + \delta$ for all $x \in [l+\epsilon/2, r - \epsilon/2]$
\item[] $f(x) \le f(l) - \delta$ for some $x \in (r, r+\epsilon/2]$.
\end{itemize}

The first line is a consequence of conditions (1) and (6) in the definition of the set $\mathcal{S}$: if for every $n>0$, there exists $x_n \in [0, l - \epsilon/2]$ such that $f(x_n) < f(l) + 1/n$, then by condition (1) there would be an accumulation point $x_{\infty} \in [0, l - \epsilon/2]$ of the sequence $(x_n)_{n\ge 1}$ such that $f(x_{\infty})\leq f(l)$, and hence there would exist an interval $(l',r')$ with $r'<r$ such that $f(r')\le f(x_{\infty})\le f(r)$. But this contradicts (6).

The second line follows from a similar argument: there cannot be an accumulation point $x_{\infty} \in [l+\epsilon/2, r - \epsilon/2]$ such that $f(x_{\infty})\leq f(l)$ since $(l,r)$ is an excursion interval.  The third line is again a consequence of condition (6) in the definition of the set $\mathcal{S}$.

Since we have $f_n \to f$ in the Skorokhod sense, there exist $n_0$ and a sequence of continuous strictly increasing functions $\lambda_n: [0,\infty) \to [0, \infty)$ such that $\lambda_n(0) = 0$, $\lim_{t \to \infty} \lambda_n(t) = \infty$ and for all $n \ge n_0$,
\[
|f_n(\lambda_n(x)) - f(x)| < \delta/2 \quad \text{for all $x \in [0,r+\epsilon/2]$}
\]
and
\[
|\lambda_n(x) - x| < \epsilon/2 \quad \text{for all $x \in [0,r+\epsilon/2]$.}
\]
Then for $n \ge n_0$,
\begin{itemize}
\item[] $f_n(\lambda_n(x)) \ge f(l) + \delta/2$ for all $x \in [0,l - \epsilon/2]$
\item[] $f_n(\lambda_n(x)) > f(l) - \delta/2$ for all $x \in [l-\epsilon/2,l+\epsilon/2]$
\item[] $f_n(\lambda_n(l)) < f(l) + \delta/2$
\item[] $f_n(\lambda_n(x)) \ge f(l) + \delta/2$ for all $x \in [l + \epsilon/2, r - \epsilon/2]$
\item[] $f_n(\lambda_n(x)) \le f(l) - \delta/2$ for some $x \in [r,r+\epsilon/2]$.
\end{itemize}
For the second point, note that $f_n(\lambda_n(x)) > f(x)-\delta/2\geq f(r)-\delta/2=f(l)-\delta/2$ for every $l+\epsilon/2\le x <r$. Therefore,
\begin{itemize}
\item[] $f_n(x) \ge f(l) + \delta/2$ for all $x \in [0,l - \epsilon]$
\item[] $f_n(x) > f(l) - \delta/2$ for all $x \in [l-\epsilon,l+\epsilon]$
\item[] $f_n(x) < f(l) + \delta/2$ for some $x \in [l-\epsilon/2,l+\epsilon/2]$.
\item[] $f_n(x) \ge f(l) + \delta/2$ for all $x \in [l + \epsilon, r - \epsilon]$
\item[] $f_n(x) \le f(l) - \delta/2$ for some $x \in [r-\epsilon/2,r+\epsilon]$.
\end{itemize}
From the first and third inequalities, the set $I:=\{x\in [l-\epsilon, r+\epsilon], \,f_n(x)\leq \inf_{y\le l-\epsilon}f_n(y)\}$ is not empty. Let $l^{(n)}:=\inf I$ and let $(x_k)_{k\geq 1}$ be a decreasing sequence in $I$ converging to $l^{(n)}$. Since $f_n$ is right-continuous, $f_n(l^{(n)})=\lim_{k\to \infty}f_n(x_k)$, and $f_n(l^{(n)})\leq \inf_{x\le l-\epsilon}f_n(x)$, so that $l^{(n)}=\min I$. Clearly,
\[
f_n(l^{(n)}) = \inf_{x \le l^{(n)}} f_n(x) 
\]
Similarly, from the first, second, fourth and fifth inequalities, 
we obtain the existence of $r^{(n)} \in [r-\epsilon,r+\epsilon]$ such that
\[
f_n(r^{(n)}) = \inf_{x \le r^{(n)}} f_n(x).
\]
Now fix $\eta > 0$.  We can find $t > 0$ such that there are no excursions of length exceeding $\eta$ in $E(f)$ which intersect $[t,\infty)$.  Then there exists a finite collection $\{(l_i,r_i): 1 \le i \le m\}$ of excursions in $E(f)$ with $l_i \le t+\eta$ for $1 \le i \le m$ and such that $\cup_{1 \le i \le m} (l_i,r_i)$ covers all of $[0,t+\eta]$ except for a set of Lebesgue measure at most $\eta/2$.  Set $\epsilon = \eta/4m$ and apply the above argument for each excursion, to see that for $n$ sufficiently large, there exist disjoint intervals $(l_1^{(n)}, r_1^{(n)}), \ldots, (l_m^{(n)}, r_m^{(n)})$ such that
\[
|l_i^{(n)} - l_i| < \eta/4m \quad \text{and} \quad |r_i^{(n)} - r_i| < \eta/4m \quad \text{for $1 \le i \le m$}.
\]
But then the remaining length in $[0,t+\eta]$ is at most $\eta$ and so, in particular, we must have captured all possible intervals $(t_{n,i}, t_{n,i+1})$ with $t_{n,i} \le t+\eta$ and $t_{n,i+1}-t_{n,i} \ge \eta$, up to an error of at most $\eta/4m$ at each end-point.  The required vague convergence follows straightforwardly.
\end{proof}

\begin{lem} \label{lem:pointprocexc}
We have $\widetilde{L} \in \mathcal{S}$ almost surely.  Moreover,
\[
\Xi^n \convdist \Xi^{\infty}
\]
as $n \to \infty$, where the convergence holds in the topology of vague convergence of counting measures on $[0, \infty) \times (0, \infty)$.
\end{lem}

\begin{proof}
$\widetilde{L}$ is clearly c\`adl\`ag with only non-negative jumps almost surely.  The other  conditions required for a function to lie in $\mathcal{S}$ follow from Proposition~\ref{prop:transient} and Lemma~\ref{lem:Josephprops}.

Now let $f=\widetilde{L}$ and $f_n=n^{-1/(\alpha +1)}X^n(\lfloor n^{\alpha/(\alpha +1)}\cdot\rfloor)$. It is clear that $\Xi^{\infty}$ is $\Xi$ of the previous lemma for this $f$.  For $n \ge 1$, $i \ge 0$, let $t_{n,i} = n^{-\alpha/(\alpha+1)}\sigma^n(i)$.  Then $t_{n,0} = 0$ and $\lim_{i \to \infty} t_{n,i} = \infty$.  By construction, the $t_{n,i}$ are times at which new infima of $f_n$ are reached.  Moreover, 
\[
f_n(t_{n,i}) - f_n(t_{n,i+1}) = n^{-1/(\alpha +1)}(X^n(\sigma^n(i)) - X^n(\sigma^n(i+1))) = n^{-1/(\alpha +1)}.
\]
Hence, the $(t_{n,i})_{i \ge 1}$ satisfy the conditions in Lemma~\ref{lem:Aldous7}.  It follows that
\[
\Xi_n = \{(t_{n,i}, t_{n,i+1}-t_{n,i}), i \ge 1\} \convdist \Xi
\]
as $n \to \infty$.  Now we have 
\[
\Sigma_i^n = t_{n,i} - n^{-\alpha/(\alpha+1)} [N^n(\sigma^n(i)) + i]
\]
and 
\[
Y_i^n = t_{n,i}-t_{n,i-1} -n^{-\alpha/(\alpha+1)}[N^n(\sigma^n(i)) - N^n(\sigma^n(i-1)) +1].
\]
Since $n^{-\alpha/(\alpha+1)} N^n(\sigma^n(i)) \to 0$ for each $i \ge 0$, it is straightforward to see that $\Xi^n$ and $\{(t_{n,i}, t_{n,i+1}-t_{n,i}), i \ge 1\}$ can be made arbitrarily close in the vague topology by taking $n$ large. Hence, $\Xi^n \convdist \Xi^{\infty}$ as desired.
\end{proof}

Proposition~\ref{prop:pointproc} tells us that we may now extract the ordered excursion lengths, and that we can add the convergence of the starting points of the excursions. This completes the proof of Proposition~\ref{prop:ordered}. As an aside, we observe that Proposition~\ref{prop:ordered} gives us an analogue of Corollary 16 of \cite{AldousCritRG}, as follows.

\begin{cor}
The point process $\Xi^{\infty} = \{(\widetilde{\sigma}^{(\ell)},\zeta(\widetilde{\varepsilon}^{(\ell)})): \ell \ge 0, \widetilde{\sigma}_{\ell} - \widetilde{\sigma}_{\ell-} > 0\}$ consisting of the left endpoints and the lengths of the excursions of $R$ is distributed as the SBPP associated with the set of excursion lengths of $R$. 
\end{cor}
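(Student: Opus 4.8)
The plan is to obtain the statement by splicing together the two convergence results already established in the proof of Proposition~\ref{prop:ordered}, using that the size-biased point process (SBPP) construction depends only on the multiset of lengths it is fed, together with its weak continuity in that input. Recall that, by construction, $\Xi^n$ is, conditionally on the random sequence $\mathbf{Y}^n := n^{-\alpha/(\alpha+1)}(\zeta^n(k) - N^n(\sigma^n(k)) + N^n(\sigma^n(k-1)) - 1,\ k \ge 1)$, distributed as the SBPP associated with $\mathbf{Y}^n$; and since the SBPP of a sequence is insensitive to reordering, it is equally the SBPP associated with $\mathrm{ord}(\mathbf{Y}^n) = \mathrm{ord}(\pi(\Xi^n))$. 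From the proof of Proposition~\ref{prop:ordered} we have, on the one hand, $\Xi^n \convdist \Xi^{\infty}$ (Lemma~\ref{lem:pointprocexc}) and, on the other hand — invoking Proposition~\ref{prop:pointproc}, whose hypotheses on $\Xi^\infty$ were checked via Proposition~\ref{prop:transient} and Lemma~\ref{lem:Josephprops} — the $\ell^2$-convergence $\mathrm{ord}(\mathbf{Y}^n) \convdist \mathbf{Y}^{\infty}$, where $\mathbf{Y}^{\infty} = \mathrm{ord}(\pi(\Xi^{\infty})) = (\zeta(\widetilde{\varepsilon}_i),\ i \ge 1)$ is precisely the decreasing sequence of excursion lengths of $R$.

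The single ingredient that needs a short independent argument is the \emph{weak continuity of the SBPP kernel}: if $\mathbf{y}^{(m)} \to \mathbf{y}$ in $\ell^2_{\downarrow}$ then $\mathrm{SBPP}(\mathbf{y}^{(m)}) \convdist \mathrm{SBPP}(\mathbf{y})$ for the topology of vague convergence of counting measures on $[0,\infty) \times (0,\infty)$. I would prove this by realising all the constructions on one probability space through a single i.i.d.\ sequence $(U_i)_{i \ge 1}$ of uniform random variables, setting $E_i = -y_i^{-1}\log(1 - U_i)$ (with $E_i = +\infty$ when $y_i = 0$), so that $\Sigma(a) = \sum_i y_i \I{U_i < 1 - e^{-ay_i}}$. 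For each fixed $a$ and index $i$ the corresponding summand converges almost surely, while the tail $\sum_{i > K} y_i^{(m)}\I{U_i < 1 - e^{-ay_i^{(m)}}}$ has mean at most $a\sum_{i>K}(y_i^{(m)})^2$, which is uniformly small in $m$ once $K$ is large, since $\limsup_m \sum_{i>K}(y_i^{(m)})^2 = \sum_{i>K}y_i^2 \to 0$ as $K \to \infty$. Hence $\Sigma^{(m)}(a) \to \Sigma(a)$ in probability for every $a$; the same reasoning applied with the (converging) random argument $E_k$ in place of a fixed $a$ gives $\Sigma_k^{(m)} \to \Sigma_k$ for every $k$; and since in any compact subset of $[0,\infty) \times (0,\infty)$ the point processes carry only finitely many atoms, this coordinatewise convergence of atoms yields the asserted vague convergence.

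Granting the continuity lemma, the conclusion follows by matching weak limits. Writing $\mathrm{Law}(\Xi^n)$ as the mixture $\int \mathrm{Law}(\mathrm{SBPP}(\mathbf{y}))\,\mathrm{Law}(\mathrm{ord}(\pi(\Xi^n)))(d\mathbf{y})$, and using weak continuity of the kernel $\mathbf{y} \mapsto \mathrm{Law}(\mathrm{SBPP}(\mathbf{y}))$ together with $\mathrm{ord}(\pi(\Xi^n)) = \mathrm{ord}(\mathbf{Y}^n) \convdist (\zeta(\widetilde{\varepsilon}_i),\ i \ge 1)$ in $\ell^2_{\downarrow}$, we obtain $\Xi^n \convdist \mathrm{SBPP}((\zeta(\widetilde{\varepsilon}_i),\ i \ge 1))$. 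Comparing this with $\Xi^n \convdist \Xi^{\infty}$ and using uniqueness of weak limits identifies the law of $\Xi^{\infty}$ with that of the SBPP associated with $(\zeta(\widetilde{\varepsilon}_i),\ i \ge 1)$, which is exactly the assertion.

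The only genuine obstacle is the continuity lemma for the SBPP kernel, and even this is essentially already present in Aldous' proof of Proposition~\ref{prop:pointproc}; the rest is bookkeeping. Within that lemma, the points requiring a little care are making the $\ell^2$-tail estimate uniform in $m$ (which is where the decreasing rearrangement and $\ell^2$-convergence are used) and handling the almost surely negligible coincidences among the $E_i$, both of which are routine.
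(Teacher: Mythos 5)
Your argument is correct and is essentially the paper's (implicit) one: the corollary is stated there without separate proof, as an immediate consequence of Proposition~\ref{prop:ordered} in the spirit of Corollary 16 of \cite{AldousCritRG}, i.e.\ exactly the combination you spell out of $\Xi^n \convdist \Xi^{\infty}$ (Lemma~\ref{lem:pointprocexc}), the $\ell^2$-convergence of the ordered lengths from Proposition~\ref{prop:pointproc}, and weak continuity of the SBPP construction in its $\ell^2_{\downarrow}$ input. Your coupling proof of that continuity (single uniform sequence, coordinatewise convergence of atoms plus the uniform $\ell^2$ tail bound) is a correct way of supplying the one detail the paper delegates to Aldous.
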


\subsection{Marked excursions}\label{subsec:markedexc}

We now strengthen the convergence in Proposition~\ref{prop:ordered} to a convergence of ordered marked excursions.  Let us first prove a deterministic analytic result, similar in spirit to Lemma~\ref{lem:Aldous7}, which we shall use to handle the positions of the back-edges (recall that when a back-edge is discovered, its other endpoint is explored at the first time when the corresponding mark in the stack reaches the top of the stack).

\begin{lem} \label{lem:beposition}
Let $f \in \mathcal{S}$ and let $(f_n)_{n \ge 1}$ be a sequence of c\`adl\`ag functions such that $f_n \to f$ as $n \to \infty$ in the Skorokhod sense.  For each $n \in \N$, let $(s_n, y_n)$ be a pair of points such that $s_n \ge 0$ and $y_n \le f_n(s_n)$.  Let $(s,y)$ be such that $s > 0$, $f(s-) = f(s)$ and $0 < y < f(s)$.  Let $t_n = \inf\{u \ge s_n : f_n(u) \le y_n\}$ and $t = \inf\{u \ge s: f(u) \le y\}$.  Suppose that $(s_n,y_n) \to (s,y)$, that $t$ is not a local minimum of $f$ and that $f(t-) = f(t)$.  Then $t_n \to t$ as $n \to \infty$.
\end{lem}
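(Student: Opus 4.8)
The plan is to split the argument into a lower bound $\liminf_n t_n \ge t$ and an upper bound $\limsup_n t_n \le t$, and in both cases to transfer statements about $f_n$ back to $f$ via the Skorokhod time-changes $\lambda_n$. Recall that $f_n \to f$ in the Skorokhod sense gives, for every finite horizon $T$ and every $\delta>0$, strictly increasing continuous bijections $\lambda_n$ of $[0,\infty)$ fixing $0$, with $\sup_{u\le T}|\lambda_n(u)-u|<\delta$ and $\sup_{u\le T}|f_n(\lambda_n(u))-f(u)|<\delta$ for $n$ large. I would fix $T$ larger than any of the times that arise (say $T = t+2$, which is harmless since everything relevant happens on a compact interval once the bounds below are in place). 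First I would record the two hypotheses that make the problem tractable: since $f(t-)=f(t)$ and $t$ is not a local minimum of $f$, for every $\epsilon>0$ there exist $\delta>0$ and points $u^-\in(t-\epsilon,t)$, $u^+\in(t,t+\epsilon)$ with $f(u^+)<y-\delta$ (strictly below level $y$, using $f(t)\le y$ from right-continuity and minimality of $t$, combined with $t$ not being a local min so $f$ strictly descends through level $y$ just after $t$) and $f(u)>y+\delta$ for all $u\in[s+\epsilon', u^-]$ for a suitable $\epsilon'$ — i.e.\ $f$ stays genuinely above level $y$ on the whole interval $(s,t)$ except possibly near the endpoints. This last point needs care: by definition of $t$ as the first hitting time of level $y$ after $s$, $f(u)\ge y$ on $[s,t)$; combined with $f\in\mathcal S$ (only downward jumps to new infima, condition (3), and the excursion structure) one upgrades $f(u)\ge y$ to $f(u)>y$ on the open interval, away from $s$.

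For the \textbf{lower bound}: fix $\epsilon>0$ and choose $\delta$ as above so that $f>y+\delta$ on $[s+\epsilon, t-\epsilon]$. For $n$ large, $\sup_{u\le T}|f_n(\lambda_n(u))-f(u)|<\delta/2$ and $\sup|\lambda_n(u)-u|<\epsilon$, hence $f_n(v)>y+\delta/2>y_n$ (using $y_n\to y$, so $y_n<y+\delta/4$ eventually) for all $v\in[\lambda_n(s+\epsilon),\lambda_n(t-\epsilon)]\supset[s+2\epsilon, t-2\epsilon]$. Also $s_n\to s$ forces $s_n\le s+\epsilon$ eventually. Therefore $f_n$ does not drop to level $y_n$ anywhere in $[s_n, t-2\epsilon]$, whence $t_n\ge t-2\epsilon$. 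Letting $\epsilon\downarrow 0$ gives $\liminf_n t_n\ge t$.

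For the \textbf{upper bound}: fix $\epsilon>0$ and pick, using $f(t-)=f(t)\le y$ and that $t$ is not a local minimum, a point $u^+\in(t,t+\epsilon)$ with $f(u^+)<y-\delta$ for some $\delta>0$. For $n$ large, $f_n(\lambda_n(u^+))<y-\delta/2<y_n$ while $\lambda_n(u^+)<u^++\epsilon<t+2\epsilon$, and (by the lower bound already shown, or directly) $s_n<u^+$ eventually. Hence $t_n=\inf\{v\ge s_n: f_n(v)\le y_n\}\le \lambda_n(u^+)<t+2\epsilon$, so $\limsup_n t_n\le t$. Combining the two bounds yields $t_n\to t$.

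The main obstacle I anticipate is not the Skorokhod bookkeeping but verifying cleanly that $f$ stays strictly above level $y$ on $(s,t)$ and strictly descends below $y$ immediately after $t$ — in other words, extracting from membership in $\mathcal S$ (conditions (1)--(6)) and from the two stated regularity hypotheses ($f(t-)=f(t)$, $t$ not a local minimum) exactly the uniform gap of size $\delta$ on a slightly shrunken interval that the convergence argument consumes. This is where the hypothesis $0<y<f(s)$ and the assumption that $t$ is not a local minimum are genuinely used: without them $f$ could touch level $y$ tangentially from above and the first-hitting times $t_n$ could fail to converge. I would handle this via a short compactness argument (as in the first-line justification inside the proof of Lemma~\ref{lem:Aldous7}): if the gap failed there would be a sequence of near-level points accumulating at some $u_\infty\in[s,t]$ with $f(u_\infty)\le y$, contradicting either the definition of $t$ (if $u_\infty<t$, using that $f\ge y$ with equality only possibly at isolated excursion-type points ruled out by conditions (3) and (6)) or the non-local-minimum hypothesis (if $u_\infty=t$).
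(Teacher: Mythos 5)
Your overall strategy is the same as the paper's: extract from $f\in\mathcal{S}$ and the stated regularity hypotheses a uniform gap $f\ge y+\delta$ on the bulk of the relevant interval together with a point just after $t$ where $f$ drops below $y-\delta$, and then transfer both facts to $f_n$ through the Skorokhod time-changes. The paper concludes $t_n\in[t-\epsilon,t+\epsilon]$ in one step rather than via separate $\liminf$/$\limsup$ bounds, but that difference is cosmetic. Your upper bound is fine, and so is your compactness argument for the interior gap (with the caveat that an accumulating sequence of near-level points only forces $f(u_\infty-)\le y$ a priori; it is condition (3) of $\mathcal{S}$ that upgrades this to $f(u_\infty)\le y$ and hence to a contradiction with the minimality of $t$, as you indicate).

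There is, however, a concrete hole in your lower bound as written. You establish $f>y+\delta$ only on $[s+\epsilon,t-\epsilon]$, hence $f_n>y_n$ only on $[s+2\epsilon,t-2\epsilon]$, but you then conclude that $f_n$ does not reach level $y_n$ anywhere on $[s_n,t-2\epsilon]$, where $s_n\le s+\epsilon$. Nothing you have proved controls $f_n$ on the stretch $[s_n,s+2\epsilon]$, and this is precisely where $t_n$ could a priori land, which would ruin $t_n\to t$ since $t>s$. The repair is exactly what the hypotheses $0<y<f(s)$ and $f(s-)=f(s)$ are for: they give $f\ge y+\delta$ on a two-sided neighbourhood of $s$, so the uniform gap can be taken on an interval of the form $[s-\epsilon/2,\,t-\epsilon/2]$, as the paper does; since $s_n\to s$ and $y_n\to y$, the Skorokhod transfer then yields $f_n>y_n$ on all of $[s_n,t-\epsilon]$ for $n$ large. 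You flagged the endpoints as the delicate issue but placed the care at $t$ and in the interior; it is the left endpoint $s$ that your argument actually leaves uncovered. With that one adjustment your plan is complete and coincides with the paper's proof.
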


\begin{proof}
Fix $0 < \epsilon < t-s$.  Since $f \in \mathcal{S}$, $y < f(s)$, $f(s-) = f(s)$, $f(t-) = f(t)$ and $t$ is not a local minimum of $f$, there exists $\delta > 0$ such that 
\begin{align*}
& f(x) \ge y + \delta \quad \text{for all $x \in [s-\epsilon/2, t - \epsilon/2]$}\\
& f(x) \le y - \delta \quad \text{for some $x \in (t, t+\epsilon/2]$}.
\end{align*}
As $f_n \to f$, there exist $n_0$ and a sequence of continuous strictly increasing functions $\lambda_n: [0,\infty) \to [0,\infty)$ such that $\lambda_n(0) = 0$, $\lim_{x \to \infty} \lambda_n(x) = \infty$ and, for all $n \ge n_0$,
\[
|f_n(\lambda_n(x)) - f(x)| < \delta/2 \quad \text{for all $x \in [0, t+\epsilon/2]$}
\]
and
\[
|\lambda_n(x) - x| < \epsilon/4 \quad \text{for all $x \in [0,t+\epsilon]$}.
\]
Then
\begin{align*}
& f_n(\lambda_n(x)) \ge y + \delta/2 \quad \text{for all $x \in [s-\epsilon/2,t-\epsilon/2]$} \\
& f_n(\lambda_n(x)) \le y - \delta/2 \quad \text{for some $x \in (t,t+\epsilon/2]$.}
\end{align*}
By taking $n_0$ larger if necessary, we also have
\[
|s - s_n| < \epsilon/4 \quad \text{and} \quad  |y-y_n| < \delta/2
\]
for all $n \ge n_0$.  Then for all $n \ge n_0$, we have $|\lambda_n^{-1}(s_n) - s| < \epsilon/2$ and so
\[
f_n(x) \ge y + \delta/2 \quad \text{for all $x \in [s_n, t-\epsilon]$}.
\]
It follows that
\[
f_n(x) > y_n \quad \text{for all $x \in [s_n, t-\epsilon]$}.
\]
Moreover, it must be the case that $f_n$ goes below $y_n$ in the time-interval $[\lambda_n(t - \epsilon/2), \lambda_n(t + \epsilon/2)]$, i.e.\ we must have $t_n \in [\lambda_n(t - \epsilon/2), \lambda_n(t + \epsilon/2)]$. But then $t_n \in [t-\epsilon, t + \epsilon]$ for all $n \ge n_0$.  As $\epsilon$ was arbitrary, the result follows.
\end{proof}

Let
\[
M^n(k) = N^{n}(\sigma^n(k)) - N^{n}(\sigma^n(k-1)),
\]
be the number of back-edges falling in the excursion $\varepsilon^n_{k}$. Suppose that $M^n(k) \ge 1$.  Then for $1 \le r \le M^n(k)$, let $g_k^n + s^{n}_{k,r}$ be the rescaled time at which the $r$th back-edge is discovered in the $k$th component and let $n^{1/(\alpha+1)} x^{n}_{k, r} \ge 0$ be its position on the stack.  Let $g_k^n + t^n_{k,r}$ be the rescaled time at which the corresponding marked leaf in the stack is killed, thus closing the $r$th back-edge. This is the first time that the stack size goes below the height of that leaf after its discovery (the stack being a LIFO queue), so that we have
\[
t^{n}_{k,r} = \inf\{t \ge s^{n}_{k,r}: \varepsilon^{n}_k(t) \le x^{n}_{k,r} - n^{-1/(\alpha+1)}\}.
\]
Finally, if $M^n(k) \ge 1$, let $\mathcal{P}^n_{k} = \sum_{r =1}^{M^n(k)} \delta_{(s^{n}_{k,r}, t^{n}_{k,r})}$ define a point measure on $[0,\zeta(\varepsilon^n_{k})]^2$.  If $M^n(k) = 0$, let $\mathcal{P}^n_{k}$ be the null measure.  Let
\[
\mathcal{Q}^n = \sum_{k\ge1} \sum_{r = 1}^{M^n(k)} \delta_{(g_k^n+s^n_{k,r}, g_k^n+t^n_{k,r})},
\]
the point measure encompassing all of the pairs of rescaled times at which a back-edge is opened and closed.

Turning now to the limiting process, recall that $(\widetilde{\varepsilon}_i, i \ge 1)$ are the excursions of $R$ listed in decreasing order of length, and that the sequence $(\zeta(\widetilde{\varepsilon}_i), i \ge 1)$ lies in $\ell_{\downarrow}^2$. Let $M_i$ be the number of marks falling in the excursion $\widetilde{\varepsilon}_i$, and if $M_i \ge 1$, write $s_{i,1}, \ldots, s_{i,M_i}$ for the times and $x_{i,1}, \ldots, x_{i,M_i}$ for the positions of the marks, respectively.  For $1 \le r \le M_i$, let
\[
t_{i,r} = \inf\{t \ge s_{i,r}: \widetilde{\varepsilon}_i(t) \le x_{i,r}\}.
\]
If $M_i \ge 1$, write $\mathcal{P}_i = \sum_{r = 1}^{M_i} \delta_{(s_{i,r}, t_{i,r})}$, and if $M_i = 0$, let $\mathcal{P}_i$ be the null measure.  Finally, let
\[
\mathcal{Q} = \sum_{i \ge 1} \sum_{r=1}^{M_i} \delta_{(g_i +s_{i,r}, g_i + t_{i,r})}.
\]

Recall that
\[
\widetilde{\sigma}^n(i) = \min\{k: \widetilde{S}^n(k) \le -i\}
\]
and let $\widetilde{\zeta}^n(i) = \widetilde{\sigma}^n(i) - \widetilde{\sigma}^n(i-1)$ be the length of the $i$th excursion of $\widetilde{S}^n$ above its running minimum.  Since the components of $\widetilde{\mathbf{F}}_n(\nu)$ again appear in size-biased order, an argument completely analogous to that above gives that $n^{-\alpha/(\alpha+1)} \mathrm{ord}(\widetilde{\zeta}^n(k), k \ge 1)$ converges in distribution to $(\zeta(\widetilde{\varepsilon}_i), i \ge 1)$ in $\ell^2_{\downarrow}$.  In particular,
\[
n^{-\alpha/(\alpha+1)} \max_{i \ge 1} \widetilde{\zeta}^n(i) \convdist \zeta(\widetilde{\varepsilon}_1),
\]
where we recall that $\zeta(\widetilde{\varepsilon}_1)$ is the length of the longest excursion of $R$ above 0; in particular, by Proposition~\ref{prop:transient} and Lemma~\ref{lem:Josephprops} we have $\zeta(\widetilde{\varepsilon}_1)< \infty$ a.s.  

For $i \ge 1$, let $\widetilde{M}^n(i)$ be the number of back-edges falling among the vertices corresponding to the $(2i-1)$th and $2i$th components of $\widetilde{\mathbf{F}}_n(\nu)$, and let $\widetilde{N}^n_{\max} = \max_{k \ge 1} \widetilde{M}^n(k)$.  On the pair formed of the $(2i-1)$th and $2i$th components of $\widetilde{\mathbf{F}}_n(\nu)$, at corresponding vertices, we have that the size of the stack in $\mathbf{F}_n(\nu)$ is bounded above by the size of the stack in $\widetilde{\mathbf{F}}_n(\nu)$ plus 1.

\begin{prop} \label{prop:orderedexmarks}
We have
\[
\mathrm{ord} \left(\zeta(\varepsilon^n_k), \varepsilon^n_{k},  \mathcal{P}^n_{k}, k \ge 1 \right) \convdist \left(\zeta(\widetilde{\varepsilon}_i), \widetilde{\varepsilon}_i, \mathcal{P}_i, i \ge 1\right)
\]
as $n \to \infty$. Here, for each $k \ge 1$, the convergence in the second co-ordinate is for the Skorokhod topology and in the third for the Hausdorff distance on $\R_+^2$; then we take the product topology over the different indices.
\end{prop}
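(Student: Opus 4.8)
The plan is to bootstrap from Proposition~\ref{prop:ordered}, carrying along, in addition to the excursion lengths and left endpoints, the excursion \emph{paths} themselves and the point measures recording the opening and closing times of the back-edges. Beyond the ingredients used for Proposition~\ref{prop:ordered}, the two new tools are the back-edge convergence in Theorem~\ref{thm:Coxprocess} and the deterministic hitting-time continuity result Lemma~\ref{lem:beposition}. Since every subsequence will be shown to have a further subsequence along which all the required convergences hold almost surely (for the same limit), it suffices to produce such an a.s.\ statement on a single Skorokhod-coupling space.

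First I would observe that the rescaled coding walk $n^{-1/(\alpha+1)}X^n(\fl{\cdot\, n^{\alpha/(\alpha+1)}})$, the reflected walk $R^n$, the counter $C^n$ and the marked counting process $(N^n,U^n)$ of back-edges are all measurable functionals of the same random input, so Proposition~\ref{prop:Xconv} and Theorem~\ref{thm:Coxprocess} give their joint convergence in law. Passing to a probability space on which this is almost sure, Lemma~\ref{lem:pointprocexc} and Proposition~\ref{prop:pointproc} also give $\mathrm{ord}(\zeta(\varepsilon^n_k),g^n_k,k\ge 1)\to(\zeta(\widetilde\varepsilon_i),g_i,i\ge 1)$ a.s. Because $\widetilde L\in\mathcal S$ a.s., distinct excursions of $R$ have distinct lengths, so for each fixed rank $i$ and all $n$ large the $i$th longest excursion of $X^n$ has left and right endpoints converging to those of $\widetilde\varepsilon_i$. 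Since $\widetilde L$ has only positive jumps, $R=\widetilde L-\widetilde I$ decreases only continuously, so $R$ leaves and returns to $0$ continuously; hence the endpoints of $\widetilde\varepsilon_i$ are continuity points of $R$, and a routine restriction argument upgrades the Skorokhod convergence of the whole rescaled walk to $\varepsilon^n_{(i)}\to\widetilde\varepsilon_i$ in the Skorokhod topology on a common interval (after re-centring both to start at $0$). Taking the product over $i$ then handles the first two coordinates jointly with the length convergence.

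For the mark measures $\mathcal P^n_k$, fix a rank $i$ and work beyond the a.s.\ finite time at which the $i$th longest excursion of $R$ ends; using Lemma~\ref{lem:back} to exclude back-edges escaping to infinity on this timescale, Theorem~\ref{thm:Coxprocess} gives that the number $M^n$ of back-edges inside the $i$th longest excursion of $X^n$ equals the limiting number $M_i$ for $n$ large, and that the rescaled pairs (opening time, stack position) converge, $(s^n_{(i),r},u^n_{(i),r})\to(s_{i,r},u_{i,r})$. I would then apply Lemma~\ref{lem:beposition} with $f_n=\varepsilon^n_{(i)}$, $f=\widetilde\varepsilon_i$ and $y_n=u^n_{(i),r}-n^{-1/(\alpha+1)}$. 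Its hypotheses hold a.s.: $s_{i,r}$ is a continuity point of $\widetilde\varepsilon_i$ (only countably many jumps); $0<u_{i,r}<\widetilde\varepsilon_i(s_{i,r})$ by construction; $\widetilde\varepsilon_i(t_{i,r}-)=\widetilde\varepsilon_i(t_{i,r})=u_{i,r}$ because $\widetilde\varepsilon_i$ can only reach lower levels continuously; and $t_{i,r}$ is a.s.\ not a local minimum of $\widetilde\varepsilon_i$, since conditionally on $R$ the level $u_{i,r}$ is uniform on a non-degenerate interval, hence has a density, and so avoids the (Lebesgue-null) set of levels whose first passage is a local minimum. Thus $t^n_{(i),r}\to t_{i,r}$ for each $r$, whence $\mathcal P^n_{(i)}\to\mathcal P_i$ for the Hausdorff distance on $\R_+^2$. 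Combining the three steps, and using that the reordering by decreasing length stabilises at each finite rank, yields $\mathrm{ord}(\zeta(\varepsilon^n_k),\varepsilon^n_k,\mathcal P^n_k,k\ge 1)\convdist(\zeta(\widetilde\varepsilon_i),\widetilde\varepsilon_i,\mathcal P_i,i\ge 1)$ in the product topology.

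The hard part will be the treatment of the mark measures $\mathcal P^n_k$: one must reconcile the back-edge convergence of Theorem~\ref{thm:Coxprocess}, which is phrased along the global time axis, with the randomly-indexed excursion to which a given back-edge belongs, and then verify that the limit excursions are a.s.\ generic in the precise sense demanded by Lemma~\ref{lem:beposition} (continuity of $\widetilde\varepsilon_i$ at $s_{i,r}$ and at $t_{i,r}$, and $t_{i,r}$ not a local minimum). The comparison between the stack heights in $\mathbf F_n(\nu)$ and in $\widetilde{\mathbf F}_n(\nu)$ noted just before the statement is what lets one pin down the positions $u^n_{k,r}$ relative to the well-behaved walk $\widetilde S^n$, and it will be used to make the identification of marks with excursions rigorous.
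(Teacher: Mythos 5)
Your proposal is correct and follows essentially the same route as the paper: a Skorokhod coupling of the convergence in Theorem~\ref{thm:Coxprocess} with the ordered-excursion convergence from Proposition~\ref{prop:ordered}, then Lemma~\ref{lem:beposition} to pass the stack-position marks to the closing times, and finally identification of the marks lying in each of the longest excursion intervals. The only (cosmetic) difference is bookkeeping: the paper first forms the global point measure $\mathcal{Q}^n$ of opening/closing time pairs and then restricts it to the $K$ longest excursion intervals, whereas you apply Lemma~\ref{lem:beposition} excursion by excursion to the re-centred paths $\varepsilon^n_{(i)}$; your explicit verification of the genericity hypotheses (continuity at $s_{i,r}$ and $t_{i,r}$, and $t_{i,r}$ not a local minimum, via the countability of local-minimum values) is exactly what the paper's brief "uniform under the excursion" remark is invoking.
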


\begin{proof}
Observe that $\{(s_{i,r}, x_{i,r}): 1 \le r \le M_i\}$ are the times and marks of the marked Cox process (see Theorem~\ref{thm:Coxprocess}) which fall into the excursion $\widetilde{\varepsilon}_i$, for each $i \ge 1$.  Equivalently, they are picked uniformly from the Lebesgue measure under the excursion $\widetilde{\varepsilon}_i$. Such an excursion has only countably many discontinuities (all of which are up-jumps), and so the conditions of Lemma~\ref{lem:beposition} are fulfilled almost surely.  Combining this with Theorem~\ref{thm:Coxprocess}, we may deduce the joint convergence in distribution as $n \to \infty$ of
\[
(n^{-1/(\alpha+1)} R^n(\fl{t n^{\alpha/(\alpha+1)}}), t \ge 0) \to (R_t, t \ge 0)
\]
in the Skorokhod topology and $\mathcal{Q}^n \to \mathcal{Q}$ in the topology of vague convergence on $\R_+^2$.
 
By Skorokhod's representation theorem, we may work on a probability space such that this joint convergence holds almost surely.  Fix $K \in \N$.  We have already shown that, given $\delta > 0$, there exist $M > 0$ and $n_0$ sufficiently large such that, with probability at least $1 - \delta$, the $K$ longest excursions of both $n^{-1/(\alpha+1)} R^n(\fl{\cdot \ n^{\alpha/(\alpha+1)}})$ and $R$ occur in the time-interval $[0,M]$ for all $n \ge n_0$.  Since $\mathcal{Q}^n \to \mathcal{Q}$ and $\mathcal{Q}$ has only finitely many points in $[0,M]^2$, we may deduce that the $K$ smaller point processes obtained by restricting $\mathcal{Q}^n$ to each of the $K$ longest excursion-intervals converge in the sense of the Hausdorff distance to $\mathcal{P}_1, \ldots, \mathcal{P}_K$ respectively. The result follows.
\end{proof}

We conclude this section with some technical bounds on the number of back-edges in a given component, of which we will make use later. 

\begin{lem} \label{lem:nmax}
Almost surely, we have $\#\{i \ge 1: M_i \ge 2\} < \infty$ and $N_{\max} := \sup_{i \ge 1} M_i < \infty$.
\end{lem}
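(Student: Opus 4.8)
The plan is to reduce both statements to the single analytic fact that $\sum_{i \ge 1} a(\widetilde{\varepsilon}_i)^2 < \infty$ almost surely, where $a(\varepsilon) := \int_0^{\zeta(\varepsilon)} \varepsilon(u)\,du$ denotes the area under an excursion, and then to invoke a conditional Borel--Cantelli argument on the Poisson points.

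First I would carry out the deduction, assuming $\sum_i a(\widetilde{\varepsilon}_i)^2 < \infty$ a.s. Working conditionally on $R$ (equivalently, on the family $(\widetilde{\varepsilon}_i, i \ge 1)$), the number $M_i$ of Poisson points under $\widetilde{\varepsilon}_i$ is Poisson with parameter $\lambda_i = a(\widetilde{\varepsilon}_i)/\mu$, and the $M_i$ are conditionally independent. The elementary inequality $1 - (1+\lambda)e^{-\lambda} \le \lambda^2/2$ for $\lambda \ge 0$ gives $\Prob{M_i \ge 2 \mid R} \le a(\widetilde{\varepsilon}_i)^2/(2\mu^2)$, so $\sum_i \Prob{M_i \ge 2 \mid R} < \infty$ a.s.; the (conditional) first Borel--Cantelli lemma then yields $\#\{i : M_i \ge 2\} < \infty$ a.s. Since moreover each $M_i$ is a.s. finite, being Poisson with a.s. finite parameter $a(\widetilde{\varepsilon}_i) \le \zeta(\widetilde{\varepsilon}_i) \sup \widetilde{\varepsilon}_i < \infty$, it follows that $N_{\max} = \sup_i M_i = \max(1, \max\{M_i : M_i \ge 2\})$ is the maximum of $1$ together with finitely many a.s. finite quantities, hence finite a.s.

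The core of the argument is thus to show $\sum_i a(\widetilde{\varepsilon}_i)^2 < \infty$ a.s. I would use Proposition~\ref{prop:condindep}: conditionally on the lengths $(\zeta(\widetilde{\varepsilon}_j), j \ge 1)$, the excursions are independent, and given $\zeta(\widetilde{\varepsilon}_i) = x$ the area $a(\widetilde{\varepsilon}_i)$ has the law of $x^{1+1/\alpha} A$, where $A = \int_0^1 \mathbbm{e}$ is distributed according to $\N^{(1)}$ tilted by $\exp\bigl(\tfrac{x^{1+1/\alpha}}{\mu}\int_0^1 \mathbbm{e}\bigr)$. Setting $\zeta_1 := \zeta(\widetilde{\varepsilon}_1) < \infty$ a.s. (Proposition~\ref{prop:transient}, Lemma~\ref{lem:Josephprops}), every $\zeta(\widetilde{\varepsilon}_i) \le \zeta_1$, so for $x \le \zeta_1$ the tilting parameter is at most $\theta_1 := \zeta_1^{1+1/\alpha}/\mu$, the normalising denominator is $\ge \N^{(1)}[1] = 1$, and, using $y^2 e^{\theta_1 y} \le C_{\theta_1} e^{2\theta_1 y}$ and Lemma~\ref{lem:expbound}, $\E{A^2 \mid \zeta(\widetilde{\varepsilon}_i) = x} \le C_{\theta_1}\N^{(1)}\bigl[\exp(2\theta_1 \int_0^1 \mathbbm{e})\bigr] =: c(\zeta_1) < \infty$, uniformly in $x \in (0, \zeta_1]$ and hence in $i$. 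Taking conditional expectations,
\[
\E{\sum_{i \ge 1} a(\widetilde{\varepsilon}_i)^2 \ \Big| \ (\zeta(\widetilde{\varepsilon}_j))_{j \ge 1}} = \sum_{i \ge 1} \zeta(\widetilde{\varepsilon}_i)^{2 + 2/\alpha}\, \E{A^2 \ \Big| \ \zeta(\widetilde{\varepsilon}_i)} \le c(\zeta_1)\, \zeta_1^{2/\alpha} \sum_{i \ge 1} \zeta(\widetilde{\varepsilon}_i)^2,
\]
which is finite a.s. since $\zeta(\widetilde{\varepsilon}_i) \le \zeta_1$ and $(\zeta(\widetilde{\varepsilon}_i), i \ge 1) \in \ell^2_{\downarrow}$ by Proposition~\ref{prop:ordered}. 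Hence the left side is a.s. finite, giving $\sum_i a(\widetilde{\varepsilon}_i)^2 < \infty$ a.s. The Brownian case $\alpha = 2$ is handled in exactly the same way, with $1+1/\alpha = 3/2$ and $C_2 = \beta/2$.

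The step I expect to be the main obstacle is obtaining the bound on $\E{A^2 \mid \zeta(\widetilde{\varepsilon}_i)}$ that is uniform over $i$: the excursion shapes carry a size-biased-by-area tilt whose strength $\zeta(\widetilde{\varepsilon}_i)^{1+1/\alpha}/\mu$ is itself random, so one must condition on $\zeta_1$ and exploit both that all lengths are dominated by $\zeta_1$ and that $\int_0^1 \mathbbm{e}$ has all exponential moments under $\N^{(1)}$ (Lemma~\ref{lem:expbound}). Everything else --- the Borel--Cantelli step, the summability of $\sum_i \zeta(\widetilde{\varepsilon}_i)^{2+2/\alpha}$ (which uses $2 + 2/\alpha > 2$), and the passage from $\#\{i : M_i \ge 2\} < \infty$ to $N_{\max} < \infty$ --- is routine.
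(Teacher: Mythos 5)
Your proposal is correct and follows essentially the same route as the paper's proof: both rest on the conditional independence of the excursions given their lengths, the tilted excursion law from Proposition~\ref{prop:condindep}, the scaling relation reducing to $\int_0^1 \mathbbm{e}$ under $\N^{(1)}$, a uniform control of the (random) tilt via $\zeta(\widetilde{\varepsilon}_1)$ together with Lemma~\ref{lem:expbound}, and the $\ell^2$ summability of the lengths from Proposition~\ref{prop:ordered} combined with the exponent $2(1+1/\alpha)>2$. The only differences are cosmetic: you bound $\Prob{M_i \ge 2 \mid R}$ by the squared area and then sum, where the paper bounds $\Prob{M_i \ge 2 \mid \zeta(\widetilde{\varepsilon}_i)}$ directly, and you replace the paper's Cauchy--Schwarz step by the pointwise bound $y^2 e^{\theta y} \le C_{\theta} e^{2\theta y}$.
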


\begin{proof}
We will bound
\[
\E{\#\{i \ge 1: M_i \ge 2\} \Big| (\zeta(\widetilde{\varepsilon}_i))_{i \ge 1}} = \sum_{i \ge 1} \Prob{M_i \ge 2 \Big| \zeta(\widetilde{\varepsilon}_i)},
\]
where we have used the independence of the excursions given their lengths.
We use the crude bound $\Prob{\mathrm{Po}(\lambda) \ge 2} \le \lambda^2$.  For any $i \ge 1$, we have
\begin{align*}
\Prob{M_i \ge 2 \Big| \zeta(\widetilde{\varepsilon}_i) = x} &= \Prob{\mathrm{Po} \left(\frac{1}{\mu} \int_0^x \widetilde{\varepsilon}_i(u) du \right) \ge 2 \ \Bigg| \ \zeta(\widetilde{\varepsilon}_i) = x} \\
&  \le \frac{\mathbb{E}_{\N^{(x)}}  \left[ \left(\frac{1}{\mu} \int_0^x \mathbbm{e}(u) du \right)^2 \exp \left(\frac{1}{\mu} \int_0^x \mathbbm{e} (u) du \right) \right]}{\mathbb{E}_{\N^{(x)}} \left[ \exp \left(\frac{1}{\mu} \int_0^x \mathbbm{e} (u) du \right) \right]} \\
& \le \mathbb{E}_{\N^{(x)}}  \left[ \left(\frac{1}{\mu} \int_0^x \mathbbm{e} (u) du \right)^4 \right]^{1/2} \mathbb{E}_{\N^{(x)}}  \left[\exp \left(\frac{2}{\mu} \int_0^x \mathbbm{e} (u) du \right) \right]^{1/2},
\end{align*}
by the Cauchy--Schwarz inequality and the fact that the denominator is bounded below by 1.

By the scaling property of the excursion $\mathbbm{e}$, 
\[
\mathbb{E}_{\N^{(x)}}  \left[ \left(\frac{1}{\mu} \int_0^x \mathbbm{e} (u) du \right)^4 \right]^{1/2}\le C x^{2(1+1/\alpha)}
\]
for some constant $C > 0$. Define
\[
f(x):=\mathbb{E}_{\N^{(x)}}  \left[\exp \left(\frac{2}{\mu} \int_0^x \mathbbm{e} (u) du \right) \right]^{1/2}= \mathbb{E}_{\N^{(1)}} \left[\exp \left(\frac{2x^{1+1/\alpha}}{\mu} \int_0^1 \mathbbm{e} (u) du \right) \right]^{1/2}.
\] 
This is clearly an increasing function of $x$, so that for $x\le \zeta(\widetilde{\varepsilon}_1)$ we have $f(x)\le f(\zeta(\widetilde{\varepsilon}_1))$, which is almost surely finite by Lemma~\ref{lem:expbound}, since $\zeta(\widetilde{\varepsilon_1}) < \infty$ a.s. By Proposition~\ref{prop:ordered}, we have $\sum_{i \ge 1} \zeta(\widetilde{\varepsilon}_i)^2 < \infty$ a.s.\ and so
\[
\E{\#\{i \ge 1: M_i \ge 2\} \Big| (\zeta(\widetilde{\varepsilon}_i))_{i \ge 1}} \le f(\zeta(\widetilde{\varepsilon}_1)) \sum_{i \ge 1} \zeta(\widetilde{\varepsilon}_i)^{2(1+1/\alpha)} < \infty \quad \text{a.s.}
\]
It follows that
\[
\#\{i \ge 1: M_i \ge 2\} < \infty \quad \text{a.s.}
\]
Since the area of any individual excursion $\widetilde{\varepsilon}_i$ is finite, it contains an almost surely finite number of points and this, together with the fact that only finitely many excursions contain more than 2 points, gives that $N_{\max} < \infty$ a.s.
\end{proof}

\subsection{Height process}\label{subsec:heightp}
In order to deal with the metric structure, we also need to know that the height process associated with $X^n$ converges. 
Let 
\[
H^n(k) = \#\left\{j \in \{0,1,\ldots,k-1\}: X^n(j) = \inf_{j \le \ell \le k} X^n(\ell) \right\}
\]
so that $H^n(k)$ is the distance from the root of the component being explored to the current vertex at step $k$.  See Figure~\ref{fig:components} for an illustration.  (Recall that $\mathbf{M}_n(\nu)$ is obtained from $\mathbf{F}_n(\nu)$ by replacing pairs of marked leaves by back-edges.)

\begin{figure}
\begin{center}
\includegraphics[width=6cm]{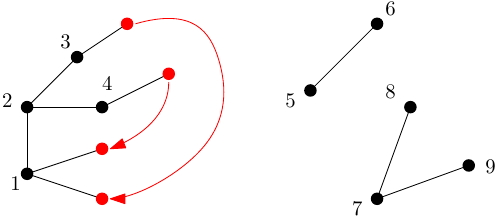} 
\vspace{5mm}

\includegraphics[width=14cm]{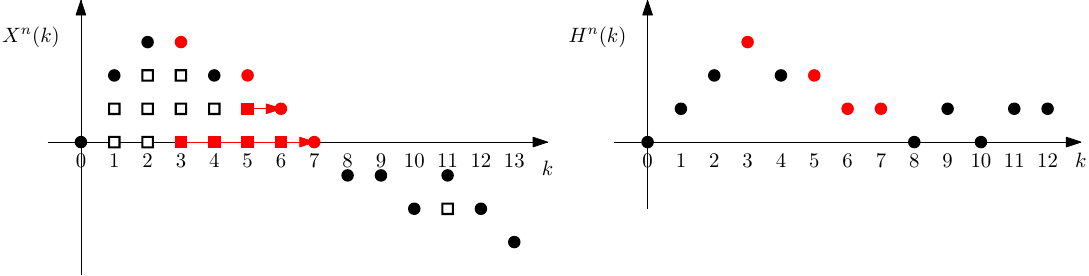}
\end{center}
\caption{Top: three components of $\mathbf{F}_n(\nu)$. Left: $X^n(k)$ drawn with the vertices on the stack indicated. Empty squares represent half-edges which are available to be connected to as back-edges. Filled squares are marked vertices. Right: the corresponding height process (with the extra vertices in red).}
\label{fig:components}
\end{figure}

Our aim in this section is to prove that $H^n$ has $\widetilde{H}$ as its scaling limit, and that we can extract its marked excursions as for the exploration process.  Recall that 
\[
\widetilde{G}^n(k) = \#\left\{j \in \{0,1,\ldots,k-1\}: \widetilde{S}^n(j) = \inf_{j \le \ell \le k} \widetilde{S}^n(\ell) \right\},
\]
which is the height process corresponding to the forest $\widetilde{\mathbf{F}}_n(\nu)$.  We will compare $H^n$ and $\widetilde{G}^n$.  It will be sufficient to do this for pairs of components of $\widetilde{\mathbf{F}}_n(\nu)$.  To this end, suppose that for $a \ge 0$ and $m \ge 1$, vertices $v_{a+1}, v_{a+2}, \ldots, v_{a+m}$ form a pair of components of $\widetilde{\mathbf{F}}_n(\nu)$, and that there are $b$ back-edges on these vertices in $\mathbf{M}_n(\nu)$.  Then the corresponding collection of components in $\mathbf{F}_n(\nu)$ together have $m+2b$ vertices, which are visited at times $c+a, c+a+1, \ldots, c+a+m+2b-1$ in the depth-first exploration, where $c \ge 0$ is such that $a = \tau_n(c+a)$.  We compare $H^n$ with $\widetilde{G}^n$ at the times $\tau_n(c+a), \tau_n(c+a+1),\ldots, \tau_n(c+a+m+2b-1)$.

\begin{lem} \label{lem:heightprocdiff}
We have
\[
\max_{0 \le i \le m+2b-1} |H^n(a+c+i) - \widetilde{G}^n(\tau_n(a+c+i)) | \le 1 + b + 2b \max_{1 \le i \le m} |\widetilde{G}^n(a+i) - \widetilde{G}^n(a+i-1)|
\]
\end{lem}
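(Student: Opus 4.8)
\textbf{Proof plan for Lemma~\ref{lem:heightprocdiff}.}

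The plan is to trace through the subtree surgery described in Section~\ref{subsec:explo} and control how much it can distort generations. First I would set up the correspondence between the $m$ vertices $v_{a+1},\dots,v_{a+m}$ (a pair of components of $\widetilde{\mathbf{F}}_n(\nu)$, explored at steps $a+1,\dots,a+m$ of $\widetilde{S}^n$) and the $m+2b$ vertices of the corresponding components of $\mathbf{F}_n(\nu)$ (explored at steps $c+a,\dots,c+a+m+2b-1$ of $X^n$), using the identity $a=\tau_n(c+a)$, so that the clock $i\mapsto \tau_n(a+c+i)$ counts how many ``genuine'' $\widetilde{\mathbf{F}}_n$-vertices have been seen. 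The key structural point to extract from the surgery picture (Figure~\ref{fig:unplugging}) is that $\mathbf{F}_n(\nu)$ differs from $\widetilde{\mathbf{F}}_n(\nu)$ on these vertices only by: (a) adding one extra edge joining the roots of the two original components (accounting for the ``$1+$''); (b) for each of the $b$ back-edges, detaching two subtrees and re-rooting them, which shifts the ancestry of every vertex in those subtrees. I would argue that each back-edge operation changes the $\mathbf{F}_n$-height of any vertex, relative to its $\widetilde{\mathbf{F}}_n$-height, by an amount bounded by $1$ plus the height (within the ambient pair of $\widetilde{\mathbf{F}}_n$-components) of the re-attachment point, and then bound that height by the maximal one-step increment of $\widetilde{G}^n$ across the relevant range of steps, which is where the term $2b\max_{1\le i\le m}|\widetilde G^n(a+i)-\widetilde G^n(a+i-1)|$ comes from (the factor $2$ because each back-edge involves two re-rooted subtrees, hence two re-attachment points, and the factor $b$ because the $b$ operations are performed successively, each on top of the previous configuration).

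More concretely, I would set $\Delta(i) := H^n(a+c+i) - \widetilde{G}^n(\tau_n(a+c+i))$ and induct on the number of back-edges already ``processed'' along the depth-first exploration up to step $a+c+i$. The base case ($0$ back-edges so far) is the comparison between $\widetilde{\mathbf{F}}_n(\nu)$ on $v_{a+1},\dots,v_{a+m}$ and the tree obtained by adjoining a single edge between the two component-roots; this changes the depth of every vertex in the second component by exactly $1$ (it gains one ancestor, namely the root of the first component and nothing else, since the join is root-to-root), giving $|\Delta|\le 1$. For the inductive step, when the $r$th back-edge is opened at some vertex $v$ and closed at the marked leaf of a subtree rooted at some $w$, the surgery re-roots $w$'s subtree (and one further subtree, the one occupying the ``pushed-out'' slot) at $v$; the new depth of any vertex $x$ in such a re-rooted subtree equals (its old depth within that subtree) plus (depth of the new attachment point $v$), whereas before it was (old depth within the subtree) plus (depth of the old attachment point). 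Hence $\Delta$ changes by at most the difference of these two attachment depths, each of which is a value of the height function at a step in the range $[a+1,a+m]$; telescoping such a height along the exploration bounds it by (number of steps) $\times$ (max one-step increment), but more cheaply — since both attachment points lie on the ancestral line being modified and the relevant heights differ by a bounded number of steps — by a constant multiple of $\max_{1\le i\le m}|\widetilde G^n(a+i)-\widetilde G^n(a+i-1)|$. Summing the contributions of the $b$ back-edges, each contributing at most $1 + 2\max_{1\le i\le m}|\widetilde G^n(a+i)-\widetilde G^n(a+i-1)|$ and using that the ``$+1$''s collapse appropriately, yields the claimed bound $1+b+2b\max_{1\le i\le m}|\widetilde G^n(a+i)-\widetilde G^n(a+i-1)|$.

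The main obstacle, I expect, is pinning down precisely the bookkeeping in the inductive step: the surgery in Section~\ref{subsec:explo} re-roots subtrees in a way that shuffles the depth-first order, and one must be careful that (i) the height changes are indeed expressible in terms of attachment-point depths within the \emph{original} forest $\widetilde{\mathbf{F}}_n(\nu)$ (rather than accumulating further through the successive surgeries), and (ii) the attachment-point depths are genuinely captured by one-step increments of $\widetilde{G}^n$ over the window $[a+1, a+m]$ rather than over a larger range. The cleanest route is probably to fix the \emph{final} configuration $\mathbf{F}_n(\nu)$ and compare each vertex's depth directly to its $\widetilde{\mathbf{F}}_n$-depth by counting, for a fixed vertex $x$, how many of its $\mathbf{F}_n$-ancestors fail to be $\widetilde{\mathbf{F}}_n$-ancestors and vice versa: the surgery only ever splices in, at each of the $b$ back-edge sites, an ``ancestral detour'' whose length is controlled by a single value of the $\widetilde{G}^n$-height, and at most two of $x$'s ancestral detours can be affected per back-edge, giving the factor $2b$ directly without an explicit induction. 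I would present whichever of these two formulations makes the constants transparent; either way the estimate is not delicate once the combinatorics of the re-rooting is correctly described.
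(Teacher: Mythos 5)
Your outline follows the same surgery-tracing strategy as the paper (the $b=0$ base case giving the ``$1$'', and a per-back-edge accounting giving $b+2b\Delta$ with $\Delta=\max_{1\le i\le m}|\widetilde G^n(a+i)-\widetilde G^n(a+i-1)|$), but the step that carries all the weight is not justified, and the justification you offer for it is wrong. You reduce the per-back-edge change of the discrepancy to ``the difference of the two attachment depths'' and then assert this is $O(\Delta)$ ``since both attachment points lie on the ancestral line being modified and the relevant heights differ by a bounded number of steps''. That is false in general: the tail of a back-edge is a uniformly chosen half-edge on the stack, so the old attachment point can be arbitrarily far up the ancestral line from the head, and the displaced subtree's root is re-attached at depth $0$ or $1$ in a \emph{new} component of $\mathbf{F}_n(\nu)$ while its depth in $\widetilde{\mathbf{F}}_n(\nu)$ can be comparable to the component size. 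Concretely, take the pair of $\widetilde{\mathbf{F}}_n$-components to be a long path $u_1-\cdots-u_K$ together with a single vertex $z$, and one back-edge from $u_j$ to $u_1$: the second component of $\mathbf{M}_n(\nu)$ is rooted at $u_{j+1}$, whose $\widetilde G^n$-height is $j$ while its $H^n$-height is $0$; and $z$, which has $\widetilde G^n$-height $0$, ends up at $H^n$-height $K-j$. Neither discrepancy is controlled by any \emph{local} comparison of attachment points; the bound only survives because the single-step drop of $\widetilde G^n$ from $u_K$ down to $z$, which occurs later but still inside the window $[a+1,a+m]$, is itself of size $K-1$ and hence feeds into $\Delta$. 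So the inequality you need is genuinely non-local: one must argue that at the moment a cut occurs the stack contains at most $O(b)$ pending half-edges, so all but $O(b)$ of the displaced root's ancestors have no unexplored children, and the $\widetilde{\mathbf{F}}_n$-exploration later backtracks past each maximal such stretch in a \emph{single} step within the window, producing one-step drops of $\widetilde G^n$ that dominate the displaced root's original depth. Your fallback formulation (counting ``ancestral detours'' whose length is ``controlled by a single value of the $\widetilde G^n$-height'') has the same defect, since a single value of the height is not a single increment of it.

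By contrast, the paper's accounting avoids attachment-depth differences altogether: it observes that within subtrees containing no back-edges the \emph{increments} of the height process are unchanged, that each of the at most $2b$ re-plugged subtrees (plus the initial root-to-root joining, which gives the $1$) starts with an extra additive error of $1$, and that the shift incurred when a subtree is unplugged and re-plugged further along the depth-first order is absorbed by one increment of $\widetilde G^n$ per move, yielding $1+b+2b\Delta$. Until you can prove the displacement-versus-$\Delta$ estimate in the non-local form above (or reproduce the paper's increment-preservation bookkeeping), the inductive step of your argument does not go through.
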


\begin{proof}
Suppose first that $b=0$.  Until we come to the end of the first component of $\widetilde{\mathbf{F}}_n(\nu)$, we have
\[
H^n(a+i) = \widetilde{G}^n(\tau_n(a+i)).
\]
Thereafter, we have $H^n(a+i) = 1 + \widetilde{G}^n(\tau_n(a+i))$, since the second component of $\widetilde{\mathbf{F}}_n(\nu)$ becomes a subtree attached to the root of the first component of $\mathbf{F}_n(\nu)$.

For $b > 0$, we encourage the reader to refer back to Figure~\ref{fig:unplugging}.
 
Write $\Delta = \max_{1 \le i \le m} |\widetilde{G}^n(a+i) - \widetilde{G}^n(a+i-1)|$. The occurrence of a head or tail of a back-edge at time $k$ implies $\tau_n(k+1) = \tau_n(k)$.  To get from $\widetilde{\mathbf{F}}_n(\nu)$ to $\mathbf{F}_n(\nu)$, we unplug a sequence of subtrees and plug them back in further along in the depth-first order. Within subtrees containing no back-edges, the \emph{increments} of the height process remain the same as they are in the corresponding subtrees of $\widetilde{\mathbf{F}}_n(\nu)$. Finally, every time we start a new subtree there is an extra difference of 1. It follows that the most by which the height process can be altered in going from $\widetilde{\mathbf{F}}_n(\nu)$ to $\mathbf{F}_n(\nu)$ is an additive factor of $1+b + 2b\Delta$.
\end{proof}

We are now ready to state and prove the main result of this section.  

\begin{prop} \label{prop:heightprocconv}
Jointly with the convergence in Theorem~\ref{thm:Coxprocess}, we have that as $n \to \infty$,
\[
\left( n^{-\frac{\alpha-1}{\alpha+1}}H^n(\fl{tn^{\alpha/(\alpha+1)}}), t \ge 0 \right) \convdist \left( \widetilde{H}_t, t \ge 0 \right)
\]
in $\mathbb{D}(\R_+,\R_+)$.
\end{prop}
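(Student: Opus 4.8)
The plan is to transfer the joint convergence of $(\widetilde{S}^n,\widetilde{G}^n)$ from Theorem~\ref{thm:jointdfwheight} to $(X^n,H^n)$ by showing that $H^n$ is uniformly close, on the timescale of interest, to a time-change of $\widetilde{G}^n$. First I would fix $t>0$ and reduce everything to the interval $[0,\fl{tn^{\alpha/(\alpha+1)}}]$. By Proposition~\ref{prop:Xconv} and Lemma~\ref{lem:back}, on this interval only finitely many back-edges occur (with high probability), the map $k\mapsto\tau_n(k)$ differs from the identity by an $O_{\mathbb{P}}(1)$ amount, and $X^n(\fl{\cdot\,n^{\alpha/(\alpha+1)}})$ rescaled converges to $\widetilde{L}$. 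The key comparison is Lemma~\ref{lem:heightprocdiff}, applied component-by-component (more precisely, to each pair of components of $\widetilde{\mathbf{F}}_n(\nu)$ as in the statement of that lemma): summing the bounds over the at most $O_{\mathbb{P}}(1)$ pairs of components whose union meets $[0,\fl{tn^{\alpha/(\alpha+1)}}]$, one gets
\[
\max_{0\le k\le \fl{tn^{\alpha/(\alpha+1)}}}\bigl|H^n(k)-\widetilde{G}^n(\tau_n(k))\bigr|\le C\Bigl(1+N^n(\fl{tn^{\alpha/(\alpha+1)}})\Bigr)\Bigl(1+N^n(\fl{tn^{\alpha/(\alpha+1)}})\max_{1\le i\le \fl{tn^{\alpha/(\alpha+1)}}}\bigl|\widetilde{G}^n(i)-\widetilde{G}^n(i-1)\bigr|\Bigr)
\]
for some universal constant $C$ (the per-pair error $1+b+2b\Delta$ accumulates, and $b$ summed over pairs is at most $N^n(\fl{tn^{\alpha/(\alpha+1)}})$, which is $O_{\mathbb{P}}(1)$).

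The right-hand side is $O_{\mathbb{P}}(1)$ times the maximal increment of $\widetilde{G}^n$ over the interval. After rescaling by $n^{-(\alpha-1)/(\alpha+1)}$, the prefactor $1+N^n$ stays $O_{\mathbb{P}}(1)$ and is killed by the rescaling constant, so the whole thing is $o_{\mathbb{P}}(1)$ \emph{provided} we know that $n^{-(\alpha-1)/(\alpha+1)}\max_{1\le i\le \fl{tn^{\alpha/(\alpha+1)}}}|\widetilde{G}^n(i)-\widetilde{G}^n(i-1)|\convprob 0$, i.e.\ the discrete height process has no macroscopic single-step jumps on this scale. This follows from Theorem~\ref{thm:jointdfwheight}: the rescaled $\widetilde{G}^n$ converges in $\mathbb{D}(\R_+,\R)$ to the \emph{continuous} process $\widetilde{H}$, and convergence in the Skorokhod topology to a continuous limit forces the maximal jump size to vanish (see e.g.\ Jacod--Shiryaev; this is exactly the statement that the modulus of continuity $w'$ controls the jumps). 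Combined with the fact that $n^{-\alpha/(\alpha+1)}\sup_{0\le k\le\fl{tn^{\alpha/(\alpha+1)}}}|\tau_n(k)-k|\to 0$ (Lemma~\ref{lem:back}) and the uniform continuity of the limit $\widetilde{H}$ on compacts, we also get $n^{-(\alpha-1)/(\alpha+1)}|\widetilde{G}^n(\tau_n(\fl{\cdot\,n^{\alpha/(\alpha+1)}}))-\widetilde{G}^n(\fl{\cdot\,n^{\alpha/(\alpha+1)}})|\convprob 0$ uniformly on $[0,t]$. Putting these together yields
\[
\sup_{0\le s\le t}\Bigl|n^{-(\alpha-1)/(\alpha+1)}H^n(\fl{sn^{\alpha/(\alpha+1)}})-n^{-(\alpha-1)/(\alpha+1)}\widetilde{G}^n(\fl{sn^{\alpha/(\alpha+1)}})\Bigr|\convprob 0.
\]
Since $n^{-(\alpha-1)/(\alpha+1)}\widetilde{G}^n(\fl{\cdot\,n^{\alpha/(\alpha+1)}})\convdist\widetilde{H}$ by Theorem~\ref{thm:jointdfwheight}, Slutsky's lemma gives $n^{-(\alpha-1)/(\alpha+1)}H^n(\fl{\cdot\,n^{\alpha/(\alpha+1)}})\convdist\widetilde{H}$ in $\mathbb{D}([0,t],\R_+)$, and since $t$ was arbitrary, in $\mathbb{D}(\R_+,\R_+)$.

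For the joint statement with Theorem~\ref{thm:Coxprocess}, I would run the whole argument on a single probability space: by Skorokhod representation, realise the convergence of Theorem~\ref{thm:Coxprocess} (which already contains the joint convergence of $n^{-1/(\alpha+1)}R^n$, $n^{-1/(\alpha+1)}C^n$, $N^n$ and $U^n$) together with the convergence of $(n^{-1/(\alpha+1)}\widetilde{S}^n, n^{-(\alpha-1)/(\alpha+1)}\widetilde{G}^n)$ from Theorem~\ref{thm:jointdfwheight} almost surely, and note that the $o_{\mathbb{P}}(1)$ bound above then holds almost surely along this coupling; adding $H^n$ to the vector therefore does not disturb the joint law in the limit. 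The main obstacle is the verification that the maximal rescaled increment of $\widetilde{G}^n$ vanishes: this is genuinely using that the \emph{limiting} height process $\widetilde{H}$ is continuous (unlike $\widetilde{L}$, which jumps), so one must be careful to invoke the Skorokhod-to-continuous-limit principle rather than trying to bound discrete height increments by hand --- a direct combinatorial bound on $|\widetilde{G}^n(i)-\widetilde{G}^n(i-1)|$ would be awkward because a single large-degree vertex can change the running-minimum structure substantially. Everything else (the accumulation of the per-pair errors, the $\tau_n\approx\mathrm{id}$ reduction) is routine given Lemma~\ref{lem:heightprocdiff}, Lemma~\ref{lem:back} and Proposition~\ref{prop:Xconv}.
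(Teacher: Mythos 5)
Your overall strategy is the same as the paper's: compare $H^n$ with $\widetilde{G}^n\circ\tau_n$ via Lemma~\ref{lem:heightprocdiff}, kill the error using tightness of the back-edge counts and the vanishing of the maximal rescaled increment of $\widetilde{G}^n$ (which, exactly as you say, comes from Skorokhod convergence to the continuous limit $\widetilde{H}$), then use Lemma~\ref{lem:back} to replace $\tau_n$ by the identity and Skorokhod representation for the joint statement. However, there is a genuine flaw in the step where you assemble the per-pair bounds. You claim that only $O_{\mathbb{P}}(1)$ pairs of components of $\widetilde{\mathbf{F}}_n(\nu)$ meet $[0,\fl{tn^{\alpha/(\alpha+1)}}]$ and that one should \emph{sum} the bounds of Lemma~\ref{lem:heightprocdiff} over them. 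The number of components explored by time $\fl{tn^{\alpha/(\alpha+1)}}$ is $C^n(\fl{tn^{\alpha/(\alpha+1)}})=\Theta_{\mathbb{P}}(n^{1/(\alpha+1)})$ (Proposition~\ref{prop:Xconv}), so the number of pairs diverges; and if one really had to sum, the ``$+1$'' in each per-pair bound alone would contribute $\Theta_{\mathbb{P}}(n^{1/(\alpha+1)})$, which after rescaling by $n^{-(\alpha-1)/(\alpha+1)}$ is of order $n^{(2-\alpha)/(\alpha+1)}\not\to 0$ for $\alpha\in(1,2)$. So the summation argument, taken literally, fails. The point you need (and which the paper uses) is that no summation is required: both $H^n$ and $\widetilde{G}^n$ restart from $0$ at the beginning of each component/pair, so at any step $k$ the discrepancy $|H^n(k)-\widetilde{G}^n(\tau_n(k))|$ is controlled by the bound of Lemma~\ref{lem:heightprocdiff} for the \emph{single} pair containing $k$; what is then needed is a bound \emph{uniform over pairs}, i.e.\ tightness of the maximal per-pair back-edge count $\widetilde{N}^n_{\max}$ (Lemma~\ref{lem:nmax} together with the marked-excursion convergence), rather than a count of how many pairs there are. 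Your displayed inequality with the total $N^n(\fl{tn^{\alpha/(\alpha+1)}})$ in place of the per-pair $b$ happens to be of the right form (the total dominates each per-pair count), but it is not obtained by the summation you describe.

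A second, smaller gap: the pair of components straddling time $\fl{tn^{\alpha/(\alpha+1)}}$ generally extends beyond it, and Lemma~\ref{lem:heightprocdiff} involves the back-edge count and the increments of $\widetilde{G}^n$ over the \emph{whole} pair. Your bound only uses $N^n$ and the maximal increment up to time $\fl{tn^{\alpha/(\alpha+1)}}$, so it misses this tail. The paper patches this by noting that, with probability at least $1-\delta+o(1)$, the current pair ends within a further $2K_\delta n^{\alpha/(\alpha+1)}$ steps (tightness of the largest component size, again from Lemma~\ref{lem:nmax}/Proposition~\ref{prop:ordered}), and then takes the maximal increment over $[0,\fl{(t+2K_\delta)n^{\alpha/(\alpha+1)}}]$. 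With these two corrections -- uniform per-pair control instead of summation, and the extended time horizon -- your argument goes through and coincides with the paper's proof.
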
 

\begin{proof}
By Theorem~\ref{thm:jointdfwheight}, we have
\[
\left(n^{-\frac{\alpha-1}{\alpha+1}} \widetilde{G}^n( \fl{n^{\frac{\alpha}{\alpha+1}} u}), u \ge 0 \right) \convdist (\widetilde{H}_u, u \ge 0).
\]
By Theorem 1.4.3 of Duquesne and Le Gall~\cite{DuquesneLeGall}, $(H_u, u \ge 0)$ is almost surely continuous.  By the absolute continuity in Proposition~\ref{prop:stablemeasch}, the same is true of $(\widetilde{H}_u, 0 \le u \le t)$.  It follows that for any $t > 0$, 
\begin{equation} \label{eqn:heightcont}
n^{-\frac{\alpha-1}{\alpha+1}} \sup_{1 \le j \le \fl{t n^{\alpha/(\alpha+1)}}} |\widetilde{G}^n(j) - \widetilde{G}^n(j-1)| \convdist 0
\end{equation}
as $n \to \infty$.  

By Lemma~\ref{lem:nmax}, if $\delta > 0$, there exists $K_{\delta}  < \infty$ such that
\[
\Prob{\zeta(\widetilde{\varepsilon}_1) > K_{\delta}} < \delta \qquad \text{and} \qquad \Prob{N_{\max} > K_{\delta}} < \delta.
\]
In particular, starting from any time $k$, the number of steps until we next reach the beginning of an odd-numbered component is bounded above by $2K_{\delta} n^{\alpha/(\alpha+1)}$ with probability at least $1 - \delta+o(1)$.

Now fix $t  > 0$ and $\epsilon > 0$.  Then by Lemma~\ref{lem:heightprocdiff}, we have
\begin{align*}
& \Prob{n^{-\frac{\alpha-1}{\alpha+1}} \sup_{0 \le k \le \fl{t n^{\alpha/(\alpha+1)}}} |H^n(k) - \widetilde{G}^n(\tau_n(k))| > \epsilon}\\
& \qquad \le \Prob{n^{-\alpha/(\alpha+1)} \max_{i \ge 1} \widetilde{\zeta}^n(i) > K_{\delta}} 
+ \Prob{N^n_{\max} > K_{\delta}} \\
& \qquad + \Prob{1 + K_{\delta}+2K_{\delta} \sup_{1 \le j \le \fl{(t + 2K_{\delta}) n^{\alpha/(\alpha+1)}}} |\widetilde{G}^n(j) - \widetilde{G}^n(j-1)| > \epsilon n^{\frac{\alpha-1}{\alpha+1}}}.
\end{align*}
Using (\ref{eqn:heightcont}), we obtain that
\[
\Prob{1 + K_{\delta} + 2K_{\delta} \sup_{1 \le j \le \fl{(t + 2K_{\delta}) n^{\alpha/(\alpha+1)}}} |\widetilde{G}^n(j) - \widetilde{G}^n(j-1)| > \epsilon n^{\frac{\alpha-1}{\alpha+1}}} \to 0
\]
as $n \to \infty$.  It follows that
\[
\limsup_{n \to \infty} \Prob{n^{-\frac{\alpha-1}{\alpha+1}} \sup_{0 \le k \le \fl{t n^{\alpha/(\alpha+1)}}} |H^n(k) - \widetilde{G}^n(\tau_n(k))| > \epsilon}
 < 2 \delta.
\]
But $\delta > 0$ was arbitrary and so by Lemma~\ref{lem:back} we may deduce that
\[
\left( n^{-\frac{\alpha-1}{\alpha+1}}H^n(\fl{un^{\alpha/(\alpha+1)}}), 0 \le u \le t \right) \convdist \left( \widetilde{H}_u,  0 
\le u \le t \right). \qedhere
\]
\end{proof}

Now let
\[
h^n_{k}(t) =  n^{-(\alpha-1)/(\alpha+1)} H^n(\sigma^n(k-1) + \fl{t n^{\alpha/(\alpha+1)}}), \quad 0 \le t \le n^{-\alpha/(\alpha+1)} \zeta^n(k).
\]

\begin{prop} \label{prop:joint}
We have
\[
\mathrm{ord} \left(\varepsilon^n_{k}, h^n_{k},  \mathcal{P}^n_{k}, k \ge 1 \right) \convdist \left(\widetilde{\varepsilon}_i, \widetilde{h}_i, \mathcal{P}_i, i \ge 1\right)
\]
as $n \to \infty$. Here, for each $k \ge 1$, the convergence in the first co-ordinate is for the Skorokhod topology and in the second for the topology of vague convergence of counting measures on $[0,\infty)^2$; then we take the product topology over different $k$.
\end{prop}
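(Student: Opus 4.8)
The plan is to combine Proposition~\ref{prop:orderedexmarks}, which already gives the convergence of the excursions $\varepsilon^n_{k}$ and the marked point measures $\mathcal{P}^n_{k}$, with Proposition~\ref{prop:heightprocconv}, which gives the convergence of the height process $H^n$ jointly with the convergence in Theorem~\ref{thm:Coxprocess}. Since $\mathcal{Q}^n$ is built from $(R^n, N^n, U^n)$ by an almost surely continuous operation (via Lemma~\ref{lem:beposition}, exactly as in the proof of Proposition~\ref{prop:orderedexmarks}), these two statements together yield the joint convergence
\[
\left( n^{-\frac{1}{\alpha+1}} R^n(\fl{\cdot\, n^{\frac{\alpha}{\alpha+1}}}),\ n^{-\frac{\alpha-1}{\alpha+1}} H^n(\fl{\cdot\, n^{\frac{\alpha}{\alpha+1}}}),\ \mathcal{Q}^n \right) \convdist \left( R,\ \widetilde{H},\ \mathcal{Q} \right),
\]
in $\mathbb{D}(\R_+,\R)\times\mathbb{D}(\R_+,\R_+)$ and in the vague topology for the last coordinate. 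By Skorokhod's representation theorem we may assume this holds almost surely, and, enlarging the space if necessary, that the convergence of ordered excursion lengths and left endpoints from Proposition~\ref{prop:ordered} holds almost surely as well.

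Next, fix $K \in \N$ and $\delta > 0$. As in the proof of Proposition~\ref{prop:orderedexmarks}, using Proposition~\ref{prop:transient}, Lemma~\ref{lem:Josephprops} and Proposition~\ref{prop:ordered}, there exist $M < \infty$ and $n_0$ such that on an event of probability at least $1 - \delta$, for all $n \ge n_0$ the $K$ longest excursions of both $n^{-1/(\alpha+1)} R^n(\fl{\cdot\, n^{\alpha/(\alpha+1)}})$ and $R$ occur within $[0,M]$, their endpoints converge, and the lengths $n^{-\alpha/(\alpha+1)}\zeta^n(k)$ of the corresponding excursions of $X^n$ converge to the lengths $\zeta(\widetilde{\varepsilon}_i)$ (here we use that distinct excursions of $R$ have almost surely distinct lengths, so that the ordering is eventually consistent). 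Restrict attention to this event and to the window $[0,M]$.

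On $[0,M]$ the limiting height process $\widetilde{H}$ is continuous (by Theorem 1.4.3 of~\cite{DuquesneLeGall} and the absolute continuity of Proposition~\ref{prop:stablemeasch}), so the Skorokhod convergence of $n^{-(\alpha-1)/(\alpha+1)} H^n(\fl{\cdot\, n^{\alpha/(\alpha+1)}})$ to $\widetilde{H}$ is in fact uniform on $[0,M]$. Moreover, as recalled in Section~\ref{sec:stable}, the excursion intervals of $R$ above $0$ coincide with the intervals on which $\widetilde{H}$ is strictly positive, and the shifted restriction of $\widetilde{H}$ to the $i$th such interval $(g_i, g_i + \zeta(\widetilde{\varepsilon}_i))$ is precisely $\widetilde{h}_i$; in particular $\widetilde{H}(g_i) = \widetilde{H}(g_i + \zeta(\widetilde{\varepsilon}_i)) = 0$. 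On the discrete side, $H^n$ restricted to the component-interval $[\sigma^n(k-1), \sigma^n(k)]$ is exactly the height function of the $k$th tree of $\mathbf{F}_n(\nu)$, vanishing at both endpoints, so that $h^n_k$ is its rescaled, re-rooted version. Since the left endpoints and lengths of the $K$ longest excursions converge and $H^n$ converges uniformly on $[0,M]$, it follows that $h^n_k \to \widetilde{h}_i$ uniformly for each of the $K$ largest excursions. Together with the convergence $\varepsilon^n_k \to \widetilde{\varepsilon}_i$ in the Skorokhod topology and $\mathcal{P}^n_k \to \mathcal{P}_i$ vaguely, already established in Proposition~\ref{prop:orderedexmarks} (the latter because $\mathcal{Q}$ has only finitely many points in $[0,M]^2$ while $\mathcal{Q}^n \to \mathcal{Q}$), this proves the convergence of the first $K$ ordered coordinates. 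Letting $K \to \infty$ and then $\delta \to 0$ gives the convergence in the product topology.

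I expect the main difficulty not to be any single one of these convergences, but keeping them \emph{joint}: we need the area-excursions, the height-excursions and the back-edge locations to converge simultaneously on one probability space, and in particular we need $\varepsilon^n_k$ and $h^n_k$ to converge to the \emph{matched} pair $(\widetilde{\varepsilon}_i, \widetilde{h}_i)$ rather than to two independent copies. This is exactly what the joint statement of Proposition~\ref{prop:heightprocconv} with Theorem~\ref{thm:Coxprocess} (itself obtained via the comparison of $H^n$ with the height process of the i.i.d.\ forest in Lemma~\ref{lem:heightprocdiff}, whose continuum analogue of this joint convergence is Theorem~\ref{thm:jointdfwheight}) is designed to supply; once it is in hand, together with the identification $\widetilde{h}_i = \widetilde{H}\vert_{(g_i,\, g_i + \zeta(\widetilde{\varepsilon}_i))}$ coming from the excursion correspondence for $\widetilde{H}$, the extraction of the individual marked excursions is the same localization argument as for Proposition~\ref{prop:orderedexmarks}.
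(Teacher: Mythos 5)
Your proposal is correct and follows essentially the same route as the paper: the paper's proof simply invokes Proposition~\ref{prop:heightprocconv} (jointly with Theorem~\ref{thm:Coxprocess}) and repeats the localization argument from the proof of Proposition~\ref{prop:orderedexmarks}, using the fact that $R^n$ and $H^n$ share the same excursion-intervals, which is exactly the structure of your argument, just spelled out in more detail.
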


\begin{proof}
We derive this from Proposition~\ref{prop:heightprocconv} by applying the same reasoning as in the proof of Proposition~\ref{prop:orderedexmarks}, using the fact that $R^n$ and $H^n$ have the same excursion-intervals.
\end{proof}

\subsection{The convergence of the metric structure}\label{subsec:metricstructure}

We have now assembled all of the ingredients needed in order to prove Theorem~\ref{thm:main}. Recall that $M_1^n, M_2^n, \ldots$ are the components of the random multigraph $\mathbf{M}_n(\nu)$, listed in decreasing order of size.  We will make the distance and measure explicit in each by writing $(M_i^n, d_i^n, \mu_i^n)_{i \ge 1}$.  Recall also that $\mathbf{F}_n(\nu)$ is the forest encoded by $H^n$.  In order to recover $\mathbf{M}_n(\nu)$ from $\mathbf{F}_n(\nu)$ we remove pairs of marked leaves and replace them by back-edges as described above.  

\begin{proof}[Proof of Theorem~\ref{thm:main}]
Write $(h^n_{(i)}, \mathcal{P}^n_{(i)})$ for the $i$th element of the sequence \linebreak $\mathrm{ord} \left( h^n_{k},  \mathcal{P}^n_{k}, k \ge 1 \right)$.  Write $(T^n_{i}, d^n_{i}, \mu^n_{i})$ for the tree with rescaled height process $h^n_{(i)}$ and with mass $n^{-\alpha/(\alpha+1)}$ on each vertex.  (For $i \ge 1$, these are the trees of the forest $\mathbf{F}_n(\nu)$ listed in decreasing order of mass.)  By Skorokhod's representation theorem, we may work on a probability space where the convergence
\[
\left(h^n_{(i)}, \mathcal{P}^n_{(i)}, i \ge 1 \right) \to (\widetilde{h}_i, \mathcal{P}_i, i \ge 1)
\]
occurs almost surely. By (\ref{eqn:GHbound}), this entails that
\[
\mathrm{d_{GHP}}((T^n_{i}, d^n_{i}, \mu^n_{i}), (\widetilde{\mathcal{T}}_i, \widetilde{d}_i, \widetilde{\mu}_i)) \to 0\quad \text{a.s.}
\]
as $n \to \infty$.  In order to obtain $(M^n_i, d_i^n, \mu_i^n)$ from $(T^n_{i}, d^n_{i}, \mu^n_{i})$ if $\mathcal{P}_{(i)}^n = \sum_{r=1}^{m^n_{(i)}} \delta_{s^n_{(i),r}, t^n_{(i),r}}$ we must remove the vertices encoded by $s^{n}_{(i),r}$ and $t^n_{(i),r}$ and replace them by a single edge between their parents, for each $1 \le r \le m_{(i)}^n$.  Since edges have rescaled length $n^{-(\alpha-1)/(\alpha+1)} \to 0$ it is straightforward to see (in the same manner as in the Proof of Theorem 22 in \cite{ABBrGo1}) that we get
\begin{equation} \label{eqn:GHPconv}
\mathrm{d_{GHP}}((M_i^n,d_i^n, \mu_i^n), (\mathcal{G}_i, d_i, \mu_i)) \to 0 \quad \text{a.s.}
\end{equation}
as $n \to \infty$. The conclusion follows.
\end{proof}

\begin{appendix}
\section*{} \label{appn}
\subsection{A change of measure for spectrally positive L\'evy processes} \label{sec:measurechangeLevy}

Let $L$ be a spectrally positive L\'evy process with L\'evy measure $\pi$ satisfying
\begin{equation} \label{eqn:integrability}
\int_0^{\infty} (x \wedge x^2) \pi(dx) < \infty.
\end{equation}
Then we may write the Laplace transform of $L_t$ as
\[
\E{\exp(-\lambda L_t)} = \exp(t \Psi(\lambda)),
\]
where
\[
\Psi(\lambda) = \gamma \lambda + \frac{\delta^2 \lambda^2}{2}  + \int_0^{\infty} \pi(dx) (e^{-\lambda x} - 1 + \lambda x).
\]
We impose also that
\begin{equation} \label{eqn:params}
\gamma \ge 0, \quad \delta \ge 0
\end{equation}
and that at least one of the two following conditions holds:
\begin{equation} \label{eqn:infinitevar}
\delta > 0 \quad \text{or} \quad \int_0^{\infty} x \pi(dx) = \infty.
\end{equation}
As observed by Duquesne \& Le Gall \cite{DuquesneLeGall}, assumptions (\ref{eqn:integrability}), (\ref{eqn:params}) and (\ref{eqn:infinitevar}) together ensure that $L$ does not drift to $+\infty$ and has paths of infinite variation.

We note that $\int_0^t \Psi(\theta s) ds < \infty$ for all $\theta >0$ and all $t > 0$.

\begin{lem} \label{lem:laplint}
For any $\theta > 0$, we have
\[
\E{\exp \left(- \theta \int_0^t s dL_s \right) } = \exp\left(\int_0^t \Psi(\theta s) ds \right)= \E{\exp\left(\theta \int_0^t (L_s - L_t) ds\right)}.
\]
In consequence, the process
\[
\left(\exp \left(- \theta \int_0^t s dL_s  - \int_0^t \Psi(\theta s) ds \right), t \ge 0 \right)
\]
is a martingale.
\end{lem}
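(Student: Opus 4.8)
The plan is to reduce everything to the single identity
\[
\E{\exp\left(-\theta \int_r^t u\, dL_u\right)} = \exp\left(\int_r^t \Psi(\theta u)\, du\right), \qquad 0 \le r \le t, \ \theta > 0,
\]
from which the two displayed equalities follow by taking $r = 0$ and the martingale claim follows by a conditioning argument. First I would observe that the second stated equality needs no work: by integration by parts, pathwise $\int_0^t u\, dL_u = tL_t - \int_0^t L_u\, du$ (the latter an ordinary integral of the càdlàg path $u \mapsto L_u$), so $-\theta\int_0^t u\, dL_u = \theta\int_0^t (L_u - L_t)\, du$.

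To prove the central identity I would approximate by Riemann sums. Fix a partition $r = u_0 < u_1 < \cdots < u_N = t$ and set $\Delta_j = L_{u_j} - L_{u_{j-1}}$. A short rearrangement shows that the left Riemann sum $\sum_{j=1}^N (u_j - u_{j-1})(L_{u_{j-1}} - L_t)$ differs from $-\sum_{j=1}^N u_j \Delta_j$ by a term that vanishes when $r = 0$ and is in any case controlled, and as the mesh tends to $0$ it converges pathwise to $\int_r^t (L_u - L_t)\, du$, since a càdlàg path is Riemann integrable on a compact interval. By the independence and stationarity of the increments of $L$,
\[
\E{\exp\left(-\theta \sum_{j=1}^N u_j \Delta_j\right)} = \prod_{j=1}^N \E{\exp(-\theta u_j \Delta_j)} = \exp\left( \sum_{j=1}^N (u_j - u_{j-1}) \Psi(\theta u_j) \right),
\]
and the exponent converges to $\int_r^t \Psi(\theta u)\, du$ because $u \mapsto \Psi(\theta u)$ is continuous. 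To pass the limit through the expectation on the left I would invoke uniform integrability: for any $\theta' > \theta$ the same computation gives $\E{\exp(-\theta' \sum_j u_j \Delta_j)} = \exp\left(\sum_j (u_j - u_{j-1}) \Psi(\theta' u_j)\right) \le \exp(t\,\Psi(\theta' t))$, a bound uniform over partitions because $\Psi$ is non-decreasing on $[0,\infty)$; hence the family $\{\exp(-\theta \sum_j u_j \Delta_j)\}$ is bounded in $L^{\theta'/\theta}$ with $\theta'/\theta > 1$ and so uniformly integrable, which together with the a.s.\ convergence upgrades the convergence to $L^1$ and hence of the expectations. Taking $r = 0$ gives the stated identity.

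Finally, for the martingale statement I would set $Z_t = \exp\left(-\theta \int_0^t u\, dL_u - \int_0^t \Psi(\theta u)\, du\right)$; it is adapted to the natural filtration $(\mathcal{F}_t)_{t \ge 0}$ of $L$, and $\E{Z_t} = 1$ by the identity with $r = 0$. For $0 \le r < t$, the random variable $\int_r^t u\, dL_u$ is a measurable function of the increments of $L$ on $[r,t]$ and hence independent of $\mathcal{F}_r$, so
\[
\E{Z_t \mid \mathcal{F}_r} = Z_r\, \E{\exp\left(-\theta \int_r^t u\, dL_u\right)} \exp\left(-\int_r^t \Psi(\theta u)\, du\right) = Z_r
\]
by the central identity on $[r,t]$. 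I expect the only genuinely delicate point to be the uniform-integrability argument used to interchange the limit and the expectation in the Riemann-sum approximation; the remainder is routine manipulation with integration by parts, independence and stationarity of the increments, and continuity and monotonicity of $\Psi$.
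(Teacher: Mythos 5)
Your proof is correct, but it follows a genuinely different route from the paper's. The paper realises $L$ through its L\'evy--It\^o decomposition, $L_t = -\gamma t + \delta B_t + \int_0^t\int_0^\infty x\,\widetilde{M}(ds,dx)$ with $B$ and the compensated Poisson measure $\widetilde{M}$ independent, and computes $\E{\exp\left(-\theta\int_0^t s\,dL_s\right)}$ directly: the exponential formula for compensated Poisson integrals handles the jump part, a Gaussian computation handles $\int_0^t s\,dB_s$, and independence assembles the exponent $\int_0^t\Psi(\theta s)\,ds$; the second equality is integration by parts and the martingale property is asserted from independence of increments. You instead bypass the decomposition entirely: you discretise $\int_r^t u\,dL_u$ by Riemann sums, use only stationarity and independence of increments to evaluate the Laplace transform of each sum as $\exp\bigl(\sum_j (u_j-u_{j-1})\Psi(\theta u_j)\bigr)$, and pass to the limit via pathwise Riemann-integrability of the c\`adl\`ag path (your summation-by-parts bookkeeping, including the partition-independent correction $-r(L_t-L_r)$ for $r>0$, is right) together with the $L^{\theta'/\theta}$-boundedness/uniform-integrability bound, which is valid since $\Psi$ is indeed non-decreasing on $[0,\infty)$ under the standing assumptions $\gamma,\delta\ge 0$. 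What each approach buys: the paper's argument is shorter if one takes the exponential formula for Poisson random measures as given, and it exhibits the three independent contributions to $\Psi$ explicitly; yours is more elementary and self-contained, needing only the Laplace exponent of the increments, and by proving the identity on a general interval $[r,t]$ it delivers the conditional expectation computation for the martingale claim explicitly rather than leaving it to the reader, which is a small but genuine gain in completeness.
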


\begin{proof}
Let $M(ds, dx)$ be a Poisson random measure on $\R_+$ of intensity $ds \otimes \pi(dx)$, and let $\widetilde{M}(ds,dx)$ be its compensated version.  Then
\[
 \E{\exp \left(-\theta \int_0^t \int_0^{\infty} sx \widetilde{M}(ds,dx) \right)} = \exp\left( \int_0^t ds \int_0^{\infty} \pi(dx) (e^{-\theta sx} - 1 + \theta s x) \right).
\]
If $B$ is a standard Brownian motion, we obtain
\[
\E{\exp \left(- \theta \int_0^t s dB_s \right)} = \exp \left(\frac{1}{2} \theta^2 \int_0^t s^2 ds \right).
\]
Since we may, in general, realise $L$ as
\[
L_t = -\gamma t + \delta B_t + \int_0^t\int_0^{\infty} x \widetilde{M}(ds,dx),
\]
where $B$ and $\widetilde{M}$ are independent, we obtain
\begin{align*}
& \E{\exp \left(-\theta \int_0^t s dL_s \right)} \\
& \qquad = \E{\exp \left(\theta  \int_0^t \gamma s ds - \theta \int_0^t \delta s dB_s - \theta \int_0^t \int_0^{\infty} xs \widetilde{M}(ds,dx) \right)} \\
& \qquad = \exp \left(\gamma  \int_0^t \theta s ds + \frac{1}{2} \delta^2\int_0^t \theta^2  s^2 ds + \int_0^t ds \int_0^{\infty} \pi(dx) (e^{-\theta sx} - 1 + \theta s x) \right) \\
& \qquad = \exp \left( \int_0^t \Psi(\theta s) ds \right).
\end{align*}
The second equality in the statement of the lemma follows on integrating by parts, and the martingale property follows since $L$ has independent increments.
\end{proof}

This martingale plays an important role as a Radon--Nikodym derivative.  Fix $\theta > 0$ and consider the process $X$ with independent (but non-stationary) increments and Laplace transform
\[
\E{\exp(-\lambda X_t)} = \exp\left( \int_0^t ds \int_0^{\infty}  \pi(dx) (e^{-\lambda x} - 1 + \lambda x) e^{-\theta xs} \right). 
\]
(The process $X$ may again be realised as a stochastic integral with respect to a compensated Poisson random measure on $\R_+ \times \R_+$, but this time with intensity $\exp(-xs) ds \pi(dx)$.)
Let
\[
A_t = -\frac{1}{\theta} \Psi(\theta t)  = - \gamma t - \frac{1}{2}\theta \delta^2 t^2 - \frac{1}{\theta} \int_0^{\infty} \pi(dx) (e^{-\theta t x} - 1 + \theta t x) 
\]
and $\widetilde{L}_t = \delta B_t + X_t + A_t$. (Note that this is expressed as the Doob--Meyer decomposition of $\widetilde{L}$, with $\delta B_t + X_t$ the martingale part.)

\begin{prop} \label{prop:Levymeasch}
For any $\theta > 0$ and every $t \ge 0$, we have the following absolute continuity relation: for every non-negative integrable functional $F$,
\[
\E{F(\widetilde{L}_s, 0 \le s \le t)} = \E{\exp\left( - \theta \int_0^t s dL_s - \int_0^t \Psi(\theta s) ds \right) F(L_s, 0 \le s \le t)}.
\]
\end{prop}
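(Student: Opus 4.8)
The plan is to realise the claimed identity as a change of measure driven by the martingale identified in Lemma~\ref{lem:laplint}, and then to identify the law of $L$ under the new measure by matching Laplace functionals. Fix $t\ge 0$ and write
\[
D_t = \exp\left(-\theta\int_0^t s\,dL_s - \int_0^t\Psi(\theta s)\,ds\right).
\]
By Lemma~\ref{lem:laplint}, $(D_s, 0\le s\le t)$ is a non-negative martingale with $D_0=1$, hence $\bE[D_t]=1$, so we may define a probability measure $\mathbb{Q}$ on $\mathcal{F}_t$ by $d\mathbb{Q}/d\mathbb{P}\big|_{\mathcal{F}_t}=D_t$. The proposition then amounts to the statement that, under $\mathbb{Q}$, the path $(L_s, 0\le s\le t)$ has the same law as $(\widetilde L_s, 0\le s\le t)$ under $\mathbb{P}$; granting that, $\bE[F(\widetilde L_s, 0\le s\le t)] = \bE^{\mathbb{Q}}[F(L_s, 0\le s\le t)] = \bE[D_t F(L_s, 0\le s\le t)]$ for every non-negative measurable $F$ (first for bounded $F$, then by monotone convergence), which is precisely the assertion.

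To identify the $\mathbb{Q}$-law I would compute, for an arbitrary non-negative step function $g$ on $[0,t]$, the Laplace functional $\bE^{\mathbb{Q}}\big[\exp(-\int_0^t g(s)\,dL_s)\big]$. By definition of $\mathbb{Q}$ this equals $\bE\big[\exp(-\int_0^t (g(s)+\theta s)\,dL_s - \int_0^t\Psi(\theta s)\,ds)\big]$, and since $L$ has stationary independent increments and is spectrally positive (so $\Psi\ge 0$ on $[0,\infty)$), the Wiener-integral exponential formula $\bE[\exp(-\int_0^t h(s)\,dL_s)]=\exp(\int_0^t\Psi(h(s))\,ds)$ (valid for bounded $h\ge 0$, first for step functions by independence of increments, then in general by approximation and dominated convergence) gives
\[
\bE^{\mathbb{Q}}\left[\exp\left(-\int_0^t g(s)\,dL_s\right)\right] = \exp\left(\int_0^t\big(\Psi(g(s)+\theta s)-\Psi(\theta s)\big)\,ds\right).
\]
On the other side I would use the decomposition $\widetilde L_t=\delta B_t + X_t + A_t$ with $B$ and $X$ independent: the Brownian part contributes a factor $\exp\big(\tfrac{\delta^2}{2}\int_0^t g(s)^2\,ds\big)$; the deterministic finite-variation part $A_t=-\tfrac1\theta\Psi(\theta t)$, for which $A_s'=-\Psi'(\theta s)$, contributes $\exp\big(\int_0^t g(s)\Psi'(\theta s)\,ds\big)$; and the pure-jump martingale $X$, realised as $\int_0^{\cdot}\int_0^\infty x\,\widetilde N(ds,dx)$ for a Poisson random measure $\widetilde N$ with compensator $e^{-\theta x s}\,ds\,\pi(dx)$, contributes $\exp\big(\int_0^t\int_0^\infty (e^{-g(s)x}-1+g(s)x)e^{-\theta x s}\,\pi(dx)\,ds\big)$ by the exponential formula for compensated Poisson integrals (all these hold for $g\ge 0$ without integrability subtleties, since $e^{-y}-1+y\ge 0$).

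It then remains to check the pointwise algebraic identity, for $a\ge 0$ and $b=\theta s\ge 0$,
\[
\Psi(a+b)-\Psi(b) = \frac{\delta^2 a^2}{2} + a\,\Psi'(b) + \int_0^\infty e^{-bx}\big(e^{-ax}-1+ax\big)\,\pi(dx),
\]
which follows by expanding $\Psi(a+b)-\Psi(b)$, writing $e^{-(a+b)x}-e^{-bx}=e^{-bx}(e^{-ax}-1)$, and recognising $\int_0^\infty x(1-e^{-bx})\,\pi(dx)=\Psi'(b)-\gamma-\delta^2 b$ (valid thanks to \eqref{eqn:integrability}). Integrating this in $s$ shows that the Laplace functionals of $L$ under $\mathbb{Q}$ and of $\widetilde L$ under $\mathbb{P}$ agree; taking $g=\sum_j\lambda_j\I{s\le s_j}$ with $\lambda_j\ge 0$ recovers the joint Laplace transform of $(L_{s_1},\dots,L_{s_m})$ at all non-negative arguments, which pins down the finite-dimensional distributions, and since $\widetilde L$ is c\`adl\`ag ($B$ continuous, $X$ c\`adl\`ag, $A$ continuous) the two path laws on $\mathbb{D}([0,t],\R)$ coincide. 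I expect the main obstacle to be the bookkeeping in this last algebraic identity: it is exactly the computation that explains why the prescribed drift $A_t=-\tfrac1\theta\Psi(\theta t)$ and the tilted jump intensity $e^{-\theta x s}\pi(dx)$ are the correct ones. The analytic points — validity of the exponential formulas for deterministic time-dependent integrands, and that Laplace functionals determine the law — are routine.
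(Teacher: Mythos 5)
Your proposal is correct and follows essentially the same route as the paper: both identify the tilted law by matching Laplace transforms of finite-dimensional distributions via the exponential formula for Wiener and compensated-Poisson integrals, and your pointwise identity for $\Psi(a+b)-\Psi(b)$ is exactly the paper's Lévy-measure decomposition once integrated in $s$ (note $\int_0^t a\Psi'(\theta s)\,ds=\tfrac{a}{\theta}\Psi(\theta t)$). The only organisational difference is that you treat the Gaussian, jump and drift parts in a single Laplace-functional computation with step-function integrands, whereas the paper handles the pure-jump case marginal-by-marginal using independence of increments and deals with the Brownian component by a separate Cameron--Martin--Girsanov argument before combining the pieces by independence.
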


\begin{proof}
Observe first that Lemma~\ref{lem:laplint} entails that the change of measure is well-defined for each $t \ge 0$. 

Let us first deal with the case where $\gamma = \delta = 0$.  We use a  decomposition of the L\'evy measure similar to that in Bertoin~\cite{BertoinFactorize} or the proof of Proposition 1 in Miermont~\cite{Mier05}:
\begin{align*}
& \int_0^t ds \int_0^{\infty} \pi(dx) (e^{-\lambda x} - 1 + \lambda x) e^{-\theta x s} \\
& = \int_0^t ds \int_0^{\infty} \pi(dx) (e^{-(\lambda + \theta s) x} - 1 + (\lambda + \theta s) x)
- \int_0^t ds \int_0^{\infty} \pi(dx) (e^{-\theta xs} - 1 + \theta xs) \\
& \qquad - \int_0^t ds \int_0^{\infty} \pi(dx) \lambda x (1 - e^{-\theta x s}) \\
& = \int_0^t \Psi(\lambda + \theta s)ds - \int_0^t \Psi(\theta s) ds - \int_0^t ds \int_0^{\infty} \pi(dx) \lambda x (1 - e^{-\theta x s}).
\end{align*}
The last integral on the right-hand side makes sense because of the integrability condition  (\ref{eqn:integrability}).
Indeed, it may be calculated as follows:
\begin{align*}
\int_0^t ds \int_0^{\infty} \pi(dx) \lambda x (1 - e^{-\theta xs}) & = \lambda \int_0^{\infty} x \pi(dx) \int_0^t (1 - e^{-\theta xs}) ds \\
& = \frac{\lambda}{\theta} \int_0^{\infty} \pi(dx) (e^{-\theta tx} - 1 + \theta tx) = \frac{\lambda}{\theta} \Psi(\theta t).
\end{align*}
Hence,
\begin{align*}
\E{\exp(-\lambda  X_t)} & = \exp \left(  \int_0^t ds \int_0^{\infty} \pi(dx) (e^{-\lambda x} - 1 + \lambda x) e^{-\theta x s}\right) \\
& = \exp\left( - \frac{\lambda}{\theta} \Psi(\theta t) +  \int_0^t \Psi(\lambda +  \theta s) ds -   \int_0^t \Psi(\theta s) ds\right)
\end{align*}
and we obtain
\[
\E{\exp(-\lambda (X_t + A_t))} = \exp\left( \int_0^t \Psi(\lambda+\theta s) ds -  \int_0^t \Psi(\theta s) ds  \right).
\]

Consider the stochastic integral
\[
\int_0^t  (\lambda + \theta s) dL_s = \lambda L_t + \theta \int_0^t s dL_s.
\]
We have
\[
\E{\exp \left(-  \int_0^t  (\lambda + \theta s) dL_s \right)} = \exp \left( \int_0^t \Psi(\lambda + \theta s)) ds \right)
\]
and so
\[
\E{\exp(-\lambda (X_t+A_t))} = \E{\exp \left(-\lambda L_t - \theta \int_0^t s dL_s - \int_0^t \Psi(\theta s) ds  \right)}.
\]

Suppose now that $0=t_0 < t_1 < \dots < t_m=t$.  Let $\lambda_1, \ldots, \lambda_m \in \R_+$. Then, by the fact that $X$ has independent increments,
\[
\E{\exp \left(-\sum_{i=1}^m \lambda_i (\widetilde{L}_{t_i} - \widetilde{L}_{t_{i-1}}) \right)}
= \prod_{i=1}^m \E{\exp(-\lambda_i (\widetilde{L}_{t_i} - \widetilde{L}_{t_{i-1}}))}.
\]
By the same argument as above, we then have
\begin{align*}
\E{\exp \left(-\sum_{i=1}^m \lambda_i (\widetilde{L}_{t_i} - \widetilde{L}_{t_{i-1}}) \right)}
& = \prod_{i=1}^m \E{\exp \left(- \int_{t_{i-1}}^{t_i} (\lambda_i + \theta s) dL_s - \int_{t_{i-1}}^{t_i} \Psi(\theta s) ds \right)} \\
& = \E{ \exp \left( -\sum_{i=1}^m \int_{t_{i-1}}^{t_i} (\lambda_i + \theta s)dL_s -  \int_{0}^{t} \Psi(\theta s) ds  \right)},
\end{align*}
since $L$ also has independent increments. Again by integration by parts, we then get that the right-hand side is equal to
\[
\E{ \exp \left(-\sum_{i=1}^m \lambda_i(L_{t_i} - L_{t_{i-1}}) -\theta \int_0^t sdL_s -  \int_{0}^{t} \Psi(\theta s)\right)}.
\]
This yields the claimed result for $\gamma = \delta = 0$.

Now let us instead suppose that $\gamma \ge 0$, $\delta > 0$ and there is no jump component i.e.\ $L_t = -\gamma t + \delta B_t$ and $\widetilde{L}_t = -\gamma t + \delta B_t - \delta \theta t^2/2$.  Then by the Cameron--Martin--Girsanov formula (see, for example, Section 5.6 of Le Gall~\cite{LeGallBook}),
\begin{align*}
\E{f(\widetilde{L}_s, 0 \le s \le t)} & = \E{ \exp \left( - \delta \theta \int_0^t s dB_s - \delta^2 \theta^2 \int_0^t s^2 ds \right) f(L_s, 0 \le s \le t)} \\
& = \E{ \exp \left( - \theta \int_0^t s dL_s -  \int_0^t \Psi(\theta s) ds \right) f(L_s, 0 \le s \le t)}.
\end{align*}

The result for general $\gamma, \delta$ and $\pi$ now follows using the independence of $X$ and $B$.
\end{proof}

The $\alpha$-stable case stated in Proposition~\ref{prop:stablemeasch} is obtained by setting $\gamma = \delta = 0$, $\pi(dx) = \frac{c}{\mu} x^{-(\alpha+1)}dx$ and $\theta = 1/\mu$.  The Brownian case is obtained by taking $\gamma = 0$, $\delta = \sqrt{\beta/\mu}$, $\theta = 1/\mu$ and no L\'evy measure $\pi$.

\subsection{Size-biased reordering}
In this section, we prove some elementary results about the size-biased reordering $(\hat{D}_1^n, \hat{D}_n^2, \ldots, \hat{D}_n^n)$ of the degrees.  First, we prove Proposition~\ref{prop:changemeas}.

\begin{proof}[Proof of Proposition~\ref{prop:changemeas}]
Denote the set of permutations of $\{1,2,\ldots,n\}$ by $\mathfrak{S}_n$.  By definition,
\begin{align*}
& \Prob{\hat{D}_1^n = k_1, \hat{D}_2^n = k_2, \ldots, \hat{D}_n^n = k_n} \\
& \qquad = \Prob{D_{\Sigma(1)} = k_1, D_{\Sigma(2)} = k_2, \ldots, D_{\Sigma(n)} = k_n} \\
& \qquad = \sum_{\sigma \in \mathfrak{S}_n} \Prob{D_{\sigma(1)} = k_1, D_{\sigma(2)}=k_2, \ldots, D_{\sigma(n)} = k_n, \Sigma = \sigma} \\
& \qquad = \sum_{\sigma \in \mathfrak{S}_n} \Prob{D_{\sigma(1)} = k_1, D_{\sigma(2)}=k_2, \ldots, D_{\sigma(n)} = k_n} \frac{k_1}{\sum_{j=1}^n k_j} \frac{k_2}{\sum_{j=2}^n k_j} \ldots \frac{k_n}{k_n} \\
& \qquad = n! \ \nu_{k_1} \nu_{k_2} \ldots \nu_{k_n} \frac{k_1}{\sum_{j=1}^n k_j} \frac{k_2}{\sum_{j=2}^n k_j} \ldots \frac{k_n}{k_n},
\end{align*}
since $D_1, \ldots, D_n$ are i.i.d.\ with law $\nu$.  Rearrangement of this expression yields
\begin{align*}
\Prob{\hat{D}_1^n = k_1, \hat{D}_2^n = k_2, \ldots, \hat{D}_n^n = k_n}
& = k_1 \nu_{k_1} k_2 \nu_{k_2} \ldots k_n \nu_{k_n} \prod_{i=1}^n \frac{(n-i+1)}{\sum_{j=i}^n k_j} \\
& = \frac{k_1 \nu_{k_1}}{\mu} \frac{k_2 \nu_{k_2}}{\mu} \cdots \frac{k_n \nu_{k_n}}{\mu} \prod_{i=1}^n \frac{(n-i+1)\mu }{\sum_{j=i}^n k_j}.
\end{align*}
Now
\begin{align*}
& \Prob{\hat{D}_1^n = k_1, \hat{D}_2^n = k_2, \ldots, \hat{D}_m^n = k_m} \\
& = \sum_{k_{m+1}, \ldots, k_n \ge 1} \Prob{\hat{D}_1^n = k_1, \hat{D}_2^n = k_2, \ldots, \hat{D}_n^n = k_n} \\
& = \frac{k_1 \nu_{k_1}}{\mu} \frac{k_2 \nu_{k_2}}{\mu} \cdots \frac{k_m \nu_{k_m}}{\mu}  \mu^m n! \sum_{k_{m+1}, \ldots, k_n \ge 1}  k_{m+1} \nu_{k_{m+1}} \cdots k_n \nu_{k_n} \prod_{i=1}^n \frac{1}{\sum_{j=i}^n k_j} \\
& = \frac{k_1 \nu_{k_1}}{\mu} \frac{k_2 \nu_{k_2}}{\mu} \cdots \frac{k_m \nu_{k_m}}{\mu}  \mu^m n! \\
& \qquad \times \sum_{k_{m+1}, \ldots, k_n \ge 1}    \prod_{i=1}^m \frac{1}{\sum_{j=i}^m k_j + \sum_{j=m+1}^n k_j} \nu_{k_{m+1}} \ldots \nu_{k_n} \prod_{\ell=m+1}^n \frac{k_{\ell}}{\sum_{j=\ell}^n k_j} \\
& = \frac{k_1 \nu_{k_1}}{\mu} \frac{k_2 \nu_{k_2}}{\mu} \cdots \frac{k_m \nu_{k_m}}{\mu}  \mu^m \frac{n!}{(n-m)!} \\
& \qquad \times \sum_{k_{m+1}, \ldots, k_n \ge 1}    \prod_{i=1}^m \frac{1}{\sum_{j=i}^m k_j + \sum_{j=m+1}^n k_j} \Prob{\hat{D}_1^{n-m} = k_{m+1}, \ldots, \hat{D}^{n-m}_{n-m} = k_n}.
\end{align*}
We have that $\sum_{j=1}^{n-m} \hat{D}_j^{n-m} \equidist \sum_{j=m+1}^{n} D_j = \Xi_{n-m}$.  It follows that the last expression is equal to
\[
\frac{k_1 \nu_{k_1}}{\mu} \frac{k_2 \nu_{k_2}}{\mu} \cdots \frac{k_m \nu_{k_m}}{\mu}   \frac{n! \mu^m}{(n-m)!} \E{\prod_{i=1}^m \frac{1}{\sum_{j=i}^m k_j + \Xi_{n-m}}},
\]
and the claimed result follows.
\end{proof}

A simple consequence of Proposition~\ref{prop:changemeas} is the following stochastic domination.

\begin{lem} \label{lem:stochdom}
We have
\[
(\hat{D}^n_1, \hat{D}_2^n, \ldots, \hat{D}_n^n) \le_{\mathrm{st}} (Z_1, Z_2, \ldots, Z_n).
\]
\end{lem}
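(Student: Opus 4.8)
The plan is to read off the result from the change-of-measure identity of Proposition~\ref{prop:changemeas} taken at $m=n$, together with an FKG-type correlation inequality. When $m=n$ the quantity $\Xi_{n-m}=\Xi_0$ is an empty sum, so the Radon--Nikodym factor becomes the deterministic function
\[
\phi_n^n(k_1,\ldots,k_n) = \prod_{i=1}^n \frac{(n-i+1)\mu}{\sum_{j=i}^n k_j},
\]
which is finite and strictly positive since $Z_i \ge 1$ forces $\sum_{j=i}^n Z_j \ge n-i+1 \ge 1$. With this, (\ref{eqn:discretechmeas}) reads
\[
\E{g(\hat{D}_1^n,\ldots,\hat{D}_n^n)} = \E{\phi_n^n(Z_1,\ldots,Z_n)\,g(Z_1,\ldots,Z_n)}
\]
for every bounded $g:\Z_+^n \to \R$ (the non-negativity restriction in Proposition~\ref{prop:changemeas} is removed by adding a constant), and taking $g\equiv 1$ gives $\E{\phi_n^n(Z_1,\ldots,Z_n)}=1$.

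The key observation is that $\phi_n^n$ is coordinatewise \emph{non-increasing}: raising any argument $k_\ell$ increases $\sum_{j=i}^n k_j$ for every $i \le \ell$, hence decreases each corresponding factor $(n-i+1)\mu/\sum_{j=i}^n k_j$, while leaving the factors with $i>\ell$ unchanged. Now to establish $(\hat{D}_1^n,\ldots,\hat{D}_n^n)\leqst(Z_1,\ldots,Z_n)$ it suffices to show $\E{g(\hat{D}_1^n,\ldots,\hat{D}_n^n)} \le \E{g(Z_1,\ldots,Z_n)}$ for every bounded coordinatewise non-decreasing $g:\Z_+^n\to\R$ (by the layer-cake formula one may even restrict to indicators of increasing subsets of $\Z_+^n$). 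Since $Z_1,\ldots,Z_n$ are independent, $g(Z_1,\ldots,Z_n)$ and $-\phi_n^n(Z_1,\ldots,Z_n)$ are both non-decreasing functions of independent random variables, so Harris's inequality (the FKG inequality for product measures) yields
\[
\cov{g(Z_1,\ldots,Z_n),\,\phi_n^n(Z_1,\ldots,Z_n)} \le 0.
\]
Combining this with $\E{\phi_n^n(Z_1,\ldots,Z_n)}=1$ gives
\[
\E{g(\hat{D}_1^n,\ldots,\hat{D}_n^n)} = \E{\phi_n^n(Z_1,\ldots,Z_n)\,g(Z_1,\ldots,Z_n)} \le \E{g(Z_1,\ldots,Z_n)},
\]
which is exactly the asserted stochastic domination.

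The argument is short and there is no serious obstacle; the only point requiring a little care is invoking the correlation inequality in the correct direction (a non-decreasing and a non-increasing function of independent coordinates are negatively correlated). If one prefers to avoid citing Harris's inequality, the same conclusion follows by induction on $n$, conditioning on the coordinates of $(Z_1,\ldots,Z_n)$ one at a time and applying the one-dimensional Chebyshev sum inequality to the monotone pair $g$, $\phi_n^n$.
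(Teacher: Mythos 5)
Your proof is correct and is essentially the paper's own argument: both take the change of measure of Proposition~\ref{prop:changemeas} at $m=n$, use that $\phi_n^n$ is coordinatewise decreasing with $\E{\phi_n^n(Z_1,\ldots,Z_n)}=1$, and apply the FKG/Harris inequality for the product law of $(Z_1,\ldots,Z_n)$. The only (harmless) difference is that you test against all bounded non-decreasing functions, whereas the paper tests against indicators of upper orthants; your version, if anything, verifies the stochastic order in the form actually needed later.
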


\begin{proof}
By Proposition~\ref{prop:changemeas} we have
\[
\Prob{\hat{D}^n_1 \ge d_1, \hat{D}_2^n \ge d_2, \ldots, \hat{D}_n^n \ge d_n} = \E{ \prod_{i=1}^n \frac{(n-i+1)\mu}{\sum_{j=i}^n Z_j} \I{Z_1 \ge d_1, Z_2 \ge d_2, \ldots, Z_n \ge d_n}}.
\]
Let
\[
f(k_1, k_2, \ldots, k_n) = \prod_{i=1}^n \frac{(n-i+1)\mu}{\sum_{j=i}^n k_j}
\]
and
\[
g(k_1, k_2, \ldots, k_n) = \I{k_1 \ge d_1, k_2 \ge d_2, \ldots, k_n \ge d_n}.
\]
Then $f$ is a decreasing function of its arguments and $g$ is an increasing function of its arguments.  It follows from the FKG inequality that
\[
\E{f(Z_1, Z_2, \ldots, Z_n) g(Z_1, Z_2, \ldots, Z_n)} \le \E{f(Z_1, Z_2, \ldots, Z_n)} \E{g(Z_1, Z_2, \ldots, Z_n)}.
\]
But $\E{f(Z_1, Z_2, \ldots, Z_n)} = 1$ and so
\[
\Prob{\hat{D}^n_1 \ge d_1, \hat{D}_2^n \ge d_2, \ldots, \hat{D}_n^n \ge d_n} \le \Prob{Z_1 \ge d_1, Z_2 \ge d_2, \ldots, Z_n \ge d_n},
\]
as required.
\end{proof}

\begin{lem} \label{lem:sumofsquares}
Fix $\alpha \in (1,2)$ and suppose $m = O(n^{\beta})$ for some $\beta < \alpha/2$.  Then as $n \to \infty$,
\[
\frac{1}{n} \sum_{i=1}^{m} (\hat{D}_i^n)^2 \convprob 0.
\]
In particular, the above holds for $m = \fl{t n^{\alpha/(\alpha+1)}}$.
\end{lem}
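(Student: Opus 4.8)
The plan is to reduce to the i.i.d.\ size-biased sequence via Lemma~\ref{lem:stochdom} and then run a standard truncation argument. Since $(\hat{D}_1^n, \ldots, \hat{D}_n^n) \leq_{\mathrm{st}} (Z_1, \ldots, Z_n)$ and the map $(k_1, \ldots, k_n) \mapsto \sum_{i=1}^m k_i^2$ is coordinatewise nondecreasing, the bounded function $\I{\sum_{i=1}^m k_i^2 > \epsilon n}$ is coordinatewise nondecreasing, so for any $\epsilon > 0$ stochastic domination gives $\Prob{\frac{1}{n} \sum_{i=1}^m (\hat{D}_i^n)^2 > \epsilon} \le \Prob{\frac{1}{n} \sum_{i=1}^m Z_i^2 > \epsilon}$. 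Hence it suffices to prove $\frac{1}{n} \sum_{i=1}^m Z_i^2 \convprob 0$ whenever $m = O(n^\beta)$ with $\beta < \alpha/2$.

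First I would record the relevant tail asymptotics for $Z_1$. From $\nu_k \sim c k^{-(\alpha+2)}$ we get $\Prob{Z_1 = k} = k \nu_k / \mu \sim \frac{c}{\mu} k^{-(\alpha+1)}$, and summing, $\Prob{Z_1 > \ell} \sim \frac{c}{\mu \alpha} \ell^{-\alpha}$ as $\ell \to \infty$; moreover, since $1-\alpha > -1$, a comparison with integrals gives $\E{Z_1^2 \I{Z_1 \le \ell}} = \sum_{k=1}^\ell k^2 \Prob{Z_1 = k} \sim \frac{c}{\mu(2-\alpha)} \ell^{2-\alpha}$. These are elementary computations.

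Next I would truncate at level $\ell = \sqrt{n}$. By a union bound and the tail estimate, $\Prob{\max_{1 \le i \le m} Z_i > \sqrt{n}} \le m\, \Prob{Z_1 > \sqrt{n}} = O(n^\beta \cdot n^{-\alpha/2}) = O(n^{\beta - \alpha/2}) \to 0$ because $\beta < \alpha/2$; so with probability tending to one, $\sum_{i=1}^m Z_i^2 = \sum_{i=1}^m Z_i^2 \I{Z_i \le \sqrt{n}}$. On the other hand, by Markov's inequality and the truncated-second-moment estimate, $\Prob{\frac{1}{n} \sum_{i=1}^m Z_i^2 \I{Z_i \le \sqrt{n}} > \epsilon} \le \frac{m}{\epsilon n}\, \E{Z_1^2 \I{Z_1 \le \sqrt{n}}} = O\!\left( \tfrac{n^\beta}{n}\, n^{(2-\alpha)/2} \right) = O(n^{\beta - \alpha/2}) \to 0$. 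Combining the two bounds yields $\frac{1}{n} \sum_{i=1}^m Z_i^2 \convprob 0$, hence the claim. For the particular case $m = \fl{t n^{\alpha/(\alpha+1)}}$ one simply notes that $\alpha/(\alpha+1) < 1/2 < \alpha/2$ for $\alpha \in (1,2)$, so the hypothesis $\beta < \alpha/2$ is met.

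There is no real obstacle here; the only points needing a little care are (i) checking that the stochastic domination of Lemma~\ref{lem:stochdom} transfers through the nondecreasing functional $\sum_{i=1}^m (\cdot)^2$, which it does by coordinatewise monotonicity, and (ii) verifying that the single truncation level $\sqrt{n}$ simultaneously makes both the ``large value'' probability $m\,\Prob{Z_1 > \sqrt n}$ and the rescaled truncated mean $\frac{m}{n}\E{Z_1^2 \I{Z_1 \le \sqrt n}}$ negligible --- which works out precisely because $\beta < \alpha/2$ (indeed any level $\ell$ with $m^{1/\alpha} \ll \ell \ll (n/m)^{1/(2-\alpha)}$ would do, and such an $\ell$ exists exactly under this hypothesis).
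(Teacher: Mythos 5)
Your argument is correct in its main thrust and follows the paper's first step exactly: reduce to the i.i.d.\ variables $Z_1, \ldots, Z_n$ via the stochastic domination of Lemma~\ref{lem:stochdom} applied to the coordinatewise nondecreasing event $\{\sum_{i=1}^m k_i^2 > \epsilon n\}$. Where you diverge is the second step. The paper cites a Marcinkiewicz--Zygmund-type strong law (Theorem 2.5.9 of Durrett) for the i.i.d.\ variables $Z_i^2$: $m^{-1/\beta}\sum_{i=1}^m Z_i^2 \to 0$ a.s.\ if and only if $\sum_{m\ge 1}\Prob{Z_1^2 > m^{1/\beta}} < \infty$, which holds since $\Prob{Z_1 > m^{1/2\beta}} = O(m^{-\alpha/2\beta})$ and $\alpha > 2\beta$; combined with $m^{1/\beta} = O(n)$ this gives the claim. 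You instead run an explicit truncation at level $\sqrt{n}$, controlling $\Prob{\max_{i\le m} Z_i > \sqrt n} \le m\,\Prob{Z_1 > \sqrt n} = O(n^{\beta-\alpha/2})$ by the tail asymptotic $\Prob{Z_1>\ell} \asymp \ell^{-\alpha}$, and the truncated part by Markov using $\E{Z_1^2 \I{Z_1\le \ell}} = O(\ell^{2-\alpha})$, again $O(n^{\beta-\alpha/2})$. Both routes rest on the same tail behaviour of the size-biased law; yours is more elementary and self-contained (no external strong-law citation), yields only convergence in probability (which is all that is claimed), and makes the rate $n^{\beta-\alpha/2}$ explicit, while the paper's citation is shorter and in fact delivers an almost-sure statement.

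One small slip to fix: in your last sentence the chain $\alpha/(\alpha+1) < 1/2 < \alpha/2$ is wrong, since for $\alpha \in (1,2)$ one has $\alpha/(\alpha+1) > 1/2$. The conclusion is still fine because the inequality you actually need, $\alpha/(\alpha+1) < \alpha/2$, is equivalent to $\alpha + 1 > 2$, which holds for all $\alpha > 1$; just verify it directly rather than via $1/2$.
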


\begin{proof}
By Lemma~\ref{lem:stochdom}, it is sufficient to prove that
\[
\frac{1}{n} \sum_{i=1}^{m} Z_i^2 \convprob 0.
\]
By Theorem 2.5.9 of Durrett~\cite{Durrett}, we have
\[
\limsup_{m \to \infty} \frac{1}{m^{1/\beta}} \sum_{i=1}^{m} Z_i^2 = 0 \text{ a.s.}
\]
if and only if
\[
\sum_{m=1}^{\infty} \Prob{Z_1^2 > m^{1/\beta}} < \infty.
\]
But $\Prob{Z_1^2 > m^{1/\beta}} = \Prob{Z_1 > m^{1/2\beta}} = O(m^{-\alpha/2\beta})$, which is summable since $\alpha > 2\beta$.
\end{proof}

\begin{lem} \label{lem:lln}
As $n \to \infty$,
\[
\frac{1}{n} \sum_{i=\fl{t n^{\alpha/(\alpha+1)}}+1}^{n} \hat{D}_i^n \convprob \mu.
\]
\end{lem}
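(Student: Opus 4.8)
The plan is to exploit the fact that $(\hat D_1^n,\ldots,\hat D_n^n)$ is a permutation of $(D_1,\ldots,D_n)$, so that $\sum_{i=1}^n \hat D_i^n = \sum_{i=1}^n D_i$, and then to discard the negligible ``head'' of the sum. Writing $m = \fl{t n^{\alpha/(\alpha+1)}}$, I would begin from the identity
\[
\frac{1}{n}\sum_{i=m+1}^n \hat{D}_i^n \;=\; \frac{1}{n}\sum_{i=1}^n D_i \;-\; \frac{1}{n}\sum_{i=1}^m \hat{D}_i^n,
\]
so that it suffices to show (a) $\tfrac{1}{n}\sum_{i=1}^n D_i \convprob \mu$ and (b) $\tfrac{1}{n}\sum_{i=1}^m \hat{D}_i^n \convprob 0$; the claim then follows since $A_n \convprob \mu$ and $B_n\convprob 0$ imply $A_n - B_n \convprob \mu$.

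Claim (a) is simply the weak law of large numbers, using $\E{D_1}=\mu<\infty$ (the odd-sum correction, which replaces $D_n$ by $D_n+1$ on an event, alters the left-hand side by at most $1/n$ and is harmless). For claim (b) I would invoke the stochastic domination of Lemma~\ref{lem:stochdom}: since $(\hat D_1^n,\ldots,\hat D_n^n)\leqst (Z_1,\ldots,Z_n)$ and the map $(x_1,\ldots,x_n)\mapsto \sum_{i=1}^m x_i$ is coordinatewise non-decreasing, we obtain $\sum_{i=1}^m \hat D_i^n \leqst \sum_{i=1}^m Z_i$ for every $n$. Since the $Z_i$ are i.i.d.\ with $\E{Z_1}=2$, Markov's inequality gives, for any $\epsilon>0$,
\[
\Prob{\frac{1}{n}\sum_{i=1}^m \hat D_i^n > \epsilon} \;\le\; \Prob{\frac{1}{n}\sum_{i=1}^m Z_i > \epsilon} \;\le\; \frac{2m}{\epsilon n} \;\le\; \frac{2t\, n^{\alpha/(\alpha+1)}}{\epsilon n} \;\longrightarrow\; 0,
\]
because $\alpha/(\alpha+1)<1$. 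This proves (b) and hence the lemma.

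There is no serious obstacle here; the only two points needing a word of care are that the stochastic domination of Lemma~\ref{lem:stochdom} transfers to partial sums (true, as partial sums are monotone functionals of the vector), and that $\E{Z_1}$ is finite — this is exactly where the hypothesis $\alpha>1$ is used, and it is recorded just after the definition of the $Z_i$ that $\E{Z_1}=2$. Alternatively one could bypass Lemma~\ref{lem:stochdom} and show directly from Proposition~\ref{prop:changemeas} that $\sup_{n}\,\sup_{1\le i\le n}\E{\hat D_i^n}<\infty$, but appealing to the stochastic domination is cleaner.
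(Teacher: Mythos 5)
Your proof is correct, and it follows the paper's own skeleton: the same decomposition $\sum_{i=m+1}^n \hat D_i^n = \sum_{i=1}^n D_i - \sum_{i=1}^m \hat D_i^n$ with the weak law of large numbers handling the full sum. Where you diverge is in killing the head term: the paper uses $\hat D_i^n \ge 1$ to bound $\sum_{i=1}^m \hat D_i^n \le \sum_{i=1}^m (\hat D_i^n)^2$ and then invokes Lemma~\ref{lem:sumofsquares} (itself proved via the stochastic domination of Lemma~\ref{lem:stochdom} together with a Marcinkiewicz--Zygmund-type strong law, Theorem 2.5.9 of Durrett), whereas you apply Lemma~\ref{lem:stochdom} directly to the partial sum and finish with a first-moment Markov bound using $\E{Z_1}=2$ and $m = O(n^{\alpha/(\alpha+1)}) = o(n)$. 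Your route is more elementary for this particular lemma, needing only the finiteness of $\E{Z_1}$ rather than any control of squares; the paper's choice is natural only because Lemma~\ref{lem:sumofsquares} is needed anyway elsewhere (e.g.\ in Proposition~\ref{prop:backedgesarelate}). Your one point of care — that the domination of Lemma~\ref{lem:stochdom} transfers to the coordinatewise non-decreasing functional $(x_1,\ldots,x_n)\mapsto\sum_{i=1}^m x_i$ — is legitimate: the FKG/Harris argument in its proof applies verbatim to any non-negative coordinatewise non-decreasing test function, not just indicators of upper orthants, and indeed the paper itself uses the domination in exactly this way in the proof of Lemma~\ref{lem:sumofsquares}.
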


\begin{proof}
We have
\[
\sum_{i=\fl{t n^{\alpha/(\alpha+1)}}+1}^{n} \hat{D}_i^n \quad =  \quad \sum_{i=1}^n \hat{D}_i^n  - \sum_{i=1}^{\fl{t n^{\alpha/(\alpha+1)}}} \hat{D}_i^n \quad = \quad \sum_{i=1}^n D_i - \sum_{i=1}^{\fl{t n^{\alpha/(\alpha+1)}}} \hat{D}_i^n.
\]
By the weak law of large numbers,
\[
\frac{1}{n} \sum_{i=1}^n D_i \convprob \E{D_1} = \mu
\]
and, since $\hat{D}_i^n \ge 1$ for $1 \le i \le n$, by Lemma~\ref{lem:sumofsquares} we have that
\[
\frac{1}{n} \sum_{i=1}^{\fl{t n^{\alpha/(\alpha+1)}}} \hat{D}_i^n \convprob 0.
\]
The result follows.
\end{proof}

\subsection{Convergence of the measure-change in the Brownian case}\label{subsec:changemeasbrown}

Recall that we have $\mu := \E{D_1}$, $\E{D_1^2} = 2 \mu$ and $\beta := \E{D_1(D_1-1)(D_1 - 2)}$, so that $\E{D_1^3} = \beta + 4\mu$.

\begin{lem}
Let $\mathcal{L}(\lambda) := \E{\exp(-\lambda D_1)}$. Then as $\lambda \to 0$,
\begin{equation} \label{eqn:LaplaceBrownian}
\mathcal{L}(\lambda) = \exp \left(-\lambda \mu + \frac{\lambda^2 \mu(2-\mu)}{2} - \frac{\lambda^3}{6}(\beta + 4 \mu - 6\mu^2 + 2 \mu^3) + o(\lambda^3) \right).
\end{equation}
\end{lem}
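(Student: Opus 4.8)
The strategy is a direct Taylor expansion of $\mathcal{L}$ around $0$, mirroring the argument already given in the $\alpha \in (1,2)$ case (see equation~(\ref{eqn:Lasymp}) above). The key difference is that now $\E{D_1^3} < \infty$ (since $\beta = \E{D_1(D_1-1)(D_1-2)} < \infty$ by hypothesis, and $\var{D_1} < \infty$), so we may expand one further order and no fractional power of $\lambda$ appears. First I would record the moments: $\E{D_1} = \mu$, $\E{D_1^2} = 2\mu$ (this follows, as noted in the paper, from $\theta(\nu) = 1$ together with $\Prob{D_1 \geq 1} = 1$, since $\E{D_1(D_1-1)} = \mu$ forces $\E{D_1^2} = 2\mu$), and $\E{D_1^3} = \E{D_1(D_1-1)(D_1-2)} + 3\E{D_1^2} - 2\E{D_1} = \beta + 6\mu - 2\mu = \beta + 4\mu$.

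Next I would write $\mathcal{L}(\lambda) = \E{e^{-\lambda D_1}}$ and, since $\E{D_1^3} < \infty$, apply Taylor's theorem with the integral (or Peano) form of the remainder to get
\begin{equation*}
\mathcal{L}(\lambda) = 1 - \lambda \E{D_1} + \frac{\lambda^2}{2}\E{D_1^2} - \frac{\lambda^3}{6}\E{D_1^3} + o(\lambda^3)
= 1 - \mu \lambda + \mu \lambda^2 - \frac{\beta + 4\mu}{6}\lambda^3 + o(\lambda^3)
\end{equation*}
as $\lambda \to 0$; the error is $o(\lambda^3)$ because $x \mapsto e^{-\lambda x}$ has third derivative $-x^3 e^{-\lambda x}$, bounded in absolute value by the integrable function $x^3$, so dominated convergence gives $\lambda^{-3}(\mathcal{L}(\lambda) - \text{(cubic Taylor polynomial)}) \to 0$.

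Finally I would pass to the exponential form. Writing $\mathcal{L}(\lambda) = \exp(\log \mathcal{L}(\lambda))$ and using $\log(1 + u) = u - u^2/2 + u^3/3 + o(u^3)$ with $u = -\mu\lambda + \mu\lambda^2 - \tfrac{\beta+4\mu}{6}\lambda^3 + o(\lambda^3)$, I would collect terms by order in $\lambda$: the $\lambda^1$ coefficient is $-\mu$; the $\lambda^2$ coefficient is $\mu - \tfrac12\mu^2 = \tfrac{\mu(2-\mu)}{2}$; and the $\lambda^3$ coefficient is $-\tfrac{\beta+4\mu}{6} - \tfrac12(2)(-\mu)(\mu) + \tfrac13(-\mu)^3 = -\tfrac{\beta+4\mu}{6} + \mu^2 - \tfrac{\mu^3}{3} = -\tfrac16(\beta + 4\mu - 6\mu^2 + 2\mu^3)$. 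This yields
\begin{equation*}
\mathcal{L}(\lambda) = \exp\left(-\lambda\mu + \frac{\lambda^2\mu(2-\mu)}{2} - \frac{\lambda^3}{6}(\beta + 4\mu - 6\mu^2 + 2\mu^3) + o(\lambda^3)\right),
\end{equation*}
as claimed. The only mildly delicate point — and the ``main obstacle'', such as it is — is bookkeeping the cross-terms in the $u^2$ and $u^3$ expansions so that the $\lambda^3$ coefficient comes out exactly right; everything else is routine. One should double-check that the $o(\lambda^3)$ terms genuinely combine to $o(\lambda^3)$ after exponentiation, which is immediate since $\exp$ and $\log$ are smooth near their relevant base points.
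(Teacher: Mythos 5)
Your proof is correct and is essentially the paper's argument: the paper simply records the first three cumulants of $D_1$ (namely $\mu$, $\mu(2-\mu)$ and $\beta+4\mu-6\mu^2+2\mu^3$) and invokes the cumulant expansion of $\log\mathcal{L}(\lambda)$, which is exactly what you derive by hand via the moment expansion followed by the $\log(1+u)$ bookkeeping. Your coefficient computations and the dominated-convergence justification of the $o(\lambda^3)$ remainder are all accurate, so nothing is missing.
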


\begin{proof}
The first three cumulants of $D_1$ are 
\[
\E{D_1} = \mu, \quad \var{D_1} = \mu(2-\mu), \quad \E{(D_1 - \mu)^3} = \beta + 4\mu - 6\mu^2 + 2\mu^3,
\]
and the result follows immediately.
\end{proof}

Recall from Proposition~\ref{prop:changemeas} that 
\[
\phi_m^n(k_1, k_2, \ldots, k_m) = \E{\prod_{i=1}^m \frac{(n-i+1)\mu}{\sum_{j=i}^m k_j + \Xi_{n-m}}}.
\]
We prove the following lemma.

\begin{lem}
Let $s(0) = 0$ and $s(i) = \sum_{j= 1}^i (k_j-2)$ for $i \ge 1$. Suppose that $|s(i)- s(m)| \le n^{1/3} \log n$ for all $0 \le i \le m$.  Then if $m = \Theta(n^{2/3})$, we have
\[
\phi_n^m(k_1,k_2, \ldots,k_m) \ge \exp\left(\frac{1}{n \mu}\sum_{i=0}^{m} (s(i) - s(m)) - \frac{\beta m^3}{6 \mu^3 n^2}  \right) (1 + o(1)),
\]
where the $o(1)$ term is independent of $k_1, \ldots, k_m \ge 1$ satisfying the conditions.
\end{lem}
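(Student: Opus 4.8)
The plan is to follow the proof of Lemma~\ref{lem:lowerbound} at the level of strategy, but to carry every Taylor expansion one order further: in the present regime $m=\Theta(n^{2/3})$ several quantities that were $o(1)$ when $\alpha\in(1,2)$ — most importantly $m^3/n^2=\Theta(1)$ and $m^2/n=\Theta(n^{1/3})$ — are no longer negligible and must be tracked exactly, and the condition $|s(i)-s(m)|\le n^{1/3}\log n$ is precisely what makes the resulting error terms $o(1)$ \emph{uniformly} in $(k_1,\dots,k_m)$. Starting from Proposition~\ref{prop:changemeas}, I write $\prod_{i=1}^m(n-i+1)=n^m\prod_{i=1}^{m-1}(1-i/n)$, so that $\phi_m^n(k_1,\dots,k_m)=\prod_{i=1}^{m-1}(1-i/n)\cdot\E{\prod_{i=1}^m\frac{n\mu}{\sum_{j=i}^m k_j+\Xi_{n-m}}}$. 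For the deterministic prefactor I now keep the quadratic term, $\log(1-x)=-x-x^2/2+O(x^3)$, giving $\sum_{i=1}^{m-1}\log(1-i/n)=-\frac{m(m-1)}{2n}-\frac{m^3}{6n^2}+O(m^4/n^3)$ with $m^4/n^3=O(n^{-1/3})=o(1)$.

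For the random factor I first pull out $\Xi_{n-m}$, writing $\prod_{i=1}^m\frac{n\mu}{\sum_{j=i}^m k_j+\Xi_{n-m}}=\left(\frac{n\mu}{\Xi_{n-m}}\right)^m\prod_{i=1}^m\big(1+\sum_{j=i}^m k_j/\Xi_{n-m}\big)^{-1}$. The point of this rearrangement is that the argument $\sum_{j=i}^m k_j/\Xi_{n-m}$ is automatically non-negative, so I may apply the elementary bound $\log(1+x)\le x-\tfrac{x^2}{2}+\tfrac{x^3}{3}$, which holds for \emph{all} $x\ge 0$ (no truncation needed), to obtain the pointwise inequality $\prod_{i=1}^m\big(1+\sum_{j=i}^m k_j/\Xi_{n-m}\big)^{-1}\ge\exp\!\left(-\frac{P}{\Xi_{n-m}}+\frac{S}{2\Xi_{n-m}^2}-\frac{R}{3\Xi_{n-m}^3}\right)$, where $P=\sum_{i=1}^m\sum_{j=i}^m k_j$, $S=\sum_{i=1}^m\big(\sum_{j=i}^m k_j\big)^2$ and $R=\sum_{i=1}^m\big(\sum_{j=i}^m k_j\big)^3$. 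Using $\Xi_{n-m}\ge n-m$ and the hypothesis on $|s(i)-s(m)|$ together with $m=\Theta(n^{2/3})$: one has $R=O(n^{8/3})$, so $R/\Xi_{n-m}^3=O(n^{-1/3})$ is $o(1)$ surely; $S=\tfrac{4}{3}m^3(1+o(1))$; and, by Abel summation, $P=m\,s(m)-\sum_{j=0}^{m-1}s(j)+m(m+1)$, so that $-P/(n\mu)=\frac{1}{n\mu}\sum_{i=0}^m(s(i)-s(m))-\frac{m(m+1)}{n\mu}$, producing the term $\frac1{n\mu}\sum_{i=0}^m(s(i)-s(m))$ of the claimed bound (plus a $\Theta(n^{1/3})$ piece to be cancelled below). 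It thus remains to bound $(n\mu)^m\,\E{\Xi_{n-m}^{-m}\exp\!\big(-P/\Xi_{n-m}+S/(2\Xi_{n-m}^2)\big)}$ from below.

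This last estimate is the crux, and it is the step I expect to cost the most. The randomness of $\Xi_{n-m}$ cannot be thrown away: replacing it by its mean $(n-m)\mu$ (e.g.\ via Jensen's inequality, using that the integrand is a convex function of $\Xi_{n-m}$) discards both the $\Theta(n^{1/3})$ contribution of $\var(D_1)$ and the $\Theta(1)$ contribution of the third cumulant of $D_1$, and one checks such crude substitutions lose a non-negligible part of the exponent. Instead I restrict the expectation to the event $\{|\Xi_{n-m}-(n-m)\mu|\le (n-m)\mu\,\omega_n\}$ for a suitably slowly shrinking $\omega_n$; the complement contributes negligibly once multiplied by the prefactor $(n\mu)^m e^{-m(m-1)/(2n)}$, the lower-tail part being handled by a Cramér bound (valid despite the absence of an exponential moment for $D_1$, since $D_1\ge 1>0$) and the upper-tail part by Chebyshev's inequality together with the fact that there $\Xi_{n-m}^{-m}$ is exponentially small. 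On the good event I then expand in $\eta:=\Xi_{n-m}/((n-m)\mu)-1$; the part of the exponent linear in $\eta$ reassembles into $\E{\exp(-c\,\Xi_{n-m})}=\mathcal L(c)^{n-m}$ with $c$ close to $\lambda:=m/(n\mu)$, whereupon the Brownian Laplace expansion~(\ref{eqn:LaplaceBrownian}) supplies exactly what is needed: its quadratic term $\tfrac{\mu(2-\mu)}{2}\lambda^2$ produces the $\Theta(n^{1/3})$ pieces that cancel the residual $\Theta(n^{1/3})$ contributions of $\prod(1-i/n)$ and of $-P/\Xi_{n-m}$, while its cubic term $-\tfrac{\kappa_3}{6}\lambda^3$ (with $\kappa_3=\beta+4\mu-6\mu^2+2\mu^3$ the third cumulant of $D_1$), combined with $-m^3/(6n^2)$ and with $S/(2((n-m)\mu)^2)=\tfrac{2m^3}{3\mu^2 n^2}(1+o(1))$, collapses after an elementary algebraic cancellation to $-\tfrac{\beta m^3}{6\mu^3 n^2}$. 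Every remainder term along the way (the $O(\eta)$ correction in $S/(2\Xi_{n-m}^2)$, the term $P\eta^2/((n-m)\mu)$, the cross terms in $S$, the $O(m^2/n^2)$ and $O(m^4/n^3)$ leftovers, etc.) is $o(1)$, uniformly in $(k_1,\dots,k_m)$ precisely because of the bound $|s(i)-s(m)|\le n^{1/3}\log n$.

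The hard part, to repeat, is controlling $\E{\Xi_{n-m}^{-m}\exp(-P/\Xi_{n-m}+S/(2\Xi_{n-m}^2))}$ to within a multiplicative $1+o(1)$: one has to capture the $\Theta(n^{1/3})$ and $\Theta(1)$ effects of $\Xi_{n-m}$ simultaneously, and to do so while keeping $\Xi_{n-m}$ in denominators (or using one-sided large-deviation bounds), since $D_1$ has only polynomial tails and naively exponentiating $\Xi_{n-m}$ would give divergent expectations; in particular the truncation scale $\omega_n$ must be chosen finely enough that the discarded mass is negligible against the $\exp(\Theta(n^{2/3}))$-sized prefactor, yet coarse enough that the Taylor remainders on the retained event stay $o(1)$.
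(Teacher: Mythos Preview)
Your overall strategy is sound and closely parallels the paper's: expand $\log(1-i/n)$ to third order, carry the Taylor expansion of the random logarithm one order further than in Lemma~\ref{lem:lowerbound}, and use the cubic term in the Laplace expansion (\ref{eqn:LaplaceBrownian}) to produce the $-\beta m^3/(6\mu^3 n^2)$. The algebraic cancellations you describe are correct.

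The substantive difference is in how you handle the expectation over $\Xi_{n-m}$. You propose to truncate to $\{|\eta|\le\omega_n\}$ with $\eta=\Xi_{n-m}/((n-m)\mu)-1$ and then expand in $\eta$, hoping the linear-in-$\eta$ piece reassembles into a Laplace transform. The difficulty is that the integrand $\Xi_{n-m}^{-m}\exp(-P/\Xi_{n-m}+\cdots)$, viewed against the density of $\Xi_{n-m}$, is not peaked at $\eta=0$ but at $\Delta_{n-m}\approx -(2-\mu)m$, i.e.\ $\eta\approx -(2-\mu)m/((n-m)\mu)=\Theta(n^{-1/3})$. So your truncation window must satisfy $\omega_n\gg n^{-1/3}$ to capture the bulk of the expectation; but then the quadratic-in-$\eta$ term, whose coefficient is $\Theta(m)=\Theta(n^{2/3})$, is $\Theta(n^{2/3}\omega_n^2)\gg 1$, and cannot be discarded. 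There is no choice of $\omega_n$ that makes both the truncation loss and the quadratic remainder simultaneously $o(1)$ when centred at $\eta=0$.

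The paper resolves this by an explicit exponential tilt: it introduces the equivalent measure $\Q$ with $d\Q/d\bP=\exp(-\theta\Delta_{n-m}-(n-m)\psi(\theta))$ for $\theta=\tfrac{m}{\mu n}+\tfrac{m^2}{\mu n^2}$, under which $\bE_{\Q}[\Delta_{n-m}]=-(2-\mu)m+O(n^{1/3})$. One then truncates to the moderate-deviation event $\mathcal{E}_n=\{|\Delta_{n-m}+(2-\mu)m|\le n^{7/12}\}$, which has $\Q$-probability $1-O(n^{-1/6})$ by Chebyshev (only second moments are needed, so the heavy tails of $D_1$ cause no trouble). On $\mathcal{E}_n$ the residual stochastic part of the exponent is bounded below by a constant, and the normalising factor $\exp((n-m)\psi(\theta))$ is exactly what produces the Laplace-transform contribution you were anticipating. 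This is the same idea you are gesturing at when you say ``the linear part reassembles into $\mathcal L(c)^{n-m}$'', but the tilt makes precise where to centre the truncation and why the remaining terms are genuinely $o(1)$. Your plan would work once you replace the truncation around $\eta=0$ by this tilted truncation; without it, the step you correctly flag as ``the hard part'' does not close.
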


\begin{proof}  The method of proof is similar in spirit to, but somewhat more involved than, that of Lemma~\ref{lem:lowerbound}.  Let us first introduce some useful notation. Let $D'_i = D_i - \mu$, the centred degree random variables, and let $\Delta_{n-m}:=\Xi_{n-m}-\mu(n-m)$ be their sum. Let $\psi$ be the log-Laplace transform of $D_1'$,
\[
\psi(\lambda) = \log \E{\exp(-\lambda D_1')},
\]
so that as $\lambda \to 0$, we have
\begin{equation} \label{eqn:psiexp}
\psi(\lambda) = \frac{\lambda^2 \mu(2-\mu)}{2} - \frac{\lambda^3}{6}(\beta + 4 \mu - 6\mu^2 + 2 \mu^3) + o(\lambda^3).
\end{equation}

Now,
\begin{align*}
& \phi_n^m(k_1, \ldots, k_m) \\
& = \prod_{i=1}^{m}\left(1-\frac{i-1}{n}\right) \\
& \qquad \quad \times \E{\exp\left(-\sum_{i=1}^{m}\log\left(1+\frac{\Delta_{n-m}+s(m)-s(i-1)+2(m-i+1)-\mu m}{\mu n}\right)\right)}.
\end{align*}
We use Taylor expansion in order to approximate the exponent:
\begin{align} \label{eq:Taylor}
& \sum_{i=1}^m \left( \log \left(1 - \frac{i-1}{n} \right) - \log\left(1+\frac{\Delta_{n-m}+ s(m)-s(i-1)+ (2 - \mu)m-2(i-1)}{\mu n}\right) \right) \\
& \notag = - \frac{m^2}{2n} - \frac{m^3}{6n^2} + o(1) \\
& \notag \qquad  - \frac{m\Delta_{n-m}}{n \mu} + \frac{1}{n \mu} \sum_{i=0}^m (s(i) - s(m)) - \frac{(2-\mu)m^2}{n \mu} + \frac{m^2}{n \mu} + o(1) \\
& \notag \qquad + \frac{m(\Delta_{n-m})^2}{2n^2 \mu^2} + \frac{1}{2\mu^2n^2} \sum_{i=0}^m(s(i) - s(m))^2 + \frac{(2-\mu)^2 m^3}{2\mu^2n^2} + \frac{2 m^3}{3 \mu^2 n^2} - \frac{(2-\mu) m^3}{\mu^2n^2}  \\
& \notag \qquad + o(1)\\
& \notag \qquad + \frac{\Delta_{n-m}}{\mu^2n^2} \sum_{i=0}^m(s(m) - s(i)) - \frac{(\mu-1)m^2 \Delta_{n-m}}{\mu^2 n^2} + \frac{(2-\mu)m}{\mu^2 n^2} \sum_{i=0}^m (s(m) - s(i)) \\
& \notag \qquad - \frac{2}{\mu^2n^2} \sum_{i=0}^m i (s(m) - s(i)) + \cdots 
\end{align}
As $\Delta_{n-m}$ is a centred sum of i.i.d.\ random variables with finite variance, the central limit theorem applies and we have that $n^{-1/2} \Delta_{n-m} \convdist \mathrm{N}(0,\sqrt{\mu(2-\mu)})$ as $n \to \infty$. The desired lower bound will, however, be obtained by restricting to the moderate deviation event
\[
\mathcal{E}_n = \left\{ -(2-\mu)m - n^{7/12} \le \Delta_{n-m} \le -(2-\mu)m + n^{7/12}\right\}.
\]
On this event, for any $0 \le i \le m$, we have
\begin{equation*}
|\Delta_{n-m}| = O(n^{2/3}), \quad |s(m) - s(i)| = O(n^{1/3} \log n) \quad \text{and} \quad |(2-\mu)m - 2(i-1)| = O(n^{2/3}).
\end{equation*}
So we have
\begin{align*}
& \frac{1}{2\mu^2n^2} \sum_{i=0}^m (s(i) - s(m))^2 = o(1), \qquad \frac{(2-\mu)m}{\mu^2 n^2} \sum_{i=0}^m (s(m) - s(i)) = o(1), \\
& \frac{\Delta_{n-m}}{\mu^2 n^2} \sum_{i=0}^m (s(m)-s(i)) = o(1), \quad \text{and} \quad - \frac{2}{\mu^2n^2} \sum_{i=0}^m i (s(m) - s(i)) = o(1),
\end{align*}
and that the remainder term (hidden in the ellipsis)  in the expansion of (\ref{eq:Taylor}) is $o(1)$. Using these facts we see that, on $\mathcal{E}_n$, the exponent (\ref{eq:Taylor}) is equal to $F_n + o(1)$, where 
\begin{align*}
F_n & :=\frac{1}{n\mu}\sum_{i=0}^m(s(i)-s(m)) -\frac{(2-\mu) m^2}{2\mu n}-\frac{(2-\mu)(\mu-1)m^3}{3\mu^2 n^2} \\
& \qquad + \frac{m(\Delta_{n-m})^2}{2 n^2 \mu^2} -\left(\frac{m}{\mu n} + \frac{(\mu -1)m^2}{\mu^2n^2} \right)\Delta_{n-m}.
\end{align*}
In order to find a lower bound on $\E{\exp(F_n) \mathbbm{1}_{\mathcal{E}_n}}$, we first consider the expectation of the stochastic part,
\[
\E{\exp \left(\frac{m(\Delta_{n-m})^2}{2 n^2 \mu^2} -\left(\frac{m}{\mu n} + \frac{(\mu -1)m^2}{\mu^2n^2} \right)\Delta_{n-m}\right) \mathbbm{1}_{\mathcal{E}_n}}.
\]
Let $\theta > 0$ (we shall choose a specific value for $\theta$ shortly) and define an equivalent measure $\Q$ via
\[
\frac{\mathrm d \Q}{\mathrm d \mathbb{P}} = \exp \left( -\theta \Delta_{n-m} - (n-m) \psi(\theta)  \right).
\]
Because the Radon--Nikodym derivative has a product form, under $\Q$ the random variables $D_1', D_2', \ldots, D_{n-m}'$ are still i.i.d.\ and each have mean
\[
\mathbb{E}_{\Q} \left[ D_1' \right] = \frac{\E{D_1' \exp(-\theta D_1' )}}{\E{\exp(-\theta D_1')}} = - \psi'(\theta)
\]
and variance $\mathrm{var}_{\Q}(D_1') = \psi''(\theta)$.
Now fix
\[
\theta = \frac{m}{\mu n} + \frac{m^2}{\mu n^2},
\]
so that
\begin{align*}
\mathbb{E}_{\Q}[\Delta_{n-m}] & = - (n-m)\psi'(\theta) \\
& = -(n-m) \left[ \frac{(2-\mu)m}{n} + \frac{(2-\mu)m^2}{n^2} - \frac{(\beta +4\mu - 6 \mu^2 + 2 \mu^3) m^2}{2 \mu^2 n^2} \right. \\ 
& \qquad \qquad  \qquad \quad \left. - \frac{(\beta +4\mu - 6 \mu^2 + 2 \mu^3) m^3}{2 \mu^2 n^3} + o\left(\frac{m^3}{n^3} \right) \right] \\
& = -(2-\mu)m + O(n^{1/3}) \\
\intertext{and}
\mathrm{var}_{\Q}(\Delta_{n-m}) & = (n-m)\psi''(\theta) =  \mu(2-\mu)(n-m) + O(n^{1/3}).
\end{align*}
Using Chebyshev's inequality and the fact that $n^{1/3} \ll n^{7/12}$, it follows that 
\[
\Q(\mathcal{E}_n^c) \le \Q(|\Delta_{n-m} + (2-\mu)m| > \tfrac{1}{2}n^{7/12}) \le \frac{5\mathrm{var}_{\Q}(\Delta_{n-m})}{n^{7/6}} = O(n^{-1/6}).
\]
So
\begin{align*}
&  \E{\exp \left(\frac{m(\Delta_{n-m})^2}{2 n^2 \mu^2} -\left(\frac{m}{\mu n} + \frac{(\mu -1)m^2}{\mu^2n^2} \right)\Delta_{n-m}\right) \mathbbm{1}_{\mathcal{E}_n} } \\
& \qquad = \exp \left( (n-m) \psi \left( \frac{m}{n \mu} + \frac{m^2}{\mu n^2} \right) \right) \mathbb{E}_{\Q} \left[ \exp \left(\frac{m(\Delta_{n-m})^2}{2 n^2 \mu^2} + \frac{m^2\Delta_{n-m}}{\mu^2 n^2}  \right) \mathbbm{1}_{\mathcal{E}_n} \right]  \\
& \qquad \ge \exp \left( (n-m) \psi \left( \frac{m}{n \mu} + \frac{m^2}{\mu n^2} \right) \right)  \Q(\mathcal{E}_n)  \\
& \qquad \qquad \times \exp \left( \frac{m((2-\mu)m - n^{7/12})^2}{2n^2\mu^2} - \frac{m^2((2-\mu)m + n^{7/12})}{\mu^2n^2} \right)\\
& \qquad = (1 + o(1)) \exp \left( (n-m) \psi \left( \frac{m}{n \mu} + \frac{m^2}{\mu n^2} \right) + \frac{(2-\mu)^2 m^3}{2 n^2 \mu^2} - \frac{(2-\mu)m^3}{\mu^2 n^2} \right).
\end{align*}
Now,
\begin{align*}
& (n-m) \psi \left( \frac{m}{n\mu} + \frac{m^2}{\mu n^2} \right) \\
& = (n-m) \left[ \frac{(2-\mu)m^2}{2\mu n^2} + \frac{(2-\mu)m^3}{\mu n^3} - \frac{(\beta + 4\mu - 6 \mu^2 + 2 \mu^3) m^3}{6 \mu^3 n^3} \right] + o(1) \\
& = \frac{(2-\mu) m^2}{2\mu n} + \frac{(2-\mu) m^3}{2 \mu n^2} - \frac{(\beta + 4\mu - 6 \mu^2 + 2 \mu^3) m^3}{6 \mu^3 n^2} + o(1).
\end{align*}
It follows that
\begin{align*}
& \phi_n^m(k_1, \ldots, k_m) \\
& \qquad \ge (1+o(1)) \E{\exp(F_n) \mathbbm{1}_{\mathcal{E}_n}} \\
& \qquad \ge (1+o(1)) \exp \left( \frac{(2-\mu) m^2}{2\mu n} + \frac{(2-\mu) m^3}{2 \mu n^2} - \frac{(\beta + 4\mu - 6 \mu^2 + 2 \mu^3) m^3}{6 \mu^3 n^2} \right. \\ 
& \qquad  \qquad \qquad \qquad \qquad \quad + \frac{(2-\mu)^2 m^3}{2 n^2 \mu^2} - \frac{(2-\mu)m^3}{\mu^2 n^2}  -\frac{(2-\mu) m^2}{2\mu n}-\frac{(2-\mu)(\mu-1)m^3}{3\mu^2 n^2} \\
& \qquad \qquad \qquad \qquad \qquad \quad  \left. + \frac{1}{n\mu}\sum_{i=0}^m(s(i)-s(m)) \right) \\
& \qquad = (1 + o(1)) \exp \left( \frac{1}{n\mu}\sum_{i=0}^m(s(i)-s(m))  - \frac{\beta m^3}{6 \mu^3 n^2} \right),
\end{align*}
as claimed.
\end{proof}

The event $\{|S(i)- S(m)| \le n^{1/3} \log n \text{ for $1 \le i \le m$}\}$ has probability tending to 1 as $n \to \infty$, and so the analogue of Proposition~\ref{prop:discretechangeofmeas} now follows exactly as in the $\alpha \in (1,2)$ case.  

\subsection{Convergence of a single large component for $\alpha \in (1,2)$}\label{sec:singlecompo}

In this section, we consider a large component of the graph conditioned to have size $\fl{x n^{\alpha/(\alpha+1)}}$, for $\alpha \in (1,2)$ only, and do the main technical work necessary to prove that it converges in distribution to a single component of the stable graph conditioned to have size $x$. By arguments analogous to those in Section~\ref{sec:graph}, it is essentially sufficient to consider a single tree in the forest $\widetilde{\mathbf{F}}_n(\nu)$ of size $\fl{xn^{\alpha/(\alpha +1)}}$, described by an excursion of the corresponding coding functions $\widetilde{S}^n$ and $\widetilde{G}^n$. The main result of this section, Theorem~\ref{thm:convofexcursion}, is a conditioned version of Theorem~\ref{thm:jointdfwheight}, which says that these excursions converge jointly in distribution to normalised excursions of $\widetilde{L}$ and $\widetilde{H}$ of length $x$.  (This is precisely the analogue of Theorem~\ref{thm:singletree} in the measure-changed setting.)  At the end of the section, we sketch how to obtain the metric space scaling limit of a single large component of the graph.

For simplicity, we will make the assumption that the support of the law of $D_1$ is $\Z_+$ so that excursions of any strictly positive length occur with positive probability.  This assumption is not necessary, since the condition $\Prob{D_1 = k} \sim c k^{-(\alpha + 2)}$ implies that the greatest common divisor of $\{k \ge 2: \Prob{D_1-1 = k} > 0\}$ is $1$, so that the claimed results all hold for $n$ sufficiently large.

Recall that $(S(k), k \ge 0)$ is a random walk which is skip-free to the left and in the domain of attraction of an $\alpha$-stable L\'evy process.  Let
\[
\mathcal{E}_m = \Big\{S(k) \ge 0 \text{ for $0 < k < m$, } S(m) = -1 \Big\},
\]
the event that the first $m$ steps form an excursion above the running minimum. If $m = \fl{n^{\alpha/(\alpha+1)}x}$ then, by Theorem~\ref{thm:singletree},
\begin{align*}
& \E{f\left(n^{-1/(\alpha+1)} S(\fl{n^{\alpha/(\alpha+1)} t}), n^{-(\alpha - 1)/(\alpha+1)} G(\fl{n^{\alpha/(\alpha+1)} t}),0 \le t \le x \right) \Big| \mathcal{E}_m} \\
& \qquad \to \N^{(x)} \left[ f(\mathbbm{e}, \mathbbm{h}) \right].
\end{align*}
More generally, write
\[
\mathcal{E}_{m_1, m_2} = \Big\{S(m_1) = \min_{0 \le k \le m_2-1} S(k) = S(m_2) +1 \Big\}
\]
(so that $\mathcal{E}_{m} = \mathcal{E}_{0,m}$) and, similarly,
\[
\widetilde{\mathcal{E}}^n_{m_1, m_2} = \Big\{\widetilde{S}^n(m_1) = \min_{0 \le k \le m_2-1} \widetilde{S}^n(k) = \widetilde{S}^n(m_2) +1 \Big\},
\]
so that $\widetilde{\mathcal{E}}^n_{m_1,m_2}$ is the event that there is an excursion of $\widetilde{S}^n$ above its running minimum between times $m_1$ and $m_2$ (recall that $\widetilde{S}^n$ and $\widetilde{G}^n$ have the same excursion intervals). This, of course, corresponds to a component of size $m_2 - m_1$.  Observe that the corresponding excursion of the height process starts and ends at 0. We will prove the following result.

\begin{thm} \label{thm:convofexcursion}
For any bounded continuous test function $f$, $0 \le t_1 < t_2$ such that $t_2 - t_1 = x$, and $m_1 = \fl{t_1 n^{\alpha/(\alpha+1)}}$, $m_2 = \fl{t_2 n^{\alpha/(\alpha+1)}}$, $m=m_2-m_1$ then
\begin{align*}
& \mathbb{E} \Big[ f \Big( n^{-1/(\alpha+1)} [\widetilde{S}^n(\fl{(t_1 + t) n^{\alpha/(\alpha+1)}}) - \widetilde{S}^n(\fl{t_1 n^{\alpha/(\alpha+1)}})], \\
& \hspace{2cm} n^{-(\alpha-1)/(\alpha+1)} \widetilde{G}^n(\fl{(t_1 + t) n^{\alpha/(\alpha+1)}}), \quad  0 \le t \le n^{-\alpha/(\alpha+1)} m \Big) \Big| \widetilde{\mathcal{E}}^n_{m_1, m_2} \Big] \\
& \hspace{1cm} \to \frac{\mathbb{N}^{(x)} \left[ \exp \left( \frac{1}{\mu} \int_0^x \mathbbm{e}(t)dt \right) f(\mathbbm{e}, \mathbbm{h}) \right]}{\mathbb{N}^{(x)} \left[ \exp \left( \frac{1}{\mu} \int_0^x \mathbbm{e}(t)dt \right) \right]}
\end{align*}
as $n \to \infty$.
\end{thm}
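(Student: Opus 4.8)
The plan is to transfer the problem, via the discrete change of measure of Proposition~\ref{prop:changemeas}, from the size-biased reordered walk $\widetilde S^n$ to the walk $S(k)=\sum_{i=1}^k(Z_i-2)$ with genuinely i.i.d.\ increments, where Duquesne's conditioned invariance principle Theorem~\ref{thm:singletree}~(ii) is available, and then to push the resulting Radon--Nikodym weight through the conditioning. The key starting observation is that both the event $\widetilde{\mathcal E}^n_{m_1,m_2}$ and the functional in the statement are measurable with respect to $(\hat D^n_1,\ldots,\hat D^n_{m_2})$ — on $\widetilde{\mathcal E}^n_{m_1,m_2}$ the displayed excursions of $\widetilde S^n$ and $\widetilde G^n$ depend only on $(\hat D^n_{m_1+1},\ldots,\hat D^n_{m_2})$ — so (\ref{eqn:discretechmeas}) yields
\[
\E{f(\cdots)\mid\widetilde{\mathcal E}^n_{m_1,m_2}}=\frac{\E{\Phi(n,m_2)\,f(\cdots)\,\I{\mathcal E_{m_1,m_2}}}}{\E{\Phi(n,m_2)\,\I{\mathcal E_{m_1,m_2}}}},
\]
where $\Phi(n,m_2)=\phi^n_{m_2}(Z_1,\ldots,Z_{m_2})$ and $f(\cdots)$ is the displayed functional, now read off from $S$, $G$.

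Since $S$ has i.i.d.\ increments and is skip-free to the left, $\mathcal E_{m_1,m_2}=B_{m_1}\cap\mathcal E^{(m_1)}_m$, where $B_{m_1}=\{S(m_1)=\min_{0\le k\le m_1}S(k)\}$ is $\sigma(Z_1,\ldots,Z_{m_1})$-measurable, $\mathcal E^{(m_1)}_m$ is the event that $(S(m_1+j)-S(m_1))_{0\le j\le m}$ is an excursion ending with a down-step (which is $\sigma(Z_{m_1+1},\ldots,Z_{m_2})$-measurable and distributed as $\mathcal E_m$), the two are independent, and given $B_{m_1}$ the prefix $(S(i))_{i\le m_1}$ is independent of the translated excursion (and of the excursion of $G$, which is a function of it). On $\mathcal E_{m_1,m_2}$ one has $S(m_2)=S(m_1)-1$, so the exponent of the explicit lower bound $\underline\Phi(n,m_2)$ of Lemma~\ref{lem:lowerbound} splits as a deterministic term, a prefix term $\tfrac1{n\mu}P_n$ with $P_n=\sum_{i=0}^{m_1-1}(S(i)-S(m_1))$ a function of $(Z_1,\ldots,Z_{m_1})$, and an excursion term $\tfrac1{n\mu}E_n$ with $E_n=\sum_{i=m_1}^{m_2}(S(i)-S(m_1))$ a function of $(Z_{m_1+1},\ldots,Z_{m_2})$. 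I would first show, as in Section~\ref{subsec:changemeasbrown} but restricting to the event $\mathcal G_n$ that the rescaled prefix and excursion have supremum at most $n^{1/(\alpha+1)}\log n$ (conditional probability $\to1$ by Theorem~\ref{thm:singletree}~(ii) and a reversal argument for the prefix), that a matching second-order Taylor estimate gives $\Phi(n,m_2)=\underline\Phi(n,m_2)(1+o(1))$ uniformly on $\mathcal G_n\cap\mathcal E_{m_1,m_2}$. Then the deterministic factor and, by the independence above, the positive finite factor $\E{e^{P_n/(n\mu)}\I{B_{m_1}}}$ cancel between numerator and denominator, and after translating back one is left with
\[
\E{f(\cdots)\mid\widetilde{\mathcal E}^n_{m_1,m_2}}=(1+o(1))\,\frac{\E{e^{E_n/(n\mu)}\,f\big(n^{-1/(\alpha+1)}S,\,n^{-(\alpha-1)/(\alpha+1)}G\big)\mid\mathcal E_m}}{\E{e^{E_n/(n\mu)}\mid\mathcal E_m}}.
\]

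Finally, $\tfrac1{n\mu}E_n=\tfrac1\mu\int_0^{m/n^{\alpha/(\alpha+1)}}n^{-1/(\alpha+1)}S(\fl{un^{\alpha/(\alpha+1)}})\,du$ with $m\sim xn^{\alpha/(\alpha+1)}$, so Theorem~\ref{thm:singletree}~(ii) and the continuous mapping theorem give, conditionally on $\mathcal E_m$,
\[
\Big(e^{E_n/(n\mu)},\,n^{-1/(\alpha+1)}S(\fl{\cdot\,n^{\alpha/(\alpha+1)}}),\,n^{-(\alpha-1)/(\alpha+1)}G(\fl{\cdot\,n^{\alpha/(\alpha+1)}})\Big)\convdist\Big(\exp\big(\tfrac1\mu\textstyle\int_0^x\mathbbm e\big),\,\mathbbm e,\,\mathbbm h\Big)
\]
under $\N^{(x)}$. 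Combined with uniform integrability of the weights $e^{E_n/(n\mu)}$ under the conditional law — which I would obtain from a bound $\sup_n\E{e^{2E_n/(n\mu)}\mid\mathcal E_m}<\infty$ via Kortchemski-type subexponential tail estimates on the supremum of the conditioned walk, with Lemma~\ref{lem:expbound} (i.e.\ $\N^{(x)}[\exp(\tfrac2\mu\int_0^x\mathbbm e)]<\infty$) identifying the limit — this yields convergence of the numerator and denominator to $\N^{(x)}[\exp(\tfrac1\mu\int_0^x\mathbbm e)f(\mathbbm e,\mathbbm h)]$ and $\N^{(x)}[\exp(\tfrac1\mu\int_0^x\mathbbm e)]$ respectively, which is the claim. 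The main obstacle is this last uniform integrability of the Radon--Nikodym weight under the conditioned (hence rare-event) law, for which one needs a sharp discrete tail bound for the conditioned excursion; a secondary difficulty is the uniform expansion of $\phi^n_{m_2}$ on $\mathcal G_n$ and, within it, verifying that the unconditioned prefix contribution is tame and genuinely cancels rather than leaving a residual correction.
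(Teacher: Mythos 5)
Your overall route is the same as the paper's: apply the discrete change of measure (Proposition~\ref{prop:changemeas}) with $m=m_2$, use independence and stationarity of the increments of $S$ to factor the Radon--Nikodym weight into a prefix part and an excursion part and cancel the prefix factor between numerator and denominator, and then conclude from Duquesne's conditioned invariance principle (Theorem~\ref{thm:singletree}(ii)), the continuous mapping theorem and uniform integrability of the weights $\exp\bigl(\frac{1}{n\mu}\sum_{i=0}^m(S(i)-S(m))\bigr)$ under $\Prob{\cdot\mid\mathcal{E}_m}$, obtained from Kortchemski's tail bounds and identified via Lemma~\ref{lem:expbound} (this last step is exactly the paper's Lemma~\ref{lem:excUI}).

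The genuine gap is in your treatment of the uniform two-sided expansion of $\Phi(n,m_2)$ on the excursion event. You propose to get $\Phi(n,m_2)=\underline{\Phi}(n,m_2)(1+o(1))$ ``as in Section~\ref{subsec:changemeasbrown}'', restricted to a good event $\mathcal{G}_n$ on which the prefix and excursion suprema are at most $n^{1/(\alpha+1)}\log n$. But Lemma~\ref{lem:lowerbound} and the Brownian-case computation only ever establish \emph{lower} bounds on $\phi^m_n$ (a lower bound plus $\E{\Phi}=1$ suffices for the unconditioned Theorem~\ref{thm:jointdfwheight} via Lemma~\ref{lem:James}, but not here); the matching \emph{upper} bound, uniformly over paths, is precisely the hard point, because $\phi^m_n$ is itself an expectation over $\Xi_{n-m_2}$ whose large deviations must be controlled from above for every admissible path. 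The paper does this in Proposition~\ref{prop:upperbound} by a different argument: representing each factor $\frac{n\mu}{\cdot}$ with independent exponentials, splitting on $\{\sum_i E_i\le Cn^{\alpha/(\alpha+1)}\}$, using Lemma~\ref{lem:exponentialmgf} for uniform $L^p$ control, and, crucially, exploiting that on $\mathcal{E}_{m_1,m_2}$ the walk attains a new minimum at $m_2$, so skip-freeness gives the deterministic bounds $0\le S(i)-S(m_2)\le m_2-i$ --- no good event is needed at all. Your $\mathcal{G}_n$ is thus superfluous, and worse, it creates a transfer problem you do not resolve: to discard $\mathcal{G}_n^c$ from the ratio you must show its conditional probability vanishes under the \emph{tilted} law conditioned on $\widetilde{\mathcal{E}}^n_{m_1,m_2}$ (equivalently, that $\E{\Phi(n,m_2)\I{\mathcal{E}_{m_1,m_2}\cap\mathcal{G}_n^c}}=o(\Prob{\widetilde{\mathcal{E}}^n_{m_1,m_2}})$), whereas Theorem~\ref{thm:singletree}(ii) and a reversal argument only control the i.i.d.\ conditioned law; transferring requires exactly the uniform control of $\Phi$ you are trying to establish, so the argument as stated is circular. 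A crude rescue (e.g.\ Cauchy--Schwarz with a second moment of $\Phi$) also fails quantitatively: the prefix-supremum requirement in $\mathcal{G}_n$ is violated with probability of order $(\log n)^{-\alpha}$ (one big jump among $m_1\asymp n^{\alpha/(\alpha+1)}$ steps), which cannot beat the polynomially small probability of $\mathcal{E}_{m_1,m_2}$. To repair the proof you should replace this step by (or reprove) the two-sided uniform estimate of Proposition~\ref{prop:upperbound} on the new-minimum event.
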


We need to prove a refinement of Lemma~\ref{lem:lowerbound}, to show that the change of measure is well-behaved at times when the process attains a new minimum.  

\begin{prop} \label{prop:upperbound}
Fix $T > 0$.  For $n \ge 1$ and $m \le T n^{\frac{\alpha}{\alpha+1}}$, let $k^{(n)}_1, k^{(n)}_2, \ldots, k^{(n)}_m \ge 1$ and let $s^{(n)}(i) = \sum_{j=1}^i(k^{(n)}_j - 2)$ be such that $s^{(n)}(0) = 0$ and $s^{(n)}(i) > s^{(n)}(m)$ for $1 \le i \le m-1$.  Then
\[
\phi_n^m(k^{(n)}_1, k^{(n)}_2, \ldots, k^{(n)}_m) = (1 + \delta_{n}) \exp \left(\frac{1}{n \mu} \sum_{i=0}^{m} (s^{(n)}(i)-s^{(n)}(m)) - \frac{C_{\alpha} m^{\alpha+1}}{(\alpha+1) \mu^{\alpha+1}n^{\alpha}} \right) ,
\]
where $\delta_{n}$ depends only on $n$ and $T$, and $\delta_{n} \to 0$ as $n \to \infty$.
\end{prop}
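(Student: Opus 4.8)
The plan is to split the claimed identity into its two inequalities. The lower bound, with the $o(1)$ uniform in $k_1^{(n)},\dots,k_m^{(n)}$ and in $m\le Tn^{\alpha/(\alpha+1)}$, is exactly Lemma~\ref{lem:lowerbound}, so the whole task is the matching upper bound. Write $k_i=k_i^{(n)}$, $s(i)=s^{(n)}(i)$, let $\Xi_{n-m}\equidist D_{m+1}+\dots+D_n$ and $\Delta=\Xi_{n-m}-\mu(n-m)$, and recall from the proof of Lemma~\ref{lem:lowerbound} that
\[
\phi_n^m(k_1,\dots,k_m)=\E{\exp\Big(\sum_{i=1}^{m-1}\log(1-i/n)\Big)\prod_{i=1}^m(1+\epsilon_i)^{-1}},\qquad \epsilon_i=\frac{\Delta-\mu m+\sum_{j=i}^m k_j}{n\mu}.
\]
The hypothesis $s(i)>s(m)$ for $1\le i\le m-1$ gives $\sum_{j=i}^m k_j=(s(m)-s(i-1))+2(m-i+1)\le 2m$ for $2\le i\le m$, and (using $s(m)\le 0$, as holds in the setting of Theorem~\ref{thm:convofexcursion}) also $\sum_{j=1}^m k_j=s(m)+2m\le 2m$; thus $0\le\sum_{j=i}^m k_j=O(n^{\alpha/(\alpha+1)})$ uniformly. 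Write $\mathrm{RHS}$ for the asserted right-hand side; since $s(m)\le 0$ forces $\frac1{n\mu}\sum_{i=0}^m(s(i)-s(m))\ge 0$, we have $\mathrm{RHS}\ge\exp(-C_\alpha T^{\alpha+1}/((\alpha+1)\mu^{\alpha+1}))>0$ uniformly.

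I would split the expectation at $\Delta=2\mu m$. On $\{\Delta>2\mu m\}$ every $\epsilon_i>0$, so $\prod(1+\epsilon_i)^{-1}\le 1$ and $\exp(\sum\log(1-i/n))\le 1$, whence this part is at most $\Prob{\Delta>2\mu m}$. As the deviation $2\mu m=\Theta(n^{\alpha/(\alpha+1)})$ sits above the Gaussian/heavy-tail crossover $\sqrt{n\log n}$ (here $\alpha>1$) and well below $n$, the ``one big jump'' heavy-tail bound together with $\nu_k\sim ck^{-(\alpha+2)}$ gives $\Prob{\Delta>2\mu m}=O(n^{1-\alpha})=o(1)$, hence $o(\mathrm{RHS})$, uniformly. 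It is essential to handle this region \emph{outside} the change of measure used below.

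For the main region $\{\Delta\le 2\mu m\}$ I would carry out the discrete change of measure of Section~\ref{subsec:changemeasbrown}: with $\psi(\lambda)=\log\E{\exp(-\lambda(D_1-\mu))}=\frac{\mu(2-\mu)}{2}\lambda^2-\frac{C_\alpha}{\alpha+1}\lambda^{\alpha+1}+o(\lambda^{\alpha+1})$ (from (\ref{eqn:Lasymp})) and $\theta=m/(n\mu)\to 0$, tilt by $e^{-\theta\Delta-(n-m)\psi(\theta)}$ to a law $\Q$ under which $\mathbb{E}_\Q[\Delta]=-(2-\mu)m+o(m)$ and $\mathrm{var}_\Q(\Delta)=\Theta(n)$, so that the $\{\Delta\le 2\mu m\}$ contribution equals $e^{(n-m)\psi(\theta)}\,\mathbb{E}_\Q\!\big[e^{\theta\Delta}\exp(\sum\log(1-i/n))\prod(1+\epsilon_i)^{-1}\I{\Delta\le 2\mu m}\big]$. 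On $\{\Delta\le 2\mu m\}$ the bound on $\sum_{j=i}^m k_j$ and $\Delta\ge-(\mu-1)n$ give $\epsilon_i\in[-\frac{\mu-1}{\mu}-o(1),\,O(m/n)]$, so $|\epsilon_i|\le\frac12$ (here $\mu<2$) and $-\log(1+\epsilon_i)\le-\epsilon_i+\epsilon_i^2$. The crux, which I expect to be the main obstacle, is a short but exact computation: by the choice $\theta=m/(n\mu)$ the $\Delta$-linear parts of $\theta\Delta$ and $-\sum_i\epsilon_i$ cancel, and the deterministic pieces coming from $\sum\log(1-i/n)$, from $-\sum_i\epsilon_i$, and from $(n-m)\psi(\theta)$ cancel at order $m^2/n$ (their coefficients sum to $0$), leaving
\[
(n-m)\psi(\theta)+\sum_{i=1}^{m-1}\log(1-i/n)+\theta\Delta-\sum_{i=1}^m\epsilon_i=\frac1{n\mu}\sum_{i=0}^m(s(i)-s(m))-\frac{C_\alpha m^{\alpha+1}}{(\alpha+1)\mu^{\alpha+1}n^\alpha}+o(1)
\]
uniformly; since $m^2/n\to\infty$, any residual term here would be fatal.

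This bounds the $\{\Delta\le 2\mu m\}$ contribution by $(1+o(1))\,\mathrm{RHS}\cdot\mathbb{E}_\Q[\exp(\sum_i\epsilon_i^2)\I{\Delta\le 2\mu m}]$, so it remains to check the last expectation is $1+o(1)$. Here $\sum_i\epsilon_i^2=O(m(\Delta^2+m^2)/n^2)$ is $o(1)$ on the $\Q$-bulk $\{|\Delta-\mathbb{E}_\Q\Delta|\le n^{2/3}\}$ (using $\alpha<2$), and on the dyadic shells $u\in(2^{j-1}n^{2/3},2^j n^{2/3}]$ inside $\{\Delta\le 2\mu m\}$ one balances $\exp(\sum_i\epsilon_i^2)\le\exp(O(mu^2/n^2))$ against the $\Q$-deviation bound $\Q(|\Delta-\mathbb{E}_\Q\Delta|>u)\le e^{-cu^2/n}$ for $u\le\delta n$ (Bernstein) and $\le e^{-cn}$ for $u>\delta n$ (Cram\'er; the left-tail rate function is finite because $D_1\ge 1$, even though $\E{D_1^3}=\infty$) — since $m/n\to 0$ the quadratic exponent wins and the shells sum to $o(1)$. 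Combining the two regions gives $\phi_n^m(k_1,\dots,k_m)\le(1+o(1))\,\mathrm{RHS}$ with the $o(1)$ depending only on $n$ and $T$, which with Lemma~\ref{lem:lowerbound} is the proposition.
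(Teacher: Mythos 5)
Your route is genuinely different from the paper's. The paper writes each factor $(n-i+1)\mu/(\sum_{j\ge i}k_j+\Xi_{n-m})$ as an expectation over an independent standard exponential $E_i$, conditions on $(E_1,\dots,E_m)$, applies the Laplace expansion (\ref{eqn:Lasymp}) to $\Xi_{n-m}$ on the event $\{\sum_i E_i\le Cn^{\alpha/(\alpha+1)}\}$ (the complementary event being killed by a gamma-tail estimate), and controls the residual term by the moment bounds of Lemma~\ref{lem:exponentialmgf}. You instead expand $\prod_i(1+\epsilon_i)^{-1}$ directly via $\log(1+x)\ge x-x^2$, tilt the degree sum at $\theta=m/(n\mu)$ (the device of Section~\ref{subsec:changemeasbrown}) and use concentration under $\Q$. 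I checked your ``crux'' identity: with this $\theta$ the $\Delta$-linear terms cancel exactly, $\sum_{i}\sum_{j\ge i}k_j=m\,s(m)-\sum_{j=0}^{m-1}s(j)+m(m+1)$ turns $\theta\Delta-\sum_i\epsilon_i$ into $\frac{1}{n\mu}\sum_{i=0}^m(s(i)-s(m))+\frac{(\mu-1)m^2-m}{n\mu}$, and the $m^2/n$ coefficients $\frac{2-\mu}{2\mu}-\frac12+\frac{\mu-1}{\mu}$ do sum to zero, so no residual of that (divergent) order survives; the leftover is $O(m/n)+O(m^3/n^2)$ plus the uniform $o(1)$ from (\ref{eqn:Lasymp}), as you claim. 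One imprecision: the two-sided bound $\Q(|\Delta-\E_\Q[\Delta]|>u)\le e^{-cu^2/n}$ for $u\le\delta n$ is false on the upper side (the tilted degree law is only exponentially light at scale $\theta\to0$); it is harmless here because the indicator $\I{\Delta\le 2\mu m}$ caps upward deviations at $O(m)\le n^{2/3}$, so only the lower tail of $\Delta$ is needed, where $-\Delta$ has increments bounded above by $\mu-1$ and Bernstein applies under $\Q$ — but you should phrase the concentration one-sidedly.

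There is, however, a genuine gap against the statement as given: $\delta_n$ must depend only on $n$ and $T$, i.e.\ the estimate must be uniform over \emph{all} $m\le Tn^{\alpha/(\alpha+1)}$, whereas your treatment of $\{\Delta>2\mu m\}$ silently assumes $m=\Theta(n^{\alpha/(\alpha+1)})$ (you even write ``$2\mu m=\Theta(n^{\alpha/(\alpha+1)})$''). If $m$ is bounded, or more generally $m=O(\sqrt n)$, then $2\mu m$ is at or below the CLT scale, $\Prob{\Delta>2\mu m}$ does not tend to $0$, and since on that event you only bound the integrand by $1$ your conclusion degrades to $\phi_n^m\le \Prob{\Delta>2\mu m}+(1+o(1))\,\mathrm{RHS}$ with both terms of order one — weaker than the claimed $(1+\delta_n)\mathrm{RHS}$. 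The fix is small: truncate at a level independent of $m$, e.g.\ at $\Delta>\max(2\mu m,\,n^{2/3})$, where Chebyshev alone gives probability $O(n^{-1/3})$ uniformly; on the complement one still has $\epsilon_i=O(n^{-1/3})$, and your cancellation and concentration steps go through verbatim (this also removes the need for the Fuk--Nagaev/one-big-jump input, which was the only place the heavy tail entered your region-one bound). Finally, your uses of $s^{(n)}(m)\le 0$ (to get $\sum_{j\ge i}k_j\le 2m$ and $\mathrm{RHS}$ bounded below, so that additive $o(1)$ errors are negligible) go slightly beyond the displayed hypotheses, which only state $s^{(n)}(i)>s^{(n)}(m)$ for $1\le i\le m-1$; this is clearly the intended reading (a new minimum at time $m$, as in the application and as the paper's own argument also needs), but make the assumption explicit.
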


We start with a technical lemma.

\begin{lem} \label{lem:exponentialmgf}  Suppose that $m = O(n^{\alpha/(\alpha+1)})$.
Let $E_1, E_2, \ldots$ be i.i.d.\ standard exponential random variables.  Suppose that for each $n$ we have a sequence $a^{(n)}_1, a^{(n)}_2, \ldots, a^{(n)}_m$ such that $a^{(n)}_i \in (0,Km/n)$ for all $1 \le i \le m$ for some constant $K$.
\begin{enumerate}
\item[(a)] We have
\[
\sum_{i=1}^m a^{(n)}_i (E_i - 1) \to 0
\]
in $L^2$.
\item[(b)] For any $p > 1$, there exists a constant $C > 0$ such that 
\[
\E{\exp\left(p \sum_{i=1}^m a^{(n)}_i (E_i - 1) \right)} \le C \exp \left( \frac{2p K^2 m^3}{n^2} \right).
\]
\end{enumerate}
Both the convergence in $(a)$ and the bound in $(b)$ are uniform in sequences $(a^{(n)}_i)$ satisfying the above conditions.
\end{lem}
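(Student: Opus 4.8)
The strategy is to treat parts (a) and (b) by direct moment and moment-generating-function computations, exploiting the independence of the $E_i$ and the smallness of the $a_i^{(n)}$. For part (a), since $E_i - 1$ has mean $0$ and variance $1$, independence gives
\[
\E{\left(\sum_{i=1}^m a^{(n)}_i (E_i - 1) \right)^2} = \sum_{i=1}^m (a^{(n)}_i)^2 \le m \cdot \frac{K^2 m^2}{n^2} = \frac{K^2 m^3}{n^2}.
\]
Since $m = O(n^{\alpha/(\alpha+1)})$, we have $m^3/n^2 = O(n^{(3\alpha - 2(\alpha+1))/(\alpha+1)}) = O(n^{(\alpha-2)/(\alpha+1)})$, which tends to $0$ because $\alpha < 2$. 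This bound depends only on $K$ and on $m/n^{\alpha/(\alpha+1)}$ (through the $O(\cdot)$), so the convergence is uniform over the admissible sequences, giving (a).

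For part (b), I would use the explicit moment generating function of an exponential random variable: for $\lambda < 1$, $\E{\exp(\lambda E_i)} = (1-\lambda)^{-1}$, so $\E{\exp(\lambda(E_i - 1))} = e^{-\lambda}(1-\lambda)^{-1}$. Fix $p > 1$. For $n$ large enough, $p a_i^{(n)} \le p K m/n \le 1/2$ for all $i$ (using $m/n \to 0$), so each factor is well-defined. Using the elementary estimate $-\lambda - \log(1-\lambda) \le \lambda^2$ valid for $\lambda \in [0,1/2]$, we get $\E{\exp(\lambda(E_i-1))} \le \exp(\lambda^2)$ for $\lambda \in [0,1/2]$. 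Hence, by independence,
\[
\E{\exp\left(p \sum_{i=1}^m a^{(n)}_i (E_i - 1) \right)} = \prod_{i=1}^m \E{\exp\left(p a^{(n)}_i (E_i-1)\right)} \le \exp\left( p^2 \sum_{i=1}^m (a^{(n)}_i)^2 \right) \le \exp\left( \frac{p^2 K^2 m^3}{n^2}\right).
\]
This is even slightly stronger than the claimed bound (which has $2pK^2$ in the exponent and an extra constant $C$); for the finitely many small $n$ where $p a_i^{(n)} > 1/2$ might fail we can absorb everything into the constant $C$, since there the left-hand side is a fixed finite number (or one can simply note the stated inequality is weaker). The bound depends only on $p$, $K$ and the implied constant in $m = O(n^{\alpha/(\alpha+1)})$, so it is uniform over admissible sequences.

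I do not anticipate a genuine obstacle here: the lemma is a routine second-moment / exponential-tilting computation, and the only points requiring a little care are (i) checking that the exponential moment is finite, i.e.\ that $p a_i^{(n)}$ stays bounded away from $1$ — which follows from $m/n \to 0$ — and (ii) making sure the error estimates are phrased uniformly in the sequence $(a_i^{(n)})$, which is automatic since all bounds are expressed purely in terms of $\sum_i (a_i^{(n)})^2 \le K^2 m^3/n^2$. The mild friction is simply bookkeeping to match the exact form of the constants in the statement, which I would handle by choosing $C$ large enough to cover small $n$ and noting the $\exp(2pK^2 m^3/n^2)$ factor dominates $\exp(p^2 K^2 m^3/n^2)$ once $p \le 2$, or trivially for all $p$ after enlarging $C$.
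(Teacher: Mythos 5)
Your proof is correct and follows essentially the same route as the paper: part (a) is the identical variance computation, and part (b) uses the exponential moment generating function with the elementary estimate $-\lambda-\log(1-\lambda)\le\lambda^2$ for $\lambda\in[0,1/2]$ (the paper uses the marginally weaker $\le 2\lambda^2$), then factorises by independence and absorbs constants. The one inaccuracy is your claim that for the finitely many small $n$ the left-hand side is ``a fixed finite number'': if some $p a_i^{(n)}\ge 1$ the expectation is infinite, so the bound should simply be asserted for $n$ large enough that $pKm/n<1/2$, which is exactly how the paper phrases its own proof (``for sufficiently large $n$'') and is all that is needed in the application.
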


\begin{proof}
(a) 
Since the sum is centred, we have
\[
\E{\left( \sum_{i=1}^m a_i^{(n)} (E_i - 1) \right)^2} = \var{\sum_{i=1}^m a_i^{(n)} (E_i - 1)} = \sum_{i=1}^m (a_i^{(n)})^2 \le \frac{K^2m^3}{n^2} \to 0
\]
as $n \to \infty$.

(b) For $0<a < 1/2$ we have
\[
\E{\exp(aE_1 - a)} = \frac{e^{-a}}{1-a} = e^{-a} \left(1 + \frac{a}{1-a} \right) \le \exp\left(-a + \frac{a}{1-a} \right) \le \exp(2a^2).
\]
So for sufficiently large $n$ we have
\[
 \E{\exp \left( p \sum_{i=1}^m a^{(n)}_i (E_i - 1) \right)}  \le \exp \left(2p \sum_{i=1}^m (a_i^{(n)})^2 \right)  \le \exp \left(\frac{2pK^2m^3}{n^2} \right). \qedhere
\]
\end{proof}
\begin{proof}[Proof of Proposition~\ref{prop:upperbound}]
The lower bound does not rely on $s^{(n)}$ attaining a new minimum at time $m$, and has already been proved in Lemma~\ref{lem:lowerbound}; we need a matching upper bound. To ease readability, we will suppress the superscripts on $k_i^{(n)}$ and $s^{(n)}(i)$.  Now,
\begin{align*}
& \E{\prod_{i=1}^m \frac{(n-i+1) \mu}{\sum_{j=i}^m k_j + \Xi_{n-m}} } \\
& = \prod_{i=1}^{m-1} \left(1 - \frac{i}{n} \right) \E{\prod_{i=1}^m \frac{n \mu}{s(m) - s(i-1) + 2(m-i+1) + \Xi_{n-m}} } \\
& = \prod_{i=1}^{m-1} \left(1 - \frac{i}{n} \right) \E{ \exp \left( - \sum_{i=1}^m \left\{ \frac{(s(m) - s(i-1) + 2(m-i+1) + \Xi_{n-m}}{n \mu} - 1 \right\} E_i  \right) },
\end{align*}
where $E_1, E_2, \ldots$ are i.i.d.\ standard exponential random variables, independent of $\Xi_{n-m}$.   We shall first consider the expectation conditionally on $E_1, E_2, \ldots, E_m$.  Write $A_m$ for the quantity
\begin{align*}
& \prod_{i=1}^{m-1} \left(1 - \frac{i}{n} \right) \\
& \quad \times \E{ \exp \left( - \sum_{i=1}^m \left\{ \frac{(s(m) - s(i-1) + 2(m-i+1) + \Xi_{n-m}}{n \mu} - 1 \right\} E_i  \right) \Bigg| E_1, \ldots, E_m}.
\end{align*}
Let $C > 0$ be a constant to be chosen later.  We will split $\E{A_m}$ into two parts, so that
\[
 \E{\prod_{i=1}^m \frac{(n-i+1) \mu}{\sum_{j=i}^m k_j + \Xi_{n-m}} } = \E{A_m \I{\sum_{i=1}^m E_i > C n^{\alpha/(\alpha+1)}}} + \E{A_m \I{\sum_{i=1}^m E_i \le C n^{\alpha/(\alpha+1)}}}.
\]
We deal with the first term on the right-hand side first.  Since $k_j \ge 1$ for all $j$ and $\Xi_{n-m} \ge n-m$ a.s., we have the crude bound
\[
s(m) - s(i-1) + 2(m-i+1) + \Xi_{n-m} \ge n-i+1 > n-m > n/2
\]
for $1 \le i \le m$ and all $n$ sufficiently large that $m/n < 1/2$.  Then
\begin{align*}
\E{A_m \I{\sum_{i=1}^m E_i > C n^{\alpha/(\alpha+1)}}} 
& \le \E{\exp \left(\left(1 - \frac{1}{2\mu} \right) \sum_{i=1}^m E_i \right) \I{\sum_{i=1}^m E_i > C n^{\alpha/(\alpha+1)}} } \\
& = \int_{C n^{\alpha/(\alpha+1)}}^{\infty} \exp\left(x -\frac{x}{2\mu}\right) \frac{e^{-x} x^{m-1}}{\Gamma(m)} dx \\
& =  \int_{C n^{\alpha/(\alpha+1)}}^{\infty} \exp\left(- \frac{x}{2\mu}\right) \frac{x^{m-1}}{\Gamma(m)} dx \\
& =(2\mu)^m \Prob{\frac{1}{2\mu}  \sum_{i=1}^m E_i >  C n^{\alpha/(\alpha+1)}}.
\end{align*}
By Markov's inequality, this last quantity is bounded above by
\[
(2\mu)^m  \E{\exp\left(\frac{1}{2\mu} \sum_{i=1}^m E_i \right)} \exp \left( -Cn^{\alpha/(\alpha+1)}\right)  = \left( \frac{(2\mu)^2}{2\mu-1} \right)^m \exp(-Cn^{\alpha/(\alpha+1)}) \to 0
\]
as $n \to \infty$, as long as we take $C > T (2 \log (2\mu) -\log(2\mu-1))$, which we henceforth assume.

Let us now turn to the expectation of $A_m$ on the event $\{\sum_{i=1}^m E_i \le C n^{\alpha/(\alpha+1)}\}$.  Since 
\[
\prod_{i=1}^{m-1} \left(1 - \frac{i}{n} \right) \le \exp\left(-\frac{m(m-1)}{2n} \right),
\]
we have
\begin{align*}
& A_m \I{\sum_{i=1}^m E_i \le C n^{\alpha/(\alpha+1)}}  \\
& \qquad \le \exp \left( - \frac{m(m-1)}{2n} +  \frac{1}{n\mu} \sum_{i=1}^m(s(i-1) - s(m)) E_i - \sum_{i=1}^m \left( \frac{2(m-i+1)}{n \mu} - 1\right) E_i \right) \\
& \qquad \qquad  \times \E{ \exp \left(- \left(\frac{1}{n \mu}\sum_{i=1}^m E_i \right) \Xi_{n-m} \right) \Bigg| E_1, \ldots, E_m} \I{\sum_{i=1}^m E_i \le Cn^{\alpha/(\alpha+1)}}.
\end{align*}
On the event $\left\{ \sum_{i=1}^m E_i \le Cn^{\alpha/(\alpha+1)} \right\}$, we have $\frac{1}{n \mu} \sum_{i=1}^m E_i = o(1)$. Hence, we may apply the asymptotic formula (\ref{eqn:Lasymp}) for the Laplace transform of $D_1$ to obtain that on the event $\left\{ \sum_{i=1}^m E_i \le Cn^{\alpha/(\alpha+1)} \right\}$ we have
\begin{align*}
& \E{ \exp \left(- \left(\frac{1}{n \mu}\sum_{i=1}^m E_i \right) \Xi_{n-m} \right) \Bigg| E_1, \ldots, E_m}  \\
& = \exp \left( - \frac{(n-m)}{n}\sum_{i=1}^m E_i + \frac{(2-\mu)}{2\mu n} \left(\sum_{i=1}^m E_i \right)^2  \! -  \frac{C_{\alpha}}{(\alpha+1) n^{\alpha} \mu^{\alpha+1}}\left( \sum_{i=1}^m E_i \right)^{\alpha+1} \! \! \! + o(1) \right) .
\end{align*}
It follows that 
\begin{align*}
& A_m \I{\sum_{i=1}^m E_i \le C n^{\alpha/(\alpha+1)}} \\
& \le \exp \left( \frac{1}{n\mu} \sum_{i=1}^m (s(i-1) - s(m)) E_i - \frac{C_{\alpha}}{(\alpha+1)n^{\alpha} \mu^{\alpha+1}}\left( \sum_{i=1}^m E_i \right)^{\alpha+1} + o(1) \right) \\
&  \quad  \times \exp \left( \frac{(2-\mu)}{2\mu n} \left( \sum_{i=1}^m E_i \right)^2 - \frac{2}{n\mu} \sum_{i=1}^m (m-i+1) E_i  + \frac{m}{n} \sum_{i=1}^m E_i - \frac{m^2}{2n} \right) \\
& \quad \times \I{\sum_{i=1}^m E_i \le Cn^{\alpha/(\alpha+1)}}.
\end{align*}
Observe that
\begin{align*}
& \frac{(2-\mu)}{2\mu n} \left( \sum_{i=1}^m E_i \right)^2 - \frac{2}{n\mu} \sum_{i=1}^m (m-i+1) E_i  + \frac{m}{n} \sum_{i=1}^m E_i - \frac{m^2}{2n} \\
& =  \frac{(2-\mu)}{2\mu n}  \left(m+ \sum_{i=1}^m E_i \right) \sum_{i=1}^m (E_i - 1)  - \frac{2}{n\mu} \sum_{i=1}^m (m-i+1)(E_i -1) \\
& \qquad  + \frac{m}{n} \sum_{i=1}^m (E_i-1) + \frac{m}{n \mu}.
\end{align*}
So
\begin{align}
& A_m \I{\sum_{i=1}^m E_i \le Cn^{\alpha/(\alpha+1)}} \notag \\
&  \le \exp \left( \frac{1}{n\mu} \sum_{i=0}^m (s(i) - s(m))  - \frac{C_{\alpha} m^{\alpha+1}}{(\alpha+1) \mu^{\alpha+1} n^{\alpha}} + o(1) \right) \exp(\chi_n) \I{\sum_{i=1}^m E_i \le Cn^{\alpha/(\alpha+1)}},  \label{eqn:smallerexpression}
\end{align}
where
\begin{align*}
\chi_n & =  \frac{1}{n\mu} \sum_{i=1}^m (s(i-1) - s(m)) (E_i-1) - \frac{2}{n\mu} \sum_{i=1}^m (m-i+1)(E_i -1) + \frac{m}{n} \sum_{i=1}^m (E_i-1) \\
& \quad + \frac{(2-\mu)}{2\mu n}  \left(m+ \sum_{i=1}^m E_i \right) \sum_{i=1}^m (E_i - 1) -\frac{C_{\alpha}m^{\alpha+1}}{(\alpha+1)n^{\alpha} \mu^{\alpha+1}}\left(\left( \frac{1}{m} \sum_{i=1}^m E_i \right)^{\alpha+1} - 1 \right).
\end{align*}

We need to understand the asymptotics of the expectation of the right-hand side of (\ref{eqn:smallerexpression}). 
Recall that $s$ has steps down of magnitude at most 1, so that we have the crude bound $s(i) - s(m) \le m-i+1 \le m$ for all $0 \le i \le m$. So by Lemma~\ref{lem:exponentialmgf}(a), we get
\[
 \frac{1}{n \mu} \sum_{i=1}^m (s(i-1) - s(m)) (E_i-1) - \frac{2}{n \mu} \sum_{i=1}^m (m-i+1)(E_i-1) +  \frac{m}{n} \sum_{i=1}^m (E_i - 1) \to 0
\]
 in $L^2$, as $n \to \infty$. We have
\begin{align*}
& \E{ \left( \frac{(2-\mu)}{2\mu n}  \left(m+ \sum_{i=1}^m E_i \right)  \sum_{i=1}^m (E_i - 1)  \right)^2\I{\sum_{i=1}^m E_i \le Cn^{\alpha/(\alpha+1)}} } \\
& \quad \le \frac{(2-\mu)^2(T+C)^2 n^{2\alpha/(\alpha+1)}}{4 \mu^2 n^2} \E{\left(\sum_{j=1}^m (E_j-1) \right)^2} \le \frac{(T+C)^2T n^{3\alpha/(\alpha+1)}}{\mu^2 n^2} \to 0,
\end{align*}
 since $n^{\alpha/(\alpha+1)}/n^2 = n^{(\alpha-2)/(\alpha-1)} = o(1)$. If $m \to \infty$ as $n \to \infty$, it follows straightforwardly from the weak law of large numbers that
\[
\left(\frac{1}{m}  \sum_{i=1}^m E_i \right)^{\alpha+1} \convprob 1
\]
and so, for any $m  \le Tn^{\alpha/(\alpha+1)}$, we have
\[
\frac{C_{\alpha}m^{\alpha+1}}{(\alpha+1)n^{\alpha} \mu^{\alpha+1}} \left( \left( \frac{1}{m}\sum_{i=1}^m E_i \right)^{\alpha+1} - 1 \right) \convprob 0.
\]
These results imply that $\chi_n \convprob 0$ on $\{\sum_{i=1}^m E_i \le Cn^{\alpha/(\alpha+1)}\}$, as $n \to \infty$ and so
\[
\exp(\chi_n) \I{\sum_{i=1}^m E_i \le Cn^{\alpha/(\alpha+1)}} \convprob 1.
\]
It remains to show that
\[
\E{\exp(\chi_n) \I{\sum_{i=1}^m E_i \le Cn^{\alpha/(\alpha+1)}}} \to 1
\]
as $n \to \infty$, for which we require uniform integrability.  Now, we have
\begin{align*}
& \exp \left( \frac{(2-\mu)}{2\mu n}  \left(m+ \sum_{i=1}^m E_i \right) \sum_{i=1}^m (E_i - 1)  \right) \I{\sum_{i=1}^m E_i \le Cn^{\alpha/(\alpha+1)}} \\
& \qquad \qquad \le 1 + \exp \left( \frac{(2-\mu)}{2\mu n} (C+1)m  \sum_{i=1}^m (E_i-1)  \right).
\end{align*}
Hence,
\begin{align*}
& \exp(\chi_n ) \I{\sum_{i=1}^m E_i \le Cn^{\alpha/(\alpha+1)}} \\
&  \le \exp \left( \frac{C_{\alpha}m^{\alpha+1}}{(\alpha+1)\mu^{\alpha+1}n^{\alpha}} \right) \\
&  \times \left[\exp\left( \sum_{i=1}^m \left\{ \frac{(s(i-1) - s(m))}{n\mu} -\frac{2(m-i+1)}{n\mu} + \frac{m}{n} \right\}(E_i-1)  \right) \right.\\
&  \left. + \exp \left( \sum_{i=1}^m  \left\{ \frac{(s(i-1) - s(m))}{n\mu} -\frac{2(m-i+1)}{n\mu} + \frac{m}{n}  + \frac{(2-\mu)(C+1)m}{2\mu n} \right\}(E_i-1) \right) \right].
\end{align*}
Applying Lemma~\ref{lem:exponentialmgf}(b), we see that both terms are bounded in $L^p$ for $p > 1$.  Hence, the sequence $(\exp(\chi_n) \I{\sum_{i=1}^m E_i \le Cn^{\alpha/(\alpha+1)}}, \ n \ge 1)$
is uniformly integrable and we may deduce that
\[
\E{\exp(\chi_n) \I{\sum_{i=1}^m E_i \le Cn^{\alpha/(\alpha+1)}}} \to 1,
\]
which concludes the proof.
\end{proof}

We will also need the following lemma.

\begin{lem} \label{lem:excUI}
Fix $\theta > 0$ and let $m = \fl{T n^{\alpha/(\alpha+1)}}$.  Then we have the following uniform integrability: for $K > 0$,
\[
\limsup_{K \to \infty} \sup_{n \ge 1} \E{ \exp \left( \frac{\theta}{n} \sum_{i=0}^m \left( S(i) - S(m) \right) \right) \I{\frac{1}{n} \sum_{i=0}^m \left( S(i) - S(m) \right) > K} \Bigg| \mathcal{E}_m} =0.
\]
\end{lem}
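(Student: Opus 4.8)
The plan is to reduce the statement to a uniform-in-$n$ exponential moment bound and then to invoke the sub-exponential tail estimates of Kortchemski~\cite{KortchemskiSubExpTailBounds}. Throughout write $W_n := \frac{1}{n}\sum_{i=0}^m (S(i)-S(m))$, where $m = \fl{T n^{\alpha/(\alpha+1)}}$; note that $W_n \ge 0$ on $\mathcal{E}_m$ since $S(i)-S(m) = S(i)+1 \ge 0$ there. It suffices to prove that $M := \sup_{n\ge1}\E{\exp(2\theta W_n)\mid\mathcal{E}_m}<\infty$: on the event $\{W_n>K\}$ one has $e^{\theta W_n}\le e^{-\theta K}e^{2\theta W_n}$, so $\E{e^{\theta W_n}\I{W_n>K}\mid\mathcal{E}_m}\le M e^{-\theta K}$, which tends to $0$ uniformly in $n$ as $K\to\infty$.

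First I would record a deterministic bound on $\mathcal{E}_m$. Since $S(m)=-1$ and $S(i)\ge0$ for $0\le i\le m-1$, one has $\sum_{i=0}^m(S(i)-S(m)) = m + \sum_{i=1}^{m-1}S(i) \le m\bigl(1+\sup_{0\le i\le m}S(i)\bigr)$, hence $W_n \le \tfrac{m}{n}\bigl(1+\sup_{0\le i\le m}S(i)\bigr)$. Because $\tfrac{m}{n}=O(n^{-1/(\alpha+1)})\to0$ and $\tfrac{m^{1+1/\alpha}}{n}\to T^{1+1/\alpha}$ (the exponent $\tfrac{\alpha}{\alpha+1}\cdot\tfrac{\alpha+1}{\alpha}$ equals $1$), writing $V_m:=m^{-1/\alpha}\sup_{0\le i\le m}S(i)$ we get, for all $n$ large, $2\theta W_n \le 2\theta + \Lambda V_m$ on $\mathcal{E}_m$ for a finite constant $\Lambda=\Lambda(\theta,T,\alpha)$, so that $\exp(2\theta W_n)\le e^{2\theta}\exp(\Lambda V_m)$. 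For the finitely many remaining (small) $n$ the conditional expectation is trivially finite, since on $\mathcal{E}_m$ one has $S(i)\le S(m)+(m-i)=m-i-1$ deterministically, whence $W_n\le m^2/n$.

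It then remains to bound $\sup_m\E{\exp(\Lambda V_m)\mid\mathcal{E}_m}$ uniformly in $m$. Conditionally on $\mathcal{E}_m$, the path $(S(0),\ldots,S(m))$ is distributed as the {\L}ukasiewicz path of a Galton--Watson tree with offspring distribution equal to the law of $Z_1-1$, conditioned to have exactly $m$ vertices; this offspring law is critical (as $\E{Z_1}=2$) and lies in the domain of attraction of the spectrally positive $\alpha$-stable law (as $\Prob{Z_1-1=k}\sim(c/\mu)k^{-(\alpha+1)}$), while $\mathcal{E}_m$ has positive probability for $n$ large by the gcd condition already noted. Kortchemski's tail bounds (the discrete counterpart of the estimate used in the proof of Lemma~\ref{lem:expbound}) then furnish $\delta\in(1,\tfrac{\alpha}{\alpha-1})$ and constants $C_1,C_2>0$, independent of $m$, with $\Prob{\sup_{0\le i\le m}S(i)\ge u\, m^{1/\alpha}\mid\mathcal{E}_m}\le C_1\exp(-C_2u^{\delta})$ for all $u\ge0$. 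Since $\delta>1$, this gives $\E{\exp(\Lambda V_m)\mid\mathcal{E}_m} = 1+\Lambda\int_0^\infty e^{\Lambda u}\,\Prob{V_m\ge u\mid\mathcal{E}_m}\,du \le 1+\Lambda C_1\int_0^\infty e^{\Lambda u - C_2 u^{\delta}}\,du <\infty$, with a bound independent of $m$. Combining the three steps yields $M<\infty$ and hence the lemma.

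The main obstacle is the last step. The conditioning event $\mathcal{E}_m$ has probability of order $n^{-1}$, so crudely bounding $\E{\exp(\Lambda V_m)\I{\mathcal{E}_m}}$ and dividing by $\Prob{\mathcal{E}_m}$ loses too much; and, crucially, $\sup_{0\le i\le m}S(i)$ does \emph{not} have a uniform exponential tail for the unconditioned walk, because a single heavy-tailed increment already produces a polynomial tail. It is precisely the excursion conditioning that lightens the tail of $\sup_{0\le i\le m}S(i)$ to a stretched exponential with exponent greater than $1$, and extracting this \emph{uniformly in} $m$ is exactly what the Kortchemski estimates provide; identifying the conditioned walk as the {\L}ukasiewicz path of a conditioned critical stable Galton--Watson tree is what lets us apply them directly.
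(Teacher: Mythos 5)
Your proof is correct and follows essentially the same route as the paper: the same deterministic bound reducing the exponent on $\mathcal{E}_m$ to $\theta T^{(\alpha+1)/\alpha} m^{-1/\alpha}\bigl(1+\max_{0\le i\le m}S(i)\bigr)$, followed by Kortchemski's Theorem 9 tail estimate for the conditioned walk with some $\delta\in(1,\alpha/(\alpha-1))$. The only difference is bookkeeping — you establish a uniform exponential moment at exponent $2\theta$ and deduce uniform integrability by the $e^{-\theta K}$ trick, whereas the paper bounds the truncated expectation directly via summation by parts — which is an inessential variation.
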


\begin{proof}
The proof uses similar ingredients to the proof of Lemma~\ref{lem:expbound}.  On the event $\mathcal{E}_m$ we have
\[
\frac{\theta}{n} \sum_{i=0}^m (S(i) - S(m)) \le \frac{\theta m}{n} \left(1+ \max_{0 \le i \le m} S(i) \right) \le \theta T^{(\alpha+1)/\alpha} m^{-1/\alpha} \left(1+ \max_{0 \le i \le m} S(i) \right).
\]
So it will be sufficient to show that we have
\[
\limsup_{K \to \infty} \sup_{m \ge 1} \E{ \exp \left(\theta m^{-1/\alpha} \max_{0 \le i \le m} S(i) \right) \I{m^{-1/\alpha} \max_{0 \le i \le m} S(i)  > K} \Bigg| \mathcal{E}_m} = 0.
\]
We have
\begin{align*}
& \E{ \exp \left(\theta m^{-1/\alpha} \max_{0 \le i \le m} S(i) \right) \I{m^{-1/\alpha} \max_{0 \le i \le m} S(i)  > K} \Bigg| \mathcal{E}_m}  \\
& \quad = \sum_{k = \fl{K m^{1/\alpha}}+1}^{\infty} e^{\theta m^{-1/\alpha} k} \Prob{\max_{0 \le i \le m} S(i) = k \Big| \mathcal{E}_m} \\
& \quad \le e^{(K+1) \theta} \Prob{m^{-1/\alpha} \max_{0 \le i \le m} S(i) > K \Big| \mathcal{E}_m} \\
& \quad \qquad + \sum_{k = \fl{K m^{1/\alpha}}+2}^{\infty} \theta m^{-1/\alpha} e^{\theta m^{-1/\alpha} k} \Prob{\max_{0 \le i \le m} S(i) \ge k \Big| \mathcal{E}_m},
\end{align*}
by summation by parts and the fact that $e^{\theta m^{-1/\alpha} k} - e^{\theta m^{-1/\alpha}(k-1)} \le m^{-1/\alpha} \theta e^{\theta m^{-1/\alpha} k} $.
Theorem 9 of Kortchemski~\cite{KortchemskiSubExpTailBounds} gives that for any $\delta \in (0, \alpha/(\alpha-1))$, there exist universal constants $C_1, C_2 > 0$ such that
\[
\Prob{m^{-1/\alpha} \max_{0 \le i \le m} S(i) \ge u \Big| \mathcal{E}_m} \le C_1 \exp(-C_2 u^{\delta}).
\]
We take $\delta \in (1,\alpha/(\alpha-1))$.  So then
\begin{align*}
& \E{ \exp \left(\theta m^{-1/\alpha} \max_{0 \le i \le m} S(i) \right) \I{m^{-1/\alpha} \max_{0 \le i \le m} S(i)  > K} \Bigg| \mathcal{E}_m}  \\
& \qquad \le e^{(K+1) \theta} \Prob{m^{-1/\alpha} \max_{0 \le i \le m} S(i) > K \Big| \mathcal{E}_m} \\
& \qquad \qquad + \int_K^{\infty}  \theta e^{\theta x} \Prob{m^{-1/\alpha} \max_{0 \le i \le m} S(i) \ge x - 1 \Big| \mathcal{E}_m}  dx \\
& \qquad \le C_1 \exp((K+1) \theta - C_2 K^{\delta}) + \int_K^{\infty}  C_1 \theta \exp(\theta x - C_2 (x-1)^{\delta}) dx,
\end{align*}
which clearly tends to 0 as $K \to \infty$ since $\delta > 1$.  The result follows.
\end{proof}

\begin{proof}[Proof of Theorem~\ref{thm:convofexcursion}]
Recall that $S(k) = \sum_{i=1}^k (Z_i - 2)$, where $Z_1, Z_2, \ldots$ are i.i.d.\ with the size-biased degree distribution. 
Then we have
\begin{align*}
& \E{f \left(n^{-\frac{1}{\alpha+1}} [\widetilde{S}^n(\fl{(t_1 + \cdot) n^{\frac{\alpha}{\alpha+1}}} )- \widetilde{S}^n(\fl{t_1 n^{\frac{\alpha}{\alpha+1}}})],  \widetilde{G}^n(\fl{(t_1 + \cdot) n^{\frac{\alpha}{\alpha+1}}})\right) \Big| \widetilde{\mathcal{E}}^n_{m_1, m_2} } \\
&= \frac{\E{f \left(n^{-\frac{1}{\alpha+1}} [\widetilde{S}^n(\fl{(t_1 + \cdot) n^{\frac{\alpha}{\alpha+1}}} )- \widetilde{S}^n(\fl{t_1 n^{\frac{\alpha}{\alpha+1}}})], n^{-\frac{\alpha-1}{\alpha+1}} \widetilde{G}^n(\fl{(t_1 + \cdot) n^{\frac{\alpha}{\alpha+1}}})\right)\mathbbm{1}_{\widetilde{\mathcal{E}}^n_{m_1, m_2} }}}{\E{\mathbbm{1}_{\widetilde{\mathcal{E}}^n_{m_1, m_2} }}}. 
\end{align*}
Using the change of measure, this is equal to
\begin{align}
& \tfrac{\E{\Phi(n,\fl{t_2 n^{\frac{\alpha}{\alpha+1}}})  f \left(n^{-\frac{1}{\alpha+1}} [S(\fl{(t_1 + \cdot) n^{\frac{\alpha}{\alpha+1}}})- S(\fl{t_1 n^{\frac{\alpha}{\alpha+1}}})], n^{-\frac{\alpha-1}{\alpha+1}} G(\fl{(t_1 + \cdot) n^{\frac{\alpha}{\alpha+1}}})\right)\mathbbm{1}_{\mathcal{E}_{m_1,m_2}}}}{\E{\Phi(n, \fl{t_2 n^{\frac{\alpha}{\alpha+1}}})\mathbbm{1}_{\mathcal{E}_{m_1,m_2}}}} \notag \\
& = \tfrac{ \E{\Phi(n,\fl{t_2 n^{\frac{\alpha}{\alpha+1}}}) f \left(n^{-\frac{1}{\alpha+1}} [S(\fl{(t_1 + \cdot) n^{\frac{\alpha}{\alpha+1}}}) - S(\fl{t_1 n^{\frac{\alpha}{\alpha+1}}})], n^{-\frac{\alpha-1}{\alpha+1}} G(\fl{(t_1+\cdot) n^{\frac{\alpha}{\alpha+1}}}) \right) \Big| \mathcal{E}_{m_1,m_2}} }{\E{\Phi(n, \fl{t_2 n^{\frac{\alpha}{\alpha+1}}}) \Big| \mathcal{E}_{m_1,m_2} }} \label{eqn:increments}.
\end{align}
By Proposition~\ref{prop:upperbound}, we have that on the event $\mathcal{E}_{m_1,m_2}$, 
\begin{equation} \label{ApproximateMeasureChange} 
 \Phi(n,\fl{t_2 n^{\frac{\alpha}{\alpha+1}}})  = (1 +o(1)) \exp \left(\frac{1}{\mu n} \sum_{i=0}^{m_2} (S(i)-S(m_2)) - \frac{C_\alpha t_2^{\alpha+1}}{(\alpha+1)\mu^{\alpha+1}} \right),
\end{equation}
where the $o(1)$ is uniform on $\mathcal{E}_{m_1,m_2}$.  But using that $S(m_2) = S(m_1) - 1$, we get
\begin{align*}
\Phi(n,\fl{t_2 n^{\frac{\alpha}{\alpha+1}}}) & = (1+o(1)) \exp \left(\frac{1}{\mu n} \sum_{i=0}^{m_1-1} (S(i)-S(m_1)) + \frac{m_1}{\mu n}- \frac{C_\alpha t_2^{\alpha+1}}{(\alpha+1)\mu^{\alpha+1}} \right)  \\
 & \hspace{2cm} \times \exp \left(\frac{1}{\mu n} \sum_{i=m_1}^{m_2} ([S(i) - S(m_1)] - [S(m_2)-S(m_1)]) \right).
\end{align*}
The increments of the random walk $S$ are independent, and so the first and second terms in this product are independent. The first term is also independent of the argument of the function $f$.  So in both the numerator and denominator of the fraction (\ref{eqn:increments}), we may cancel a factor of
\[
\E{\exp \left(\frac{1}{\mu n} \sum_{i=0}^{m_1-1} (S(i)-S(m_1)) + \frac{m_1}{\mu n}- \frac{C_\alpha t_2^{\alpha+1}}{(\alpha+1)\mu^{\alpha+1}} \right) \Bigg| \mathcal{E}_{m_1,m_2}}.
\]
Using also the stationarity of the increments of $S$ and the fact that $m=m_2-m_1$, we then obtain that (\ref{eqn:increments}) is equal to $(1+o(1))$ times
\[
\tfrac{\E{ \exp \left(\frac{1}{\mu n} \sum_{i=0}^{m} (S(i) - S(m)) \right)  f \left(n^{-\frac{1}{\alpha+1}} S(\fl{t n^{\frac{\alpha}{\alpha+1}}}) , n^{-\frac{\alpha-1}{\alpha+1}} G(\fl{t n^{\frac{\alpha}{\alpha+1}}}), 0 \le t \le  n^{-\frac{\alpha}{\alpha+1}} m \right) \Big| \mathcal{E}_{m}} }{\E{\exp \left(\frac{1}{\mu n} \sum_{i=0}^{m} (S(i) - S(m)) \right) \Big| \mathcal{E}_{m} }}.
\]
Lemma~\ref{lem:excUI} gives us the requisite uniform integrability in order to now deduce the result from Theorem~\ref{thm:singletree} and the continuous mapping theorem.
\end{proof}

Let us briefly sketch how this result gives a scaling limit for a single component of $\mathbf{M}_n(\nu)$ or $\mathbf{G}_n(\nu)$ conditioned to have size $\fl{xn^{\alpha/(\alpha+1)}}$.  First note that for any $\epsilon > 0$ there exists a time $T > 0$ such that any component of size $\fl{x n^{\alpha/(\alpha+1)}}$ is discovered before time $\fl{Tn^{\alpha/(\alpha+1)}}$ with probability exceeding $1-\epsilon$, uniformly in $n$ sufficiently large.  Any such component discovered before time $\fl{Tn^{\alpha/(\alpha+1)}}$ corresponds to a tree of size $\approx x n^{\alpha/(\alpha+1)}$ in the forest encoded by $\widetilde{S}^n$ and $\widetilde{G}^n$ and, indeed, this tree is asymptotically indistinguishable in the Gromov--Hausdorff--Prokhorov sense from a spanning tree of the graph component.  The locations of the back-edges can then be handled in exactly the same way as in the unconditioned setting.
\end{appendix}
 \section*{Acknowledgements}
G.C. would like to thank Thomas Duquesne and Igor Kortchemski for useful discussions.  C.G.\ is very grateful to James Martin and Jon Warren for very helpful discussions, to Serte Donderwinkel for her careful reading of the paper which resulted in many improvements, and to Zheneng Xie for an improved proof of Lemma~\ref{lem:James}.  She would also like to thank Robin Stephenson, Jean Bertoin, Juan Carlos Pardo Mill\'an, Andreas Kyprianou and V\'ictor Rivero Mercado for advice and discussions.  Work on this paper was considerably facilitated by professeur invit\'e positions at LIX, \'Ecole polytechnique in November 2016 and November 2017 and at LAGA, Universit\'e Paris 13 in September 2018.  C.G.\ would like to thank Marie Albenque and LIX, and B\'en\'edicte Haas and LAGA for their hospitality, and the ANR GRAAL for funding. Both authors would like to express their gratitude to the referee for their careful and insightful reading of the paper, and for comments which led to many improvements.

C.G.'s research is supported by EPSRC Fellowship EP/N004833/1.
 
\bibliographystyle{amsplain}
\bibliography{stable}

\end{document}